\pgfplotsset{my style/.append style={axis x line=left, axis y line=  
left, axis equal }}
\newcommand{\beq}{\begin{equation}}
\newcommand{\eeq}{\end{equation}}
\newcommand{\labelword}[2]{%
  \phantomsection
  #1\def\@currentlabel{\unexpanded{#1}}\label{#2}%
}
\numberwithin{equation}{section}
\newtheorem{theorema}{Theorem}
\theoremstyle{plain}
\newtheorem{theorem}{Theorem}[section]
\newtheorem{lemma}[theorem]{Lemma}
\newtheorem{proposition}[theorem]{Proposition}
\newtheorem{cor}[theorem]{Corollary}
\theoremstyle{remark}
\newtheorem{rem}[theorem]{Remark}
\newtheorem*{notation*}{Notation}
\theoremstyle{definition}
\newtheorem{definition}[theorem]{Definition}
\newtheorem{example}[theorem]{Example}
\newtheorem{assumption}{Assumption}
\newtheorem{conj}{Conjecture}
\newcommand{\bbZ}{\mathbb{Z}}
\newcommand{\bbE}{\mathbb{E}}
\newcommand{\bone}{\mathbf{1}}
\newcommand{\cF}{\mathcal{F}}
\newcommand{\cP}{\mathcal{P}}
\newcommand{\cA}{\mathcal{A}}
\newcommand{\R}{\mathbb{R}}
\newcommand{\bbR}{\mathbb{R}}
\newcommand{\N}{\mathbb{N}}
\newcommand{\E}{\mathbb{E}}
\renewcommand{\P}{\mathbb{P}}
\newcommand{\bbP}{\mathbb{P}}
\newcommand{\cals}{{\mathcal S}}
\newcommand{\cC}{\mathcal C}
\newcommand{\cL}{\mathcal L}
\newcommand{\cY}{\mathcal Y}
\newcommand{\cG}{\mathcal G}
\newcommand{\calu}{{\mathcal U}}
\newcommand{\calf}{{\mathcal F}}
\newcommand{\cala}{{\mathcal A}}
\newcommand{\calp}{{\mathcal P}}
\newcommand{\calb}{{\mathcal B}}
\newcommand{\calt}{{\mathcal T}}
\newcommand{\caly}{{\mathcal Y}}
\newcommand{\calv}{{\mathcal V}}
\newcommand{\calz}{{\mathcal Z}}
\newcommand{\cZ}{{\mathcal Z}}
\newcommand{\frakX}{{\mathfrak{X}}}
\newcommand{\calx}{{\mathcal{X}}}
\newcommand{\X}{\mathbb{X}}
\newcommand{\al}{{\alpha}}
\newcommand{\la}{{\lambda}}
\newcommand{\gl}{{\lambda}}
\newcommand{\eps}{{\varepsilon}}
\newcommand{\gep}{{\varepsilon}}
\newcommand{\ga}{{\gamma}}
\newcommand{\Ga}{{\Gamma}}
\newcommand{\gG}{{\Gamma}}
\newcommand{\vp}{{\varphi}}
\newcommand{\om}{{\omega}}
\newcommand{\go}{{\omega}}
\newcommand{\Om}{{\Omega}}
\renewcommand{\phi}{\varphi}
\newcommand{\gb}{{\beta}}
\newcommand{\wh}{\widehat}
\newcommand{\wt}{\widetilde}
\newcommand{\dd}{\mathrm{d}}
\newcommand{\cc}{\mathrm{c}}
\newcommand{\bQ}{\mathbf{Q}}
\newcommand{\bt}{\boldsymbol{t}}
\newcommand{\bz}{\boldsymbol{z}}
\newcommand{\bw}{\boldsymbol{w}}
\newcommand{\bx}{\boldsymbol{x}}
\newcommand{\bj}{\boldsymbol{j}}
\newcommand{\btheta}{\boldsymbol{\theta}}
\newcommand{\ceq}{:=}
\newcommand{\ind}{\mathbf{1}}
\newcommand{\lint}{\llbracket}
\newcommand{\rint}{\rrbracket}
\newcommand{\var}{{\mathrm{Var}}}
\newcommand{\suptwo}[2]{\sup_{\substack{#1 \\ #2}}} 
\newcommand{\sumtwo}[2]{\sum_{\substack{#1 \\ #2}}} 
\newcommand{\bthm}{\begin{theorem}}
\newcommand{\ethm}{\end{theorem}}
\newcommand{\bcor}{\begin{cor}}
\newcommand{\ecor}{\end{cor}}
\newcommand{\blem}{\begin{lemma}}
\newcommand{\elem}{\end{lemma}}
\newcommand{\bprop}{\begin{proposition}}
\newcommand{\eprop}{\end{proposition}}
\newcommand{\bdf}{\begin{definition}}
\newcommand{\edf}{\end{definition}}
\newcommand{\bex}{\begin{example}}
\newcommand{\eex}{\end{example}}
\newcommand{\brem}{\begin{rem}}
\newcommand{\erem}{\end{rem}}
\newcommand{\bass}{\begin{assumption}}
\newcommand{\eass}{\end{assumption}}
\newcommand{\bpr}{\begin{proof}}
\newcommand{\epr}{\end{proof}}
\newcommand{\benu}{\begin{enumerate}}
\newcommand{\eenu}{\end{enumerate}}
\newcommand{\bit}{\begin{itemize}}
\newcommand{\eit}{\end{itemize}}
\newcommand{\itt}{\textit}
\title[Stochastic heat equation with Lévy noise]{The stochastic heat equation with multiplicative \\ Lévy noise: Existence, Moments, and Intermittency}
\author{Quentin Berger}
\address{LPSM, Sorbonne Université\\ UMR 8001 Campus Pierre et Marie Curie\\ Boîte courrier 158\\ 4 Place Jussieu\\ 75252 Paris Cedex 05\\ France}
\email{\href{mailto:quentin.berger@sorbonne-universite.fr}{quentin.berger@sorbonne-universite.fr}}
\author{Carsten Chong}
\address{Department of Statistics\\ 
		Columbia University\\
			1255 Amsterdam Avenue\\
			New York, NY 10027\\ USA}
\email{\href{mailto:carsten.chong@columbia.edu}{carsten.chong@columbia.edu}}
\author{Hubert Lacoin}
\address{IMPA\\ Institudo de Matemática Pura e Aplicada\\ Estrada Dona Castorina 110\\ Rio de Janeiro CEP-22460-320\\ Brazil}
\email{\href{mailto:lacoin@impa.br}{lacoin@impa.br}} 
\keywords{continuum directed polymer, decoupling inequalities, Lyapunov exponents, non-integer moments, parabolic Anderson model, partition function, size-biased measure, stable noise}
\subjclass[2020]{Primary: 60H15, 82D60, 37H15; Secondary: 60K37, 60G51} 
\date{}
\begin{document}

\begin{abstract}
We study the stochastic heat equation (SHE) $\partial_t u = \frac12 \Delta u + \beta  u \xi$ driven by a multiplicative Lévy noise $\xi$ with positive jumps and amplitude $\beta>0$, in arbitrary dimension $d\geq 1$.
We prove the  existence of solutions under an optimal condition if $d=1,2$ and  a close-to-optimal condition if $d\geq3$.
Under an assumption that is general enough to include  stable noises, we further prove that the solution is unique.
By establishing tight moment bounds on the multiple Lévy integrals arising in the chaos decomposition of $u$, we further show that the solution has finite $p$th moments for $p>0$ whenever the noise does. 
Finally,  for any $p>0$,
we derive
 upper and lower bounds on the moment Lyapunov exponents of   order $p$ of the solution, which are asymptotically sharp in the limit as $\beta\to0$.
One of our most striking findings is that the solution to the SHE exhibits 
a property called \emph{strong intermittency} (which implies moment intermittency of all orders $p>1$ and pathwise mass concentration of the solution), for any non-trivial Lévy measure, at any disorder intensity $\gb>0$,
in any dimension $d\geq1$. 
\end{abstract}

\maketitle

\tableofcontents

\section{Introduction}

\noindent We consider the stochastic partial differential equation 
\begin{equation}\label{eq:SHELN}
\partial_t u = \tfrac12\Delta u + \beta  u \xi\, ,\qquad u(0,\cdot)=u_0,
\end{equation}
where $\xi$ is a space-time Lévy noise, $\gb>0$ is an intensity parameter and $u_0$ is some initial condition. In most parts of the paper, we assume that $\xi$ is spectrally positive (\textit{i.e.}, only has positive jumps) without a Gaussian part; extensions to the general case will be discussed in Section~\ref{sec:genelevy} below. 
The equation \eqref{eq:SHELN} is usually referred to  as  the \emph{stochastic heat equation  (SHE)} with multiplicative noise
or
  the \emph{parabolic Anderson model (PAM)}; see  \cite{CB95,CM94} for early works on the subject and \cite{Chen15, Chen2016,HHNT2015, Khoshnevisan14,Khoshnevisan17,Khoshnevisan18} for a selection of more recent contributions in the case where $\xi$ is Gaussian.
We opt for the SHE denomination since  the expression  \textit{parabolic Anderson model} is also often used in the literature to  designate  the equation 
\begin{equation}\label{eq:PAM}
\partial_t u = \tfrac12\Delta u + \beta  u V,
\end{equation}
where the multiplicative noise  term $V$  does not depend on time (see  \cite{GW2018,HL2015} for examples of continuous models and \cite{GK00, GM90, Konig16} for  lattice models). In this case,   Equation \eqref{eq:PAM} is the real-valued analogue of the Schrödinger equation associated with the Anderson Hamiltonian $H=\frac 1 2 \Delta u+\beta V$. A lattice version of this Schrödinger equation appears in one of the original papers concerning Anderson localization \cite[Equation (1)]{Anderson58}; see also \cite[Equation (0.2)]{GM90}.
The study of the localization properties of $H$ (which bears some connection to \eqref{eq:PAM}) has been and remains an important area of research in  physics. We refer the reader to the reviews \cite{CM87,Gun11} and the references therein.

\smallskip

Compared to~\eqref{eq:PAM}, Equation \eqref{eq:SHELN} corresponds to a related but different problem: the diffusion of particles in a 
random medium that varies at small time scales.
This equation has been widely studied in the literature, firstly and mostly in the case where $\xi$ is a space-time Gaussian white noise~\cite{CB95}.
In this case, the equation can be related, via the Cole--Hopf transform, to the KPZ equation, a model for the progression of growth fronts that has attracted a lot of attention in the literature; see \cite{ACQ11, BG97,Hairer13,KPZ86} and the two review papers \cite{Corwin12,Quastel15}. Furthermore, in the case of a Dirac initial condition, the solution to \eqref{eq:SHELN} 
has been used to describe the scaling limit of a one-dimensional directed polymer in a random environment \cite{AKQ14b,AKQ14a}, called the \emph{continuum directed polymer model}.
With Lévy noise,  the existence of solutions to \eqref{eq:SHELN} has only been proved  under  some additional integrability assumptions on the Lévy measure; see \cite{BL20_cont,Chong17, Loubert98} as well as Sections~\ref{sec:mild} and~\ref{sec:soltrunk} below for a more detailed review.  Recently, \cite{BL20_disc} further showed that directed polymers in certain heavy-tailed environments have  scaling limits that, as we shall show below, agree with   solutions to \eqref{eq:SHELN} with $\al$-stable $\xi$,  providing a physical motivation for studying the SHE with Lévy noise.

\smallskip

Another central point of interest in past studies of the SHE is the phenomenon of localization of solutions, or \emph{intermittency} \cite{CM94, GM90,ZMR87}.
One striking manisfestation is the
appearance  of sharp peaks at large times $t$ in the random field
$(u(t,x))_{x\in \bbR^d}$, where $u$ is the  solution to \eqref{eq:SHELN} with initial condition $u_0\equiv 1$. 
These peaks result from  mass concentration   of $u(t,\cdot)$ on a fraction of space that decreases exponentially in $t$.
A common way to quantify this concentration property (see Section~\ref{sec:interm} for more details) is to prove that for some $p<p'$, the ratio of moments $\bbE[ u(t,x)^{p'}]/\bbE[ u(t,x)^{p}] $ grows exponentially in time, and to compute the growth rate.  
For the SHE with Gaussian noise, this type of moment intermittency  was studied in depth by, for example, \cite{CB95,Chen15,Chen2015,Das21,Khoshnevisan14}. 
We also refer to 
\cite{Khoshnevisan17,Khoshnevisan18} for a path-by-path analysis of the intermittency peaks.
For the SHE with Lévy noise, the issue of moment intermittency was investigated only much more recently in \cite{CK19}. In \cite{CK20}, it was further proved that unusually large peaks (on a logarithmic scale) already appear in the solution to the SHE with an \emph{additive} Lévy noise.

\smallskip

Before we describe our main results in Section~\ref{sec:results}, we   provide a detailed technical introduction to Equation \eqref{eq:SHELN}, which aims at being as self-contained as possible. 
 
\subsection{Lévy noise with positive jumps}
Given  a  measure $\lambda$ on $(0,\infty)$ that satisfies     $\gl([1,\infty))<\infty$ and $\int_{(0,1)} z^2\, \lambda(\dd z) <\infty$, we provide a constructive definition of a pure-jump Lévy noise with  intensity~$\gl$. General Lévy noises, with a Gaussian part or with negative jumps, will be considered in Section~\ref{sec:genelevy}. 
Let $\go$ be a Poisson point process on $\R \times \bbR^d \times (0,\infty)$ with intensity $\nu:=\dd t \otimes \dd x \otimes \lambda (\dd  z)$.
The law of $\go$ is denoted by $\bbP$.
For convenience, we sometimes split $\go$ into two processes $\go_{<}$ and $\go_{\geq}$ that correspond to the restriction of $\go$ to  $\bbR \times \bbR^d \times(0,1)$ and $\bbR \times \bbR^d \times[1,\infty)$, respectively. We let $\bbP_{<}$ and $\bbP_\geq$  denote the associated probabilities, so that $\bbP=\bbP_<\otimes \bbP_\geq$.
For $a\in(0,1]$, consider the  measure
\begin{equation}
\label{def:xia}
\xi_{\go}^{a} := \sum_{(t,x,z)\in \go} z \ind_{\{z\geq a\}} \delta_{(t,x)}  - \kappa_a \cL \, ,
\end{equation}
where $\cL$ is the Lebesgue measure on $\bbR \times \bbR^d$
and $\kappa_a  =  \int_{[a,1)} z \,\lambda (\dd z)$.
We also set 
\begin{equation}
\label{def:xia12}
\xi_{\go_<}^{a} := \sum_{(t,x,z)\in \go} z \ind_{\{z\in [a,1)\}} \delta_{(t,x)}  - \kappa_a \cL  \qquad \text{and} \qquad  \xi_{\go_\geq} := \sum_{(t,x,z)\in \go} z \ind_{\{z\ge 1\}} \delta_{(t,x)}  \,.
\end{equation}
Under the assumption $\int_{(0,1)} z^2\,\gl(\dd z) <\infty$,
the measure $\xi_{\go}^{a}$ converges a.s., in
the local Sobolev space $H_{\mathrm{loc}}^{-s}(\bbR \times \bbR^d)$ for any $s>(1+d)/2$, towards a limit $\xi_{\go} \in H_{\mathrm{loc}}^{-s}(\bbR \times \bbR^d)$ (this is a standard result, we refer to \cite[Appendix A]{BL20_cont} for a proof of this exact statement).
For $k\ge 1$ and $u\in \bbR$,  the Sobolev space  $H^{u}(\bbR^k)$  is the Hilbert space of all Schwartz distributions $\varphi\in\cals'(\R^k)$   for which
\begin{equation*}
\lVert\varphi \rVert_{H^u(\bbR^k)}:=  \bigg(\int_{\bbR^k} (1+|\xi|^2)^u |\hat \varphi(\xi)|^2 \,\dd \xi \bigg)^{\frac12} < \infty,
\end{equation*}
where $\hat \varphi$ denotes the Fourier transform of $\varphi$. When $\varphi\in C^{\infty}_c(\bbR^k)$ (\textit{i.e.}, $\varphi$ is smooth and compactly supported), then $\hat \varphi$ is defined by
$ \hat \varphi(\xi):= \int_{\bbR^k} e^{i \xi\cdot x} \varphi(x) \,\dd x.$
The local Sobolev space $H^{u}_{\mathrm{loc}}(\bbR^k)$ is then defined by 
\begin{equation*}
H^{u}_{\mathrm{loc}}(\bbR^k):= \Big\{  \varphi \in\cals'(\R^k)\ : \ \forall \rho\in C^{\infty}_c(\bbR^k), \  \rho\varphi\in
 H^{u}(\bbR^k)  \Big\} \,,
\end{equation*}
equipped with the topology induced by the family of seminorms $\{\varphi \mapsto \| \rho \varphi\|_{H^u(\bbR^k)}:\rho\in C^{\infty}_c(\bbR^k)\}$.
The random distribution $\xi_{\go}$ is called a \emph{L\'evy (space-time white) noise} with intensity or Lévy measure~$\lambda$. In the
the case where $\lambda(\dd x) = \alpha x^{-(1+\alpha)}\, \dd x$ for some $\alpha\in (0,2)$, $\xi_\om$ is referred to as an \emph{$\alpha$-stable noise}. 
We refer the reader to \cite{Aziznejad20} and \cite{Dalang17} for more background on L\'evy noises.
In the remainder, we let $\cF:=(\cF_t)_{t\ge 0}$ be the 
 completed  natural filtration associated with $\xi_{\go}$, that is,
	\begin{equation}\label{filtraF}
	\cF_t := \overline{ \sigma\Big( \go \cap ([0,t]\times \bbR^d\times(0,\infty))\Big)}.
	\end{equation}

	\begin{rem} 
	Note that the Poisson process $\go$ is measurable with respect to $\sigma(\xi_{\go})$. Hence the solution $u$ to \eqref{eq:SHELN} that we construct in this paper (which is defined in terms of  $\go$) is a function of the noise $\xi$.  
	Our choice to index by $\om$ instead of $\xi$ is only for convenience because
many quantities   in our proofs are easier to express in terms of $\om$ than in terms of $\xi$; in particular, this is not because we need an extension of the probability space in our construction. We mention this explicitly because there are martingale constructions which are similar to the one used in this paper and that do require an extension of the probability space: this is, for instance, the case in     Kahane's construction of  Gaussian multiplicative chaos associated with a kernel of $\sigma$-positive type  (see \cite{Kahane85} for the seminal paper or \cite[Section 2.1]{RV14} for a review).
\end{rem}

\subsection{Mild solutions to SHE}\label{sec:mild}

A random field $(u(t,x))_{t>0,\ x\in\R^d}$ is called a \emph{mild solution} to
the SHE~\eqref{eq:SHELN} with initial condition $u_0$ if for all $t>0$ and $x\in\R^d$,
\begin{equation}\label{eq:integz}
	u(t,x)=\int_{\bbR^d} \rho(t,x-y) u_0(\dd y)+ \beta \int^t_0 \int_{\bbR^d}\rho(t-s,x-y)u(s,y) \,\xi(\dd s, \dd y)\qquad\text{a.s.}
\end{equation}
The statement includes the requirement that the integrals be well-defined and finite. In \eqref{eq:integz}, 
$$ \rho(t,x) :=  (2\pi t)^{-\frac d2} e^{- \frac{\|x\|^2}{2t}} $$
is the $d$-dimensional  heat kernel and $\lVert\cdot\rVert$ denotes the Euclidean norm on $\bbR^d$.

\brem\label{rem:Ito}
If $\int_{(0,1)} z \,\la(\dd z)=\infty$, the noise $\xi$ does not have a locally finite total variation. In this case, the stochastic integral on the right-hand side of~\eqref{eq:integz} cannot be defined as a path-by-path Lebesgue integral (\textit{i.e.}, an integral with respect to a random measure defined on $[0,t]\times \bbR^d$) and must be interpreted in It\^o's sense.
\erem

\noindent In \cite{Loubert98}, it was shown that \eqref{eq:SHELN} has a unique mild solution if   the Lévy measure $\la$ satisfies
\beq\label{eq:SLB} 
\exists \, q\in (1,1+\tfrac2d)\ : \ \int_{(0,\infty)} z^q\,\la(\dd z)<\infty \,.
\eeq
 While this condition covers many  examples, it is not fully satisfying, in particular because it rules out  $\al$-stable noises for all $\al\in(0,2)$. Subsequent papers that considered the case of $\alpha$-stable noise (\textit{e.g.}, \cite{Balan14, BL20_cont,Chong17,Chong19}) neither established uniqueness nor the finiteness of moments of the solution to~\eqref{eq:SHELN}.
At this point, let us also mention \cite{Mueller98,Mytnik02}, who investigated an SHE with stable noise but with a non-Lipschitz nonlinearity.

\subsection{The truncation approach to  SHE}\label{sec:soltrunk}

Prior to this article, 
the most general existence condition for the solution to the SHE \eqref{eq:SHELN} was obtained, for a Lévy noise with positive jumps, in~\cite{BL20_cont}, but using a (possibly) weaker notion of solution than \eqref{eq:integz}.
The approach in  \cite{BL20_cont} consists in solving the equation 
 \begin{equation}\label{eq:SHELNa}
\partial_t u^a = \tfrac12\Delta u^a + \beta  u^a \xi^a_\go \, ,\qquad u^a(0,\cdot)=u_0,
\end{equation}
with $\xi^a_{\go}$ from \eqref{def:xia} and then taking the limit of $u^a$ as $a\downarrow 0$.

\begin{rem}\label{rem:dirpol}
In~\cite{BL20_cont}, it is not proved that the obtained limit satisfies \eqref{eq:integz}. In fact, the main  focus of \cite{BL20_cont} is not the SHE, but rather constructing a 
  continuum path measure on~$\bbR^d$: \emph{the continuum directed  polymer in Lévy noise}. This is further discussed in Section~\ref{sec:dirpol} below.
\end{rem}

In order to review the results of \cite{BL20_cont}, let us start by introducing some important notations and  quantities.
For $0<s<t<\infty$, $x,y \in \bbR^d$, 
 let 
 $\frakX_k(s,t) := \{ (t_1, \ldots, t_k) \in \bbR^k:  s<t_1 < \cdots < t_k <t \}$ denote the $k$-dimensional simplex delimited by $s$ and $t$ and define, for $\bt\in\frakX_k(s,t)$ and 
 $\bx\in (\bbR^d)^k$, 
\begin{equation}
\label{def:rho}
\rho_{s,x;t,y}(\bt, \bx) = \prod_{i=1}^{k+1} \rho( \Delta t_i, \Delta x_i) \,,
\end{equation}
where
\begin{equation}\label{deltadeff}
\Delta t_i := t_i-t_{i-1}\qquad \text{and}\qquad \Delta x_i := x_{i}-x_{i-1} \,,
\end{equation}
with the convention that $t_0:=s$, $x_0:=x$ and $t_{k+1}:=t$, $x_{k+1}:=y$.
Given $a>0$ we let $|\xi_{\go}^a|$ and $\xi_{\go}^{a,+}$ denote the total variation and the positive part of  $\xi_{\go}^a$ (considered as a measure),
that is, 
\begin{equation}\label{def:posandtv}
 |\xi_{\go}^a|=\xi_{\go}^a+2\kappa_a\mathcal L \qquad \text{and}   \qquad \xi_{\go}^{a,+}=\xi_{\go}^a+\kappa_a\mathcal L.
\end{equation}
We define the \textit{point-to-point partition function} associated with \eqref{eq:SHELNa}
and truncated L\'evy noise $\xi_{\go}^a$ as
\begin{equation}\label{eq:trucatedp} 
\cZ_{\gb}^{\go, a}(s,x;t,y) :=   \rho(t-s,y-x)+ \sum_{k=1}^{\infty}\gb^k
\int_{ \frakX_k(s,t) \times (\bbR^d)^k}   \rho_{s,x;t,y}( \bt , \bx)   \prod_{i=1}^k  \xi_{\go}^a  (\dd t_i , \dd x_i ),
\end{equation}
under the assumption that the sum of integrals is absolutely convergent, that is,  
\begin{equation}\label{absol}
 \sum_{k=0}^{\infty}\gb^k
\int_{ \frakX_k(s,t) \times (\bbR^d)^k}   \rho_{s,x;t,y}( \bt , \bx)   \prod_{i=1}^k  |\xi_{\go}^a|  (\dd t_i , \dd x_i )<\infty.
\end{equation}
It is proved in~\cite[Prop.~2.5 and 2.6]{BL20_cont} that \eqref{absol} holds if and only if
\begin{equation}\label{eq:log}
	\int_{[1,\infty)} (\log z)^{\frac d2} \,\lambda(\dd z) <\infty.
\end{equation}
The case where $x=0$ and $s=0$ is of particular interest to us, hence we introduce the notational convention
\begin{equation}\label{eq:simplif}
\cZ^{\go,a}_{\beta}(x;t,y):=\cZ^{\go,a}_{\beta}(0,x;t,y), \qquad  
\cZ^{\go,a}_{\beta}(t,x):=\cZ^{\go,a}_{\beta}(0,0;t,x), 
\end{equation}
which applies similarly to other quantities such as $\frakX_k(t):= \frakX_k(0,t)$ and $\rho_{t,x}(\bt,\bx):=\rho_{0,0;t,x}(\bt,\bx)$.
When the condition \eqref{eq:log} holds, $\cZ^{\go,a}_{\beta}(\cdot,\cdot)$ is a mild solution solution to the SHE with noise  $\xi^a_\go$ and initial condition $\delta_0$. Indeed, from \eqref{absol}, the integrals are absolutely convergent, so~\eqref{eq:trucatedp} can be rewritten as
\begin{equation}\label{eq:pppf-2}
	\cZ_{\gb}^{\go,a}(t,x) = \rho(t,x)  + \gb \int_{0}^t \int_{\bbR^d} \rho(t-s,x-y) \cZ_{\gb}^{\go,a}(s,y)\, \xi_{\go}^a(\dd s, \dd y) \,.
\end{equation}
In the same manner, $\cZ_{\gb}^{\go,a}(y;\cdot,\cdot)$ is a mild solution to the SHE with initial condition $\delta_y$  and noise~$\xi^a_\om$, and  given some uniformly bounded $u_0: \bbR^d \to [0,\infty)$, the random field $u^a$ defined by 
\begin{equation}
	\label{eq:integz-2}
	u^{a} (t,x):= \int_{\bbR^d} u_0(y)\cZ_{\gb}^{\go,a}(y;t,x)\, \dd y
\end{equation}
is a mild
solution to the SHE with initial condition $u_0$ and noise $\xi_\om^a$; see \cite[Prop.~2.19]{BL20_cont}.

\begin{rem}
\label{rem:invariance}
Note that by translation invariance, we have
\begin{equation}\label{eq:theinlaws}
\begin{split}
\Big(\cZ^{\go,a}_{\beta}(s,x;t,y)\Big)_{y\in \bbR^d} 
& \stackrel{(d)}{=}\Big(\cZ^{\go,a}_{\beta}(t-s,y-x)\Big)_{y\in \bbR^d} \stackrel{(d)}{=} \bigg(e^{-\frac{\| x-y\|^2+\|y\|^2}{2(t-s)}}\cZ^{\go,a}_{\beta}(t-s,y)\bigg)_{y\in \bbR^d}.
\end{split}
\end{equation}
For this reason, we present most results only for the case $s=0$ and $x=0$, without loss of generality.
\end{rem}

\noindent

The next step is to investigate existence and ``relevance'' of the limit $\cZ^{\go,a}_{\beta}(t,x)$ as $a\downarrow 0$.
If  \eqref{eq:log} holds,
integrating the ``Lebesgue part'' of $\xi_{\go}^a$ (see  \cite[Prop.\ 2.15]{BL20_cont} for details),   we obtain the  alternative expression 
\begin{equation}
\label{eq:help} \cZ_{\gb}^{\go, a}(t,x) \\
= e^{-\beta\kappa_a t } \left(\rho(t,x)+ \sum_{k=1}^{\infty}\gb^k
\int_{ \frakX_k(t) \times (\bbR^d)^k}   \rho_{t,x}( \bt , \bx)   \prod_{i=1}^k  \xi_{\go}^{a ,+}  (\dd t_i , \dd x_i )\right),
\end{equation}
which entails the positivity of $\cZ_{\gb}^{\go, a}(t,x)$.
 Still under \eqref{eq:log},    considering the   reverse filtration $\cG:=(\cG_a)_{a\in(0,1)}$ defined by
\begin{equation}\label{filtraG}
	\cG_a:= \sigma(\xi^{a}_\go)= \sigma( \go \cap (\bbR\times \bbR^d\times [a,\infty))),
\end{equation}
one can observe  that for almost every realization of $\go_\geq$,
$(\cZ_{\beta}^{\go, a}(t,x))_{a\in(0,1]}$ is a non-negative càdlàg time-reversed $\P_<$-martingale with respect to $\cG$ (cf.\ \cite[Lemma~3.5]{BL20_cont})
and thus admits a limit as $a\downarrow0$.
To determine whether $\lim_{a\to 0}\cZ_{\beta}^{\go, a}(t,x)$  is a good candidate for being a solution to \eqref{eq:integz}, a first step  is to determine whether this limit is degenerate or not. 
The answer depends on the intensity measure $\gl$ and  the following result summarizes the main findings of \cite[Thm.~2.7 \& Prop.~2.10--2.15]{BL20_cont}.

\begin{theorema}\label{th:previousex}
	Assume that $\gl$ satisfies the condition in \eqref{eq:log}.
	\begin{itemize}
		\item [(i)] If, in addition,
		\begin{equation}
			\label{assump1}
			\begin{cases}
				\displaystyle\int_{(0,1)} z^2\, \lambda(\dd z) <\infty \  \quad \ &\text{if } d=1,\\
				\exists\, p\in \big(1,1+\frac{2}{d}\big) \ : \ \displaystyle\int_{(0,1)} z^p \,\lambda(\dd z) <\infty &\text{if } d\geq 2,
			\end{cases}
		\end{equation}
		then for every $0<s<t<\infty$ and $x,y\in\R^d$, we have
		\begin{equation}\label{eq:conv}
			\cZ_{\gb}^{\go}(s,x;t,y):= \lim_{a\to 0} \cZ_{\gb}^{\go, a}(s,x;t,y) >0  \qquad \text{$\bbP$-a.s.}
		\end{equation}
		Moreover, the convergence holds in $L^1(\bbP_{<})$ for $\P_\geq$-a.e.\ realization of $\go_{\geq}$, that is, 
		\begin{equation*}
			\lim_{a\to 0}  \bbE_{<} \Big[  \Big| \cZ_{\beta}^{\go}(s,x;t,y)-\cZ_{\beta}^{\go, a}(s,x;t,y) \Big| \Big]=0    \qquad \text{$\bbP_{\geq}$-a.s.}
		\end{equation*}
		\item [(ii)] If, on the other hand, we have
		\begin{equation}
			\label{assump2}
			\begin{cases}
				\displaystyle\int_{(0,1)} z^2\, \lambda(\dd z) =\infty \  \quad \ &\text{if } d=1,\\
				\displaystyle\int_{(0,1)} z^2 \lvert \log z\rvert \,\lambda(\dd z) =\infty \ \quad \  &\text{if } d=2,\\
				\displaystyle   \int_{(0,1)} z^{1+\frac 2d} \,\lambda(\dd z) = \infty &\text{if } d\geq 3,
			\end{cases}
		\end{equation}
		then $\bbP$-a.s., we have $\lim_{a\to 0} \cZ_{\beta}^{\go, a}(s,x;t,y) =0$.
	\end{itemize}
\end{theorema}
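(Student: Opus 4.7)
The plan is to exploit the reverse-martingale structure of $a \mapsto \cZ^{\go,a}_\beta(s,x;t,y)$ in the filtration $\cG$ from \eqref{filtraG}, already pointed out in the text preceding the statement. Because the process is non-negative with constant mean $\rho(t-s,y-x)$ (a direct consequence of compensation and the chaos expansion \eqref{eq:trucatedp}), reverse-martingale convergence yields the existence of a $\bbP$-a.s.\ limit $\cZ^{\go}_\beta(s,x;t,y)\in[0,\infty)$ as $a\downarrow 0$. The whole question therefore reduces to deciding whether the convergence is in $L^1(\bbP_<)$, in which case the mean is preserved and the limit is non-trivial, or whether it collapses to $0$. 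By translation invariance (Remark~\ref{rem:invariance}), it suffices to treat $s=0$, $x=0$.

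For part~(i), the main input is a uniform $L^p$ estimate: under \eqref{assump1}, there exists $p>1$ in the stated range (with $p=2$ admissible when $d=1$) such that $\sup_{a\in(0,1]}\bbE[\cZ^{\go,a}_\beta(t,x)^p]<\infty$. I would derive this by applying a Rosenthal/BDG-type inequality for $p$-moments of compensated Poisson integrals (valid for $p\in(1,2]$), which for a non-negative integrand $f$ yields $\bbE[|\int f\,\dd\xi^a_\go|^p]\leq C_p (\int z^p\,\lambda(\dd z))\int |f|^p\,\dd t\,\dd x$. Iterating along the chaos decomposition \eqref{eq:trucatedp} produces the bound
\begin{equation*}
\bbE\bigl[\cZ^{\go,a}_\beta(t,x)^p\bigr] \leq \sum_{k\geq 0} \Bigl(C_p\beta^p\int_{(0,\infty)} z^p\,\lambda(\dd z)\Bigr)^k \int_{\frakX_k(t)\times(\bbR^d)^k} \rho_{t,x}(\bt,\bx)^p\,\dd\bt\,\dd\bx,
\end{equation*}
which is finite uniformly in $a$ exactly when $p<1+2/d$, since the temporal singularity at $\Delta t_i\to 0$ is $(\Delta t_i)^{-d(p-1)/2}$ per step. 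Uniform $L^p$-boundedness implies uniform integrability and hence $L^1(\bbP_<)$-convergence, so the limit has mean $\rho(t,x)>0$. Strict positivity $\bbP$-a.s.\ then follows from a $0$--$1$ argument applied to the tail $\sigma$-algebra of small-amplitude jumps, combined with the multiplicative positivity made manifest by \eqref{eq:help}.

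For part~(ii), the starting observation is that under each case of \eqref{assump2}, the chaos-orthogonal computation
\begin{equation*}
\bbE\bigl[\cZ^{\go,a}_\beta(t,x)^2\bigr] = \sum_{k\geq 0} \beta^{2k}\Bigl(\int_{[a,\infty)} z^2\,\lambda(\dd z)\Bigr)^k \int_{\frakX_k(t)\times(\bbR^d)^k} \rho_{t,x}(\bt,\bx)^2\,\dd\bt\,\dd\bx
\end{equation*}
diverges as $a\downarrow 0$: in $d=1$ because the jump variance itself blows up; in $d=2$ because the $k$-fold integral has logarithmic growth in $k$ that a logarithmically divergent $\int z^2|\log z|\,\lambda(\dd z)$ cannot compensate after summation; in $d\geq 3$ by comparison with the borderline exponent $p=1+2/d$. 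The challenge is to promote this $L^2$ failure to collapse of the $L^1$-bounded limit. I would do this via a Kahane-style multi-scale argument: decompose the noise into independent amplitude shells $(a_{n+1},a_n]$, view $\cZ^{\go,a_n}_\beta$ as a product-type reverse-martingale in the scale index $n$, and apply a Paley--Zygmund lower bound scale by scale together with Kahane's convexity inequality. This reduces degeneracy to showing that $\bbE[\cZ^{\go,a}_\beta(t,x)^q]\to 0$ for some $q\in(0,1)$, which follows from the scale-by-scale variance divergence.

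The main obstacle is clearly the second part: upgrading $L^2$-divergence to genuine collapse of an $L^1$-bounded martingale is the heart of the matter and rests on the scale-by-scale Kahane-type comparison; the existence and positivity side (i) becomes routine once the uniform moment bound has been secured.
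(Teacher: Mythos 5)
This statement is quoted from \cite{BL20_cont} (the paper itself gives no proof, but its closest in-house analogues are Theorem~\ref{thm:local} and Proposition~\ref{prop:waitingforbetter}), so let me measure your proposal against those arguments. For part \textit{(i)}, your route (iterated BDG/Rosenthal bounds along the chaos expansion, with $p<1+\frac2d$ dictated by the integrability of $\rho^p$) is indeed the classical one, but as written it fails at the large jumps: under \eqref{eq:log} only a $(\log z)^{d/2}$-moment is assumed on $[1,\infty)$, so the factor $\int_{(0,\infty)}z^p\,\lambda(\dd z)$ in your displayed bound is infinite in general, and $\sup_a\bbE[\cZ^{\go,a}_\beta(t,x)^p]$ is simply false unconditionally. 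This is precisely why the statement (and \eqref{eq:condconv}) is phrased as $L^1(\bbP_<)$-convergence for $\bbP_\geq$-a.e.\ $\go_\geq$: one must first fix the large-jump configuration, apply the moment bound only to the small-jump noise, and then re-expand over the (locally finite) large jumps as in \eqref{eq:decompobs} and Lemma~\ref{lem:pint}. This is a repairable oversight, but it is not cosmetic --- the conditioning is what makes the uniform-integrability argument go through.

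Part \textit{(ii)} contains the genuine gap. First, the quantity you want to let diverge does not exist in $d\geq2$: since $\int_0^t\int_{\R^d}\rho(s,x)^2\,\dd x\,\dd s=\int_0^t(4\pi s)^{-d/2}\,\dd s=\infty$, the second moment of $\cZ^{\go,a}_\beta$ is $+\infty$ for \emph{every} fixed $a>0$ and every non-trivial $\lambda$, so there is no ``divergence as $a\downarrow0$'' to exploit and no logarithmic bookkeeping in $k$ to perform. Second, even where the second moment is finite ($d=1$), its blow-up does not imply degeneracy: e.g.\ heavy tails of $\lambda$ at infinity make $\bbE[\cZ^2]=\infty$ while the limit stays non-degenerate, and in $d=2$ the limit is non-degenerate under \eqref{eq:log-2} although all $L^2$-norms are infinite; so ``$L^2$ failure $\Rightarrow$ collapse'' is false as an implication, not just unproved. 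Third, the tools you invoke do not produce the needed estimate: Kahane's convexity inequality is a Gaussian comparison device with no Poisson-chaos counterpart, and Paley--Zygmund bounds probabilities from \emph{below}, which is the wrong direction when the goal is an upper bound $\bbE[(\cZ^{\go,a}_\beta)^{q}]\to0$ for some $q\in(0,1)$. The actual mechanism, both in \cite{BL20_cont} and in this paper's Proposition~\ref{prop:waitingforbetter} (see Section~\ref{sec:waitingforbetter} and Lemma~\ref{lem:help}), is a change-of-measure/size-biased (spine) argument: one exhibits an event (here, clusters of small atoms) that is unlikely under $\bbP$ but likely under the tilted measure $\wt\bbP^a_{\beta,t}$, and this --- crucially using the positivity of the jumps --- forces the fractional moment $\bbE[(\cZ^{\go,a}_{\beta})^{1/2}]$ to vanish; no second-moment computation enters that proof.
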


  One can also consider a \textit{free-end} (or \textit{point-to-line}) version of the partition function
with truncated L\'evy noise, given by 
\begin{equation}\label{eq:fepf}
\cZ_{\gb}^{\go, a} (s,x;t,\ast) :=\int_{\bbR^d}\cZ^{\go,a}_{\beta} (s,x;t,y)\,\dd y.
\end{equation}
Theorem~\ref{th:previousex} applies to $\cZ_{\gb}^{\go, a} (s,x;t,\ast)$ \itt{mutatis mutandis}. Following the convention \eqref{eq:simplif}, we use the notations $\cZ^{\go}_{\gb}(s,x;t,\ast)$, $\cZ^{\go}_{\gb}(x;t,\ast)$ and $\cZ^{\go}_{\gb}(t,\ast)$ for the limits we obtain as~$a$ tends to zero (as a consequence of the reverse martingale property).
 Furthermore, under the assumptions of Theorem~\ref{thm:local},   \cite[Prop.~2.20]{BL20_cont} showed 
that  
$\lim_{a\to 0} u^{a} (t,x)$ exists a.s.\ for $u^a$ defined in \eqref{eq:integz-2}, without proving that the limit is a solution to \eqref{eq:integz}.
Since moments of the measure $\gl$ play an important role in our assumptions, we introduce a notation for partial moments of the measure $\gl$ by setting  
\begin{equation}\label{defmuab}
	\mu_{a,b}(p) :=\int_{[a,b)} z^p \,\lambda(\dd z) \,,
\end{equation}
for $0\leq a\leq b\leq\infty$.
 We   simply write  $\mu:=\mu_{1,\infty}(1)$ for the first moment restricted to~$[1,\infty)$. 

\begin{rem}\label{rem:1}
In the case of a finite mean $\mu$, given $t>0$ and $x\in \mathbb R$,
  the convergence of both $\cZ_{\beta}^{\go, a}(t,x)$ and
$\cZ_{\beta}^{\go, a}(t,\ast)$ as $a\to 0$  also holds in $L^1(\bbP)$.
\end{rem}

If $\mu <\infty$, it is convenient to consider the normalized partition functions, defined as 
\begin{equation}\label{barparti}
	\bar \calz^{\om,a}_{\beta}(t,x):= e^{-\beta \mu t}  \calz^{\om,a}_{\beta}(t,x),\qquad \bar \calz^{\om,a}_{\beta}(t,\ast):= e^{-\beta \mu t}  \calz^{\om,a}_{\beta}(t,\ast).
\end{equation}
In the same manner, we can center the noise by setting 
\begin{equation}\label{barnoise}
	\bar \xi^a_{\go}:= \xi^a_{\go}-\mu \cL,
\end{equation}
which has the effect that  integrals with respect to  $\bar \xi^a_{\go}$ have mean zero. Note that we have
\begin{equation}\label{eq:chaos}
	\bar \calz^{\om,a}_{\beta}(t,x)= \rho(t,x)+ \sum_{k=1}^{\infty}  \gb^k \int_{ \frakX_k(t) \times (\bbR^d)^k}  \rho_{t,x}( \bt , \bx)   \prod_{i=1}^k \bar \xi_{\go}^a (\dd t_i , \dd x_i )
\end{equation}
and a similar identity for $\bar \calz^{\om,a}_{\beta}(t,\ast)$.

\subsection{Intermittency and related notions}
\label{sec:interm} Consider a random field $(v(t,x))_{t>0, x\in\R^d}$ where $d\geq0$; when $d=0$ this means that $(v(t))_{t>0}$ is a stochastic process only indexed by $t$. Assuming that the \emph{moment Lyapunov exponents}
 \begin{equation}\label{eq:Lya}
  \ga(p)= \gamma(v,p):=\lim_{t\to\infty}  \frac1t \log \E\Big[ \lvert v(t,x)\rvert^p \Big]  
\end{equation}
exist on the extended real line for all $p\in(0,\infty)$ and are independent of $x\in\R^d$, we let
\beq\label{defig2} I = I(v):=\{p\geq0 \ : \ \ga(v,p)<\infty\}, \eeq
with the convention $\ga(0)=0$.
Further assuming $\lvert \ga(1)\rvert <\infty$ in the following, we  define
the \emph{normalized moment Lyapunov exponents} of $v$ by
\begin{equation}\label{eq:gabar}
 \bar \gamma(p):= \gamma(p)-p\,\gamma(1). 
\end{equation}
Clearly, $\bar \ga(1)=0$.
Following the terminology of \cite{CM94,GM90,ZMR87}, if $p>1$ and $\lvert \ga(1)\rvert <\infty$,  we say that~$v$ exhibits
 \emph{(moment) intermittency of order $p$} if 
$\bar \gamma(p)\in (0,\infty)$.
We say that  $v$ exhibits
 \emph{full intermittency} if $v$ is intermittent of all orders   $p\in (1,\infty)\cap I$. 
 
Let us also introduce some new terminology when we consider moments of order $p\in (0,1)$:
we say that $v$ exhibits 
 \emph{strong intermittency}   (in analogy with ``very strong disorder'' used in the directed polymer context, see the discussion below) if $\bar \gamma(p)\in(-\infty,0)$ for some $p\in (0,1)$. As $p\mapsto \bar\ga(p)$ is convex on $I$ (cf.\ Proposition~\ref{prop:qual} \textit{(i)} below),   strong intermittency implies  $\bar \gamma(p)<0$ for all $p\in (0,1)$ but also that   $\bar \gamma(p)> 0$ for all $p>1$. Thus,  strong intermittency implies full intermittency. As   Section~\ref{sec:geoloca} below reveals,   strong intermittency, plus some ergodic properties in $x$, yields a    \emph{geometric characterization} of intermittency: we have a mass concentration  of the paths of $v$ at large times, characterized by the appearance of exponentially large peaks on islands  covering only an exponentially small fraction of space (a more quantitative study of this phenomenon was undertaken in \cite{Khoshnevisan17,Khoshnevisan18},  and
 we also refer to \cite{GKM07} for a similar work concerning the parabolic Anderson model on $\bbZ^d$).


\subsubsection*{Intermittency for the SHE}
Let us first discuss the known intermittency result in a semi-discrete setting (this is the setup where the most is known). 
For the SHE on $\mathbb Z^d$ with either a Gaussian or a finite-variance Lévy noise, the results of \cite{Ahn92b,Ahn92,CM94} and \cite{Ber15} show that
\bit
\item
if $d=1,2$,  then strong intermittency holds for every $\beta>0$; 
\item
if $d\geq3$, then strong intermittency holds  if and only if $\beta> \bar \beta_c$ for some $\bar \beta_c>0$.
\eit

For the SHE on $\R$ driven by a multiplicative Gaussian white noise $\xi$, the analysis of intermittency has a long history: if $\xi$ has variance $\beta^2$ and $u_0\equiv 1$,   the authors of \cite{CB95} derived the formula
\beq\label{eq:fml} \ga(p)=\ga(u,p)=\frac{p(p^2-1)}{24}\beta^4 \eeq
for $p\in\N$ and  proved it for $p=2$. Later, a proof  of \eqref{eq:fml} was given in \cite{Chen15} and \cite{Le16} for all integers $p\in\N$ and all real numbers $p\geq2$, respectively. The formula \eqref{eq:fml} for all $p>0$ (and in particular, strong intermittency) was only established recently in \cite{Das21,Ghosal20} using integrable probability methods. For $d\geq2$, there is no notion of solution to the SHE with a Gaussian space-time white noise (cf.\ Section~\ref{sec:genelevy}). For the SHE on $\R^d$ driven by a Gaussian noise that is white in time and colored in space with, say, a compactly supported correlation function, one has a similar picture to the discrete-space setting: strong intermittency always holds if $d=1,2$ and only for large $\beta$ if $d\geq3$; see \cite{Chen19, Lac11}. The situation may be different if the noise has long-range spatial correlation, as shown in \cite{Chen19,Foondun17, Lac11}.

For the solution to the Lévy-driven SHE,
under the assumption \eqref{eq:SLB},  \cite{CK19} established intermittency of order $p$  in the following cases: for all $p\in(1,3)$ if $d=1$; for $p$ close enough to (but smaller than) $1+\frac2d$ if $d\geq2$; for any fixed $p\in(1,1+\frac2d)$ in any dimension $d\geq2$ if $\beta>\beta_{p,d}$ for some $\beta_{p,d}>0$. Whether strong intermittency (or full intermittency in dimensions $d\geq2$) holds in the Lévy case or not, has been an open problem so far.



\subsubsection*{Very strong disorder for the directed polymer model}

To complete the former discussion, let us mention some results that have been proved for the directed polymer in a random environment (DPRE), whose partition function formally corresponds to the solution to an SHE in discrete space and time.
In the context of the directed polymer model, the notion equivalent to strong intermittency is that of \textit{very strong disorder} (see for instance \cite{AKQ14a,CH06,Lac10pol}). For cultural reasons (DPRE is a statistical mechanics model), very strong disorder is a property of the
\emph{free energy}, that is, of the asymptotic behavior of 
$\bbE[\log Z_{\beta}(N,\ast)]$, rather than  a property of the moments of order $p\in(0,1)$ of the partition function;
but this is not relevant for the present discussion.
Very strong disorder   has been proved to hold for directed polymers in a various settings:
\begin{itemize}
\item When $d=1$ \cite{CV06} and $d=2$ \cite{Lac10pol} for any $\beta>0$;
\item When $d\ge 3$, if the environment has a power-law distribution with  exponent $\alpha\in (1, 1+\frac{2}{d}]$, for any $\beta>0$, in \cite{Vi19} (this roughly corresponds  to $\alpha$-stable noise in the SHE context).
\end{itemize}
The method used in \cite{Lac10pol} does not rely much on the discrete nature on the model and has been adapted to  prove analogous results when either space or time are  continuous:  Let us mention the case of the SHE on $\mathbb Z^d$ with Gaussian white noise \cite{Ber15}, 
 the SHE on $\bbR^d$ with a Gaussian noise which is white in time but colored in space \cite[Theorem 1.2]{Lac11}, or directed polymers in a Poisson environment \cite[Remark 3.4.3]{CCrev} (the partition function  of which corresponds to that of the SHE with a spatially convoluted Lévy noise). 
 In fact, the techniques can also be adapted to the SHE with Lévy noise, and the content of Section \ref{sec:onebody} in the present paper is strongly inspired by the proofs in \cite{Lac10pol} (for $d=1$) and \cite{Vi19} (for the case of heavy-tailed noise).
 
 \smallskip
 
 On the other hand there are other situations where
the  directed polymer model \textit{does not} display very strong disorder:
 \begin{itemize}
  \item  When $d\ge 3$, if the environment has bounded second moment and $\beta$ is small \cite{Bol89,IS88};
  \item  When $d\ge 3$, if the environment  has a power-law distribution with  exponent $\alpha >1+\frac{2}{d}$ and $\beta$ is sufficiently small  \cite{Vi19}.
 \end{itemize}
 In both cases, it has been shown that the directed polymer displays \textit{weak disorder}, a property which implies but goes beyond the absence of very strong disorder. Weak disorder implies, for instance, that the sample paths drawn from the polymer measure have diffusive behavior \cite{CY06}.

 \smallskip



For this reason, the question of whether \textit{strong intermittency} holds for the SHE with Lévy space-time white noise is particularly intriguing when $d\ge 3$ and $\beta$ is small.

\section{Main results}\label{sec:results}

Our main results can be summarized as follows:

\smallskip
$\bullet$ Firstly, we considerably reduce the gap 
 conditions~\eqref{assump1} and \eqref{assump2} leave
regarding the non-degeneracy of $\cZ^{\go}_{\beta} (t,x)$ if $d\geq2$. We obtain, in Theorem \ref{thm:local}  and Proposition \ref{prop:waitingforbetter}, a necessary and sufficient condition in dimension $d=2$ and a close-to-optimal condition in dimensions $d\geq 3$.
We further prove the $L^p$-convergence of $\cZ^{\go,a}_{\beta} (t,x)$ to $\cZ^{\go}_{\beta} (t,x)$ for all $p\in(1,1+\frac 2d)$  as soon as the noise~$\xi_\om$ has a finite $p$th moment (\textit{i.e.}, $\mu_{1,\infty}(p)<\infty$).

\smallskip
$\bullet$ Secondly, in Theorem \ref{thm:SHE}, we show that the limit $\cZ^{\go}_{\beta} (t,x)$, when non-degenerate, is indeed a mild solution to the SHE with  initial condition $\delta_0$. Theorem \ref{thm:SHEunique} further establishes  the uniqueness of solutions  under  conditions that are  general enough to  include the case of $\al$-stable noise.

\smallskip
$\bullet$ Thirdly,
after establishing the existence of the moment Lyapunov exponents of $\calz^{\om}_\beta(t,\ast)$ and $\calz^{\om}_\beta(t,x)$ in Proposition \ref{prop:finitemom}, we show our most striking result in Theorem \ref{thm:int}: the solution to the SHE with a multiplicative Lévy noise and $\delta_0$-initial condition exhibits  strong intermittency---and thus, in particular,  full intermittency---for any non-trivial environment with finite expectation, that is, for any $\gb>0$, in any dimension $d\geq1$, for any non-trivial intensity measure $\la$ with $\mu<\infty$. 
In Theorems \ref{thm:thinup} and \ref{thm:heavyup}, we further complement this result by deriving sharp asymptotic estimates on the moment Lyapunov exponents as $\beta$ tends to $0$.

\medskip
The proof of the aforementioned results relies on two main methodological achievements: 

\smallskip
$\bullet$ By combining decoupling inequalities with an iterative partition of  $\frakX_k(t)\times(0,\infty)^k$ (the integration domain of  $(\bt,\bz)$), we establish, in Propositions~ \ref{prop:localbisdone} and~\ref{prop:localbis}, sharp moment bounds of order $p>1$ on the multiple Lévy integrals that arise in the series representation \eqref{eq:chaos} of $\bar\calz^{\om, a}_{\beta}(t,x)$. From these, we will then derive the upper bounds for $p>1$ and, by convexity, the lower bounds for $p<1$ in Theorems~\ref{thm:thinup} and~\ref{thm:heavyup}. The proof of Theorems~\ref{thm:local},~\ref{thm:SHE} and~\ref{thm:SHEunique} will also rely on (variants of) these moment bounds.

\smallskip
$\bullet$ By combining a change-of-measure technique with a coarse-graining approach,  summarized  in Lemma~\ref{lem:changemeas}, we obtain  moment upper bounds of order $p\in(0,1)$ for  $\cZ^{\go}_{\beta}(t,\ast)$
which are then  used to show Theorem~\ref{thm:int} as well as the lower bounds in Theorems~\ref{thm:thinup} and~\ref{thm:heavyup}. Moreover, we apply a simpler version of this method to prove Proposition \ref{prop:waitingforbetter}.
 While the coarse-graining and change-of-measure approach  is derived from that used in \cite{Lac10pol}, which itself was inspired from earlier work on disordered pinning \cite{DGLT09, GLT10hier,  Ton09}, its implementation to prove Proposition \ref{prop:dgeq2} (which is the most important part of the proof of Theorem \ref{thm:int}) and Proposition \ref{prop:waitingforbetter}  relies on important novel ideas, which we discuss in more details at the beginning  of Section \ref{sec:theotherbound}).

\subsection{Optimal conditions for the non-degeneracy of $\cZ^{\go}_{\beta}(t,x)$}

When $d\geq 2$, Theorem~\ref{th:previousex} displays a gap between the sufficient condition~\eqref{assump1} and the necessary condition~\eqref{assump2} for the non-degeneracy of $\cZ^{\go}_{\beta}(s,x;t,y)$.
Our first result reduces this gap when $d\ge 3$ and identifies the necessary and sufficient condition in dimension $d=2$. 
Our new sufficient condition  reads as follows:
 \begin{equation}
\label{eq:log-2}
\begin{cases}
\displaystyle\int_{(0,1)} z^2\, \lambda(\dd z) <\infty \  \quad \ &\text{if } d=1 \,,\\
 \displaystyle\int_{(0,1)} z^{1+\frac 2d}\lvert \log z\rvert \,\lambda(\dd z) <\infty &\text{if } d\geq 2\,.
\end{cases}
\end{equation}
For later reference, we also included the case $d=1$, which is identical to~\eqref{assump1}.
\begin{theorem}[Non-degeneracy]
 \label{thm:local}  Let $d\geq1$ and assume  that \eqref{eq:log} and \eqref{eq:log-2} hold.
Then, for any $t$ and $x$,  we have 
\beq\label{eq:conv-2} \cZ_{\beta}^{\go}(t,x):= \lim_{a\to 0} \cZ_{\beta}^{\go, a}(t,x) >0\qquad\text{$\bbP$-a.s.} \eeq
In addition, for any $p\in  [1, 1+\frac{2}{d} )$, we have
\begin{equation}\label{eq:condconv}
 \lim_{a\to 0} \bbE_{<}\Big[\Big|\cZ_{\beta}^{\go,a}(t,x)-\cZ_{\beta}^{\go}(t,x) \Big|^p \Big]=0 
 \qquad \text{$\bbP_{\geq}$-a.s.}
\end{equation}
If furthermore $\mu_{1,\infty}(p)<\infty$, then
\begin{equation}\label{eq:lpconv}
 \lim_{a\to 0} \bbE\Big[\Big|\cZ_{\beta}^{\go,a}(t,x)-\cZ_{\beta}^{\go}(t,x) \Big|^p \Big]=0.
\end{equation}
The same convergence results hold for the free-end partition function $\calz^\om_\beta(t,\ast)$.
\end{theorem}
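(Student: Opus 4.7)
The plan is to reduce Theorem \ref{thm:local} to a uniform (in $a$) $L^p$-bound on the truncated partition function for some $p \in (1, 1+\tfrac{2}{d})$, and then to combine this bound with the reverse-martingale structure recalled in Theorem \ref{th:previousex}. The strict positivity \eqref{eq:conv-2} will follow from non-degeneracy of the $L^1$ limit plus a zero-one argument.

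\emph{Step 1 ($L^p$-boundedness).} Starting from the chaos expansion \eqref{eq:chaos} of the centered partition function $\bar{\cZ}^{\go,a}_\beta(t,x)$, I would invoke the sharp moment bounds on multiple L\'evy integrals advertised in the paper's methodological overview (Propositions \ref{prop:localbisdone} and \ref{prop:localbis}). These should deliver, for any $p \in (1, 1+\tfrac{2}{d})$ and $\bbP_\geq$-a.e.\ $\go_\geq$,
\[
\sup_{a \in (0,1]} \bbE_<\!\big[|\bar{\cZ}^{\go,a}_\beta(t,x)|^p \,\big|\, \go_\geq\big] < \infty,
\]
the logarithmic weight appearing in \eqref{eq:log-2} being precisely what ensures convergence of the resulting series in the chaos order $k$ at the borderline integrability $p$. (When $\mu = \infty$ the centering $\bar{\cZ}^{\go,a}_\beta$ is not well-defined as a product with $e^{\beta\mu t}$; in this regime I would work directly with the compensated representation \eqref{eq:help} of $\cZ^{\go,a}_\beta$, whose integrability in $\go_<$ is controlled by the same bounds since only the small-jump noise is centered in either formulation.)

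\emph{Step 2 ($L^p$-convergence).} Given the $L^p$-bound of Step 1, the conditional a.s.\ convergence of the $\bbP_<$-reverse martingale $(\cZ^{\go,a}_\beta(t,x))_{a \in (0,1]}$ from Theorem \ref{th:previousex} combined with Doob's maximal inequality in $L^p$ upgrades to convergence in $L^p(\bbP_<)$ for $\bbP_\geq$-a.e.\ $\go_\geq$, yielding \eqref{eq:condconv}. For the full $L^p(\bbP)$-statement \eqref{eq:lpconv}, the assumption $\mu_{1,\infty}(p) < \infty$ renders the $\go_\geq$-conditional bound integrable against $\bbP_\geq$ (the $\go_\geq$-dependence is captured by the $p$-th moments of the large jumps), producing an unconditional $L^p$-bound and the desired convergence via dominated convergence. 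The argument for the free-end partition function $\cZ^{\go}_\beta(t,\ast)$ is identical after integrating out the endpoint $x$.

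\emph{Step 3 (Positivity).} Taking $p = 1$ in the convergence statement above gives $\bbE[\cZ^{\go}_\beta(t,x)] = \rho(t,x) > 0$, so the limit is nondegenerate. To promote this to a.s.\ positivity I use a zero-one argument: the event $\{\cZ^{\go}_\beta(t,x) = 0\}$ is measurable with respect to the tail $\bigcap_{a > 0}\cG_a$, and adding or removing any finite collection of Poisson atoms $(s,y,z)$ with $z \geq a_0 > 0$ multiplies every $\cZ^{\go,a}_\beta(t,x)$ (with $a \le a_0$) by a strictly positive bounded factor, so the event $\{\cZ^{\go}_\beta(t,x) = 0\}$ is insensitive to such modifications. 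A standard $0$-$1$ law for Poisson random measures then forces its probability to lie in $\{0,1\}$, and the positive mean rules out probability $1$.

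The main obstacle is clearly Step 1: obtaining the moment bound under the near-optimal condition \eqref{eq:log-2} rather than the cruder \eqref{assump1}. This is exactly where the iterative partition of $\frakX_k(t) \times (0,\infty)^k$ combined with decoupling inequalities --- advertised in the introduction as a main methodological novelty --- enters. Everything else in the proof is fairly standard, being a combination of the reverse-martingale convergence theorem with a $0$-$1$ law, once the correct $L^p$-bound is in hand.
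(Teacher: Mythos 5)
Your overall skeleton (uniform $L^p$ bounds from Propositions \ref{prop:localbisdone}--\ref{prop:localbis}, reverse-martingale convergence, then a separate positivity argument) is the paper's route, but two of your steps have genuine gaps. First, the conditional bound you assert in Step 1, namely $\sup_{a}\bbE_<\big[|\bar\cZ^{\go,a}_\beta(t,x)|^p\big]<\infty$ for $\bbP_\geq$-a.e.\ $\go_\geq$, is not what those propositions deliver: they bound moments under the \emph{full} measure $\bbP$ and require $\mu_{1,\infty}(p)<\infty$, whereas \eqref{eq:condconv} must hold for every $p<1+\frac2d$ assuming only \eqref{eq:log} and \eqref{eq:log-2} (think of a heavy tail with $\mu_{1,\infty}(p)=\infty$ for all $p$ of interest, in which case the unconditional $p$th moments are simply infinite and no conditioning/Fatou argument can start from them). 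The missing ingredient --- and the reason hypothesis \eqref{eq:log} appears in the theorem although it is never invoked in your proposal --- is the decomposition of $\cZ^{\go,a}_\beta(t,x)$ over the large-jump atoms (the paper's \eqref{eq:decompobs}): the moment bounds are applied only to the small-jump partition functions $\cZ^{\go_<,a}_\beta$ (whose L\'evy measure has no mass on $[1,\infty)$), and Minkowski plus Corollary \ref{labornedesmoments} then give $\bbE_<[\cZ^{\go,a}_\beta(t,x)^p]^{1/p}\le C\,\cZ^{\go_\geq}_{\beta'}(t,x)$ uniformly in $a$, the right-hand side being $\bbP_\geq$-a.s.\ finite precisely because of \eqref{eq:log}. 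Relatedly, the $p=1$ case of \eqref{eq:lpconv} under the sole assumption $\mu<\infty$ is not covered by your scheme (there need not exist any $p>1$ with $\mu_{1,\infty}(p)<\infty$); the paper treats it by a separate truncation argument.

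Second, your positivity step does not work as written. Adding or removing an atom $w=(s,y,z)$ acts \emph{additively}: for the truncated partition functions, $\cZ^{\go,a}_\beta(t,x)=\cZ^{\go\setminus w,a}_\beta(t,x)+\beta z\,\cZ^{\go\setminus w,a}_\beta(0,0;s,y)\,\cZ^{\go\setminus w,a}_\beta(s,y;t,x)$, so there is no ``strictly positive bounded multiplicative factor''; in particular, while $\{\cZ^{\go}_\beta(t,x)=0\}$ is clearly stable under \emph{removing} an atom, its stability under \emph{adding} one would require the extra factor $\cZ^{\go}_\beta(0,0;s,y)\,\cZ^{\go}_\beta(s,y;t,x)$ to vanish as well, which you have not shown, so the invariance needed for your $0$--$1$ law is not established. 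Moreover, your exclusion of probability one via $\bbE[\cZ^{\go}_\beta(t,x)]=\rho(t,x)$ presupposes $L^1(\bbP)$-convergence, which needs $\mu<\infty$ --- an assumption the theorem does not make; when $\mu=\infty$ one must argue conditionally on $\go_\geq$ (for instance $\bbE_<[\cZ^{\go}_\beta(t,x)]=\cZ^{\go_\geq}_\beta(t,x)>0$, obtained from \eqref{eq:condconv} with $p=1$) and run a zero--one argument for the small-jump noise; this is the content of the argument in \cite[Section~4.7]{BL20_cont}, to which the paper defers for \eqref{eq:conv-2}.
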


\noindent Together with  \eqref{assump2}, this shows that \eqref{eq:log-2} is necessary and sufficient for the non-degeneracy of $\cZ_{\beta}^{\go}(t,x)$ and $\calz^\om_\beta(t,\ast)$ in dimension $d=2$. If $d\geq3$, the following result improves upon the necessary condition in \eqref{assump2}.
\begin{proposition}[Degeneracy]
\label{prop:waitingforbetter} Suppose that \eqref{eq:log} holds and that $d\geq 3$. If for some $\eps>0$, we have that 
\begin{equation}\label{eq:log1-eps} 
\int_{(0, e^{-e})}  \,\frac{ z^{1+\frac 2d} \lvert \log z\rvert}{ (\log\lvert\log z\rvert )^{5+\frac 4d +\eps}}\,\la(\dd z)=\infty \,,
\end{equation}
 then for all $t>0$ and $x\in\R^d$, we have  
 $\lim_{a\to 0} \cZ_{\beta}^{\go,a}(t,x)=\lim_{a\to 0} \cZ_{\beta}^{\go,a}(t,\ast)=0$ $\bbP$-a.s.
\end{proposition}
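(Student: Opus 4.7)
The plan is to adapt the change-of-measure and coarse-graining strategy summarized in Lemma~\ref{lem:changemeas}---the ``simpler version'' alluded to at the end of Section~\ref{sec:results}---in the spirit of \cite{Lac10pol,Vi19}, to the task of proving degeneracy of the martingale limit rather than the identification of an intermittency exponent. Since $(\cZ^{\go,a}_{\gb}(t,x))_{a\in(0,1]}$ is already known to be a non-negative martingale with an $\bbP$-a.s.\ limit $\cZ^{\go}_{\gb}(t,x)\geq 0$ (cf.~\cite[Lemma~3.5]{BL20_cont}), it is enough to exhibit a sequence $a_n\downarrow 0$ along which $\cZ^{\go,a_n}_{\gb}(t,x)$ converges to $0$ in $\bbP$-probability; the a.s.\ convergence of the martingale then forces the limit to be $0$ a.s. The free-end case will be handled simultaneously because the spatial average inherits the same vanishing mechanism.

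To obtain the convergence in probability I would construct, for each large $n$, a tilted law $\tilde\bbP_n\ll\bbP$ and a ``typical'' event $G_n$ satisfying
\[
\bbE\Big[\Big(\tfrac{\dd\bbP}{\dd\tilde\bbP_n}\Big)^{\!2}\Big]\leq C,\qquad \tilde\bbP_n(G_n)\to 1,\qquad \tilde\bbE_n\Big[\cZ^{\go,a_n}_{\gb}(t,x)\,\ind_{G_n}\Big]\leq \vep_n\downarrow 0.
\]
A Cauchy--Schwarz transfer $\bbP(A)\leq C^{1/2}\,\tilde\bbP_n(A)^{1/2}$ applied with $A=\{\cZ^{\go,a_n}_{\gb}(t,x)>\delta\}\cap G_n$, combined with a Markov bound under $\tilde\bbP_n$, then yields $\bbP(\cZ^{\go,a_n}_{\gb}(t,x)>\delta)\to 0$ for every $\delta>0$, which is what we need.

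The tilt $\tilde\bbP_n$ is built dyadically in the jump size: split $(0,1)$ into shells $I_j=[2^{-j-1},2^{-j})$ and, for each $j$ between some cut-off and $\lceil\log_2(1/a_n)\rceil$, tile a relevant space-time region by blocks of diffusive scale adapted to the bi-heat kernel $\rho(s,y)\rho(t-s,x-y)$; on each such block and each shell $I_j$, decrease the Poisson intensity by a small thinning parameter $\theta_j$. The $\chi^2$-cost adds across scales to a quantity of order $\sum_j \theta_j^2\cdot 2^{-2j}\la(I_j)\cdot V_j$, where $V_j$ is the total effective space-time volume seen at scale $j$, while the corresponding gain, coming from the missing positive jumps integrated against the bi-heat kernel, is of order $\sum_j \theta_j\cdot 2^{-j}\la(I_j)\cdot V_j$. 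Optimizing the $\theta_j$ dyadically under a fixed $\chi^2$-budget produces an aggregate payoff of order $\sum_j j\cdot 2^{-j(1+2/d)}\la(I_j)$, which is exactly the dyadic counterpart of $\int z^{1+2/d}|\log z|\,\la(\dd z)$; the polylog denominator $(\log j)^{5+4/d+\vep}$ in \eqref{eq:log1-eps} is precisely what is required to absorb the combinatorial overhead of union bounds over blocks at each scale and of the iterated Cauchy--Schwarz couplings used to combine tilts across scales.

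The main obstacle will be the multi-scale bookkeeping: verifying that the nested tilts can be combined with a uniformly bounded $L^2$ cost while producing an aggregate payoff that diverges under~\eqref{eq:log1-eps}, and in particular pinning down that the correct polylog threshold produced by summing second-moment penalties and the various coupling costs is indeed $5+4/d+\vep$. Once this calibration is done, the same construction with a suitably enlarged spatial window handles the free-end partition function $\cZ^{\go,a_n}_{\gb}(t,\ast)$, completing the proof.
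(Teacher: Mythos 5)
Your overall skeleton---a change of measure combined with a Cauchy--Schwarz transfer at fixed $t$, i.e.\ a fractional-moment argument---is indeed the shape of the paper's proof (which runs it through $\bbE[(\cZ^{\go,a}_{\beta}(t,\ast))^{1/2}]$ and Lemma~\ref{lem:help}). The gap is in the tilt itself: thinning the Poisson intensity on jump shells and measuring the gain through the induced reduction of the \emph{first moment} of the partition function is a mean-shift mechanism, and per shell such a tilt can beat its $\chi^2$-cost only when $2^{-j(1+2/d)}\la(I_j)$ is bounded below (up to $\beta$-dependent constants); summing over shells you then recover, at best, the condition $\int_{(0,1)}z^{1+2/d}\,\la(\dd z)=\infty$ of Theorem~\ref{th:previousex}(ii), which is already known. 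The whole point of Proposition~\ref{prop:waitingforbetter} is the extra factor $\lvert\log z\rvert$, and in the paper this factor is not produced by any mean shift: it is an entropy factor coming from \emph{clusters} of $k_n\to\infty$ atoms with sizes in shells $[b_{n+1},b_n)$, $b_n=e^{-M_n}$, whose $k_n-1$ consecutive time gaps may range over a window of logarithmic length $\asymp M_n=\lvert\log b_n\rvert$; integrating $\prod_i \dd s_i/s_i$ over that window is what creates the factor $M_n^{1-1/k_n}$ in $V_n$ (see \eqref{def:Vn} and the computations around \eqref{calculEf} and \eqref{calculEprime}). The argument then uses the size-biased representation of Lemma~\ref{lem:sizebias} (environment plus a Poisson spine along a Brownian path) and the counting functionals $f_n$ in \eqref{anotherf}: such clusters occur with probability close to one along the spine but have vanishing expectation under $\bbP$, which is exactly the input required by Lemma~\ref{lem:help}. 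Your proposal contains no multi-atom structure, and the asserted aggregate payoff $\sum_j j\,2^{-j(1+2/d)}\la(I_j)$ is unsupported: there is no identified mechanism by which a per-shell intensity reduction produces the factor $j$.

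Relatedly, your guess for the origin of the exponent $5+\tfrac4d+\eps$ (union bounds over blocks and iterated couplings) does not correspond to what is needed: in the paper it comes from requiring $V_n\gg k_n^{2(1+2/d)}$ so that the cluster count concentrates under the spine measure (second moment of $f'_n$, with $\theta'_n-\theta_n\asymp k_n^{-2}$), together with a factor $k_n^{3}$ from summability over $n$ when $k_n\sim n^{1/3}$; it is a calibration of the cluster construction, not of coarse-graining overhead. Two smaller points: the paper first reduces to the case $\int_{(0,1)}z^{1+2/d}\,\la(\dd z)<\infty$ and $\mu=0$ (otherwise degeneracy is already known, or follows by truncation), and it suffices to drive the fractional moment of the free-end partition function to zero, the point-to-point case following by translation invariance and Fatou; your subsequence-in-probability reduction is legitimate, but it does not compensate for the missing cluster mechanism.
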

\noindent
While there is still a small gap between \eqref{eq:log-2} and \eqref{eq:log1-eps} if $d\geq3$, we believe that \eqref{eq:log-2} is necessary and sufficient for non-degeneracy. 


\begin{rem}
There is a small abuse of language when we say  that we establish an almost necessary and sufficient condition for existence of solutions to the SHE, since Proposition \ref{prop:waitingforbetter} does not \textit{a~priori} exclude the existence of solutions that are not given by limits of solutions with truncated noise. 
\end{rem}

\begin{rem}
 Besides replacing  \eqref{assump1} by the weaker condition \eqref{assump2} and the inclusion of $L^p$-convergence, Theorem \ref{thm:local} contains a third important improvement. Contrary to Theorem \ref{th:previousex}, the proof of~\eqref{eq:condconv},
 which is sufficient to establish non-triviality of the limit, does not rely on the positivity of $\cZ_{\beta}^{\go,a}(t,x)$ and  can thus be directly adapted to the case of signed noise, an observation that we will further elaborate on in Section \ref{sec:genelevy}. On the other-hand, our proof of Proposition \ref{prop:waitingforbetter}  strongly relies on having only positive jumps.
\end{rem}

\subsection{Existence and uniqueness  of solutions to SHE}

Our next result shows that the point-to-point partition function $\calz^\om_\beta(t,x)$ solves the SHE with Lévy noise $\xi_{\go}$ and initial condition $\delta_0$.

\begin{theorem}[Existence]
\label{thm:SHE} Suppose that
 \eqref{eq:log} and \eqref{eq:log-2} hold.
 \benu
 \item[(i)] 
The point-to-point partition function $\cZ_{\gb}^{\go}(t,x)$
is a mild solution to the SHE \eqref{eq:integz} with initial condition $\delta_0$,
that is, for every $(t,x)\in(0,\infty)\times\R^d$, the stochastic integral below is well defined and  the following identity holds a.s.:
\begin{equation}
\label{eq:SHE}
\cZ_{\gb}^{\go}(t,x) = \rho(t,x)  + \gb\int_{0}^t \int_{\bbR^d} \rho(t-s,x-y) \cZ^\om_{\gb}(y,s) \,\xi_{\go}(\dd s, \dd y) \,.
\end{equation}
\item[(ii)] If $u_0$ is a locally finite signed measure on $\R^d$ such that 
\beq\label{eq:u0growth} \limsup_{r\to\infty} r^{-2}\log\Big(\lvert u_0\rvert([-r,r]^d)\Big)<\frac{1}{2T}\eeq
for some $T>0$, then
\begin{equation}\label{eq:v}
 v(t,x):= \int_{\bbR^d}\cZ_{\gb}^{\go}(y;t,x)\,u_0(\dd y)
\end{equation}
is well defined and finite for all $(t,x)\in[0,T]\times\R^d$ and 
is a solution to the SHE~\eqref{eq:integz} with initial condition $u_0$, 
on $[0,T]\times\R^d$.
\eenu
\end{theorem}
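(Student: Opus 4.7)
The plan is to take the limit $a\downarrow 0$ in the mild equation \eqref{eq:pppf-2} satisfied by the truncated partition function $\cZ^{\om,a}_\gb(t,x)$. By Theorem~\ref{thm:local}, the left-hand side of \eqref{eq:pppf-2} converges to $\cZ^\om_\gb(t,x)$ in $L^p(\bbP_<)$ (conditionally on $\om_\geq$) for every $p\in[1,1+\tfrac{2}{d})$. For the right-hand side, decompose $\xi^a_\om=\xi^a_{\om_<}+\xi_{\om_\geq}$: the large-jump integral is the finite almost-sure sum
\begin{equation*}
\sum_{(s,y,z)\in\om_\geq,\,s\leq t} z\,\rho(t-s,x-y)\,\cZ^{\om,a}_\gb(s,y),
\end{equation*}
which converges to the corresponding sum with $\cZ^\om_\gb$ by the pointwise convergence \eqref{eq:conv-2} at the countably many jump points of $\om_\geq$. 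For the compensated small-jump integral, choose $p\in(1,1+\tfrac{2}{d})$ and apply a Rosenthal/BDG-type $L^p$-estimate for compensated L\'evy integrals together with the sharp moment bounds on multiple L\'evy integrals developed elsewhere in the paper (Propositions~\ref{prop:localbisdone}--\ref{prop:localbis}) to show that the family of such integrals is $L^p(\bbP_<)$-Cauchy as $a\downarrow 0$. This identifies the limit as the canonical It\^o integral against the compensated measure for jumps in $(0,1)$, and collecting both pieces yields the claimed mild identity \eqref{eq:SHE}.

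\textbf{Part (ii): Convolution against $u_0$.} I would use the invariance in law \eqref{eq:theinlaws} together with the positive representation \eqref{eq:help} to derive a bound of the form $\bbE[\cZ^\om_\gb(y;t,x)]\leq C(t,\gb)\,\rho(t,x-y)$ (the constant being $e^{\gb\mu t}$ when $\mu<\infty$, and obtained via truncation and monotone convergence otherwise). Combined with the growth condition \eqref{eq:u0growth}, this gives
\begin{equation*}
\int_{\R^d}\bbE\bigl[\cZ^\om_\gb(y;t,x)\bigr]\,|u_0|(\dd y)\leq C(t,\gb)\int_{\R^d}\rho(t,x-y)\,|u_0|(\dd y)<\infty
\end{equation*}
for every $t\in(0,T)$, so that $v(t,x)$ is finite a.s.\ and in $L^1$. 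To verify that $v$ solves the mild equation, apply \eqref{eq:SHE} to $\cZ^\om_\gb(y;\cdot,\cdot)$ (which by translation invariance is the mild solution starting from $\delta_y$), integrate against $u_0(\dd y)$, and interchange the order of integration via a stochastic Fubini theorem; its hypotheses are met by the same moment bound and \eqref{eq:u0growth}. The initial condition $u_0$ is recovered from the weak convergence $\rho(t,\cdot)\ast u_0\to u_0$ as $t\downarrow 0$.

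The main technical difficulty lies in the simultaneous passage to the limit in the stochastic integral of part (i), where both the integrand $\cZ^{\om,a}_\gb$ and the integrator $\xi^a_\om$ are being refined as $a\downarrow 0$, and where the L\'evy noise may have infinite total variation. Closing the associated $L^p$-estimate in the critical regime $p\in(1,1+\tfrac{2}{d})$ forced by \eqref{eq:log-2} relies crucially on the sharp $L^p$-moment bounds on multiple L\'evy integrals that constitute one of the main methodological contributions of the paper.
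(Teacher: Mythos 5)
Your overall strategy (pass to the limit $a\downarrow0$ in \eqref{eq:pppf-2}, split the noise into $\xi^a_{\om_<}+\xi_{\om_\geq}$, and work in $L^p(\P_<)$ for $p\in(1,1+\frac2d)$ conditionally on $\om_\geq$) is the same as the paper's, but two steps as you describe them would fail. In part (i), the large-jump term is \emph{not} a finite sum: $\om_\geq\cap([0,t]\times\R^d\times[1,\infty))$ is a.s.\ countably infinite since the spatial intensity is Lebesgue measure on all of $\R^d$, so pointwise convergence at the atoms does not suffice; one needs a domination uniform in $a$, which the paper obtains from Lemma~\ref{lem:pint} together with the observation that $\int\rho(t-s,x-y)\calz^{\om_\geq}_{\beta'}(s,y)\,\xi_{\om_\geq}(\dd s,\dd y)=(\calz^{\om_\geq}_{\beta'}(t,x)-\rho(t,x))/\beta'$ (see \eqref{eq:zup}--\eqref{eq:Mink1}). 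More seriously, for the compensated small-jump part, an abstract $L^p(\P_<)$-Cauchy argument does not identify the limit with $\int\rho(t-s,x-y)\calz^\om_\beta(s,y)\,\xi_{\om_<}(\dd s,\dd y)$: after BDG/subadditivity one must control, uniformly in $a$, the contribution to the chaos expansion of $\calz^{\om,a}_\beta$ whose last noise integration uses jumps of size in $[a,b)$, and under \eqref{eq:log-2} alone $\mu_{0,1}(p)$ may be infinite for $p<1+\frac2d$, so this is not a black-box consequence of Propositions~\ref{prop:localbisdone}--\ref{prop:localbis}. The paper has to introduce the modified partition function $\caly^{\om,a,b}_\beta$ and rerun the decoupling/partitioning argument to get the uniform small-$b$ bound of Lemma~\ref{lem:localter}; this is the key missing ingredient in your sketch (together with Lemma~\ref{lem:integrab}, which upgrades the conditional $L^p(\P_<)$-integrability to the $L^0(\P)$ framework needed since $\calz^\om_\beta$ has no unconditional moments in general).

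In part (ii) your central estimate $\E[\calz^\om_\beta(y;t,x)]\le C(t,\beta)\rho(t,x-y)$ is false under the hypotheses of the theorem: \eqref{eq:log} only requires $\int_{[1,\infty)}(\log z)^{d/2}\la(\dd z)<\infty$, so $\mu=\mu_{1,\infty}(1)$ may be infinite, in which case $\E[\calz^{\om,a}_\beta(t,x)]=e^{\beta\mu t}\rho(t,x)=\infty$ already for the truncated object, and no truncation/monotone-convergence argument can produce a finite constant. Consequently both the finiteness of $v(t,x)$ and the hypotheses of the stochastic Fubini theorem cannot be verified by unconditional first moments; they must be checked conditionally on $\om_\geq$, which is what the paper does via Lemma~\ref{lem:pint} (bounding $\E_<[\,\cdot\,]^{1/p}$ by $C\,\calz^{\om_\geq}_{\beta'}(y;t,x)$) and the a.s.\ finiteness of $\int_{\R^d}\calz^{\om_\geq}_{\beta'}(y;t,x)\,\lvert u_0\rvert(\dd y)$ under \eqref{eq:u0growth} (quoted from \cite[Prop.~2.21]{BL20_cont}), with the ordinary Fubini theorem for $\xi_{\om_\geq}$ and the stochastic Fubini theorem only for $\xi_{\om_<}$. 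Your argument is fine when $\mu<\infty$, but it does not cover the general case claimed by the theorem.
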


\begin{rem}\label{rem:lexpress}
The proof presented in this paper actually implies that
	\begin{equation}\label{iteratz}
	\calz^\om_\beta(t,x)=	\rho(t,x)+ \sum_{k=1}^{\infty}\gb^k
		\int_{ \frakX_k(t) \times (\bbR^d)^k}   \rho_{t,x}( \bt , \bx)   \prod_{i=1}^k  \xi_{\go} (\dd t_i , \dd x_i ),
	\end{equation}
where the right-hand side is well defined as a convergent sum of iterated stochastic integrals (see Section \ref{sec:sol} for the corresponding framework).
\end{rem}

\noindent
 Under the stronger condition \eqref{assump1}, we prove that the solution found above is unique.

\begin{theorem}[Uniqueness]
\label{thm:SHEunique}
 We assume that $\gl$ satisfies \eqref{eq:log} and \eqref{eq:log-2}  and that $u_0$ and $T$ satisfy~\eqref{eq:u0growth}.
We let $(v(t,x))_{t\in [0,T],\ x \in  \bbR^d}$ be the solution to the SHE  defined in \eqref{eq:v}. Then there exists  $\theta>1$ such that
for all $p\in (1,1+\tfrac{2}{d})$, $t\in(0,T]$  and $x\in \bbR^d$, we have
\beq\label{eq:cond}  \begin{cases}
\E_{<}\Big[\lvert v(t,x)\rvert^p\Big]<\infty \qquad &\text{$\P_\geq$-a.s.,}\\
\displaystyle \int_{(0,t)\times\R^d} \rho(\theta(t-s),x)^p \E_{<}\Big[\lvert v(s,x)\rvert^p\Big] \,\dd s\,\dd x <\infty\qquad&\text{$\P_\geq$-a.s.}
\end{cases} \eeq
Furthermore, if $\gl$ satisfies \eqref{assump1}, $v$ is the unique---up to modifications---predictable random field that  is a mild solution to the SHE \eqref{eq:integz} and  satisfies  \eqref{eq:cond}.
\end{theorem}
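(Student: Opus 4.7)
The statement has two parts: (i) verifying the moment bound \eqref{eq:cond} for the specific solution $v$ defined in \eqref{eq:v}, and (ii) uniqueness of mild solutions satisfying \eqref{eq:cond} under the stronger hypothesis \eqref{assump1}.

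For (i), I would start from the chaos representation \eqref{iteratz} of the point-to-point partition function $\cZ^\om_\beta(y;t,x)$ and apply the sharp $L^p$-moment bounds on multiple L\'evy integrals from Propositions \ref{prop:localbisdone}--\ref{prop:localbis}. Conditionally on $\go_{\geq}$, these should yield, for $p \in (1, 1+\tfrac{2}{d})$, an estimate of the form
\beq
\E_{<}\bigl[|\cZ^\om_\beta(y;t,x)|^p\bigr]^{1/p} \leq C(\go_{\geq}, T)\, \rho(c_p t, x-y),
\eeq
with some deterministic $c_p > 1$ and $C(\go_{\geq},T) < \infty$ $\P_{\geq}$-a.s. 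Minkowski's integral inequality applied to \eqref{eq:v} then yields $\E_{<}[|v(t,x)|^p]^{1/p} \leq C(\go_{\geq},T) \int_{\R^d} \rho(c_p t, x-y)\, |u_0|(\dd y)$, which is finite on $(0,T]$ by the sub-Gaussian growth condition \eqref{eq:u0growth}. The weighted double integral in \eqref{eq:cond} then follows by Fubini and Gaussian convolution, provided $\theta > 1$ is chosen large enough (depending on $c_p$).

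For (ii), set $w := v_1 - v_2$ where $v_1, v_2$ are two mild solutions satisfying \eqref{eq:cond}. By linearity, $w$ solves the homogeneous mild equation $w(t,x) = \beta \int_0^t \int \rho(t-s,x-y) w(s,y)\, \xi(\dd s,\dd y)$. Conditioning on $\go_{\geq}$ splits $\xi$ into a compensated small-jump noise (the a.s.\ limit of $\xi_{\go_{<}}^a$ as $a \downarrow 0$) and the pathwise locally finite large-jump measure $\xi_{\go_{\geq}}$. A standard $L^p$-BDG inequality applied to the compensated part---which requires $\mu_{0,1}(p) < \infty$, guaranteed by \eqref{assump1}---together with a H\"older estimate for the large-jump part yields the Volterra-type inequality
\beq
f(t,x) := \E_{<}\bigl[|w(t,x)|^p\bigr] \leq K \int_0^t \int_{\R^d} \rho(t-s,x-y)^p f(s,y)\, \dd s\, \dd y,
\eeq
with $K = K(\go_{\geq}, T, \beta, p) < \infty$ $\P_{\geq}$-a.s. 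Using the identity $\rho(t,x)^p = c_p'\, t^{-d(p-1)/2} \rho(t/p, x)$ and the Gaussian semigroup property, iterating this bound $n$ times produces a factorially small prefactor of order $K^n/\Gamma(n(1-\alpha))$ with $\alpha := d(p-1)/2 < 1$. Multiplying through by the Gaussian weight $\rho(\theta(T-t), x_0-x)^p$ (for arbitrary $x_0 \in \R^d$) and integrating over $(t,x)$, the right-hand side after $n$ iterations remains finite by \eqref{eq:cond} and vanishes as $n \to \infty$; hence $f \equiv 0$ Lebesgue-almost everywhere, and predictability of $v_1,v_2$ promotes this to $w \equiv 0$ up to modification.

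The principal technical obstacle is controlling the uncompensated large-jump contribution: each atom $(t_i, x_i, z_i) \in \go_{\geq}$ contributes a pathwise term $\beta z_i \rho(t-t_i, x-x_i) w(t_i, x_i)$ to $w(t,x)$, and the positions $x_i$ are spatially unbounded. Absorbing these terms into the Volterra constant $K$ via H\"older requires the Gaussian envelope $\rho(\theta(T-t), x_0-x)^p$ in \eqref{eq:cond} to supply sufficient extra spatial decay, which is exactly why the room $\theta > 1$ is essential in the statement. A secondary subtlety is that the stronger hypothesis \eqref{assump1} (rather than merely \eqref{eq:log-2}) is needed so that the BDG bound delivers $K < \infty$ for some $p \in (1, 1+\tfrac{2}{d})$---precisely within the range where the scaling exponent $\alpha = d(p-1)/2 < 1$ makes the iterated Volterra kernel regularize and the Gronwall argument close.
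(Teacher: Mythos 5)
The gap lies in how you treat the uncompensated large-jump noise $\xi_{\om_\geq}$, which is the real crux of both halves of the statement. For part (i), your intermediate claim $\E_<[\lvert\calz^\om_\beta(y;t,x)\rvert^p]^{1/p}\le C(\om_\geq,T)\,\rho(c_pt,x-y)$ with a constant uniform in $(y,t,x)$ is false: conditionally on $\om_\geq$, the chaos/moment bounds (Lemma~\ref{lem:pint}) only give the \emph{random} bound $C\,\calz^{\om_\geq}_{\beta'}(y;t,x)$, and the ratio $\calz^{\om_\geq}_{\beta'}(y;t,x)/\rho(c_pt,x-y)$ is a.s.\ unbounded in $y$ — take $y=x=x_1$ at an atom $(t_1,x_1,z_1)$ of $\om_\geq$: already the one-atom term contributes a factor of order $z_1\,(t_1(t-t_1))^{-d/2}t^{d/2}$, and the supremum of $z_1$ over the infinitely many atoms in $(0,T)\times\R^d$ is infinite. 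The correct substitute, as in \eqref{lacompapa}, is $\calz^{\om_\geq}_{\beta'}(y;t,x)\le e^{\calt+\gep\|y\|^2/(2T)}\,\wh\calz^{\om_\geq}_{\beta'}(y;t,x)$, where $\calt$ from \eqref{eq:TT} is a.s.\ finite (Lemma~\ref{quatrepointdixseptbis}) and $\wh\calz$ has widened Gaussian kernels; the price $e^{\gep\|y\|^2/(2T)}$ must then be absorbed using the strict gap in \eqref{eq:u0growth}, and this is also what forces $\theta$ to be taken \emph{close to} $1$: your parenthetical ``$\theta$ large enough'' points the wrong way, since enlarging $\theta$ slows the decay of the weight $\rho(\theta(t-s),x)^p$ and makes the second line of \eqref{eq:cond} harder, not easier, to verify.

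For part (ii), the Volterra inequality $f(t,x)\le K\int\rho(t-s,x-y)^pf(s,y)\,\dd s\,\dd y$ with a single a.s.\ finite constant $K(\om_\geq,T,\beta,p)$ cannot be obtained for the large-jump contribution: $\xi_{\om_\geq}$ is purely atomic, so the corresponding term is a sum over atoms weighted by $z_i\,\rho(t-t_i,x-x_i)$, and no H\"older argument dominates this discrete sum by the Lebesgue integral $\int\rho^p f$ with a constant uniform in $(t,x)$ (again because the $z_i$ are unbounded over space); hence your factorial Gronwall iteration does not close. The paper instead iterates the Duhamel formula exactly, as in \eqref{kreplik}, observes that \emph{every} iterate equals $v(t,x)$, and proves that the series of iterates converges: the small jumps are resummed into $\calv$ and estimated by BDG under \eqref{assump1} (this piece matches your proposal), while the large-jump series is handled pathwise through the functional $\calt(t,x)$ of \eqref{eq:TTxt}, the widened-Gaussian partition function \eqref{defchapo}, Lemma~\ref{finipointz} and a Poisson integrability criterion — which is precisely where the weighted integrability in \eqref{eq:cond}, with its specific $\theta$, is consumed. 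Without an argument of this kind for $\xi_{\om_\geq}$, both the verification of \eqref{eq:cond} for $v$ and the uniqueness step are incomplete.
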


\subsection{Lyapunov exponents and intermittency}
\label{sec:Lyap}

Having established the finiteness of moments in Theorem~\ref{thm:local}, we now investigate the growth rate of these moments as $t\to\infty$
 and their dependence on $\beta$.
Our first task is to establish the existence of the
\textit{(moment) Lyapunov exponents}.

\begin{proposition}
\label{prop:finitemom}
Assume that \eqref{eq:log} and \eqref{eq:log-2}  hold.
\benu
\item[(i)]
If $p\in (0,1+\tfrac{2}{d} )$ and $\mu_{1,\infty}(p)<\infty$, then the moment Lyapunov exponent
 \begin{equation}\label{eq:ga-p}
  \gamma_{\beta}(p):=\ga(\cZ_{\gb}^{\go} (t,\ast),p)=\lim_{t\to\infty}  \frac1t \log \E\Big[  \cZ_{\gb}^{\go} (t,\ast)^p \Big]  
\end{equation}
exists and is finite. 
\item[(ii)] The exponents   $\gamma_{\beta}(p)$ also capture the growth of the point-to-point partition function, in the sense that 
\begin{equation}\label{eq:free-end}
 \lim_{t\to\infty}  \frac1t \log \E\Big[  \cZ_{\gb}^{\go} (t,0)^p \Big]=   \gamma_{\beta}(p) \,.
\end{equation}
 \eenu
\end{proposition}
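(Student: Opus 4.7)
\emph{Part (i).} I plan to derive sub/supermultiplicativity of $t\mapsto\E[\cZ^\om_\beta(t,\ast)^p]$ from the Chapman--Kolmogorov identity
\[
\cZ^\om_\beta(t+s,\ast)=\int_{\R^d}\cZ^\om_\beta(t,y)\,\cZ^\om_\beta(t,y;t+s,\ast)\,\dd y,
\]
where the second factor is independent of $\cF_t$ and has the distribution of $\cZ^\om_\beta(s,\ast)$ for every $y$ (by translation invariance of the noise in time and space), and then apply Fekete's lemma. The positivity of $\cZ^\om_\beta(t,\cdot)$ from Theorem~\ref{thm:local} (via~\eqref{eq:help}) allows me, conditional on $\cF_t$, to introduce the random probability measure $\nu_t(\dd y):=\cZ^\om_\beta(t,y)\,\dd y/\cZ^\om_\beta(t,\ast)$ and rewrite the identity as $\cZ^\om_\beta(t,\ast)\cdot\E_{Y\sim\nu_t}[\cZ^\om_\beta(t,Y;t+s,\ast)]$. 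The conditional Jensen inequality (convex for $p\geq 1$, concave for $p\in(0,1)$), combined with Fubini and the independence, then yields submultiplicativity $\E[\cZ^\om_\beta(t+s,\ast)^p]\leq\E[\cZ^\om_\beta(t,\ast)^p]\cdot\E[\cZ^\om_\beta(s,\ast)^p]$ for $p\geq 1$ and supermultiplicativity for $p\in(0,1)$. Fekete's lemma then produces $\gamma_\beta(p)\in[-\infty,+\infty]$, and finiteness follows by sandwiching against $p\beta\mu=p\gamma_\beta(1)$ via Jensen on $\E[\cZ^\om_\beta(t,\ast)]=e^{\beta\mu t}$, together with the $L^p$-moment bounds of Theorem~\ref{thm:local} and Proposition~\ref{prop:localbis}, which apply under the hypotheses $p<1+2/d$ and $\mu_{1,\infty}(p)<\infty$.

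\emph{Part (ii).} The key ingredient is the marginal distributional identity $\cZ^\om_\beta(t,y)/\rho(t,y)\eqd\cZ^\om_\beta(t,0)/\rho(t,0)$ (a consequence of noise invariance under time-dependent spatial shifts; cf.~\eqref{eq:theinlaws}), which yields $\E[\cZ^\om_\beta(t,y)^p]=e^{-p\|y\|^2/(2t)}\E[\cZ^\om_\beta(t,0)^p]$. For the bound $\gamma_\beta(p)\leq\liminf_t\tfrac1t\log\E[\cZ^\om_\beta(t,0)^p]$, Minkowski's integral inequality (for $p\geq 1$) gives $\|\cZ^\om_\beta(t,\ast)\|_p\leq(2\pi t)^{d/2}\|\cZ^\om_\beta(t,0)\|_p$, while for $p\in(0,1)$ the analogous bound follows from $(\sum x_i)^p\leq\sum x_i^p$ applied to a unit-cube partition of $\R^d$, with the cube-by-cube moments controlled by the marginal identity. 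For the reverse inequality $\gamma_\beta(p)\geq\limsup_t\tfrac1t\log\E[\cZ^\om_\beta(t,0)^p]$, restricting $\cZ^\om_\beta(t,\ast)\geq\int_{B(0,1)}\cZ^\om_\beta(t,y)\,\dd y$ and applying reversed Jensen (for $p\in(0,1)$) yields $\E[\cZ^\om_\beta(t,\ast)^p]\geq c_p\,\E[\cZ^\om_\beta(t,0)^p]$. For $p\geq 1$, I would instead use the three-step decomposition
\[
\cZ^\om_\beta(t+2,\ast)\geq\iint_{B(0,1)^2}\cZ^\om_\beta(1,y_1)\,\cZ^\om_\beta(1,y_1;t+1,y_2)\,\cZ^\om_\beta(t+1,y_2;t+2,\ast)\,\dd y_1\,\dd y_2,
\]
where the middle factor is distributed as $\cZ^\om_\beta(t,y_2-y_1)$, and combine the independence of the three factors with the bounded-time moment bounds of Propositions~\ref{prop:localbisdone}--\ref{prop:localbis} to transfer the comparison with only a polynomial-in-$t$ correction.

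\emph{Main obstacle.} The delicate step is the lower bound in Part~(ii) for $p\geq 1$: the usual Jensen-type tools act in the unfavourable direction when trying to extract $\cZ^\om_\beta(t,0)^p$ from the moments of a free-end integral, and a single-moment comparison is insufficient. The joint (not merely marginal) control of $\cZ^\om_\beta(t,\cdot)$ over bounded regions provided by Propositions~\ref{prop:localbisdone}--\ref{prop:localbis}, together with the semigroup decomposition above, is what closes the gap, reducing the correction between $\log\E[\cZ^\om_\beta(t,0)^p]$ and $\log\E[\cZ^\om_\beta(t,\ast)^p]$ to $O(\log t)$, which vanishes in the Lyapunov limit.
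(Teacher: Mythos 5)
Your skeleton for part (i) (Chapman--Kolmogorov, Jensen with the random polymer measure, Fekete) is exactly the paper's Lemma~\ref{lem:submult}, and your ``easy'' comparisons in part (ii) (Minkowski for $p\geq1$, reversed Jensen on a unit ball for $p<1$) are fine. But the finiteness claim in part (i) has a real gap for $p\in(0,1)$: sandwiching against $\E[\cZ^\om_\beta(t,\ast)]=e^{\beta\mu t}$ presupposes $\mu=\mu_{1,\infty}(1)<\infty$, which is \emph{not} part of the hypothesis when $p<1$ (think of an $\alpha$-stable tail with $\alpha\in(p,1)$), and even when $\mu<\infty$ Jensen only gives $\gamma_\beta(p)\leq p\beta\mu$; it says nothing about $\gamma_\beta(p)>-\infty$, and the $L^q$-bounds of Theorem~\ref{thm:local}/Proposition~\ref{prop:localbis} for $q>1$ need $\mu_{1,\infty}(q)<\infty$, which may also fail. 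The paper's fix uses positivity twice: for the lower bound, drop the large jumps (so $\cZ^\om\geq\cZ^{\om_<}$), where all moments of the tail vanish, and interpolate by convexity between the exponents $p$, $1$ and $1+\frac1d$ (using $\gamma^<_\beta(1)=0$); for the upper bound under only $\mu_{1,\infty}(p)<\infty$, apply Jensen conditionally on the small jumps and then subadditivity \eqref{eq:subbaditiv} over the atoms of $\om_\geq$, which reduces everything to a Gamma-function series via \eqref{help3} and Lemma~\ref{lemGamma}. None of this is in your sketch.

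In part (ii) the two ``hard'' directions are exactly where your plan is incomplete, as you half-acknowledge. For $p<1$, after $(\sum x_i)^p\le\sum x_i^p$ over unit cubes, the cube moments are \emph{not} controlled by the marginal identity: for $p<1$ you cannot pass the $p$-th moment inside the spatial integral in the direction you need, and the only thing the (joint, not marginal) translation invariance gives is $\E[(\int_{\cC_z}\cZ)^p]\le(\max_{\cC_z}\rho)^p\,\E[Y_0(t)^p]$ with $Y_0(t)=\int_{\cC_0}\cZ^\om_\beta(t,x)\rho(t,x)^{-1}\dd x$; you then still must relate $\E[Y_0(t)^p]$ to a point-to-point moment, which the paper does by a time-shift $t\to t+1$ together with reversed Jensen and independence (\eqref{eq:p}, \eqref{asdf}, \eqref{asdf-bis}). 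For $p\geq1$, your three-step lower bound on $\cZ^\om_\beta(t+2,\ast)$ runs into the same obstruction: after conditioning out the outer factors you must lower-bound $\E[(\iint_{B(0,1)^2}\cZ^\om_\beta(1,y_1;t+1,y_2)\,\dd y_1\dd y_2)^p]$ by $\E[\cZ^\om_\beta(t,0)^p]$, i.e.\ compare the moment of a spatial average with the moment at a point in the direction Jensen does not provide; the bounded-time bounds of Propositions~\ref{prop:localbisdone}--\ref{prop:localbis} are upper bounds and do not close this. The paper instead avoids the problem by writing the \emph{exact} identity $\cZ^\om_\beta(t+1,0)=\int\cZ^\om_\beta(t,x)\rho(1,x)\,\cZ^\om_\beta(t,x;t+1,0)\rho(1,x)^{-1}\dd x$ and applying Jensen with respect to the normalized random measure with density $\propto\cZ^\om_\beta(t,x)\rho(1,x)$, exactly the device you already use in Lemma~\ref{lem:submult}: this yields \eqref{indasamemanner}, $\E[\cZ^\om_\beta(t+1,0)^p]\le(2\pi)^{-pd/2}\E[\cZ^\om_\beta(t,\ast)^p]\,\E[\cZ^\om_\beta(1,0)^p\rho(1,0)^{-p}]$, with the bounded-time bound entering only as the finite constant on the right. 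You should replace your three-step decomposition by this one-step argument and supply the missing $t\to t+1$ step for $p<1$.
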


By \cite[Theorem~3.1]{CK19}, we have $\E [  \cZ_{\gb}^{\go} (t,\ast)^{1+2/d}  ]  =\E[ \cZ_{\beta}^{\go} (t,0)^{1+2/d} ]=\infty$ for any $t>0$ and non-trivial $\la$. We also have $\E [  \cZ_{\gb}^{\go} (t,\ast)^{p}  ] =\infty$ if  $\mu_{1,\infty}(p)=\infty$.
Hence, 
$\gamma_{\beta}(p)$ is well defined (possibly infinite) for every $p\ge 0$. 
Letting
\begin{equation}\label{def:pmax}
  p_{\max}=p_{\max}(\gl):=\sup\Big\{ p\in [0,1+\tfrac{2}{d}   ) \ : \ \mu_{1,\infty}(p)<\infty\Big\} \,,
\end{equation} 
we obtain from the above observation
 that the set
\begin{equation}\label{defig}
I_{\gl} := I(  \cZ_{\gb}^{\go} (t,\ast))= \{ p\ge 0 \ : \ \gamma_{\beta}(p)<\infty \}
\end{equation}
 is either $ [0,p_{\max})$ or $[0,p_{\max}]$, 
the latter occurring if $p_{\max}<1+\tfrac 2d$ and $\mu_{1,\infty}(p_{\max})<\infty$.

The next result lists a few (partly classical) qualitative properties of $\ga_\beta(p)$. 
If $\mu<\infty$ (in which case $1\in I_{\gl}$), we write 
\[
\bar \gamma_{\beta}(p):=\bar \ga( \cZ_{\gb}^{\go} (t,\ast),p)=\gamma_{\gb}(p)-p\gamma_{\beta}(1)  = \gamma_{\beta}(p)- p \mu = \ga(\bar\cZ_{\gb}^{\go} (t,\ast),p ).
\] 
Note that 
by Jensen's inequality, we have $\bar \gamma_{\beta}(p)\ge 0$ for $p>1$ and $\bar \gamma_{\beta}(p)\le 0$ for $p<1$.
\begin{proposition}\label{prop:qual} Under the assumptions of Proposition~\ref{prop:finitemom}, we have the following:
	\begin{enumerate}
	\item[(i)] The map $p\mapsto \ga_\beta(p)$ is convex on $I_{\gl}$.
	\item[(ii)] If $\mu<\infty$ and  $p\in I_{\gl}$, the map  $\gb \mapsto \bar \gamma_{\beta}(p)$
	is non-decreasing if $p\geq 1$ and non-increasing if $p\in (0,1]$.
	\item[(iii)] If $p_{\max}>1$, then  $p\mapsto \ga_{\beta}(p)/p$ is strictly increasing on $\{p\in I_{\gl} : \gamma_{\gb}(p)-p\gamma_{\beta}(1)>0\}$. 
\end{enumerate}
\end{proposition}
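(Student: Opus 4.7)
The plan is to treat the three parts in order, with part~(ii) being the heart of the matter.

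For part~(i), I would invoke H\"older's inequality: for $p_0,p_1\in I_{\gl}$, $\theta\in[0,1]$, and $p_{\theta}:=\theta p_0+(1-\theta)p_1$,
\begin{equation*}
\E\bigl[\cZ^{\go}_{\beta}(t,\ast)^{p_\theta}\bigr]\leq \E\bigl[\cZ^{\go}_{\beta}(t,\ast)^{p_0}\bigr]^{\theta}\E\bigl[\cZ^{\go}_{\beta}(t,\ast)^{p_1}\bigr]^{1-\theta}.
\end{equation*}
Taking logarithms, dividing by $t$ and sending $t\to\infty$ then yields both $p_\theta\in I_{\gl}$ and the convexity of $\gamma_\beta$ on $I_\gl$.

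For part~(iii), convexity from~(i) combined with $\gamma_\beta(0)=0$ forces $p\mapsto\gamma_\beta(p)/p$ to be non-decreasing on $I_\gl\cap(0,\infty)$ (since $\gamma_\beta(p_1)\leq(p_1/p_2)\gamma_\beta(p_2)$ for $0<p_1<p_2$); since $\gamma_\beta(1)=\beta\mu$ (from $\E[\cZ^\go_\beta(t,\ast)]=e^{\beta\mu t}$, itself a consequence of the centred chaos expansion), the set $\{p\in I_\gl:\bar\gamma_\beta(p)>0\}$ must lie in $(1,\infty)$. For strict monotonicity I would argue by contradiction: assume $\gamma_\beta(p_1)/p_1=\gamma_\beta(p_2)/p_2=c$ with $1<p_1<p_2$ in this set, so $c>\gamma_\beta(1)$. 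Then $h(p):=\gamma_\beta(p)-cp$ is convex with $h(0)=0$, $h(1)=\gamma_\beta(1)-c<0$, and $h(p_1)=h(p_2)=0$. The chord from $(1,h(1))$ to $(p_1,0)$ has positive slope $(c-\gamma_\beta(1))/(p_1-1)$, so convexity gives $h'_+(p_1)>0$; monotonicity of right derivatives then forces $h'_+\geq h'_+(p_1)>0$ throughout $[p_1,p_2)$, contradicting $h(p_2)-h(p_1)=\int_{p_1}^{p_2}h'_+(p)\,\dd p=0$.

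For part~(ii), I would first establish $\beta$-monotonicity at integer moments $p=n\geq 2$ via the chaos expansion~\eqref{eq:chaos} (applied to the free-end partition function, for which an analogous identity holds): write $\bar\cZ^{\go,a}_\beta(t,\ast)=1+\sum_{k\geq 1}\beta^k J_k^a$, where $J_k^a$ is the $k$-fold iterated stochastic integral of a non-negative (Gaussian product) kernel against the centred noise $\bar\xi^a_\go$. Expanding $(\bar\cZ^{\go,a}_\beta(t,\ast))^n$ and applying the product formula for multiple L\'evy integrals (whose diagonal contractions produce the cumulants $c_j:=\int_{(0,\infty)}z^j\,\gl(\dd z)$ for $j\geq 2$) expresses $\E[(\bar\cZ^{\go,a}_\beta(t,\ast))^n]$ as a power series in $\beta$; spectral positivity of $\xi$ ensures $c_j\geq 0$, so all coefficients are non-negative, and the resulting $\beta$-monotonicity is preserved under $a\to 0$ (via Theorem~\ref{thm:local}) and $t\to\infty$. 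To extend to all real $p>1$ and simultaneously obtain the \emph{reversed} monotonicity for $p\in(0,1)$, I would upgrade this to a stochastic convex-order comparison $\bar\cZ^\go_{\beta_1}(t,\ast)\leq_{\mathrm{cvx}}\bar\cZ^\go_{\beta_2}(t,\ast)$ for $\beta_1<\beta_2$, ideally by exhibiting a conditional-expectation representation through a thinning or Girsanov-type coupling on $\go$; Jensen's inequality then delivers both claims at once (convex $x\mapsto x^p$ for $p\geq 1$, concave for $p\in(0,1)$).

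The hard part will be this extension from integer to real exponents in~(ii): convexity of $\gamma_\beta$ in $p$ combined with $\beta$-monotonicity on $\N$ does \emph{not} by itself yield pointwise $\beta$-monotonicity of $p\mapsto\bar\gamma_\beta(p)$, because two convex functions agreeing on the integers can cross in between. Building the convex-order coupling---or, failing that, carrying out an interpolation argument directly at the level of the chaos series---is the technical crux of part~(ii).
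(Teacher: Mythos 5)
Parts (i) and (iii) are correct: your H\"older argument for (i) is exactly the paper's route, and your contradiction argument for (iii), while more elaborate than the one-line convexity estimate $\bar\gamma_\beta(p)\le\frac{p-1}{p'-1}\bar\gamma_\beta(p')$ used in the paper, is valid.

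Part (ii), however, is not proved, and you say so yourself: everything hinges on the passage from integer moments to all real $p\in I_\gl$, which you leave as "the technical crux". This is a genuine gap, and it is in fact almost the whole content of the statement. Note first that the integer-moment computation buys you essentially nothing here: since $\E[\cZ^\go_\beta(t,\ast)^{1+2/d}]=\infty$, the set $I_\gl$ is contained in $[0,1+\tfrac2d]$, so the only integer exponent $>1$ it can contain is $p=2$ and only when $d=1$; for $d\ge2$ there are no integer exponents at all, and the case $p\in(0,1)$ (which is the one actually needed elsewhere in the paper, e.g.\ for strong intermittency) is untouched by that argument. Second, the mechanism you propose for the extension is doubtful as stated: a thinning coupling would give an almost-sure monotone comparison of the point processes, but $\bar\cZ^{\go,a}_\beta$ is built from the \emph{centred} noise (equivalently carries the normalisations $e^{-\beta\kappa_a t}$, $e^{-\beta\mu t}$), so increasing $\beta$ simultaneously adds atoms and increases the subtracted drift; no pathwise monotone or conditional-expectation (martingale-coupling) representation between $\bar\cZ^{\go}_{\beta_1}$ and $\bar\cZ^{\go}_{\beta_2}$ is exhibited, and a Girsanov tilt of the intensity does not obviously provide one either. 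So the convex-order comparison you want is precisely what remains to be proved.

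The paper's resolution is Lemma~\ref{lem:mon}: for a convex $\phi$ with polynomial growth controlled by an admissible $q$, one shows $\partial_\beta\,\E[\phi(\bar\cZ^{\go,a}_\beta(t,0))]\ge0$ directly. The derivative of the chaos series \eqref{eq:chaos} in $\beta$ is rewritten as a series whose innermost integral is against the \emph{centred} noise $\bar\xi^a_\go$ (the $-\bar\kappa_a t$ term from the normalisation recombines exactly into this centring), one justifies exchanging $\partial_\beta$ and $\E$, applies Mecke's multivariate equation to replace the outer Poisson integration by integration against the intensity with deterministic atoms added to $\go$, and finally uses the FKG inequality for Poisson point processes to bound $\E[U_k^{\go\cup(\bt,\bx,\bz)}]$ below by the product of expectations, which vanishes because the inner integral is centred. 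Applying this with $\phi(x)=x^p$ ($p\ge1$) and $\phi(x)=-x^p$ ($p\in(0,1)$), then letting $a\to0$ via \eqref{eq:lpconv} and $t\to\infty$, yields (ii). In other words, the convex-order statement you correctly identified as the target is established not by a coupling but by a correlation-inequality (FKG) argument after a Mecke-type conditioning; without some such ingredient your outline for (ii) does not close.
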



\smallskip
Our next goal is to understand under what conditions we have strong intermittency. Based on the  results reviewed in Section~\ref{sec:interm}, one may conjecture that the solution to the SHE on $\bbR^d$ driven by a Lévy noise (that is white in time \emph{and} space) exhibits a similar behavior: strong intermittency in dimensions $d=1,2$ and strong intermittency only for large $\beta$ if $d\geq3$.
This turns out to be a fallacy:  the SHE with a non-trivial Lévy noise \emph{always} exhibits strong intermittency, irrespective of $\beta$, $\lambda$, $d$ or  $p$.


\begin{theorem}[Strong intermittency]
\label{thm:int}
Let $d\geq1$. If $\la\not\equiv0$, $\mu<\infty$  and $\beta>0$, we have $\bar \gamma_{\beta}(p)< 0$ for $p\in (0,1)$ and $\bar \gamma_{\beta}(p)> 0$ for $p\in  I_{\gl}\cap (1,\infty)$. 
\end{theorem}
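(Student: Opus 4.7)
The plan is to use the convexity of $p\mapsto\bar\gamma_\beta(p)$ established in Proposition~\ref{prop:qual}(i), together with the boundary values $\bar\gamma_\beta(0)=0$ and $\bar\gamma_\beta(1)=0$, to reduce the whole theorem to producing a single $p_0\in(0,1)$ with $\bar\gamma_\beta(p_0)<0$. Indeed, once such a $p_0$ exists, writing any $p\in(0,p_0)$ as $\tfrac{p}{p_0}\,p_0+(1-\tfrac{p}{p_0})\cdot 0$ and any $p\in(p_0,1)$ as a convex combination of $p_0$ and $1$ gives $\bar\gamma_\beta(p)<0$ on all of $(0,1)$; and for $p\in I_\gl\cap(1,\infty)$, writing $1$ as a convex combination of $p_0$ and $p$ and applying convexity yields $\bar\gamma_\beta(p)\ge-\tfrac{p-1}{1-p_0}\bar\gamma_\beta(p_0)>0$. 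Thus I only need to exhibit some $p_0\in(0,1)$ and constants $c,C>0$ with $\E[\bar\calz_\beta^{\go}(t,\ast)^{p_0}]\le C e^{-ct}$ for all large $t$.

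To obtain this decay I would follow the change-of-measure and coarse-graining scheme that the introduction attributes to Lemma~\ref{lem:changemeas} and Proposition~\ref{prop:dgeq2}. Partition $[0,t]$ into $N=\lfloor t/T\rfloor$ successive slabs of fixed length $T$, where $T$ is a large parameter, and on each slab $j$ design a ``good event'' $E_j$, measurable with respect to $\go$ restricted to that slab, capturing the presence of an atypically large jump of $\xi_\go$ placed on a spatial window that, through the heat-kernel weight $\rho_{t,x}$, boosts the contribution of Brownian trajectories to $\bar\calz_\beta^\go(t,\ast)$. Construct a tilted probability $\widehat\bbP$ by specifying a Radon--Nikodym derivative $g=\dd\widehat\bbP/\dd\bbP$ that deletes (or strongly down-weights) the Poissonian atoms responsible for each $E_j$; then H\"older's inequality with exponents $(1/p_0,1/(1-p_0))$ gives
\[
\E\bigl[\bar\calz_\beta^\go(t,\ast)^{p_0}\bigr]
=\widehat\E\bigl[g^{-1}\bar\calz_\beta^\go(t,\ast)^{p_0}\bigr]
\le \widehat\E\bigl[\bar\calz_\beta^\go(t,\ast)\bigr]^{p_0}\,\E\bigl[g^{-p_0/(1-p_0)}\bigr]^{1-p_0}.
\]
It remains to show that the first factor decays like $e^{-c_1 N}$ while the tilt cost $\E[g^{-p_0/(1-p_0)}]$ grows at most like $e^{c_2 N}$ with $c_2(1-p_0)<c_1 p_0$, so that the product is exponentially small in $t$. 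Because $\go$ is a Poisson point process, independence across slabs allows both factors to be factorized, reducing everything to a single-slab estimate.

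The main obstacle is precisely this one-slab estimate: one must choose the truncation level, spatial window, and density of the events $E_j$ so that, simultaneously, (i) the deletion of their underlying atoms genuinely lowers $\widehat\E[\bar\calz_\beta^\go(t,\ast)]$ by a multiplicative factor $\le e^{-c_1/T}$ --- this amounts to showing that Brownian trajectories relevant for $\bar\calz_\beta^\go(t,\ast)$ must, with overwhelming probability, harvest a contribution from some $E_j$ in every slab, a transversal fact that becomes delicate in $d\ge 2$ because the diffusive spread $\sqrt{T}$ must be matched carefully with the density of ``useful'' jumps of size $\ge a_0$ --- and (ii) the cost $\E[g^{-p_0/(1-p_0)}]$ per slab is kept sub-exponential in $T$, which constrains how much noise one may remove. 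Balancing (i) and (ii) across all non-trivial intensity measures $\lambda$ and all $\beta>0$, uniformly in the dimension, is the genuinely new input that the authors flag as the heart of Proposition~\ref{prop:dgeq2}; once established, iterating over the $N$ slabs and invoking the convexity reduction of the first paragraph completes the proof of Theorem~\ref{thm:int}.
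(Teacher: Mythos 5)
Your first paragraph is fine and coincides with how the paper concludes: convexity of $p\mapsto\bar\gamma_\beta(p)$ together with $\bar\gamma_\beta(0)=\bar\gamma_\beta(1)=0$ reduces everything to exhibiting one $p_0\in(0,1)$ with $\bar\gamma_\beta(p_0)<0$ (the paper takes $p_0=\tfrac12$, cf.\ \eqref{convexinho2}). The problem is that everything after that is a plan, not a proof: you explicitly defer the ``one-slab estimate'' to what you yourself call the genuinely new input of the paper (Lemma~\ref{lem:changemeas} together with Propositions~\ref{prop:fractional} and~\ref{prop:dgeq2}), so the heart of Theorem~\ref{thm:int} is never established. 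Moreover, the sketch as written has two structural flaws. First, you coarse-grain only in time and attach each good event $E_j$ to a fixed ``spatial window'' per slab. For the free-end quantity $\bar\calz^\om_\beta(t,\ast)$ this cannot work: the polymer endpoint diffuses, so a window of size independent of its location is either avoided by the relevant trajectories (and then $\widehat\E[\bar\calz]$ does not drop) or must cover a region whose volume grows with $t$ (and then the tilt cost per slab is not $O(1)$). The paper's fix is precisely the missing ingredient: decompose $\bar\calz^\om_\beta(mT,\ast)=\sum_{\bj}\calz(\chi_{\bj})$ over coarse-grained \emph{spatial} skeletons of cubes of side $\sqrt T$, use the subadditivity $(\sum_{\bj}a_{\bj})^{1/2}\le\sum_{\bj}a_{\bj}^{1/2}$, and make the change of measure $G_{\bj}$ depend on the skeleton, the tilted factor being controlled through the size-biased (spine) representation of Lemma~\ref{lem:sizebias}.

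Second, your choice of event --- ``the presence of an atypically large jump'' --- cannot cover all non-trivial $\la$ with $\mu<\infty$, which is exactly the point of the theorem. If, say, $\la$ is a point mass (all jumps of the same bounded size) and $d\ge2$, no single jump is atypical and a one-jump event gives nothing; the paper handles $d=1$ by a counting-fluctuation event (condition \ref{(C1)} in Corollary~\ref{cheapcoro}) and $d\ge2$ by detecting \emph{clusters} of $k$ atoms at microscopic time separations (the multi-body functional of Proposition~\ref{prop:dgeq2}), with a delicate tuning of $T(\beta)$, $k$, and the cluster scales because $|\bar\gamma_\beta(\tfrac12)|$ is doubly-exponentially small in $\beta$. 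Finally, those estimates are only proved for small $\beta$; the paper gets all $\beta>0$ from the monotonicity of $\beta\mapsto\bar\gamma_\beta(p)$ for $p\in(0,1)$ (Proposition~\ref{prop:qual}\,(ii), proved via FKG), whereas your plan silently promises a bound uniform in $\beta$ that the paper never attempts. In short: the reduction step is correct, but the quantitative fractional-moment bound that constitutes the theorem is missing, and the sketch offered in its place would need the skeleton-indexed change of measure, different event designs depending on $d$ and $\la$, and the monotonicity-in-$\beta$ argument before it could be completed.
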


To shed more light on this peculiar result, we further investigate the  detailed behavior
of $\bar \gamma_{\beta}(p)$ in the small $\beta$ limit. Let us start with the case of light-tailed noise at infinity.

\begin{theorem}[Light-tailed noise]
\label{thm:thinup}
Assume that \eqref{eq:log-2} holds.
 \begin{itemize}
  \item [(i)] If $d=1$ and $\mu_{1,\infty}(2)<\infty$, then for every $p\in I_{\gl}\setminus \{1\}$, there exists $C_p\in(0,\infty)$ such that for every $\beta \in (0,1]$, we have
  \beq\label{eq:d1}(C_p)^{-1}\beta^4 \le  \lvert\bar \gamma_{\beta}(p)\rvert \le C_p \beta^4.\eeq
  In the special case $p=2$, we have $\bar \gamma_{\beta}(2) = \frac14 \mu_{0,\infty}(2)^2 \gb^4$. 
  \item [(ii)] If $d\ge 2$ and $\mu_{1,\infty}(1+\frac{2}{d})<\infty$, then for every $p\in \big(0,1+\frac{2}{d}\big)\setminus \{1\}$, we have 
  \begin{equation}\label{eq:1+2d}
   \lim_{\beta \to 0}   \frac{\log |\log |\bar \gamma_{\beta}(p)||}{|\log \beta|}= 1+\tfrac{2}{d}.
  \end{equation}
 \end{itemize}
\end{theorem}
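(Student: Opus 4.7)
The argument rests on three ingredients. The \emph{first} is an exact chaos computation for $p=2$, $d=1$: inserting the series \eqref{eq:chaos} into the $L^2$-isometry for iterated centered-Poisson integrals converts each $\prod_i\bar\xi^a_\om(\dd t_i\,\dd x_i)$ into $\mu_{a,\infty}(2)^k\,\dd\bt\,\dd\bx$. Combining $\int_\R\rho(s,y)^2\,\dd y=(4\pi s)^{-1/2}$ with the Dirichlet identity $\int_{\frakX_k(t)}\prod_{i=1}^k(4\pi\Delta t_i)^{-1/2}\,\dd\bt=t^{k/2}/(2^k\Gamma(k/2+1))$ and letting $a\downarrow 0$ via the $L^2$-convergence of Theorem~\ref{thm:local} (applicable since $\mu_{1,\infty}(2)<\infty$) produces the closed form
\[
\E\bigl[\bar\calz^\om_\beta(t,\ast)^2\bigr]=\sum_{k\ge 0}\frac{\alpha^k}{\Gamma(k/2+1)},\qquad \alpha=\tfrac12\beta^2\mu_{0,\infty}(2)\sqrt{t},
\]
and a saddle-point at $k^{\ast}\sim 2\alpha^2$ yields $\E[\bar\calz^\om_\beta(t,\ast)^2]\sim e^{\alpha^2}$, i.e.\ $\bar\gamma_\beta(2)=\tfrac14\mu_{0,\infty}(2)^2\beta^4$.

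The \emph{second} ingredient is the family of chaos moment inequalities of Propositions~\ref{prop:localbisdone}--\ref{prop:localbis}: applied termwise to \eqref{eq:chaos} and resummed, these yield $\bar\gamma_\beta(p)\le C_p\beta^4$ for $d=1$ and $p\in(1,p_{\max})\cap I_\gl$ (in the subrange $p\in(1,2]$ this can also be read off from the chord inequality $\bar\gamma_\beta(p)\le (p-1)\bar\gamma_\beta(2)$ coming from convexity, Proposition~\ref{prop:qual}\textit{(i)}); in part~(ii) they produce the upper bound $|\bar\gamma_\beta(p)|\le\exp(-\beta^{-(1+2/d)+o(1)})$ once the truncation level $a=a(\beta)$ is tuned to balance the growth of $\mu_{0,a}(p)$ against the series coefficients. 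The \emph{third} ingredient is the change-of-measure/coarse-graining Lemma~\ref{lem:changemeas}, which delivers the complementary lower bound on $|\bar\gamma_\beta(p)|$ for $p\in(0,1)$: partitioning $[0,t]\times\R^d$ into space--time boxes of diffusive aspect ratio, perturbing the L\'evy intensity on a carefully chosen mesoscopic range of jump sizes, and controlling the resulting Radon--Nikodym density in $L^{p/(p-1)}$ via H\"older, one obtains $\bar\gamma_\beta(p)\le -c_p\beta^4$ when $d=1$ and $\bar\gamma_\beta(p)\le -\exp(-\beta^{-(1+2/d)-o(1)})$ when $d\ge2$.

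The remaining values of $p$ are then handled by the Cauchy--Schwarz identity
\begin{equation}\label{eq:CS}
\bar\gamma_\beta(p)+\bar\gamma_\beta(2-p)\ge 0,
\end{equation}
which follows from $1=\E[\bar\calz^\om_\beta(t,\ast)]^2\le\E[\bar\calz^\om_\beta(t,\ast)^p]\,\E[\bar\calz^\om_\beta(t,\ast)^{2-p}]$. Specifically, for $p\in(0,1)$ the upper bound on $|\bar\gamma_\beta(p)|$ follows from \eqref{eq:CS} together with the upper bound on $\bar\gamma_\beta(2-p)$ where $2-p\in(1,2)$; for $p\in(1,2)$ the lower bound on $\bar\gamma_\beta(p)$ follows from \eqref{eq:CS} together with the lower bound on $|\bar\gamma_\beta(2-p)|$ where $2-p\in(0,1)$. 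For $p\ge 2$ the lower bound $\bar\gamma_\beta(p)\ge(p-1)\bar\gamma_\beta(2)$ is read off from the monotonicity of $q\mapsto\bar\gamma_\beta(q)/(q-1)$ on $(1,\infty)$, itself a consequence of convexity together with $\bar\gamma_\beta(1)=0$.

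\textbf{Main obstacle.} The hardest step is the lower bound produced by Lemma~\ref{lem:changemeas}, particularly in $d\ge2$ where no $L^2$-isometry of the chaos is available: one must choose the mesoscopic range of perturbed jump sizes and the matching coarse-graining block size so that the entropic cost of the tilt is precisely balanced against the pathwise suppression of $\bar\calz^\om_\beta$ it produces. Implementing this delicate balance in the L\'evy setting, and extracting the critical exponent $1+2/d$ out of the integrability assumption $\mu_{1,\infty}(1+2/d)<\infty$, is the novel ingredient announced at the beginning of Section~\ref{sec:theotherbound}.
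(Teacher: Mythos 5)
Your overall architecture (exact $p=2$ chaos computation, moment bounds of Propositions~\ref{prop:localbisdone}--\ref{prop:localbis} for $p>1$, change of measure via Lemma~\ref{lem:changemeas} for the fractional lower bound, interpolation to the remaining $p$) is the same as the paper's, but two of your reductions do not work as stated. The clearest failure is the Cauchy--Schwarz pairing $\bar\gamma_\beta(p)+\bar\gamma_\beta(2-p)\ge 0$ used to transfer the upper bound from exponent $2-p$ to $p\in(0,1)$. For $d\ge 3$ and $p\le 1-\tfrac2d$ one has $2-p\ge 1+\tfrac2d$, and by the result of \cite{CK19} quoted below \eqref{def:pmax}, $\E[\bar\calz^\om_\beta(t,\ast)^{2-p}]=\infty$ for every $t>0$ and every non-trivial $\la$; hence $\bar\gamma_\beta(2-p)=+\infty$ and your inequality is vacuous, leaving no upper bound on $\lvert\bar\gamma_\beta(p)\rvert$ for these $p$ --- precisely the half of \eqref{eq:1+2d} asserting $\lvert\bar\gamma_\beta(p)\rvert\le \exp(-\beta^{-(1+2/d)+o(1)})$. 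The paper avoids this by never leaving the window $(1,1+\tfrac2d)$: convexity of $\bar\gamma_\beta$ with $\bar\gamma_\beta(1)=0$ gives $\frac{q-1}{p'-1}\bar\gamma_\beta(p')\le\bar\gamma_\beta(q)\le 0$ for any $q\in(0,1)$ and $p'\in(1,1+\tfrac2d)$ (inequality \eqref{convexinho}), so all fractional upper bounds follow from moment bounds at an exponent strictly below $1+\tfrac2d$. This is an easy repair, but your argument as written fails in $d\ge3$.

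The second problem concerns the mechanisms you propose in $d\ge 2$. For the upper bound, tuning a truncation level $a(\beta)$ (or $\eta$) with the moment order $p$ \emph{fixed} cannot produce $\lvert\bar\gamma_\beta(p)\rvert\le\exp(-\beta^{-(1+2/d)+o(1)})$: with $p$ fixed, $\nu_p>0$ is a fixed constant, and the resummation of Proposition~\ref{prop:localbis} via Lemma~\ref{lem:convenient} forces an exponential growth rate $\alpha\gtrsim(C\beta)^{p/\nu_p}$, a \emph{power} of $\beta$; the stretched-exponentially small rate only appears when the moment order $q=q(\beta)\uparrow 1+\tfrac2d$ so that $\nu_q\downarrow 0$, which is why the paper tracks the $p$-dependence of its moment bounds uniformly near $1+\tfrac2d$, makes the choices \eqref{parachoiz} for $q(\beta),\alpha(\beta),t(\beta)$, and then returns to the fixed $p$ by Jensen as in \eqref{check}. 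For the lower bound, "perturbing the L\'evy intensity on a mesoscopic range of jump sizes'' is a one-body device: for light-tailed $\la$ in $d\ge2$ the first/second-moment balance in \ref{(C1)}--\ref{(C2)} then forces $\beta$ to be bounded below (this is exactly why Proposition~\ref{prop:fractional}\textit{(ii)} needs a heavy-tail hypothesis), so no choice of jump-size window and block size works for small $\beta$. The paper's Proposition~\ref{prop:dgeq2} instead counts \emph{clusters} of $k$ atoms confined to time windows of size $\eps\beta^{2k/d}T^{-1}$ inside a block of length $T=\exp(\eps^{-c}\beta^{-(1+2/d)k/(k-2)})$, and lets $k$ be large so that the exponent $(1+\tfrac2d)\frac{k}{k-2}$ approaches $1+\tfrac2d$. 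Without the moment-order tuning and the multi-body cluster construction, part \textit{(ii)} does not follow from your sketch; part \textit{(i)} and the $p=2$ computation are essentially the paper's argument.
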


\noindent
If $d=1$ and $d=2$, these results match the asymptotics  
for the free energy of the directed polymer in a random environment (see \cite{Nak19} for $d=1$ and \cite{BL17} for $d=2$).
In dimensions $d\geq 3$, however, one has in the directed polymer setting that $\bar\gamma_{\gb}(p) =0$ for sufficiently small $\gb$, see~\cite{Bol89} and~\cite[Prop.~4.1]{Vi19}.
  Note that \eqref{eq:1+2d} implies that $|\bar \gamma_{\beta}(p)|$ vanishes very fast as $\beta$ goes to zero. As a consequence, there is not a lot of margin to play with to prove that  $\lvert\bar\gamma_{\gb}(p)\rvert>0$ when $\beta$ is small, and the proof indeed relies on a fine tuning of parameters.
The behavior of $\bar \gamma_{\beta}(p)$ is different if $\gl$ displays an heavier tail at infinity. In this case, $\bar \gamma_{\beta}(p)$ rather has a power-law behavior close to $\beta=0$.
Let us introduce 
\begin{equation}\label{defnup}
 \nu_p:=1-\tfrac d2 (p-1).
\end{equation}
\begin{theorem}[Heavy-tailed noise]
\label{thm:heavyup}
Assume   that \eqref{eq:log-2} holds and that $\mu<\infty$.
\bit
  \item [(i)] If $\mu_{1,\infty}(q)<\infty$ for some $q\in ( 1, \min(2,1+\frac{2}{d}) )$,
 then for every $p\in (0,q]\setminus \{1\}$, there exists $C_{p,q}\in(0,\infty)$ such that for every $\beta \in (0,1]$,
\begin{equation}\label{sioum}
 \lvert\bar \gamma_{\beta}(p)\rvert \le C_{p,q}  \beta^{\frac{q}{\nu_q}}\,,
 \end{equation}
which implies
\beq\label{eq:liminf} \liminf_{\beta \to 0}   \frac{\log  \lvert\bar \gamma_{\beta}(p)\rvert}{\log \beta}\ge \frac{q}{\nu_q}. \eeq

    \item [(ii)] If $\mu_{1,\infty}(q)=\infty$ for some $q\in ( 1,  \min(2, 1+\frac{2}{d} ) )$, then 
   for every $p\in (0,q)\setminus \{1\}$,
    \begin{equation}\label{eq:lastone} 
\limsup_{\beta \to 0}   \frac{\log  \lvert\bar \gamma_{\beta}(p)\rvert}{\log \beta}\le \frac{q}{\nu_q}. 
    \end{equation}
       \eit
   In particular, if  
   \beq\label{eq:alpha}\lim_{z\to \infty} \frac{\log \gl([z,\infty))}{\log z}=-\alpha\eeq 
   for some $1<\alpha < \min (2,1+\frac2d )$ (which includes the $\alpha$-stable case), then for all $p\in(0,\al)\setminus\{1\}$
   \begin{equation}\label{eq:stable}
   	\lim_{\beta \to 0}   \frac{\log \lvert\bar \gamma_{\beta}(p)\rvert}{\log \beta}= \frac{\alpha}{\nu_\al} \,.
   \end{equation}
\end{theorem}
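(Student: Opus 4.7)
Proof plan for Theorem~\ref{thm:heavyup}:

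Both parts pivot on the convexity of $\ga_\beta$ (Proposition~\ref{prop:qual}\,(i)) together with $\bar\ga_\beta(1)=0$. My strategy is to establish the bound at one well-chosen exponent and then propagate it to every admissible $p$ via chord/secant inequalities. Concretely, an upper bound on $\bar\ga_\beta(q)$ for some $q>1$ yields $\bar\ga_\beta(p)\le \tfrac{p-1}{q-1}\bar\ga_\beta(q)$ on $(1,q]$ and $|\bar\ga_\beta(p)|\le \tfrac{1-p}{q-1}\bar\ga_\beta(q)$ on $(0,1)$, while a lower bound on $|\bar\ga_\beta(p_0)|$ for some $p_0\in(0,1)$ yields $\bar\ga_\beta(p)\ge \tfrac{p-1}{1-p_0}|\bar\ga_\beta(p_0)|$ for every $p>1$ and $|\bar\ga_\beta(p)|\ge \tfrac{1-p}{1-p_0}|\bar\ga_\beta(p_0)|$ for $p\in(0,p_0]$.

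For part (i), I would work directly with the normalized partition function $\bar\calz^{\om,a}_\beta(t,\ast)$ and establish a $q$th-moment bound of the form
\[
 \E\Big[\bar\calz_\beta^{\om}(t,\ast)^q\Big]\le \exp\!\big(C_q\,\mu_{1,\infty}(q)\,\beta^{\,q/\nu_q}\,t\big),\qquad t\ge 1,
\]
using the estimates on multiple L\'evy integrals announced in Propositions~\ref{prop:localbisdone}--\ref{prop:localbis}. The decoupling inequalities reduce the $q$th moment of the chaos series~\eqref{eq:chaos} to sums of integrals involving $|\rho_{t,x}|^q$ and the $k$-fold moments $\mu_{1,\infty}(q)^k$, the iterative partition of $\frakX_k(t)\times(0,\infty)^k$ converts those sums into the desired exponential, and a scaling analysis of the heat-kernel integrals isolates the critical exponent $q/\nu_q=q/(1-d(q-1)/2)$ (consistent with the Gaussian benchmark $q=2$, $d=1$, where $q/\nu_q=4$). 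Passing to $a\downarrow 0$ with the $L^p$-convergence granted by Theorem~\ref{thm:local}, dividing by $t$ and letting $t\to\infty$ yields $\bar\ga_\beta(q)\le C_q\mu_{1,\infty}(q)\beta^{q/\nu_q}$; convexity then extends this to every $p\in(0,q]\setminus\{1\}$, giving~\eqref{sioum}, and taking logarithms gives~\eqref{eq:liminf}.

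For part (ii), I would fix $q\in(1,\min(2,1+2/d))$ with $\mu_{1,\infty}(q)=\infty$, a small $\delta>0$ and a reference exponent $p_0\in(0,1)$, and apply the change-of-measure / coarse-graining framework summarized in Lemma~\ref{lem:changemeas}. Following the scheme of \cite{Lac10pol,Vi19}, space-time is partitioned into diffusive blocks whose time-scale and large-jump threshold are tuned so that (a) a single anomalously large jump, whose Poisson mass is controlled by the divergence of $\mu_{1,\infty}(q)$, suffices to tilt the block partition function away from its mean, and (b) the resulting Radon--Nikod\'ym density remains controlled in $L^{p_0}$. Aggregating the roughly independent block contributions should produce
\[
 \E\Big[\bar\calz_\beta^{\om}(t,\ast)^{p_0}\Big]\le \exp\!\big(-c_{p_0,q,\delta}\,\beta^{\,q/\nu_q+\delta}\,t\big)\qquad\text{for }t\ge t_0(\beta),
\]
and hence $|\bar\ga_\beta(p_0)|\ge c_{p_0,q,\delta}\,\beta^{\,q/\nu_q+\delta}$. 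Sending $\beta\downarrow 0$ and then $\delta\downarrow 0$ gives \eqref{eq:lastone} at $p_0$, which convexity propagates to every $p\in(0,q)\setminus\{1\}$.

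The stable statement~\eqref{eq:stable} then follows by letting $q\uparrow\alpha$ in (i) and $q\downarrow\alpha$ in (ii) and invoking the continuity of $q\mapsto q/\nu_q$ at $\alpha$, which forces matching $\liminf$ and $\limsup$ equal to $\alpha/\nu_\alpha$. The main obstacle is clearly part (ii): the coarse-graining scale has to be simultaneously large enough for the block partition function to register an anomalously large jump and small enough to keep the $L^{p_0}$-cost of the tilt subpolynomial in $\beta^{-1}$, and this fine tuning is precisely the novel implementation of the change-of-measure / coarse-graining method highlighted in the discussion of Section~\ref{sec:theotherbound}.
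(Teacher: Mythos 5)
Your plan is essentially the paper's own proof: part (i) comes from the order-$q$ moment bounds of Propositions~\ref{prop:localbisdone}--\ref{prop:localbis} evaluated at a $\beta$-dependent time and then spread to all $p\in(0,q]$ by convexity, part (ii) is the one-body change-of-measure estimate of Section~\ref{sec:onebody} (Lemma~\ref{lem:changemeas}/Corollary~\ref{cheapcoro} with a single-large-jump counting functional, i.e.\ Proposition~\ref{prop:fractional}(ii)) at a reference exponent in $(0,1)$, again propagated by convexity, and the stable case follows by letting $q\to\al$ from both sides. The only minor discrepancy is that, since $\mu_{1,\infty}(q)=\infty$ only yields $\limsup_{A\to\infty}A^{\al}\lambda([A,\infty))>0$ for each $\al>q$, the paper runs the tilting argument along a sequence of $\beta$'s (the statement \eqref{assumlimsup}) rather than for every small $\beta$ as your sketch asserts, with your slack parameter $\delta$ playing the role of the excess $\al-q$.
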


\subsection{Overview of the rest of the paper}

The remainder of the paper is organized as follows.
In Section  
\ref{sec:comments}, we discuss some  extensions of our results and some related conjectures concerning geometric localization.
Section~\ref{sec:prelim} introduces a few important tools
that are needed throughout the paper.
 In Section~\ref{sec:properties}, we prove all basic properties concerning the moment Lyapunov exponents, that is, Propositions~\ref{prop:finitemom} and \ref{prop:qual}.
Section~\ref{sec:upper} is devoted to proving our first main technical achievement: Propositions~ \ref{prop:localbisdone} and~\ref{prop:localbis}, which contain upper estimates on moments of order $p>1$ of $\cZ^{\go}_{\beta}(t,x)$. 
In Section \ref{sec:deducing}, we then use
these moment bounds to derive
Theorem \ref{thm:local} and the first halves of Theorems \ref{thm:thinup} and \ref{thm:heavyup}. The existence and uniqueness of solutions (\textit{i.e.}, Theorems~\ref{thm:SHE} and~\ref{thm:SHEunique}) are addressed in Section~\ref{sec:SHE}. 
Finally, 
 Section~\ref{sec:theotherbound} gathers the proofs that rely on a coarse-graining/change-of-measure approach summarized in Lemma~\ref{lem:changemeas}.
In particular, we show the remaining halves of Theorems  \ref{thm:thinup} and \ref{thm:heavyup} as well as Proposition \ref{prop:waitingforbetter}.
 The Appendix contains proofs of some of the results introduced in Section~\ref{sec:prelim}.

\section{Extensions and consequences  of our  results}\label{sec:comments}

\subsection{Continuum directed polymer model}\label{sec:dirpol}
As mentioned in Remark \ref{rem:dirpol}, the main purpose of   introducing $\mathcal Z^{\go,a}_\gb(t,\ast)$ in \cite{BL20_cont} is not to study   the SHE but rather to define a random probability measure on the space of continuous functions
$$C_0([0,T]):=\{ \theta: [0,T]\to \bbR^d \ : \ \theta \text{ continuous and } \theta(0)=0 \}, $$ called the continuum directed polymer in a Lévy environment.
For $T>0$ the directed polymer in the  truncated environment $\xi^a_{\go}$ is the probability measure $\bQ^{\go,a}_{T,\beta}$ on $C_0([0,T])$ defined via the integral of bounded Borel measurable functions as follows:
\begin{equation}
 \bQ^{\go,a}_{T,\beta}(f)
= \frac{e^{-\beta\kappa_a t } }{\cZ^{\go,a}_{\beta}(T,\ast)}\left(\bQ\Big[ f((B_t)_{t\in[0,T]})\Big]+ \sum_{k=1}^{\infty}\gb^k
\int_{ \frakX_k(T) \times (\bbR^d)^k}   \rho( \bt , \bx,f)   \prod_{i=1}^k  \xi_{\go}^{a ,+}  (\dd t_i , \dd x_i )\right),
\end{equation}
where $\rho(\bt,\bx,f):= \bQ [ f((B_t)_{t\in[0,T]})  \ | \ \forall i\in \lint k\rint: B_{t_i}=x_i ] $ and $\lint k\rint:=\{1,\dots,k\}$. Here,  $\bQ$ denotes the distribution of a standard $d$-dimensional Brownian motion  and,  with some light abuse of notation,  $\bQ [ \cdot  \ | \ \forall i\in \lint k\rint: B_{t_i}=x_i ]$ is the law of the concatenation of Brownian bridges obtained by conditioning a Brownian motion on the null event  $\{\forall i\in \lint k\rint: B_{t_i}=x_i\}$.
The main result in \cite{BL20_cont} is to prove that  $\bQ^{\go,a}_{T,\beta}$ converges in distribution to a limit 
$\bQ^{\go}_{T,\beta}$ under the assumptions \eqref{eq:log} and \eqref{assump1}. 
By the findings of the present paper, we can  replace the assumption \eqref{assump1} by \eqref{eq:log-2} in the results of \cite{BL20_cont}.

\begin{theorem}
If the measure $\gl$ satisfies \eqref{eq:log} and \eqref{eq:log-2},
then there exists a measure $\bQ^{\go}_{T,\beta}$ on $C_0([0,T])$ such that almost surely, for every bounded continuous function $f$ in $C_0([0,T])$, we have 
\begin{equation}
 \lim_{a\to 0} \bQ^{\go,a}_{T,\beta}(f)=  \bQ^{\go}_{T,\beta}(f).
\end{equation}
\end{theorem}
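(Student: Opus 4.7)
The plan is to follow the strategy used in \cite{BL20_cont}, where the analogous statement was proved under the stronger assumption \eqref{assump1}, and to replace every use of \eqref{assump1} by the improved non-degeneracy and $L^p$-convergence provided by our Theorem~\ref{thm:local}. The quantity $\bQ^{\go,a}_{T,\beta}(f)$ is a ratio between a numerator---a partition function with test functional $f$---and the denominator $\cZ^{\go,a}_\beta(T,\ast)$, and the denominator is already handled by Theorem~\ref{thm:local}: it converges $\bP$-a.s.\ to $\cZ^{\go}_\beta(T,\ast)>0$ under \eqref{eq:log} and \eqref{eq:log-2}, providing the correct normalization for the limiting measure.

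For the numerator, write $\cZ^{\go,a}_\beta(T,f)$ for the full expression inside the parentheses in the definition of $\bQ^{\go,a}_{T,\beta}(f)$, multiplied by $e^{-\beta\kappa_a T}$. For non-negative bounded $f$ (the general case follows by decomposing $f$ into non-negative parts, using boundedness), the chaos expansion involves only integrals against the positive measure $\xi^{a,+}_\go$, so $\cZ^{\go,a}_\beta(T,f)\ge 0$. The same conditioning computation as in \cite[Lem.~3.5]{BL20_cont} yields that $(\cZ^{\go,a}_\beta(T,f))_{a\in(0,1]}$ is a time-reversed $\cG$-martingale under $\bP_{<}$ for $\bP_{\geq}$-a.e.\ realization of $\go_{\geq}$, so it converges $\bP$-a.s.\ as $a\downarrow 0$ to a limit $\cZ^{\go}_\beta(T,f)$. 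Combined with the domination $\cZ^{\go,a}_\beta(T,f)\le\|f\|_\infty\,\cZ^{\go,a}_\beta(T,\ast)$ and the $L^1$-convergence of the denominator from Theorem~\ref{thm:local}, one obtains that $\bQ^{\go,a}_{T,\beta}(f)\to\bQ^{\go}_{T,\beta}(f):=\cZ^{\go}_\beta(T,f)/\cZ^{\go}_\beta(T,\ast)$ $\bP$-a.s.\ for each \emph{fixed} bounded continuous $f$.

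To upgrade this pointwise convergence to the uniform statement ``$\bP$-a.s., convergence holds for every bounded continuous $f$''---equivalently, to the weak convergence of the random probability measures $\bQ^{\go,a}_{T,\beta}$ on the Polish space $C_0([0,T])$---I would proceed in two steps. First, apply the previous step to a countable convergence-determining class $\{f_n\}_{n\in\N}$ of bounded continuous functions on $C_0([0,T])$ and take the union of the associated null sets, obtaining a single $\bP$-a.s.\ event on which $\bQ^{\go,a}_{T,\beta}(f_n)$ converges for every $n$. Second, establish $\bP$-a.s.\ tightness of $(\bQ^{\go,a}_{T,\beta})_{a\in(0,1]}$ via a Kolmogorov--Chentsov estimate on the path increments, e.g.\ through bounds on $\bE[\bQ^{\go,a}_{T,\beta}(\|B_t-B_s\|^{q})]$ obtained by rewriting such quantities as ratios of weighted partition functions to $\cZ^{\go,a}_\beta(T,\ast)$ and then invoking the $L^p$-moment bounds from Propositions~\ref{prop:localbisdone} and~\ref{prop:localbis} for $p\in(1,1+\tfrac{2}{d})$.

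The main technical obstacle is this last tightness estimate. The $L^2$-based argument that was available under \eqref{assump1} is no longer at our disposal under the weaker \eqref{eq:log-2}, so one must work with fractional $L^p$-moments of order $p\in(1,1+\tfrac{2}{d})$. Some care will be required to choose the exponents so that H\"older's inequality combined with the Brownian scaling of bridge increments produces a factor of the form $(t-s)^{\kappa}$ with $\kappa>0$, as required by Kolmogorov--Chentsov; it is here that the sharpness of the moment bounds of Section~\ref{sec:upper} becomes decisive. Once this regularity estimate is in place, the identification of all subsequential weak limits through the countable determining class $\{f_n\}$ is routine, and defines $\bQ^{\go}_{T,\beta}$ unambiguously.
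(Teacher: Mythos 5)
Your treatment of a fixed test function $f$ is exactly the paper's argument: write $\bQ^{\go,a}_{T,\beta}(f)=\cZ^{\go,a}_{\beta}(T,f)/\cZ^{\go,a}_{\beta}(T,\ast)$, obtain a.s.\ convergence of the numerator from the reverse-martingale structure in $a$, and of the denominator to a strictly positive limit from Theorem~\ref{thm:local}; note that for this step a.s.\ convergence of both pieces suffices, and the $L^1$-convergence you invoke is neither needed nor available in general under \eqref{eq:log}--\eqref{eq:log-2} alone (it requires $\mu<\infty$). The divergence, and the genuine gap, lies in how you handle simultaneity over all bounded continuous $f$.

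Your plan to prove tightness of $\{\bQ^{\go,a}_{T,\beta}:a\in(0,1]\}$ from scratch via a Kolmogorov--Chentsov estimate rests on the premise that the tightness argument of \cite{BL20_cont} was tied to \eqref{assump1} and must be redone with the fractional moment bounds of Section~\ref{sec:upper}. That premise is false: \cite[Section 4.8]{BL20_cont} establishes tightness of the \emph{non-normalized} family $\{\cZ^{\go,a}_{\beta}(T,\ast)\times\bQ^{\go,a}_{T,\beta}:a\in(0,1]\}$ under \eqref{eq:log} alone, with no moment condition on the small jumps, and the paper simply quotes this and divides by the a.s.\ positive limit of $\cZ^{\go,a}_{\beta}(T,\ast)$ guaranteed by \eqref{eq:conv-2}; no new regularity estimate is required. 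Moreover, the estimate you propose would not follow from what you invoke: $\bbE[\bQ^{\go,a}_{T,\beta}(\lVert B_t-B_s\rVert^{q})]$ is the expectation of a \emph{ratio}, and Propositions~\ref{prop:localbisdone} and~\ref{prop:localbis} give only upper bounds on positive moments of partition-function-type numerators; nothing in the paper (or in your sketch) supplies negative moments of, or a lower bound uniform in $a$ on, $\cZ^{\go,a}_{\beta}(T,\ast)$, so H\"older cannot split the ratio. Even granting such a bound, controlling $\bbE[\bQ^{\go,a}_{T,\beta}(\cdot)]$ yields tightness only in an averaged sense, whereas the statement requires $\bbP$-a.s.\ tightness uniformly in $a$, which would need a further argument (Borel--Cantelli along a sequence $a_n$, or again passing through the non-normalized measures). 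The correct repair is the paper's route: combine the existing tightness result, valid under \eqref{eq:log}, with the positivity \eqref{eq:conv-2} of the limiting normalization, rather than re-deriving path regularity from the new moment bounds.
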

\begin{proof}
The convergence of $\bQ^{\go,a}_{T,\beta}(f)$ for a fixed $f$ is a consequence of Theorem \ref{thm:local}. Indeed, $\bQ^{\go,a}_{T,\beta}(f)$ can be written as a quotient $\cZ^{\go,a}_{\beta}(T,f)/\cZ^{\go,a}_{\beta}(T,\ast)$. 
Theorem \ref{thm:local} asserts that the denominator converges to a positive limit and the martingale argument below \eqref{filtraG} entails the almost-sure convergence of $\cZ^{\go,a}_{\beta}(T,f)$.
The fact that  convergence holds simultaneously for all continuous bounded functions is a consequence of tightness.
The proof given in \cite[Section 4.8]{BL20_cont} shows that the family of non-normalized measures $\{\cZ^{\go,a}_{\beta}(T,\ast)\times \bQ^{\go,a}_{T,\beta}:a\in(0,1]\}$ is tight, with no other assumption than \eqref{eq:log}.
The tightness of $\{\bQ^{\go,a}_{T,\beta}:a\in(0,1]\}$ then follows from the almost-sure positivity of $\cZ^{\go,a}_{\beta}(T,\ast)$; cf.~\eqref{eq:conv-2}.
\end{proof}

\subsection{Geometric localization of the solution to the SHE}

\label{sec:geoloca}

\subsubsection*{The case $u_0\equiv 1$.}
The intermittency result  given in Theorem \ref{thm:int} (especially the part concerning  strong intermittency) has direct implications on the distribution of mass for the solution to \eqref{eq:integz} with initial condition $u_0\equiv 1$. To illustrate this, let us present an argument from \cite[\S 2.4]{CB95}, adapted to the case of non-integer moments---the argument in \cite[\S 2.4]{CB95} is about mass concentration for the square of the solution.

If $\mu<\infty$ and \eqref{eq:log-2} holds, then the solution  to \eqref{eq:integz} with $u_0\equiv 1$ is given by
\begin{equation*}
 U(t,x):= \int_{\bbR^d} \cZ^{\go}_{\beta}(y;t,x)\,\dd y.
\end{equation*}
Let us define  the normalized solution $\bar U(t,x) := e^{-\mu t} U(t,x)$.
By symmetry, we have that  $\bar U(t,x)\stackrel{(d)}{=}\bar \cZ^{\go}_\beta(t,\ast)$  for any fixed $x$.
For a fixed value of $t$, let us assume that the field $\bar U(t,x)_{x\in \bbR^d}$ is ergodic (in the case of a Gaussian noise, this was proved in~\cite{Chen19b}): we then get
\begin{equation}\label{mass1}
\lim_{R\to \infty} \frac{\int_{\|x\|\le R} \bar U(t,x)\, \dd x}{ \sigma_d R^d}= \bbE\Big[ \bar  \cZ^{\go}_\beta(t,\ast) \Big] = 1 \,,
\end{equation}
where $\sigma_d= \pi^{d/2}\gG(\tfrac d 2 +1)$ is the volume of the unit ball in $\bbR^d$.
On the other hand, if $p\in(0,1)$ and $\bar \gamma_{\beta}(p)<0$ (which is ensured by Theorem \ref{thm:int}), then for any $\alpha\in  (0, \frac{\bar \gamma_{\beta}(p)}{p-1} )$ we have
\begin{equation}\label{mass2}
\begin{split}
\lim_{R\to \infty} \frac{\int_{\|x\|\le R} \bar U(t,x)\ind_{\{ \bar U(t,x)\le e^{ \alpha t}\}}\, \dd x}{ \sigma_d R^d}
&= \bbE\Big[ \bar\cZ^{\go}_\beta(t,\ast)\ind_{\{ \bar\cZ^{\go}_\beta(t,\ast)\le e^{\alpha t } \}} \Big] 
\\ &\le e^{(1-p)\alpha t} \bbE[\bar\cZ^{\go}_\beta(t,\ast)^p]
\le e^{[(1-p)\alpha+\bar \gamma_{\beta}(p)] t},
\end{split}
\end{equation}
where the last inequality relies on super-multiplicativity, see Lemma \ref{lem:submult} below.
Setting $\delta:=\alpha(p-1)-\bar \gamma_{\beta}(p)>0$ and
combining \eqref{mass1} and \eqref{mass2}, we obtain
\begin{equation}
\label{mass3}
 \lim_{R\to \infty} \frac{\int_{\|x\|\le R} \bar U(t,x)\ind_{\{\bar U(t,x)\ge e^{\alpha t}\}} \,\dd x}{\int_{\|x\|\le R} \bar U(t,x)\, \dd x}\ge 1- e^{-\delta  t}  \,.
\end{equation}
Note that we also have 
\begin{equation}
\label{mass4}
\lim_{R\to \infty} \frac{\int_{\|x\|\le R} \ind_{\{\bar U(t,x)\geq  e^{\alpha t}\}} \,\dd x}{ \sigma_d R^d}
= \bbP\Big( \bar\cZ^{\go}_\beta(t,\ast)\ge e^{\alpha t }   \Big) \leq  e^{-\alpha t} \,,
\end{equation}
thanks to Markov's inequality.
Hence, the two identities~\eqref{mass3} and \eqref{mass4} show that in the large $t$ limit, the mass of the solution concentrates on a very small portion of space. 


\subsubsection*{The case $u_0= \delta_0$.}
Although this is much more difficult to prove rigorously, we 
believe that intermittency in Theorem \ref{thm:int}  also has implications on the localization of the solution $\mathcal Z^{\go,a}_{\beta}(t,x)$ to the SHE with $\delta_0$ initial condition.
Let us consider   the probability measure on $\bbR^d$ given by
$$ P^{\go}_{\beta,t}(\dd x)=  \frac{\mathcal Z^{\go,a}_{\beta}(t,x)}{\mathcal Z^{\go,a}_{\beta}(t,\ast)}\, \dd x.$$
When $\beta=0$, $P^{\go}_{\beta,t}$ is simply $\rho(t,x)\,\dd x$.
In this case, for large $t$, its mass is roughly homogeneously spread out on the centered Euclidean ball of radius $t^{1/2}$.
By contrast, if $\beta>0$, the large time behavior of 
$P^{\go}_{\beta,t}(\dd x)$ is conjectured to be different:
the mass of $P^{\go}_{\beta,t}$ should typically be concentrated on a set of bounded volume (the volume here should not depend on $t$). 

\begin{conj}\label{conjdificil}
 Given $\gep>0$ and $\beta>0$, there exist two constants $k=k(\gep,\beta)\in \mathbb N$ and $R=R(\gep,\beta)>0$ such that for all $t\geq0$,
 \begin{equation*}
 \bbP\Bigg( \exists z_1, z_2, \dots, z_k \in \bbR^d \ : \  P^{\go}_{\beta,t}\bigg(\bigcup_{i=1}^k B(z_i,R)\bigg) \ge 1-\gep \Bigg) \ge 1-\gep,
 \end{equation*}
where $B(z,R):=\{x\in\R^d: \lVert x-z\rVert < R\}$.
\end{conj}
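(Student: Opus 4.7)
The plan is to adapt the Bates--Chatterjee endpoint-localization framework (originally developed for discrete directed polymers in the very-strong-disorder regime) to the continuum L\'evy setting. The starting point is the strong intermittency property $\bar\gamma_\beta(p)<0$ for $p\in(0,1)$ provided by Theorem~\ref{thm:int}. The strategy splits into four steps centered around the \emph{$R$-overlap}
\[
J_R(t):=\int_{\bbR^d} P^{\go}_{\beta,t}(B(x,R))\, P^{\go}_{\beta,t}(\dd x),
\]
which is the probability that two independent samples from $P^{\go}_{\beta,t}$ are within distance $R$. The elementary inequality $J_R(t)\le \sup_{z} P^{\go}_{\beta,t}(B(z,R))$ shows that a non-vanishing lower bound on $J_R(t)$ immediately produces one ball carrying a positive fraction of the mass.

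\textbf{Step 1 (Replica representation and moment bounds).} Express $J_R(t)$ as a ratio
\[
J_R(t)=\frac{\int_{\bbR^d\times\bbR^d}\ind_{\{\|x-y\|\le R\}}\cZ^{\go}_\beta(t,x)\cZ^{\go}_\beta(t,y)\,\dd x\,\dd y}{\cZ^{\go}_\beta(t,\ast)^{2}}.
\]
The numerator is a two-replica partition function that, via its chaos expansion, can be controlled by adapting the moment bounds of Propositions~\ref{prop:localbisdone}--\ref{prop:localbis} to pairs of simplices (the replica version essentially introduces an extra integration over $\bbR^d$ with a Brownian bridge weight of mass $\asymp R^d$ at short distances).

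\textbf{Step 2 (Lower bound on $\bbE[J_R(t)]$).} Combine the strong intermittency estimate $\bbE[\bar\cZ^{\go}_{\beta}(t,\ast)^{p}]\le e^{\bar\gamma_\beta(p)t+o(t)}$ with a size-biased computation: if $P^{\go}_{\beta,t}$ were spread out over a ball of radius $t^{1/2}$, then $J_R(t)\lesssim R^{d}t^{-d/2}$ and the corresponding moment estimates for $\cZ^{\go}_\beta(t,\ast)$ would contradict strong intermittency. Making this precise by a change-of-measure argument in the spirit of Lemma~\ref{lem:changemeas} should yield
\[
\bbE[J_R(t)]\ge \eta=\eta(R,\beta)>0\qquad \text{for all $t\ge t_0(R,\beta)$}.
\]

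\textbf{Step 3 (Iterative peeling).} Given Step~2, with positive probability there exists $z_1$ with $P^{\go}_{\beta,t}(B(z_1,R))\ge \eta/2$. By translation invariance and the homogeneity of the L\'evy noise, apply the same argument to the conditional polymer measure on $B(z_1,R)^{c}$: this yields a second ball $B(z_2,R)$ carrying a proportion $\eta/2$ of the remaining mass. Iterating $k\sim \log\eps^{-1}/\log(1-\eta/2)^{-1}$ times leaves mass at most $\eps$ outside $\bigcup_{i=1}^{k}B(z_i,R)$.

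\textbf{Step 4 (From positive probability to high probability).} Upgrade $\bbE[J_R(t)]\ge \eta$ to $\bbP(J_R(t)\ge \eta/2)\ge 1-\eps$ using a concentration argument for the Poisson functional $\go\mapsto J_R(t)$. A candidate is a Chebyshev bound on $\Var(J_R(t))$ based on the decoupling inequalities of Section~\ref{sec:upper}, applied to a four-replica partition function, combined with the fact that strong intermittency forces rapid decorrelation of independent copies.

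\textbf{Main obstacle.} Step~2 is the decisive and genuinely hard step: a one-point upper bound on $\bbE[\cZ^{\go}_\beta(t,\ast)^{p}]$ does not immediately translate into a lower bound on the two-replica quantity $\bbE[J_R(t)]$, and a delicate size-biased (or tilt) analysis tailored to the L\'evy noise is needed. In addition, Step~4 is nontrivial in the heavy-tailed regime (\textit{e.g.}\ $\alpha$-stable noise) because standard Poisson concentration inequalities degrade; one likely needs a bespoke truncation-plus-union-bound argument or a sharpened version of the coarse-graining ideas in Section~\ref{sec:theotherbound} to handle the contribution of extreme atoms of~$\go$.
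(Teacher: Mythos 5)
What you are trying to prove is stated in the paper as a conjecture, not a theorem: the paper offers no proof of Conjecture~\ref{conjdificil}, and explicitly identifies the passage from strong intermittency (Theorem~\ref{thm:int}) to geometric localization as an open problem, suggesting only that the \emph{weaker}, Ces\`aro-in-time statement \eqref{cesarostuff} might be within reach by adapting \cite{BC20,BaSe20}. Your proposal is a reasonable research plan in that direction, but it is not a proof, and the gaps are exactly where the difficulty lives. Step~2 is the heart of the matter and is not established by anything in the paper or in the literature you invoke: the known route from ``very strong disorder'' to overlap lower bounds (Comets--Hu--Shiga--Yoshida, Bates--Chatterjee) proceeds via a derivative-in-$\beta$ or integration-by-parts identity that controls only \emph{time- or parameter-averaged} quantities, i.e.\ it yields lower bounds on $\frac1T\int_0^T \bbE[J_R(t)]\,\dd t$, not on $\bbE[J_R(t)]$ for every large $t$. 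Asserting ``$\bbE[J_R(t)]\ge\eta$ for all $t\ge t_0$'' is essentially a reformulation of the conjecture itself, and Lemma~\ref{lem:changemeas} gives upper bounds on fractional moments, not a mechanism for lower-bounding a two-replica functional pointwise in $t$. You acknowledge Step~2 is hard, but without it the argument does not start.

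There are further genuine problems downstream. In Step~3 the peeling is not legitimate as stated: after removing $B(z_1,R)$, the conditioned measure $P^{\go}_{\beta,t}(\,\cdot\mid B(z_1,R)^c)$ is no longer the polymer measure of a (translated) L\'evy noise, so neither translation invariance nor the Step~2 bound applies to it; the discrete proofs in \cite{BC20,BaSe20} avoid this by working with limits of empirical measures in the Mukherjee--Varadhan topology rather than by iterative extraction of balls, and that compactness machinery is precisely the technical work that has not been carried out in the continuum L\'evy setting. In Step~4, a Chebyshev bound on a four-replica variance does not obviously produce a bound that is uniform in $t$ with a single pair $(k,R)$, which is what the conjecture demands (``for all $t\ge0$'' with probability $1-\gep$), and, as you note yourself, in the heavy-tailed regime the required concentration is unavailable off the shelf. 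So the proposal should be read as a heuristic outline consistent with the paper's discussion in Section~\ref{sec:geoloca}, not as a proof; to make progress, the realistic target is the Ces\`aro version \eqref{cesarostuff}, for which the averaged overlap lower bound of Step~2 is actually the statement one can hope to extract from $\bar\gamma_\beta(p)<0$.
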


A weaker version of the above conjecture is that   localization holds in Ces\`aro mean, that is,  for $\gep>0$ and $\beta>0$ fixed, there exist two constants $k=k(\gep,\beta)\in \mathbb N$ and $R=R(\gep,\beta)>0$ such that 
\begin{equation}\label{cesarostuff}
 \liminf_{T\to \infty} \frac{1}{T} \int^T_0  \max_{z_1,\dots, z_k\in \bbR^d}P^{\go}_{\beta,t}\bigg(\bigcup_{i=1}^k B(z_i,R)\bigg)\, \dd t \ge  1-\gep.
\end{equation}
In fact, such statements have been rigorously proved for a discrete analogue of the SHE: the partition function of the discrete polymer in $\bbZ^d$.
In this setup, the link between    \textit{very strong disorder} and localization in the Ces\`aro sense was first explored in \cite{CH02,CSY03}. More recently, in \cite{BC20}, it was shown that for directed polymers, 
very strong disorder  implies \eqref{cesarostuff}; the result was extended to continuous space and discrete time in \cite{BaSe20}.
The approach in \cite{BaSe20,BC20} is to prove that the measure $\frac{1}{T}\int^T_0 P^{\go}_{\beta,t}\,\dd t$ converges to a limiting object, using a specific  topology (introduced in \cite{MuVa16}) on the space of finite measures on~$\bbR^d$.
The corresponding localization result has also been shown for a continuum directed polymer model (and SHE) with spatially convoluted Gaussian white noise in~\cite{BM19}, using a Gaussian multiplicative chaos approach.

\medskip

While many details of the proofs presented in \cite{BaSe20,BC20} use the discrete nature of the polymer model, the general ideas the proof is based on do not seem to rely on it. For this reason, we believe that while there are technical challenges to be overcome, these proofs could in principle be adapted to the continuum case, to show that \eqref{cesarostuff} holds whenever $\bar \gamma_\beta(p)<0$ for $p\in (0,1)$ and hence, by Theorem \ref{thm:int}, for any $\beta>0$ as soon as $\mu<\infty$.

\begin{rem}
Note that the localization result in \cite{BC20} holds for every $\beta$ in dimensions~$1$ and $2$ but only for $\beta$ above a certain threshold in dimensions $d\geq3$. This is because for the directed polymer, the analogue of our Theorem \ref{thm:int} does not hold. Quite the contrary, if $d\ge 3$  and $\beta$ is small, the end-point distribution of the directed polymer satisfies the central limit theorem \cite{Bol89,CY06,IS88}.
\end{rem}

\begin{rem}
 The above discussion gives a justification of Conjecture \ref{conjdificil} in the case when $\mu<\infty$ (which implies that $\mathcal Z^{\go}_{\beta}(t,\ast)$ has a finite expectation).
 As a general rule, an   environment with heavier tail is expected to only increase geometric localization (cf.\  \cite{Va07}).
\end{rem}

\begin{rem}
 When $d=1$, known results concerning the distribution of $\cZ^\om_\beta(t,x)$ in the case of Gaussian white noise \cite{ACQ11} strongly suggest that, in fact, only one big ball (that is, $k=1$) is sufficient to capture most of the mass of $P^{\go}_{\beta,t}$. For a proof of a result of this kind in the discrete polymer  setup (with a special boundary condition), we refer to \cite{CoVu14}.
 In dimension $d\ge 2$, whether one or finitely many balls are needed is a challenging open problem, even at the heuristic level, see~\cite[Section 9, (1)]{BC20}.
 \end{rem}



\subsection{The case of more general noise}
\label{sec:genelevy}


In general, a Lévy white noise $\xi$ (on $\bbR\times\bbR^{d}$) is a random distribution in $\bigcap_{s> ({1+d})/{2}} H^{-s}_{\mathrm{loc}}(\R\times\bbR^d)$  that can be decomposed as follows:
\begin{equation}\label{noisedeco}
 \xi= \xi_1-\xi_2+ \alpha_1 \xi_3+ \alpha_2\mathcal{L}
\end{equation}
where $\xi_1$ and $\xi_2$ are two independent Lévy noises with respective jump intensities $\gl_1$ and $\gl_2$, $\xi_3$ is a standard Gaussian space-time white noise that is independent of $\xi_1$ and $\xi_2$, and $\alpha_1\ge 0$ and $\alpha_2\in \bbR$ are  constants. In our exposition so far, we have chosen to focus on the case $\xi_2\equiv 0$ and $\alpha_1=\alpha_2=0$. 
The constant $\alpha_2$ is irrelevant for any discussion of \eqref{eq:SHELN} as it only adds a multiplicative exponential in $t$ to the solution. 

Adding a Gaussian part in dimension $1$ does not pose a problem for solving \eqref{eq:SHELN}; see \cite{Chong16}. Given that $\ga(p)$ in \eqref{eq:fml} exhibits the same behavior in $\beta$ as $\ga_\beta(p)$ in  \eqref{eq:d1}, we believe that both Theorem~\ref{thm:thinup} and \ref{thm:heavyup}  remain valid when Gaussian noise is added. 
If  $d\geq2$, the SHE with multiplicative Gaussian white noise is known to be degenerate in the following sense. If $\xi^{(\gep)}$ denotes the noise obtained by space convolution of $\xi$ with a smooth kernel $\theta_{\gep}:=\gep^{-d} \theta( \gep^{-1} \cdot)$, then 
the solution of 
$$ \partial_t u^{(\gep)}= \frac 1 2 \Delta u^{(\gep)} + \beta \xi^{(\gep)} u^{(\gep)}, \quad    u^{\gep}(0,\cdot)=\delta_0$$ 
satisfies $\lim_{\gep\to 0} u^{(\gep)}(t,\cdot)= 0$ (not only pointwise but also in $L^1(\bbR^d)$ for any fixed $t>0$).
In the past years, important progress has been made to obtain  convergence of the solutions to non-trivial (non-Gaussian) limits   for $d=2$ when $\beta$  is sent to zero (in a specific critical window) jointly with  $\gep$, 
see~\cite{CSZ19,GQT21} and the recent breakthrough~\cite{CSZ21}.

One can also consider a noise $\xi=\xi_1-\xi_2$ that includes negative jumps. 
In this case, one can approximate $\xi$ by $\xi^a=\xi^a_1-\xi^a_2$.
Then
 the criterion \eqref{eq:log} applied to both jump intensities $\gl_1$ and $\gl_2$
 still implies the  convergence of the integrals in \eqref{absol}, and
 many of the proofs presented in this paper can be applied verbatim.
 Note that when negative jumps are present in the noise, the solution is not necessarily positive.
 In particular, in this case, $\mathcal Z^{\go}_{\beta}(t,x)$ does not correspond to the partition function of a polymer model.

Applying the proofs presented below,  Theorems \ref{thm:local}, \ref{thm:SHE} and \ref{thm:SHEunique} remain valid when signed noise is considered, and the same holds true for the  upper bounds of 
$\bbE [  |\cZ^{\go}_{\beta}(t,0)|^p ]$ for $p>1$. In fact, this translates into half of the inequalities (the upper bounds) proved in Theorems \ref{thm:thinup} and \ref{thm:heavyup}.
On the other hand, our proof of Proposition \ref{prop:finitemom} and the upper bounds on $\bbE\big[  |\cZ^{\go}_{\beta}(t,0)|^p\big]$ for $p\in(0,1)$ rely in a crucial manner on the positivity of $\cZ^{\go}_{\beta}(t,0)$, and thus break down when allowing for negative jumps. 
Note that the upper bounds on fractional moments of the partition function are required for the other half of the inequalities (the lower bounds) in
Theorems \ref{thm:thinup} and \ref{thm:heavyup}, and in particular, for Theorem \ref{thm:int}.
We believe nonetheless that   intermittency occurs in the following sense.

\begin{conj}
If $\xi =\xi_1-\xi_2$ is a signed Lévy white noise as above, with jump intensities that satisfy \eqref{eq:log-2} and $\int^{\infty}_0 z^{p_0} \,(\gl_1+\gl_2)(\dd z)$ for some
$p_0< \min(2,1+\tfrac2d)$, then
\[
\gamma_\beta(p):=\lim_{t\to\infty} \frac{1}{t}\log \bbE [   | Z^{\go}_{\beta}(t,0) |^p ]
\]
is well defined for $p\in[0,p_0]$ and $p\mapsto   \gamma_\beta(p)/p$ is an increasing function of $p$ on the interval $[1,p_0]$.
\end{conj}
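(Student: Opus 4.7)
The plan is to mimic the proofs of Proposition~\ref{prop:finitemom} and Proposition~\ref{prop:qual}(iii), substituting $|Z^\omega_\beta(t,0)|$ for $Z^\omega_\beta(t,0)$ wherever positivity was used. Two ingredients from the positive-case analysis remain available in the signed setting: the moment upper bounds in Propositions~\ref{prop:localbisdone} and~\ref{prop:localbis}, which the authors indicate extend verbatim, and the iterated-integral representation~\eqref{iteratz}, whose derivation does not rely on positivity. What is lost is the supermultiplicativity underlying the Fekete argument in the positive case, together with the change-of-measure/coarse-graining machinery of Section~\ref{sec:theotherbound} used to establish strict intermittency.

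For the existence of $\gamma_\beta(p)$ with $p\in[1,p_0]$, the finiteness $\E[|Z^\omega_\beta(t,0)|^p]<\infty$ for all $t>0$ is immediate from the extended moment bounds together with $\int z^{p_0}(\lambda_1+\lambda_2)(\dd z)<\infty$. Splitting the simplex $\frakX_k(t+s)$ in~\eqref{iteratz} along $\{t_\ell=t\}$ and summing in $\ell$ yields the Chapman--Kolmogorov identity
\begin{equation*}
Z^\omega_\beta(0,0;t+s,0) \;=\; \int_{\R^d} Z^\omega_\beta(0,0;t,y)\,Z^\omega_\beta(t,y;t+s,0)\,\dd y,
\end{equation*}
whose two factors are independent since they depend on disjoint parts of~$\omega$. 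Minkowski's integral inequality (valid for $p\ge 1$), the independence, and the distributional identity~\eqref{eq:theinlaws}---which controls $\|Z^\omega_\beta(t,y)\|_p$ by a Gaussian factor in $y$ times $\|Z^\omega_\beta(t,0)\|_p$---together yield
\begin{equation*}
\|Z^\omega_\beta(t+s,0)\|_p \;\le\; C_{d,p}\,\Bigl(\tfrac{ts}{t+s}\Bigr)^{d/2}\,\|Z^\omega_\beta(t,0)\|_p\,\|Z^\omega_\beta(s,0)\|_p.
\end{equation*}
The sequence $a_t:=\log\E[|Z^\omega_\beta(t,0)|^p]$ is then subadditive up to an $O(\log t)$ error, so an approximate-Fekete argument gives the limit $\gamma_\beta(p)\in[-\infty,\infty)$; finiteness from below is obtained by retaining only the $k=0$ term in~\eqref{iteratz}.

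The case $p\in[0,1)$ is more delicate because Minkowski fails. One route is to use the subadditivity $|a+b|^p\le|a|^p+|b|^p$ for $p\in(0,1)$ together with a discretization of the $y$-integral into unit cubes to recover an approximate subadditivity of $\log\E[|Z^\omega_\beta(t,0)|^p]$; a cleaner alternative is to invoke convexity of $p\mapsto \log\E[|Z^\omega_\beta(t,0)|^p]$ (H\"older's inequality) together with existence of the limit on $\{0\}\cup[1,p_0]$: both $f_*(p):=\liminf_t t^{-1}\log\E[|Z^\omega_\beta(t,0)|^p]$ and $f^*(p):=\limsup_t t^{-1}\log\E[|Z^\omega_\beta(t,0)|^p]$ are convex on $[0,p_0]$ and coincide on $\{0\}\cup[1,p_0]$, which pins them down throughout. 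Convexity of $\gamma_\beta$ on $[0,p_0]$ then passes from the prelimit, and combined with $\gamma_\beta(0)=0$ it immediately yields that $p\mapsto\gamma_\beta(p)/p$ is non-decreasing on $(0,p_0]$.

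The main obstacle is the \emph{strict} monotonicity of $\gamma_\beta(p)/p$ on $[1,p_0]$, which by convexity is equivalent to the intermittency statement $\gamma_\beta(p)>p\gamma_\beta(1)$ for some (hence all) $p\in(1,p_0]$. The positive-case argument (Theorem~\ref{thm:int}) relies on Lemma~\ref{lem:changemeas}, whose change-of-measure step bounds a fractional moment of $Z$ from above by a probability of a lower-deviation event and thus requires $Z\ge 0$. A natural substitute, when $p_0\ge 2$, is to extract strict intermittency from the $L^2$-orthogonality of the chaos expansion together with positivity of $\int z^2(\lambda_1+\lambda_2)(\dd z)$, and then propagate to all $p\in(1,p_0]$ via convexity of $\gamma_\beta$. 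When $p_0<2$---which is always the case if $d\ge 3$, since $\min(2,1+2/d)<2$ there---no such direct $L^2$ computation is available, and one would likely need a fractional-moment change-of-measure argument tailored to signed integrands, bypassing the positivity used in Lemma~\ref{lem:changemeas}. This appears to be the principal conceptual obstacle and the main content of the conjecture.
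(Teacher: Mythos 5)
You cannot be checked against a proof in the paper, because the statement you were asked about is precisely what the authors leave open: it appears as a \emph{conjecture} at the end of Section~\ref{sec:genelevy}, stated there exactly because the positivity of $\cZ^{\go}_{\beta}$ is used in an essential way both in the proof of Proposition~\ref{prop:finitemom} (existence of the Lyapunov exponents, especially for $p<1$) and in the change-of-measure machinery of Lemma~\ref{lem:changemeas} behind Theorem~\ref{thm:int}. Your proposal is honest about this: you correctly identify that the upper moment bounds of Propositions~\ref{prop:localbisdone} and~\ref{prop:localbis} and the representation~\eqref{iteratz} survive signed noise, and that what is missing is a positivity-free substitute for the fractional-moment/change-of-measure step. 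So, as you yourself concede in your last paragraph, what you have written is not a proof of the conjectured statement; the ``principal conceptual obstacle'' you name is exactly the open problem.

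Beyond that, two of your intermediate steps are flawed. First, the ``cleaner alternative'' for $p\in(0,1)$ is not valid: two convex functions $f_*\leq f^*$ on $[0,p_0]$ that coincide at $0$ and on $[1,p_0]$ need not coincide on $(0,1)$ (take $f^*\equiv0$ and $f_*(p)=p(p-1)$ on $[0,1]$), so convexity of the prelimit plus existence of the limit on $\{0\}\cup[1,p_0]$ does not pin down $\gamma_\beta(p)$ for $p\in(0,1)$; and your first route (cube decomposition with $|a+b|^p\leq|a|^p+|b|^p$) does not produce subadditivity of $\log\E[|\cZ^{\go}_\beta(t,0)|^p]$ either, since after Jensen you are left comparing $p$th moments to first moments rather than to products of $p$th moments --- this is where positivity (via the super-multiplicativity of Lemma~\ref{lem:submult}) is used in the paper, and no signed analogue is supplied. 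Thus even the ``well defined for $p\in[0,p_0]$'' part of the conjecture is not established for $p\in(0,1)$. Second, your proposed $L^2$ fallback for strict monotonicity is vacuous: the hypothesis $p_0<\min(2,1+\frac2d)$ forces $p_0<2$ in every dimension, and for $d\geq2$ the second moment of the chaos series is infinite in any case (cf.\ the discussion after Proposition~\ref{prop:finitemom} and Remark~\ref{rem:secmom}, where $\nu_2>0$ requires $d=1$), so there is no regime in which this branch applies. What remains sound, modulo a justification of the Chapman--Kolmogorov identity for the signed solution in the $L^0$/It\^o framework, is your argument for the existence of $\gamma_\beta(p)$ for $p\in[1,p_0]$ via Minkowski, independence, \eqref{eq:theinlaws} and an approximate-Fekete lemma with logarithmic error; this is a reasonable (and genuinely different, since the paper's Lemma~\ref{lem:submult} uses positivity and the free-end function) partial contribution, but it does not touch the heart of the conjecture, namely the strict increase of $\gamma_\beta(p)/p$.
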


\section{Preliminary tools}
\label{sec:prelim}

\subsection{Size-biased measure}\label{sec:sizebias}

If $\mu<\infty$, note that
$\bar \cZ_{\beta}^{\go,a}(t,\ast)$ is non-negative by \eqref{eq:help} and satisfies $
\bbE  [\bar \cZ_{\beta}^{\go,a}(t,\ast) ]=1$.
We can therefore define a new measure $\wt \bbP^a_{\beta,t}$  for the environment by
\begin{equation}
\label{eq:sizebiased}
\wt \bbP^a_{\beta,t}(\go\in A):= \bbE\Big[ \bar \cZ_{\beta}^{\go,a} (t,\ast) \ind_{A}\Big] \, ,
\end{equation}
which is referred to as the \emph{size-biased measure}.
Note that when  
$(\bar \cZ_{\beta}^{\go,a}(t,\ast))_{a\in(0,1)}$ is uniformly integrable, then using martingale convergence, we can also consider
$\wt \bbP^0_{\beta,t}$ whose density with respect to $\bbP$ is given by $\bar \calz^{\om}_{\beta}(t,\ast)$.

Lemma~3.7 in~\cite{BL20_cont} gives a useful representation of 
the size-biased measure $\wt \bbP^a_{\beta,t}$:
the distribution of $\go$ under $\wt \bbP^a_{\beta,t}$ 
is obtained by adding to $\go$ an independent Poisson process on the trajectory of a Brownian motion.  
More precisely, we let $\go'$ be a Poisson point process on $\R\times (0,\infty)$ with intensity $\dd t \otimes \gb z\, \lambda(\dd z)$ and we denote its law by $\bbP'_{\beta}$. 
Furthermore, let $(B_t)_{t\in [0,\infty)}$ be a standard $d$-dimensional Brownian motion starting from $0$ that is independent of both $\om$ and $\om'$ and denote its distribution by $\bQ$.
For $a\in[0,1]$, we then define
\begin{align*}
 \wt\go(\go',B)&:=\Big\{ (\tau, B_\tau, \zeta) :  (\tau,\zeta)\in \go' \Big\},\\
  \wt\go_{a,t}(\go',B)&:=\Big\{ (\tau, B_\tau, \zeta) :  (\tau,\zeta)\in \go', \tau\in[0,t]\,, \zeta\geq a  \Big\} \,,
\end{align*}
which corresponds to putting the Poisson process $\go'$ on the trajectory of $B$, with some restrictions on $\tau$ and $\zeta$ in the second case. When $a=0$, the dependence in $a$ is sometimes omitted.

\begin{lemma}\label{lem:sizebias}
If $\mu <\infty$, then for any $\beta,t>0$ and $a\in (0,1]$ and for any measurable bounded function $g$, we have
\begin{equation}\label{sides}
 \wt \bbE^a_{\beta,t} [ g(\go) ]= \bbE\otimes \bbE'_{\beta}\otimes \bQ [ \wh g_{a,t}(\go, \go', B) ],
\end{equation}
where $\wh g_{a,t}( \go, \go', B):= g( \go \cup \wt \go_{a,t}(\go',B))$.
 If
$(\bar \cZ_{\beta}^{\go,a}(t,\ast))_{a\in(0,1)}$ is uniformly integrable, then \eqref{sides} is also satisfied for $a=0$.
\end{lemma}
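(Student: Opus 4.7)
\medskip
\noindent\emph{Plan of proof.} The strategy is to expand both sides of \eqref{sides} explicitly and identify them term-by-term, using the multivariate Mecke formula for the Poisson point process $\go$. Fix $a\in(0,1]$ and take $g\ge 0$ bounded and measurable (the general bounded case follows by linearity/a monotone-class argument). Starting from the positive-form expression~\eqref{eq:help} and expanding $\xi_{\go}^{a,+}=\sum_{(\tau,y,\zeta)\in\go,\,\zeta\geq a}\zeta\,\delta_{(\tau,y)}$, one rewrites
\begin{equation*}
\bar\calz^{\om,a}_{\beta}(t,\ast) = e^{-\beta(\mu+\kappa_a)t}\sum_{k\geq 0}\beta^k \sumtwo{(\tau_i,y_i,\zeta_i)_{i=1}^k\subset\go}{0<\tau_1<\cdots<\tau_k<t,\,\zeta_i\geq a}\prod_{i=1}^k\zeta_i\,\prod_{i=1}^{k}\rho(\Delta\tau_i,\Delta y_i),
\end{equation*}
with the convention $(\tau_0,y_0)=(0,0)$. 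Multiplying by $g(\go)$ and taking expectation, the multivariate Mecke formula applied to each $k$-fold sum over distinct atoms of $\go$ (with intensity $\dd t\otimes\dd y\otimes\lambda(\dd z)$) turns the sum into an integral over $(\tau_i,y_i,\zeta_i)$ with $g$ evaluated at the enlarged configuration. Recognizing $\prod_{i=1}^{k}\rho(\Delta\tau_i,\Delta y_i)$, as a function of $(y_1,\dots,y_k)$, as the joint density of $(B_{\tau_1},\ldots,B_{\tau_k})$ under $\bQ$, and integrating out $\dd y_1\cdots\dd y_k$, we get
\begin{equation*}
\bbE\Big[\bar\calz^{\om,a}_\beta(t,\ast)\,g(\go)\Big] = e^{-\beta(\mu+\kappa_a)t}\sum_{k\geq 0}\beta^k\int_{\frakX_k(t)\times [a,\infty)^k}\prod_{i=1}^k z_i\,\lambda(\dd z_i)\,\dd\tau_i\;\bbE\otimes\bQ\Big[g\big(\go\cup\{(\tau_i,B_{\tau_i},z_i)\}_{i=1}^k\big)\Big].
\end{equation*}

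\medskip
\noindent For the right-hand side of~\eqref{sides}, note that $\go'\cap([0,t]\times[a,\infty))$ is a Poisson process with intensity $\dd\tau\otimes\beta z\,\lambda(\dd z)\,\mathbf{1}_{[0,t]\times[a,\infty)}$, of finite total mass $t\beta(\mu+\kappa_a)$ (using $\mu<\infty$ and $\kappa_a<\infty$ for $a>0$). Writing the law of this Poisson process by first sampling $N\sim\mathrm{Poisson}(t\beta(\mu+\kappa_a))$ and then $N$ i.i.d.\ atoms with density proportional to $\beta z\,\lambda(\dd z)\,\dd\tau$, one obtains
\begin{equation*}
\bbE\otimes\bbE'_\beta\otimes\bQ\Big[g\big(\go\cup\wt\go_{a,t}(\go',B)\big)\Big] = e^{-t\beta(\mu+\kappa_a)}\sum_{k\geq 0}\frac{\beta^k}{k!}\int_{([0,t]\times[a,\infty))^k}\prod_{i=1}^k z_i\,\lambda(\dd z_i)\,\dd\tau_i\;\bbE\otimes\bQ\Big[g\big(\go\cup\{(\tau_i,B_{\tau_i},z_i)\}_i\big)\Big].
\end{equation*}
Because $g$ depends only on the \emph{unordered} enlarged configuration, the integrand is symmetric in the pairs $(\tau_i,z_i)$; partitioning $[0,t]^k$ into $k!$ slices according to the ordering of $\tau_i$ converts the factor $1/k!$ into a restriction to $\frakX_k(t)$, and this matches exactly the expression derived for the left-hand side.

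\medskip
\noindent For the extension to $a=0$: under uniform integrability, the backwards martingale $(\bar\calz^{\om,a}_\beta(t,\ast))_{a\in(0,1]}$ converges to $\bar\calz^{\om}_\beta(t,\ast)$ in $L^1(\bbP)$ as $a\downarrow 0$, so the left-hand side passes to the limit. On the right, $\wt\go_{a,t}\uparrow\wt\go_{0,t}$ as $a\downarrow 0$; for bounded $g$ that is continuous on the space of (locally finite) configurations this gives the limit by bounded convergence, and a standard monotone-class argument extends this to all bounded measurable $g$.

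\medskip
\noindent\emph{Main obstacle.} The serious work is combinatorial bookkeeping: matching the simplex representation of the chaos expansion (with ordered times) against the symmetric Poisson representation of $\wt\go_{a,t}$ (unordered atoms, hence the factor $1/k!$), and justifying the interchange of expectations, Brownian integration, and the infinite $k$-sum. Both are handled by absolute convergence once one notes that each term in the sum is controlled, after taking absolute values and using Jensen's inequality for $\bQ$, by $(t\beta(\mu+\kappa_a))^k/k!$ times $\|g\|_\infty$, producing the same normalizing exponential $e^{-t\beta(\mu+\kappa_a)}$ that already appears on both sides.
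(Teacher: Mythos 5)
Your proposal is correct, but it follows a different route from the paper. For $a>0$ the paper simply invokes \cite[Lemma~3.7]{BL20_cont}, whereas you re-derive the identity from scratch: expanding $\bar\calz^{\om,a}_\beta(t,\ast)$ via \eqref{eq:help}, applying the multivariate Mecke equation to each $k$-fold sum over distinct atoms, and recognizing $\prod_i\rho(\Delta\tau_i,\Delta y_i)$ as the finite-dimensional density of $B$ so that the spatial integration produces $\bQ$; matching against the Poissonian representation of $\om'\cap([0,t]\times[a,\infty))$ (finite intensity $t\beta(\mu+\kappa_a)$, the $1/k!$ being absorbed by symmetrization onto the simplex) is exactly the content of the cited lemma, and your bookkeeping and Tonelli justifications are sound (the appeal to "Jensen for $\bQ$" is unnecessary -- the heat kernels integrating to one suffices). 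For $a=0$ the paper argues more abstractly: by uniform integrability $\E[\bar\calz^{\om}_\beta(t,\ast)\mid\cG_a]=\bar\calz^{\om,a}_\beta(t,\ast)$, so for bounded $\cG_a$-measurable $g$ one has $\wt\E^0_{\beta,t}[g]=\wt\E^a_{\beta,t}[g]$ while $\wh g_{0,t}=\wh g_{a,t}$, and a monotone class argument over $\bigcup_a\cG_a$ finishes; this avoids any topology on configuration space. Your alternative -- $L^1$-convergence of the reversed martingale on the left, a.s.\ vague convergence $\wt\om_{a,t}\uparrow\wt\om_{0,t}$ plus bounded convergence for vaguely continuous $g$ on the right, then a functional monotone class step -- also works, but it additionally relies on the (true, standard) fact that bounded vaguely continuous functionals generate the canonical $\sigma$-field of the point process, a measurability point the paper's conditional-expectation trick sidesteps. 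In exchange, your argument is self-contained and makes the structure of the size-biasing (spine plus Poissonization) completely explicit.
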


If $a>0$, this was shown in~\cite[Lemma~3.7]{BL20_cont}. For $a=0$, the result  can be deduced from the  case $a>0$ by observing that $\bbE[ \bar \calz^{\om}_{\beta}(t,\ast) \mid \cG_a] = \bar \calz^{\om,a}_{\beta}(t,\ast)$ for any $a\in(0,1]$, which is why
\eqref{sides} with $a=0$ is true for any bounded $\mathcal G_a$-measurable $g$. The monotone class theorem yields \eqref{sides} for general $g$. Note that by Theorem \ref{thm:local}, $(\bar \cZ_{\beta}^{\go,a}(t,\ast))_{a\in(0,1)}$ is uniformly integrable as soon as \eqref{eq:log-2} holds.

\begin{rem}
Let us make a quick comment about notation. In \eqref{sides} as well as in the rest of the paper, we  denote the expectation with respect to a measure $\P$, possibly with decorations, by $\E$ with the same decorations. 
For example, we write $\wt \bbE^a_{\beta,t}$ and  $\E'_{\beta}$ for expectation with respect to $\wt\P^a_{\beta,t}$ and $\P'_{\beta}$, respectively.
\end{rem}

\subsection{Decoupling inequalities}\label{sec:dec}

In the proofs below, we shall occasionally make use of \emph{moment decoupling inequalities} for multiple Poisson integrals. The results that we review in this section are based on \cite{Kallenberg17}, where discrete-time decoupling techniques from \cite{delaPena99, Kwapien92} are extended to continuous time. We also refer the reader to these references for further literature (and history) on the subject. 
Let $N\geq1$ be an integer, let $\X\ceq  (0,\infty)\times\R^d\times (0,\infty)$, with generic point $w\ceq (t,x,z)$, and for concreteness let $M=M^\om$ be the compensated Poisson random measure 
$M^\om(\dd w)=(\delta_\om-\nu)(\dd w)$ (recall that $\nu:=\dd t\otimes\dd x\otimes\la(\dd z)$). Furthermore, suppose that $f: \X^N\to \R$ is measurable and \emph{tetrahedral}, that is,
\[
f((t_1,x_1,z_1),\dots,(t_N,x_N,z_N))=0\quad \text{unless} \  t_1<\dots<t_N \,. 
\]
In that case, subject to integrability conditions on $f$, the multiple integral
\[
\int_{ \X^N} f(\bw) \,M(\dd w_1)\cdots M(\dd w_N) \ceq \int_\X \Biggl(\cdots\Biggl(\int_\X f(\bw)\,M(\dd w_1)\Biggr)\cdots\Biggr)\, M(\dd w_N)\,,
\]
with $\bw=(w_1,\dots,w_N)$,
can be defined as an iterated It\^o integral (recall Remark \ref{rem:Ito}). The general definition of It\^o integrals used in this paper is detailed in Section~\ref{sec:sol}. 
We let $(\om_i)_{i=1}^N$ denote i.i.d.\ copies of our environment $\om$ and we use the notations $\bbP^{\otimes N}$ and $\bbE^{\otimes N}$ for the associated probability and expectation, respectively. The following theorem is proved in Appendix \ref{app:A}.

\bthm\label{thm:dec}  
There exists a universal constant $C>0$ such that for every  $1<p\leq 2$,  for any integer $N\geq1$ and any measurable and tetrahedral function $f:  \X^N\to\R$ satisfying
\begin{equation}
\label{eq:fcond} \int_{ \X^N} \lvert f(\bw)\rvert^p\,\nu(\dd w_1)\cdots\nu(\dd w_N)<\infty\,,
\end{equation}
the stochastic integrals below are well defined and have finite moments of order $p$ satisfying
\begin{equation}\label{eq:dec-2} \begin{split}
&\bigg(\frac{p-1}{C}\bigg)^N \E^{\otimes N}\Biggl[ \Biggl\lvert\int_{ \X^N} f(\bw)\,M^{\om_1}(\dd w_1)\cdots\,M^{\om_N}(\dd w_N)\Biggr\rvert^p\Biggr]\\
&\qquad\qquad\qquad\qquad\leq\E\Biggl[ \Biggl\lvert\int_{ \X^N} f(\bw)\,M^\om(\dd w_1)\cdots\,M^\om(\dd w_N)\Biggr\rvert^p\Biggr]\\
&\qquad\qquad\qquad\qquad\qquad\qquad\leq \bigg(\frac{C}{p-1}\bigg)^N \E^{\otimes N}\Biggl[ \Biggl\lvert\int_{ \X^N} f(\bw)\,M^{\om_1}(\dd w_1)\cdots\,M^{\om_N}(\dd w_N)\Biggr\rvert^p\Biggr].
\end{split}
\end{equation}

\ethm

\subsection{Stochastic integration in the absence of moments}\label{sec:sol}

Because $\calz^\om_\beta(t,x)$ does not possess any finite moments in general, the proof of Theorem~\ref{thm:SHE} has to make use of a stochastic integration theory that does not assume existence of any moments a priori. For the reader's convenience, let us give a brief review of this $L^0$-theory, which was developed by \cite{Bichteler83} in its most general form (see also \cite[Appendix~A]{Chong19} for a summary). Let $M=M^\om$ denote either the Poisson measure $\delta_\om$ or its compensated version $\delta_\om-\nu$.

A \emph{predictable step process} $H$ is of the form $H=\sum_{i=1}^r a_i \bone_{A_i}$, where $r\in\N$, $a_i\in\R$, and for each $i$ we have $A_i\in\calp\otimes\calb(\R^d\times(0,\infty))$ (where $\cP$ is the usual predictable $\sigma$-field) and $A_i\subseteq \Om\times (0,T)\times [-N,N]^d \times(a,\infty)$ for some $T,N,a>0$.
Then, the integral of the predictable step process $H=\sum_{i=1}^r a_i \bone_{A_i}$
 is canonically defined as
\[
 \int_\X  H(\bw)\,M(\dd \bw) := \sum_{i=1}^r a_i M(A_i)\,. 
 \]
Denoting by $\cals$ the collection of predictable step processes, we can extend the integral to a larger subset of predictable processes by using the metric induced by
\begin{equation}
\label{eq:semivar} 
\lVert K\rVert_{M,p} \ceq \sup_{H\in\cals, \lvert H\rvert\leq \lvert K\rvert } \Biggl\lVert \int_\X  H(\bw)\,M(\dd \bw) \Biggr\rVert_{L^p}, 
 \end{equation}
defined for $\calp\otimes\calb(\R^d\times(0,\infty))$-measurable processes $K$,
where we have used the notation $\lVert X\rVert_{L^p}\ceq\E[\lvert X\rvert^p]^{1/p}$ and $\lVert X\rVert_{L^0}\ceq\E[1\wedge \lvert X\rvert]$. Such a process $K$ is
called \emph{$L^p$-integrable with respect to $M$} if there exists a sequence $(H_n)_{n\in\N}\subseteq\cals$ such that $\lim_{n\to \infty}\lVert K-H_n \rVert_{M,p} =0$. 
The stochastic integral of $K$ with respect to $M$ is then defined as the $L^p$-limit of $\int_\X H_n(\bw)\,M(\dd \bw)$,
which exists and does not depend on the choice of $(H_n)_{n\in\N}$. If $p=0$, we simply say that $K$ is \emph{integrable with respect to $M$}. According to \cite[(2.8)]{Bichteler83}, $K$ is $L^p$-integrable with respect to $M$ if and only if
\beq\label{eq:integ}
 \begin{cases}
  \lVert K\rVert_{M,p}<\infty&\text{if } p>0,
 \\ \displaystyle\lim_{u\to0} \lVert uK\rVert_{M,0}=0&\text{if } p=0.
 \end{cases}
  \eeq
Furthermore, by \cite[Lemma~A.2]{Chong19}, 
\bit
\item if $1\leq p<\infty$ and $M^\om=\delta_\om-\nu$, there are $c=c_p>0$ and $C=C_p>0$ such that
\begin{equation}\label{eq:condizione}
c\lVert K\rVert_{M,p}\leq \E\Biggl[\biggl(\int_\X K(\bw)^2\,\delta_\om(\dd \bw)\biggr)^{\frac p2} \Biggr]^{\frac 1p}\leq C\lVert K\rVert_{M,p};
\end{equation}
\item if $0<p\leq 1$ and $M^\om=\delta_\om$, then
$$ \lVert K\rVert_{M,p}^p\leq \int_\X \E[\lvert K(\bw)\rvert^p]\,\nu(\dd \bw).$$
\eit
In particular, the $L^p$-theory, $p\geq1$, for $\delta_\om-\nu$ encompasses the $L^p$- and $L^2$-integrals considered in \cite{Loubert98} and \cite{Walsh86}. Even in the case when $\delta_\om-\nu$ has a finite first moment (\textit{i.e.}, is a martingale measure), the $L^0$-theory is more general since, for example, it does not require the integrands  have finite moments.

Let us end this section with a simple integrability criterion that we need in the proof of Theorem~\ref{thm:SHE}. Its proof can be found in the appendix.

\blem\label{lem:integrab} 
Suppose that for $\P_\geq$-a.e.\ realization of $\om_\geq$, the process $(\om_<, t,x)\mapsto K(\om_< \cup\om_\geq,t,x)$ is $L^p(\P_<)$-integrable with respect to $\xi_{\om_<}$ for some $p>0$. Then $K$ is $L^0(\P)$-integrable with respect to $\xi_{\om_<}$.
\elem

\subsection{A technical lemma}

The following technical result will be used repeatedly in the paper.
Its proof comes from a straightforward calculation and can be found in~\cite[Lemma~A.3]{BL20_cont}.
 
\begin{lemma}
\label{lemGamma}
For any $t >0$, $k\ge 0$ and $\zeta_1,\dots ,\zeta_{k+1} >0$, 
\[
\int_{ \mathfrak{X}_k(t)}  \prod_{i=1}^{k+1} (\Delta t_i)^{\zeta_i-1} \,\dd t_i    = t^{\sum_{i=1}^{k+1}\zeta_i-1} \frac{\prod_{i=1}^{k+1}\Gamma(\zeta_i)}{\Gamma(\sum_{i=1}^{k+1}\zeta_i)} \,,
\]
where $\Delta t_i := t_i-t_{i-1}$ as defined in~\eqref{deltadeff}, with the convention $t_0=0$ and $t_{k+1}=t$.
\end{lemma}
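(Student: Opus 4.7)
The plan is to reduce the integral to the classical Dirichlet formula by a linear change of variables to the gap increments. I would set $s_i := \Delta t_i = t_i - t_{i-1}$ for $i = 1, \dots, k$, which is a bijection of $\mathfrak{X}_k(t)$ onto $\{(s_1,\dots,s_k) : s_i > 0,\ \sum_{i=1}^k s_i < t\}$ with Jacobian~$1$. Since $\Delta t_{k+1} = t - \sum_{i=1}^k s_i$ is determined, the integral becomes
\[
\int_{s_i > 0,\ \sum_{i=1}^k s_i < t} \prod_{i=1}^{k} s_i^{\zeta_i - 1} \left(t - \sum_{i=1}^{k} s_i\right)^{\zeta_{k+1}-1} \dd s_1 \cdots \dd s_k.
\]
Rescaling by $u_i := s_i/t$ then factors out $t^{\sum_{i=1}^{k+1}\zeta_i - 1}$ and leaves the classical Dirichlet integral over the open unit simplex, whose value is $\prod_{i=1}^{k+1}\Gamma(\zeta_i) \big/ \Gamma\!\bigl(\sum_{i=1}^{k+1}\zeta_i\bigr)$; this identifies the right-hand side.

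If one prefers to avoid quoting the Dirichlet integral as a black box, I would instead proceed by induction on $k$. The base case $k=0$ is trivial, since $\mathfrak{X}_0(t)$ is a point and both sides equal $t^{\zeta_1-1}$. For the inductive step I would apply Fubini and integrate the variable $t_k$ first over $(t_{k-1}, t)$. Writing $T := t - t_{k-1}$ and substituting $u = t - t_k$, the inner integral reads
\[
\int_0^{T} u^{\zeta_{k+1}-1}(T-u)^{\zeta_k - 1}\,\dd u = T^{\zeta_k+\zeta_{k+1}-1}\,\frac{\Gamma(\zeta_k)\Gamma(\zeta_{k+1})}{\Gamma(\zeta_k+\zeta_{k+1})},
\]
by the standard Beta--Gamma identity. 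What remains is an integral of the same form over $\mathfrak{X}_{k-1}(t)$ with $k$ exponents $\zeta_1,\dots,\zeta_{k-1},\,\zeta_k+\zeta_{k+1}$, to which the induction hypothesis applies; the telescoping of Gamma factors then yields the stated formula.

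There is no genuine obstacle here: this is a routine Dirichlet computation, and the only care required is bookkeeping of the Gamma factors in the telescoping step (respectively, justifying the change of variables on the open simplex, which is immediate since all $\zeta_i > 0$ ensures integrability near the boundary).
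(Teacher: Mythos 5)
Your proposal is correct, and both of your routes (the change of variables $s_i=\Delta t_i$ reducing to the classical Dirichlet integral over the simplex, and the induction on $k$ integrating $t_k$ first via the Beta--Gamma identity and telescoping) are complete and standard. The paper itself offers no proof of this lemma, deferring to \cite[Lemma~A.3]{BL20_cont} with the remark that it is a straightforward calculation; your argument is exactly that routine computation, so there is nothing to add beyond noting that either of your two variants suffices.
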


\section{Qualitative properties of Lyapunov exponents}
\label{sec:properties}
In this section, we prove the statements made in Section \ref{sec:Lyap} concerning existence and basic properties of moment Lyapunov exponents (Propositions~\ref{prop:finitemom} and \ref{prop:qual}), assuming that Theorem \ref{thm:local} as well as the bounds in Propositions~\ref{prop:localbisdone} and \ref{prop:localbis} below are true. These are proved independently in Sections~\ref{sec:upper} and \ref{sec:deducing}, respectively.

\subsection{Existence of Lyapunov exponents}\label{subsec:exiss}

The existence of $\ga_\beta(p)$  readily  follows from sub-/ supermultiplicative properties of the moments of the partition function. 
\begin{lemma}
 \label{lem:submult}
Under assumption \eqref{eq:log}, for every $a\in(0,1]$, $s,t>0$ and $p\in[0,1+\frac 2d)$,  we have
\begin{gather}\label{eq:supersub1}
   \E[\calz^{\go,a}_{\beta}(t+s,\ast)^p]\le   \E[\calz^{\go,a}_{\beta}(s,\ast)^p]    \E[\calz^{\go,a}_{\beta}(t,\ast)^p]  \qquad \text{ if } p\ge 1,\\
    \E[\calz^{\go,a}_{\beta}(t+s,\ast)^p]\ge \E[\calz^{\go,a}_{\beta}(s,\ast)^p]   \E[\calz^{\go,a}_{\beta}(t,\ast)^p] \qquad  \text{ if } p\le1.\label{eq:supersub2}
\end{gather}
\end{lemma}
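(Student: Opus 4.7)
The plan is to combine a Chapman--Kolmogorov type factorization of the truncated partition function with a conditional (reverse) Jensen inequality with respect to a random probability measure built from the polymer endpoint at time $s$. The first step is to establish, at the level of the chaos expansion~\eqref{eq:trucatedp}, the identity
\begin{equation*}
\calz^{\go,a}_\beta(t+s, \ast) \,=\, \int_{\bbR^d} \calz^{\go,a}_\beta(s,y)\, \calz^{\go,a}_\beta(s,y;t+s,\ast)\, \dd y.
\end{equation*}
To do this I would split each simplex $\frakX_n(t+s)$ according to which of the times $t_i$ lie in $[0,s]$ and which lie in $(s, t+s]$, insert an auxiliary variable $y$ at the time slice $\{s\}$ via the semigroup identity $\rho(t_{k+1}-t_k, x_{k+1}-x_k) = \int \rho(s-t_k,y-x_k)\rho(t_{k+1}-s,x_{k+1}-y)\, \dd y$, and reassemble the resulting multiple integrals on the strips $[0,s]\times\bbR^d$ and $(s,t+s]\times\bbR^d$ into the two point-to-point partition functions. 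Integrating the outer endpoint over $\bbR^d$ gives the displayed identity. Absolute convergence of every manipulation is ensured by~\eqref{absol}, which holds under~\eqref{eq:log}.

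\textbf{Jensen step.} Set $A(y) := \calz^{\go,a}_\beta(s,y)$ and $B(y) := \calz^{\go,a}_\beta(s,y; t+s, \ast)$. The field $A$ is $\cF_s$-measurable in the filtration~\eqref{filtraF}, while $B$ depends only on the noise on $(s, t+s] \times \bbR^d$, so $A$ and $B$ are independent; moreover, the space--time translation invariance of $\xi^a_\go$ (cf.~\eqref{eq:theinlaws}) yields $B(y) \stackrel{d}{=} \calz^{\go,a}_\beta(t, \ast)$ for every fixed $y$. The positivity formula~\eqref{eq:help} ensures that $W := \calz^{\go,a}_\beta(s, \ast) = \int A(y)\, \dd y > 0$ a.s., so $\mu(\dd y) := W^{-1} A(y)\, \dd y$ defines a random probability measure on $\bbR^d$. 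Writing
\begin{equation*}
\calz^{\go,a}_\beta(t+s,\ast) \,=\, W \int_{\bbR^d} B(y)\, \mu(\dd y)
\end{equation*}
and applying Jensen's inequality with respect to $\mu$ (using convexity of $x \mapsto x^p$ for $p \ge 1$ and concavity for $p \in [0,1]$) gives the pathwise bound
\begin{equation*}
\calz^{\go,a}_\beta(t+s,\ast)^p \,\le\, W^{p-1} \int_{\bbR^d} A(y)\, B(y)^p\, \dd y \qquad (p\ge 1),
\end{equation*}
and the reverse inequality when $p\in[0,1]$. Taking $\bbE[\,\cdot\mid \cF_s]$, using independence of $A$ and $B$ and the fact that $\bbE[B(y)^p] = \bbE[\calz^{\go,a}_\beta(t,\ast)^p]$ does not depend on $y$, collapses the remaining integral and produces, in both regimes, a bound on $\bbE[\calz^{\go,a}_\beta(t+s,\ast)^p \mid \cF_s]$ by $W^{p} \, \bbE[\calz^{\go,a}_\beta(t,\ast)^p]$ (with the appropriate direction of inequality). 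A final outer expectation yields~\eqref{eq:supersub1} and~\eqref{eq:supersub2}.

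\textbf{Main obstacle.} The only step requiring care is the Chapman--Kolmogorov factorization: one must justify interchanging the chaos summation with the iterated L\'evy integrals, and then recognize the resulting sub-series as $\calz^{\go,a}_\beta(s,y)$ and $\calz^{\go,a}_\beta(s,y;t+s,\ast)$. The key simplification is that the two sub-integrals live on disjoint time strips, so under~\eqref{absol} each of them is an ordinary (pathwise) Lebesgue--Stieltjes integral against the signed measure $\xi^a_\go$, and the rearrangement reduces to Fubini together with the heat-kernel semigroup identity above. No a priori moment information on $\calz^{\go,a}_\beta$ is required: if either moment on the right-hand side of the claimed bounds is infinite, the inequalities hold trivially in $[0,\infty]$; otherwise the argument above delivers sharp, pathwise control.
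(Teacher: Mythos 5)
Your argument is correct and follows essentially the same route as the paper: the factorization $\calz^{\go,a}_\beta(t+s,\ast)=\int_{\bbR^d}\calz^{\go,a}_\beta(s,y)\,\calz^{\go,a}_\beta(s,y;t+s,\ast)\,\dd y$, Jensen's inequality with respect to the random endpoint measure (your $\mu$ is exactly the paper's polymer measure $P^{\go,a}_{\gb,s}$), and then conditioning on $\cF_s$ together with independence and translation invariance of the noise on $(s,t+s]$. The only difference is that you spell out the Chapman--Kolmogorov step via the chaos expansion, which the paper simply attributes to the Markov property of Brownian motion.
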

\bpr  The statement being trivial if $\mu_{1,\infty}(p)=\infty$, we  can assume $\mu_{1,\infty}(p)<\infty$ for the remainder of the  proof.
Let $P_{\gb,t}^{\go,a}$ denote the (random) probability measure on $\R^d$ whose density with respect to the Lebesgue measure is given by $\cZ_{\beta}^{\go,a}(t,x)/\cZ^{\om,a}_{\beta}(t,\ast)$ and $X$ denote an $\R^d$-valued random variable with distribution $P_{\gb,t}^{\go,a}$.
Recalling the definition~\eqref{eq:fepf} of $\cZ_{\beta}^{\go, a} (s,x;t,\ast)$
and using the Markov property for Brownian motion,
we have
\begin{align*}
	\cZ_{\beta}^{\go,a} (s+t,\ast)  &= \int_{\bbR^d}  \cZ_{\beta}^{\go,a} (s,x)  \cZ_{\beta}^{\go, a} (s,x;s+t,\ast)  \,\dd x \\
	&= \cZ_{\gb}^{\go,a}(s,\ast) \int_{\bbR^d}  \cZ_{\beta}^{\go, a} (s,x;s+t,\ast)  \,P_{\gb,s}^{\go,a} ( \dd x )\\
	& =  \cZ_{\gb}^{\go,a}(s,\ast) \, E_{\gb,s}^{\go,a} \Big[ \cZ_{\beta}^{\go,a}  (s,X;s+t,\ast)\Big] \,.
\end{align*}
For $p\geq 1$, we get by Jensen's inequality 
that 
\begin{equation*}
	\cZ_{\beta}^{\go,a} (s+t,\ast)^p  \leq  \cZ_{\gb}^{\go,a}(s,\ast)^p \, E_{\gb,s}^{\go,a} \Big[ \cZ_{\beta}^{\go,a}  (s,X;s+t,\ast)^p\Big] \,,
\end{equation*}
and the inequality goes in the other direction if $p\in (0,1)$.
Now, recalling \eqref{filtraF} and \eqref{eq:theinlaws}, we have 
\begin{equation*}
	\begin{split}
		\bbE\Big[  \cZ_{\gb}^{\go,a}(s,\ast)^p \, E_{\gb,s}^{\go,a} \Big[ \cZ_{\beta}^{\go,a}  (s,X;s+t,\ast)^p\Big] \mathrel{\Big|} \cF_s \Big]
		&= \cZ_{\gb}^{\go,a}(s,\ast)^p \, 
		E_{\gb,s}^{\go,a}\Big[ \bbE\Big[ \cZ_{\beta}^{\go,a}  (s,X;s+t,\ast)^p \mathrel{\Big|} \cF_s \Big]\Big] \\
		&=  \cZ_{\gb}^{\go,a}(s,\ast)^p \, 
		\bbE\Big[ \cZ_{\beta}^{\go,a}  (t,\ast)^p \Big] \,.
	\end{split}
\end{equation*}
Altogether, we have proven that for $p\geq 1$,
\[
\bbE\Big[ \cZ_{\beta}^{\go,a}(s+t,\ast)^p \mathrel{\Big|} \cF_s \Big] \leq    \cZ_{\gb}^{\go,a}(s,\ast)^p \, 
\bbE\Big[ \cZ_{\beta}^{\go,a}  (t,\ast)^p \Big]  \,,
\]
which yields \eqref{eq:supersub1}.
Repeating the same computation with reversed inequalities  when $p\in (0,1)$, we obtain   \eqref{eq:supersub2}.
\epr

\begin{proof}[Proof of Proposition~\ref{prop:finitemom}]
As a consequence of \eqref{eq:lpconv} in Theorem~\ref{thm:local},  we have
$$\lim_{a\to 0}\bbE\Big[ \calz^{\go,a}_{\beta}(t,\ast)^p\Big]=\bbE\Big[\calz^{\go}_{\beta} (t,\ast)^p\Big].$$ 
Therefore, the previous lemma remains valid for $\calz^{\go}_{\beta}(t,\ast)$ instead of $\calz^{\go,a}_{\beta} (t,\ast)$. 
An immediate consequence is that the limit in~\eqref{eq:ga-p} exists by the continuous version of Fekete's subadditive (or superadditive) lemma. It remains to check that $\gamma_{\beta}(p)$ is finite when \eqref{eq:log-2} is satisfied and $\mu_{1,\infty}(p)<\infty$.
When $p\in [1,1+ \tfrac 2 d)$, $\ga_\beta(p)<\infty$ follows from 
\begin{equation}
\label{Lyapunovfinite}
	\ga_\beta(p)=\lim_{n\to\infty} \frac1n \log \E\Big[\calz^\om_{\beta}(n,\ast)^p\Big]\leq \lim_{n\to\infty} \frac1n \log \E\Big[ \calz^\om_{\beta}(1,\ast)^p\Big]^n = \log \E\Big[\calz^\om_{\beta}(1,\ast)^p\Big],
\end{equation}
and the finiteness of $\E [ \calz^\om_{\beta}(1,\ast)^p ]$, see Theorem \ref{thm:local}.

For $p\in(0,1)$, we prove  $\gamma_{\beta}(p)>-\infty$ and  $\gamma_{\beta}(p)<\infty$ separately.
For the first part, 
 let $\gamma^{<}_{\beta}(p)$ be the moment Lyapunov exponents that one obtains after replacing $\om$ by $\om_{<}$.
Thus, by an obvious comparison and convexity (see Proposition~\ref{prop:qual}, applied to $\lambda_{<}(\dd z) = \ind_{(0,1)}(z)\, \lambda(\dd z) $), it follows that
$$
\gamma_{\beta}(p)\ge \gamma^{<}_{\beta}(p)\ge \frac{\ga_\beta^{<}(1)-(1-\theta)\ga_\beta^{<}(1+\frac1d)}{\theta}>-\infty, 
$$
where $\theta\in(0,1)$ is such that $1=\theta p+(1-\theta)(1+\frac 1d)$.
Note that  $\ga_\beta^{<}(1)=0$ and  that $\ga_\beta^{<}(1+\frac 1d)<\infty$ thanks to~\eqref{Lyapunovfinite}. 

In order to show that $\ga_\beta(p)<\infty$ 
when $p\in (0,\infty)$ and $\mu_{1,\infty}(p)<\infty$, we use Jensen's inequality for $\bbE_{<}$ so that for every $a\in [0,1)$,
\begin{equation*}
 \bbE\Big[ \cZ^{\go,a}_{\beta}(t,\ast)^{p} \Big]\le \bbE \Big[\cZ^{\go_\geq}_{\beta}(t,\ast)^p\Big].
\end{equation*}
 To bound the right-hand side, we use the following simple inequality, which   will be used extensively in the remainder of the paper:
given $\kappa\in(0,1)$ and any countable collection of non-negative numbers $(a_i)_{i\in I}$,
 we have
\begin{equation}\label{eq:subbaditiv}
	\bigg(\sum_{i\in I} a_i\bigg)^{\kappa}\le \sum_{i \in I} a^{\kappa}_i.
\end{equation}
We will refer to~\eqref{eq:subbaditiv}, which can be proved by induction (using the fact that $(a+b)^{\kappa}\le a^{\kappa}+b^{\kappa}$), as the \textit{subadditivity property}; note that it obviously extends to stochastic integrals with respect to discrete measures.

Moreover, note that we have
\beq\label{help3}
\rho(t,x)^p=t^{\nu_p-1}\vartheta(p)\rho (\tfrac tp,x ),
\eeq 
where $\nu_p = 1-\frac d2 (p-1)$ was defined in \eqref{defnup} and $\vartheta(p):= (2\pi)^{\nu_p-1} p^{-\frac d2}$.
Thus, using  \eqref{eq:subbaditiv} for the first step, \eqref{help3} and the fact that $\rho$ is a density for the second step, and Lemma~\ref{lemGamma} for the last step, we derive
\begin{align*}
  \bbE \Big[\cZ^{\go_\geq}_{\beta}(t,\ast)^p\Big]&\le \sum_{k=0}^\infty (\beta^p \mu_{1,\infty}(p))^k \int_{\frakX_k(t)\times(\bbR^d)^k}
   \prod_{i=1}^k  \rho(\Delta t_i, \Delta x_i)^p \,\dd t_i\, \dd x_i \\
   &=\sum_{k=0}^\infty (\beta^p \vartheta(p)\mu_{1,\infty}(p))^k \int_{\frakX_k(t) }
   \prod_{i=1}^k  (\Delta t_i)^{\nu_p-1}\,\dd t_i   = \sum_{k=0}^\infty  \frac{(\beta^p\vartheta(p)\mu_{1,\infty}(p)\Gamma(\nu_p)t^{\nu_p})^k}{\Gamma(\nu_pk+1)} \,.
\end{align*}
Using Stirling's formula for the gamma function (or  the estimate \eqref{eq:6} below), one can check that the right-hand side 
grows exponentially in $t$: as a consequence, we have $\gamma_{\beta}(p)<\infty$. This ends the proof of part \textit{(i)} of the proposition.  

\medskip
For part \textit{(ii)}, we are going to prove the upper and lower bounds in \eqref{eq:free-end} separately.
Using Jensen's inequality for the first inequality and the translation invariance \eqref{eq:theinlaws} for the last identity, we have, for $p>1$,
\begin{equation}\label{supersub}
 \bbE \Big[\cZ^{\go}_{\beta}(t,\ast)^p\Big]\le 
 \int_{\bbR^d}\rho(t,x)  \bbE\Big[\Big(\rho(t,x)^{-1}\cZ^{\go}_{\beta}(t,x)\Big)^p\Big]\,\dd x=\bbE\Big[\Big(\rho(t,0)^{-1}\cZ^{\go}_{\beta}(t,0)\Big)^p\Big].
\end{equation}
Since $\rho(t,0)^{-1}$ is of order $t^{d/2}$, after taking logarithm, dividing by $t$ and taking the limit as $t\to\infty$, we obtain 
\begin{equation}\label{firhal}
  \liminf_{t\to \infty}\frac{1}{t}\log\bbE\Big[\cZ^{\go}_{\beta}(t,0)^p\Big]\ge \gamma_{\beta}(p).
\end{equation}
The same proof yields   for $p\in (0,1)$ that
\begin{equation}\label{firhal2}
 \limsup_{t\to \infty}\frac{1}{t}\log\bbE\Big[\cZ^{\go}_{\beta}(t,0)^p\Big]\le \gamma_{\beta}(p).
\end{equation}
Let us now prove the complementary bound, first in the case $p>1$. We introduce the probability measure  $\bar P_{\gb,t}^{\go}$ with Lebesgue density $\cZ^{\go}_{\beta}(t,x)\rho(1,x)/\int_{\R^d} \cZ^{\go}_{\beta}(t,x)\rho(1,x)\,\dd x$.
Then we have
\begin{align*}
\cZ^{\go}_{\beta}(t+1,0)&=\int_{\R^d}\cZ^{\go}_{\beta}(t,x)\rho(1,x)\cZ^{\go}_{\beta}(t,x;t+1,0) \rho(1,x)^{-1}\,\dd x\\
&= \Bigg(\int_{\bbR^d}\cZ^{\go}_{\beta}(t,x)\rho(1,x)\,\dd x \Bigg) \bar E_{\gb,t}^{\go}\Big[ \cZ^{\go}_{\beta}(t,X;t+1,0) \rho(1,X)^{-1}\Big].
\end{align*}
Proceeding as in the proof of Lemma \ref{lem:submult} and using the translation invariance~\eqref{eq:theinlaws}, we obtain 
for $p>1$
\begin{equation}\label{indasamemanner}
 \bbE\Big[\cZ^{\go}_{\beta}(t+1,0)^p\Big]\le \bbE\bigg[\bigg(
 \int_{\bbR^d}\cZ^{\go}_{\beta}(t,x)\rho(1,x)\,\dd x\bigg)^p \bigg] \bbE \Big[ \cZ^{\go}_{\beta}(1,0)^p \rho(1,0)^{-p} \Big]\,.
\end{equation}
In particular, we have
\begin{equation*}
 \bbE\Big[\cZ^{\go}_{\beta}(t+1,0)^p\Big]\le (2\pi)^{-\frac{pd}{2}}
 \bbE\Big[\cZ^{\go}_{\beta}(t,\ast)^p \Big] \bbE \Big[ \cZ^{\go}_{\beta}(1,0)^p \rho(1,0)^{-p} \Big],
\end{equation*}
which implies, for $p>1$,
\begin{equation*}
 \limsup_{t\to \infty}\frac{1}{t}\bbE\Big[\cZ^{\go}_{\beta}(t+1,0)^p\Big]\le \gamma_\beta(p) \,.
\end{equation*}
When $p\in(0,1)$, repeating the proof of \eqref{indasamemanner}, but using concavity instead of convexity, we obtain
\begin{equation}\label{eq:p}
   \bbE\Big[ \cZ^{\go}_{\beta}(t+1,0)^p \Big]\ge  \bbE\bigg[\bigg(\int_{\bbR^d}\cZ^{\go}_{\beta}(t,x)\rho(1,x)\,\dd x\bigg)^p\bigg]\bbE \Big[ \cZ^{\go}_{\beta}(1,0)^p\rho(1,0)^{-p}\Big].
\end{equation}
Now setting $\cC_z:=z+[0,1)^d$, we have
\begin{equation}\label{forsmallp}
 \cZ^{\go}_{\beta}(t,\ast)= \sum_{z\in \bbZ^d} \int_{\cC_z}\cZ^{\go}_{\beta}(t,x)\,\dd x \le \sum_{z\in \bbZ^d} \bigg(\max_{x\in \cC_z} \rho(t,x) \bigg) Y_z(t) 
\end{equation}
with 
$ Y_z(t):=\int_{\cC_z}\cZ^{\go}_{\beta}(t,x) \rho(t,x)^{-1}\,\dd x.$

Note that the variables $Y_z$ are identically distributed by \eqref{eq:theinlaws}, which together with \eqref{eq:subbaditiv} implies
\begin{equation}\label{asdf}
  \bbE\Big[ \cZ^{\go}_{\beta}(t,\ast)^p\Big] \le \bigg( \sum_{z\in \bbZ^d} \max_{x\in \cC_z} \rho(t,x)^{p} \bigg)\bbE[Y_0(t)^p]
  \le C t^{\frac d2 {(1-p)}}\bbE[Y_0(t)^p] 
\end{equation}
for $t>1$. Moreover, also for $t>1$,
\begin{equation}
\label{asdf-bis}
 \int_{\bbR^d}\cZ^{\go}_{\beta}(t,x)\rho(1,x)\,\dd x\ge \bigg(\min_{x\in \cC_0}\rho(1,x)\rho(t,x)\bigg)Y_0(t)\ge (2\pi)^{-d} t^{-\frac{d}{2}}Y_0(t) \,.
\end{equation}
Therefore, combining \eqref{eq:p} with \eqref{asdf} and~\eqref{asdf-bis}, we obtain that 
\begin{equation*}
 \bbE\Big[\cZ^{\go}_{\beta}(t+1,0)^p\Big]\ge C' {t^{-\frac d2}} \bbE\Big[ \cZ^{\go}_{\beta}(t,\ast)^p\Big]\,.
\end{equation*}
This allows us to conclude that, for $p\in(0,1)$,
\begin{equation*}
  \liminf_{t\to \infty}\frac{1}{t}\log\bbE\Big[\cZ^{\go}_{\beta}(t+1,0)^p\Big]\ge \gamma_{\beta}(p).\qedhere
  \end{equation*}
\end{proof}

\subsection{Monotonicity and convexity properties}\label{sec:quall}

To show the monotonicity in $\beta$ of the Lyapunov exponents, we prove a more general result linking monotonicity and convexity. Its proof is inspired by an analogous result proved in the discrete setup \cite[Lemma 3.3]{CY06}. Recall the definition of the renormalized partition function \eqref{barparti}.
\begin{lemma}\label{lem:mon}
Assume that $\mu_{1,\infty}(q)<\infty$  for some  $q\in 
[1,1+\frac{2}{d})$
and let $\phi: [0,\infty) \to \bbR$ be a convex function that satisfies 
\[
	\sup_{u\ge 1}\   |\phi(u)| u^{-q} <\infty \,.
\]
Then $\beta \mapsto \bbE [ \varphi (\bar \cZ^{\go,a}_{\beta}(t,0) ) ]$ is a non-decreasing function, for any $a>0$.
\end{lemma}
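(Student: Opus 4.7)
The approach I would take is based on Malliavin-type calculus on Poisson space. The key observation is that the partition function $\bar\calz^{\om,a}_\beta(t,0)$ is a non-decreasing function of the point measure $\om$, in the strong sense that adding any single atom only increases its value. Indeed, the alternative expression~\eqref{eq:help} writes $\bar\calz^{\om,a}_\beta(t,0)$ as a product of $e^{-\beta\mu_{a,\infty}(1)t}$ (which depends only on the intensity~$\lambda$, not on the realization of~$\om$) with a sum of non-negative terms indexed by finite subsets of atoms of~$\om$. Adding an extra atom $w=(s,y,z)$ with $z\ge a$ only contributes non-negative summands, so if I write $D_w F(\om) := F(\om\cup\{w\}) - F(\om)$ for the add-a-point operator, then $D_w\bar\calz^{\om,a}_\beta(t,0)\ge 0$. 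Since $\phi$ is convex, its right-derivative $\phi'$ is non-decreasing, and hence $D_w\phi'(\bar\calz^{\om,a}_\beta(t,0))\ge 0$ as well.

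The plan is then to express $\partial_\beta\bbE[\phi(\bar\calz^{\om,a}_\beta(t,0))]$ as an integral of the non-negative product $\bigl(D_w\phi'(\bar\calz)\bigr)\cdot\bigl(D_w\bar\calz\bigr)$ against the intensity measure~$\nu$. First, differentiating in $\beta$ the exponential formulation of~\eqref{eq:help} and recognising that termwise differentiation of the inner series amounts, up to the factor $\beta^{-1}$, to selecting each atom of~$\om$ once, one obtains the almost-sure identity
\[
\partial_\beta\bar\calz^{\om,a}_\beta(t,0) = -\mu_{a,\infty}(1)t\,\bar\calz^{\om,a}_\beta(t,0) + \beta^{-1}\sum_{w\in\om,\,z_w\ge a}\bigl(\bar\calz^{\om,a}_\beta(t,0) - \bar\calz^{\om\setminus\{w\},a}_\beta(t,0)\bigr).
\]
Multiplying by $\phi'(\bar\calz^{\om,a}_\beta(t,0))$, taking expectation, and applying Mecke's formula to the atom sum converts the second term into $\int\nu(\dd w)\ind_{\{z\ge a\}}\bbE\bigl[\phi'(\bar\calz^{\om\cup\{w\},a}_\beta(t,0))\,D_w\bar\calz^{\om,a}_\beta(t,0)\bigr]$. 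Next, I would verify the balancing identity
\[
\beta\mu_{a,\infty}(1)t\,\bbE\bigl[\phi'(\bar\calz^{\om,a}_\beta(t,0))\,\bar\calz^{\om,a}_\beta(t,0)\bigr] = \int\nu(\dd w)\ind_{\{z\ge a\}}\bbE\bigl[\phi'(\bar\calz^{\om,a}_\beta(t,0))\,D_w\bar\calz^{\om,a}_\beta(t,0)\bigr],
\]
which is the Poisson-space analogue of a Malliavin integration-by-parts. It follows from a straightforward Mecke computation combined with the convolution identity $\int_{\bbR^d}\rho(s,y)\rho(t-s,-y)\,\dd y=\rho(t,0)$ (applied at the inserted atom) followed by integration in $s\in(0,t)$, which produces exactly the factor~$t$. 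Subtracting the two yields
\[
\beta\,\partial_\beta\bbE[\phi(\bar\calz^{\om,a}_\beta(t,0))] = \int\nu(\dd w)\ind_{\{z\ge a\}}\bbE\bigl[D_w\phi'(\bar\calz^{\om,a}_\beta(t,0))\cdot D_w\bar\calz^{\om,a}_\beta(t,0)\bigr]\ge 0,
\]
proving the lemma.

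The main technical obstacle will be to justify the interchange of $\partial_\beta$ with $\bbE$ and the applications of Mecke and Fubini, under only the growth condition on $\phi$. The hypothesis $\mu_{1,\infty}(q)<\infty$ for some $q\in[1,1+\tfrac{2}{d})$ ensures that $\bar\calz^{\om,a}_\beta(t,0)\in L^q$ uniformly for $\beta$ in a compact set (by the moment bounds of Section~\ref{sec:upper}, or more elementarily in the truncated regime $a>0$), and the growth $|\phi(u)|\lesssim u^q$ combined with convexity gives $|\phi'(u)|\lesssim 1+u^{q-1}$, so H\"older's inequality puts every relevant product into $L^1$. If any doubts remain, one can first restrict jumps to $[a,A]$ and atoms to a bounded spatial box, in which case $\om$ has only finitely many atoms a.s.\ and $\bar\calz^{\om,a}_\beta(t,0)$ is a genuine polynomial in $\beta$, making every step a finite-dimensional computation; one then takes $A\to\infty$ and lets the box expand to $\R^d$.
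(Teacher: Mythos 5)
Your proposal is correct, and it reaches the conclusion by a genuinely different final step than the paper. Both arguments share the same backbone: differentiate the chaos expansion \eqref{eq:help} in $\beta$, observe that the $\beta$-derivative of the series amounts to inserting one extra point (your identity $\beta\,\partial_\beta S=\sum_{w\in\om}(S(\om)-S(\om\setminus\{w\}))$ is exactly the index-shift computation in \eqref{twoterms}--\eqref{nicessum}, and your ``balancing identity'' is the paper's observation that $\int_{\R^d}\rho(s,y\,|\,\bt,\bx)\,\dd y=1$, which turns the compensator term into a centering of the noise), and then apply Mecke's formula to move the inserted atom into the intensity measure. Where you diverge is in how positivity is extracted: the paper Mecke-inserts the $k$ atoms of the chaos term and then invokes the FKG inequality for Poisson point processes to argue that the expectation of the product of the centered one-point integral and $\phi'(\bar\calz)$ is non-negative; you instead keep the single added atom explicit and observe that the integrand $\bigl(\phi'(\bar\calz^{\om\cup\{w\}})-\phi'(\bar\calz^{\om})\bigr)\bigl(\bar\calz^{\om\cup\{w\}}-\bar\calz^{\om}\bigr)$ is non-negative pointwise, simply because $\phi'$ is non-decreasing. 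This buys you a more elementary and self-contained argument (no correlation inequality needed -- in fact you do not even need the monotonicity of $\bar\calz$ in $\om$ that you advertise, since $(\phi'(b)-\phi'(a))(b-a)\ge0$ holds regardless of the sign of $b-a$), at the price of a covariance-type decomposition that is essentially the standard derivative proof of FKG in disguise.

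Two small caveats on the technical side. First, you use $\phi'$ freely, but a convex $\phi$ need not be differentiable, and your bound $|\phi'(u)|\lesssim 1+u^{q-1}$ can fail near $u=0$ (e.g.\ $\phi(u)=-\sqrt{u}$ satisfies the hypotheses with $q=1$ but has $\phi'(0+)=-\infty$); the clean fix is the same reduction the paper performs at the outset: replace $\phi$ by an increasing sequence of convex functions that are differentiable with bounded derivative and conclude by monotone convergence, after which the interchange of $\partial_\beta$ with $\bbE$ and the Mecke/Fubini steps are justified exactly as in \eqref{eq:int}. Second, in your truncation fallback ($z\in[a,A]$, bounded box) you should be slightly careful that convergence of $\bbE[\phi(\bar\calz_{\mathrm{truncated}})]$ to $\bbE[\phi(\bar\calz)]$ requires a uniform integrability argument under the bare assumption $\mu_{1,\infty}(q)<\infty$; with the bounded-$\phi'$ reduction above this issue disappears, so I would perform that reduction first rather than rely on the truncation.
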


\bpr[Proof of Proposition~\ref{prop:qual}]
As a consequence of Lemma~\ref{lem:mon},  if 
$\mu_{1,\infty}(p)<\infty$ for some $p\in [1,1+\frac{2}{d})$,
then
$\beta \mapsto  \bbE[ \bar \cZ^{\go,a}_{\beta}(t,0)^p]$
is non-decreasing if $p>1$ and non-increasing if $p\in (0,1)$  (apply Lemma~\ref{lem:mon} to the convex function $x\mapsto - x^p$). Letting $a\to0$, we obtain the same results for $ \bbE[ \bar \cZ^{\go}_{\beta}(t,0)^p]$ because of \eqref{eq:lpconv}. 
Since we have 
$$
\bar \gamma_{\beta}(p)=\lim_{t\to \infty} \frac{1}{t}\log \bbE[ \bar \cZ^{\go}_{\beta}(t,0)^p]\,,
$$
this completes the proof of \textit{(ii)} in Proposition~\ref{prop:qual}.
Both \textit{(i)} and \textit{(iii)} are standard: The convexity of $p\mapsto \ga_\beta(p)$ follows from that 
of $p\mapsto \log \E [ X^p ]$, valid for an arbitrary non-negative random variable~$X$ by  H\"older's inequality; 
\textit{(iii)} is a direct consequence of convexity since for $1<p<p'$  we have
$\bar \gamma_{\beta}(p) \le \frac{p-1}{p'-1} \bar \gamma_{\beta}(p')$, recalling that $\bar \gamma_{\beta}(1)=0$ by definition.
\epr

\begin{proof}[Proof of Lemma~\ref{lem:mon}]
Since $\varphi$ is convex, there exists $a$ and $b$ such that 
$\varphi(u)+au+b\ge 0$ for every $u\ge 0$.
Hence,  replacing $\varphi(u)$ by $\varphi(u)+au+b$, we can assume that $\varphi$ is non-negative, since $\bbE[ \bar \cZ^{\go,a}_{\beta}(t,0) ]=\rho(t,0)$ does not depend on $\beta$. 
We may also assume without loss of generality that~$\varphi$ is differentiable on $[0,\infty)$ and that $\varphi'$ is bounded. 
Indeed, if this is not the case, we can find a sequence $\varphi_n\uparrow \varphi$ that has these properties and then use monotone convergence.

Recalling the notation \eqref{def:posandtv} and setting $\bar \kappa_a=\kappa_a+\mu$, we have, similarly to \eqref{eq:help},
\[
\bar \cZ^{\go,a}_{\beta}(t,0) =e^{-\beta\bar \kappa_a t}\sum_{k=0}^{\infty}\beta^k\int_{\frakX_k(t)\times(\bbR^d)^k} \rho_{t,0}(\bt,\bx) \prod_{i=1}^k\xi^{a,+}_{\go}(\dd t_i,\dd x_i).
\]
Taking  the derivative with respect to $\beta$, we obtain, after an index shift for the first term,
	\begin{equation}\label{twoterms}\mathtoolsset{multlined-width=0.9\displaywidth}\begin{multlined}
		\partial_\beta \bar \cZ^{\go,a}_{\beta}(t,0)= e^{-\beta \bar \kappa_a t}\sum_{k=0}^{\infty}\beta^k \bigg(   (k+1)\int_{\frakX_{k+1}(t)\times(\bbR^d)^{k+1}} \rho_{t,0}(\bt,\bx) \prod_{i=1}^{k+1}\xi^{a,+}_{\go}(\dd t_i,\dd x_i)\\
		-\bar \kappa_a t\int_{\frakX_k(t)\times(\bbR^d)^k} \rho_{t,0}(\bt,\bx) \prod_{i=1}^k\xi^{a,+}_{\go}(\dd t_i,\dd x_i)\bigg) \, .
\end{multlined}	\end{equation}
The first term in the sum can be viewed as the sum of $k+1$ integrals 
where the variables of integration are $(t_1,\dots,t_i,s,t_{i+1},\dots,t_k)\in \frakX_{k+1}(t)$ and $(x_1,\dots,x_i,y,x_{i+1},\dots,x_k)$ for $i= 0,\dots,k$.
Recombining the terms, we arrive at the identity (recall the convention $t_0:=0$ and $t_{k+1}:=t$)
	\begin{multline*}
		(k+1)\int_{\frakX_{k+1}(t)\times (\bbR^d)^{k+1}}  \rho_{t,0}(\bt,\bx) \prod_{i=1}^k\xi^{a,+}_{\go}(\dd t_i,\dd x_i) \\
	 = \int_{\frakX_{k}(t)\times (\bbR^d)^{k}} \Bigg(\int_{(0,t)\times \bbR^d}\rho_{t,0}(\bt,\bx) \rho(s,y\,|\,\bt,\bx)\,\xi^{a,+}_{\go}(\dd s,\dd y)\Bigg) \prod_{i=1}^{k}\xi^{a,+}_{\go}(\dd t_i,\dd x_i),
	\end{multline*}
where 
	\begin{equation*}
		\rho(s,y\,|\,\bt,\bx):=
			\frac{\rho(s-t_{i-1},y-x_{i-1})\rho(t_i-s,x_i-y)}{\rho(t_{i}-t_{i-1},x_i-x_{i-1})}   \quad  \text{ if } s \in (t_{i-1},t_i) 
	\end{equation*}
and $\rho(t_i,y\,|\,\bt,\bx):=0$ for all $i=1,\dots,k$.
	As $\int_{\bbR^d}\rho(s,y\,|\,\bt,\bx)\,\dd y=1$, the second term in \eqref{twoterms} corresponds to a centering of the noise $\xi^{a,+}_{\go}(\dd s,\dd y)$, and we have
	\begin{equation}\label{nicessum} 
	\begin{split}
		&\partial_\beta \bar \cZ^{\go,a}_{\beta}(t,0)\\
		&= e^{-\beta \bar \kappa_a t}\sum_{k=0}^{\infty}\beta^k \int_{\frakX_k(t)\times (\bbR^d)^{k}}\Bigg(\int_{(0,t)\times \bbR^d}\rho_{t,0}(\bt,\bx)  \rho(s,y\,|\,\bt,\bx) \, \bar \xi^{a}_{\go}(\dd s,\dd y)\Bigg) \prod_{i=1}^{k}\xi^{a,+}_{\go}(\dd t_i,\dd x_i).  
		\end{split}
	\end{equation}
Note that we have 
\begin{multline*}
  |\partial_\beta \bar \cZ^{\go,a}_{\beta}(t,0)|\\
		\le e^{-\beta \bar \kappa_a t}\sum_{k=0}^{\infty}\beta^k \int_{\frakX_k(t)\times (\bbR^d)^{k}}\Bigg(\int_{(0,t)\times \bbR^d}\rho_{t,0}(\bt,\bx)  \rho(s,y\,|\,\bt,\bx) \, |\bar \xi^{a}_{\go}|(\dd s,\dd y)\Bigg) \prod_{i=1}^{k}\xi^{a,+}_{\go}(\dd t_i,\dd x_i) \,,
\end{multline*}
where  $\lvert\bar \xi^{a}_{\go}\rvert= \bar \xi^{a}_{\go}+2\bar \kappa_a  \cL$. 
Since we have reduced to the case where $\varphi'$ is bounded,
the expression above implies (cf.\ \cite[Prop.~2.5]{BL20_cont}) that for any $\beta_0>0$, 
\beq\label{eq:int}
 \bbE\left[  \sup_{\beta\in[0,\beta_0]} \Big| \partial_\beta \bar \cZ^{\go,a}_{\beta}(t,0)\phi' (\bar \cZ^{\go,a}_{\beta}(t,0)) \Big| \right]
 <\infty.
 \eeq 
 This allows to interchange derivative and expectation and by \eqref{nicessum}, we obtain that
	\begin{equation}
	\label{compopo}
	\begin{split}
		\partial_{\beta}\bbE\Big[ \varphi\big(\bar \cZ^{\go,a}_{\beta}(t,0)\big) \Big]&= \bbE\Big[  \partial_\beta \bar \cZ^{\go,a}_{\beta}(t,0) \, \phi' \big(\bar \cZ^{\go,a}_{\beta}(t,0) \big)\Big]\\
		&= e^{-\beta\bar\kappa_a t}\sum_{k=0}^{\infty}\beta^k \bbE \Bigg[ \int_{\frakX_k(t)\times (\bbR^d)^{k}} U^\go_k(\bt,\bx)\rho_{t,0}(\bt,\bx)\prod_{i=1}^k\xi^{a,+}_{\go}(\dd t_i,\dd x_i) \Bigg]\,,
		\end{split}
	\end{equation}
where we have set
\begin{equation*}
 U_k^\go(\bt,\bx):= \Bigg(\int_{[(0,t)\setminus\{t_i\}^k_{i=1}]\times \bbR^d} \rho(s,y\,|\,\bt,\bx)\,\bar \xi^{a}_{\go}(\dd s,\dd y)\Bigg)  \varphi'\big(\bar \cZ^{\go,a}_{\beta}(t,0)\big).
\end{equation*}
Given $(\bt,\bx,\bz)\in \frakX_k(t)\times (\bbR^d)^{k}\times (0,\infty)^k$, we let $\go\cup (\bt,\bx,\bz)$ be the point process obtained by adding the~$k$ points  $(t_i,x_i,z_i)_{1\leq i \leq k}$ to $\go$.
By Mecke's multivariate equation (see \cite[Thm.~4.4]{PoiBook}),
we have 
\beq\label{supermecke}\begin{split}
& \bbE \Bigg[ \int_{\frakX_k(t)\times (\bbR^d)^{k}} U_k^\go(\bt,\bx) \rho_{t,0}(\bt,\bx) \prod_{i=1}^k  \xi^{a,+}_{\go}(\dd t_i,\dd x_i) \Bigg]\\
&\qquad=
 \bbE \Bigg[ \int_{\frakX_k(t)\times (\bbR^d)^{k}\times (0,\infty)^k}  U_k^\go(\bt,\bx) \rho_{t,0}(\bt,\bx)\prod_{i=1}^k z_i\ind_{[a,\infty)}(z_i)\, \delta_{\go}(\dd t_i,\dd x_i,\dd z_i) \Bigg]\\
&\qquad =
 \int_{\frakX_k(t)\times (\bbR^d)^{k}\times (0,\infty)^k}  \bbE\Big[ U_k^{\go\cup (\bt,\bx,\bz)}(\bt,\bx)\Big] \rho_{t,0}(\bt,\bx) \prod_{i=1}^k z_i\ind_{[a,\infty)}(z_i)\,\dd t_i\, \dd x_i\,\gl( \dd z_i).
\end{split}\eeq
If $k$, $\bt$, $\bx$ and $\bz$ are fixed,
the  functionals
	\[
	\go\mapsto \phi' \big( \bar \cZ^{\go\cup(\bt,\bx,\bz),a}_{\beta}(t,0) \big) \quad \text{ and } \quad \go \mapsto \int_{[(0,t)\setminus\{t_i\}^k_{i=1}]\times \bbR^d}\rho(s,y\,|\,\bt,\bx)\, \bar \xi^{a}_{\go}(\dd s,\dd y)
	\]
are non-decreasing for the inclusion order. Thus we can apply the FKG inequality for Poisson point processes (see~\cite[Lemma~2.1]{Jan84}) and obtain that
	\begin{equation*}
		\bbE\Big[  U^{\go\cup (\bt,\bx,\bz)}_k(\bt,\bx)\Big] \ge 
		\bbE\Bigg[ \int_{[(0,t)\setminus\{t_i\}^k_{i=1}]\times\bbR^d}\rho(s,y\,|\,\bt,\bx)\, \bar\xi^{a}_{\go}(\dd s,\dd y)\Bigg]   \bbE\Big[  \phi' (\bar \cZ^{\go\cup (\bt,\bx,\bz),a}_{\beta}(t,0))\Big]=0.
	\end{equation*}
Combining this with \eqref{compopo} and \eqref{supermecke}, we conclude that $\partial_{\beta}\bbE\big[ \varphi\big(\bar \cZ^{\go,a}_{\beta}(t,0)\big) \big] \geq 0$.
\end{proof}

\section{Moments  of order $p>1$}\label{sec:upper}

The goal of this section is to formulate and prove Propositions~\ref{prop:localbisdone} and \ref{prop:localbis} below, 
which form the core of all moment upper bounds for $p>1$ in this paper.

\subsection{The statements}

Recall that $\nu_p:=1-\frac d2 (p-1)$ and let $\bar \calz^{\om,0}_{\beta}(t,x):= \bar \calz^{\om}_{\beta}(t,x)$. 

\subsubsection{The case of dimension $d=1$}
We start with $d=1$, where the statement is  easier to state (and easier to prove).

\begin{proposition}\label{prop:localbisdone}
	Assume that $d=1$ and that \eqref{eq:log-2} holds. 
	\benu
	\item[(i)] If  $\mu_{1,\infty}(2)<\infty$, then for any $a\in[0,1)$,
	\begin{equation}\label{explicitesecondmoment}\begin{split}
	\E\Big[\rho(t,x)^{-2}\bar \calz^{\om,a}_{\beta}(t,x)^2\Big]
	&=   \sum_{k=0}^{\infty}\Bigg(\frac{\beta^2 \mu_{a,\infty}(2) \sqrt{t}}{2}\Bigg)^k \frac{\sqrt{\pi}}{\gG((k+1)/2)}\\
	&=1+\beta^2\mu_{a,\infty}(2)\sqrt{\pi t} \exp\Big(\tfrac14 \beta^4\mu_{a,\infty}(2)^2 t\Big)\Phi\Big(\beta^2\mu_{a,\infty}(2)\sqrt{\tfrac t2}\Big),
	\end{split}
	\end{equation}
where $\Phi$ is the standard normal distribution function.
 \item[(ii)] There exists a constant $C\in(0,\infty)$ such that if 
	$\mu_{1,\infty}(p)<\infty$ for some $p\in (1,2)$, then for any value of  $\eta\in(0,1]$, $a\in[0,1)$ and $t,\beta>0$, 
	\begin{equation}\label{mobdd1}
	\E\Big[\rho(t,x)^{-p}\bar \calz^{\om,a}_{\beta}(t,x) ^p\Big]^{\frac1p}
	\le 
	    \sqrt{2} \Gamma(\nu_p)^{\frac1p}  \sum_{k_1,k_2=0}^\infty \bigg(\frac{C\beta}{p-1}\bigg)^{k_1+k_2}  \Big( \mu_{0,\eta}(2)t^{\frac 12} \Big)^{\frac{k_1}2} 
	   \frac{ (\mu_{\eta,\infty}(p) t^{\nu_p} )^{\frac{k_2}p} }{\Gamma(\nu_p(k_2+1))^{\frac1p}}.
	 \end{equation}
 \item[(iii)] There exists a constant $C\in(0,\infty)$ such that if 
 $\mu_{1,\infty}(p)<\infty$ for some $p\in (2,3)$, then for any value of $a\in[0,1)$ and $t,\beta>0$, 
 	\begin{equation}\label{mobdd2}
 	\begin{split}
 	\E\Big[\rho(t&,x)^{-p}\bar \calz^{\om,a}_{\beta}(t,x) ^p\Big]^{\frac1p} \\
 	& \le \pi^{\frac14} \Gamma(\nu_p)^{\frac1p}   \sum_{k_2=0}^{\infty} \frac{ \big( C \beta  \mu_{0,\infty}(2)^{\frac12} \Gamma(\nu_p)^{\frac1p} t^{\frac14} \big)^{k_2}  }{  \Gamma(\frac{k_2+1}{2})^{\frac12}} \Bigg( \sum_{\ell =0}^{\infty}   \frac{ \big( C'\beta \mu_{0,\infty}(p)^{\frac1p} t^{\frac{\nu_p}{p}} \Gamma(\nu_p)^{\frac1p} \big)^{\ell} }{ \Ga(  \nu_p (\ell+1) )^{\frac1p}}  \Bigg)^{k_2+1} \,.
 		\end{split}
 \end{equation}
 \eenu
\end{proposition}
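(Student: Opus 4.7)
Plan: All three parts start from the chaos representation~\eqref{eq:chaos}, $\bar\calz^{\om,a}_\beta(t,x) = \rho(t,x) + \sum_{k\ge 1} \beta^k I_k$ with $I_k = \int_{\frakX_k(t) \times \R^k} \rho_{t,x}(\bt,\bx)\prod_{i=1}^k \bar\xi^a_\om(\dd t_i,\dd x_i)$. The main tools I will combine are orthogonality of multiple compensated Poisson integrals, a decomposition of the centered noise $\bar\xi^a_\om$ at level $\eta$ into independent compensated pieces (small jumps in $[a,\eta)$, large jumps in $[\eta,\infty)$), the decoupling inequality of Theorem~\ref{thm:dec}, iterated Kunita-type $L^p$ moment bounds for compensated Poisson integrals, and Lemma~\ref{lemGamma} to evaluate the resulting $\bt$-simplex integrals.

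Part~(i) is a direct $L^2$ computation. By orthogonality of the $I_k$ and the $L^2$-isometry, $\E[I_k^2] = \mu_{a,\infty}(2)^k \int \rho_{t,x}^2$. Using the algebraic identity $\rho(s,y)^2 = (4\pi s)^{-1/2}\rho(s/2,y)$ special to $d=1$, the $\bx$-integral collapses via the Chapman--Kolmogorov semigroup to $\rho(t/2,x)$, and Lemma~\ref{lemGamma} with $\zeta_i=1/2$ evaluates the remaining $\bt$-integral; combined with the identity $\rho(t/2,x)/\rho(t,x)^2 = (4\pi t)^{1/2}$ this yields the series \eqref{explicitesecondmoment}. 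The closed form then follows from the classical identity $\sum_{k\ge0}y^k/\Gamma((k+1)/2)=\sqrt{\pi}\,e^{y^2}[1+\mathrm{erf}(y)]$ applied with $y = \beta^2\mu_{a,\infty}(2)\sqrt{t/2}$.

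For part~(ii) the strategy is, after splitting the noise at level $\eta$, to iterate the stochastic integral equation for $\bar\calz^{\om,a}_\beta$ to obtain the large-jump skeleton representation
\begin{equation*}
\bar\calz^{\om,a}_\beta(t,x) = \sum_{k_2\ge 0} \beta^{k_2} \int_{\frakX_{k_2}(t)\times\R^{k_2}} \prod_{i=0}^{k_2} \bar\calz^{\om,[a,\eta)}_\beta(t_i,x_i;t_{i+1},x_{i+1}) \prod_{i=1}^{k_2} \bar\xi^\eta_\om(\dd t_i,\dd x_i),
\end{equation*}
with $(t_0,x_0)=(0,0)$ and $(t_{k_2+1},x_{k_2+1})=(t,x)$, where $\bar\calz^{\om,[a,\eta)}_\beta$ is the partition function driven only by the small-jump noise. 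Taking the $L^p$-norm and applying the triangle inequality in $k_2$, Theorem~\ref{thm:dec} (at cost $(C/(p-1))^{k_2}$) together with iterated Kunita's inequality $\|\int g\,\dd\tilde N\|_p^p \le C_p\int\E[|g|^p]\,\dd\nu$ (valid for $p\in(1,2]$) bound the large-jump contribution by $k_2$ factors of $\mu_{\eta,\infty}(p)$. Since the small-noise propagators over disjoint time segments are independent, the remaining expectation factorizes into $k_2+1$ terms, each controlled by Jensen ($\E[Y^p]\le\E[Y^2]^{p/2}$, valid as $p\le 2$) and part~(i) applied with $\la$ restricted to $(0,\eta)$. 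Expanding the product of the $k_2+1$ $L^2$-series, introducing $k_1$ as the total small-noise chaos order across the segments, and evaluating the $\bt$-simplex integral via Lemma~\ref{lemGamma} with $\zeta$-parameters collecting the segment-wise small-noise contributions produces the double sum in $(k_1,k_2)$ claimed in \eqref{mobdd1}.

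Part~(iii) does not need the $\eta$-split: for $p\in(2,3)$, the assumption $\mu_{1,\infty}(p)<\infty$ together with \eqref{eq:log-2} already forces $\mu_{0,\infty}(2)<\infty$. The key tool is Kunita's inequality in Rosenthal form, which bounds $\|\int g\,\dd\bar\xi^a_\om\|_p$ for $p>2$ by the sum of a quadratic-variation term (carrying the factor $\mu_{0,\infty}(2)^{1/2}\|(\int g^2)^{1/2}\|_p$) and an $L^p$-jump term (carrying $\mu_{0,\infty}(p)^{1/p}\|(\int g^p)^{1/p}\|_p$). Iterating this bound on the fixed-point equation for $\bar\calz^{\om,a}_\beta$, together with Theorem~\ref{thm:dec} and Minkowski's integral inequality (to move $\|\cdot\|_p$ inside the heat-kernel convolutions), yields a binary expansion; organizing the sum by the number $k_2$ of times the quadratic branch is chosen (contributing $\mu_{0,\infty}(2)^{k_2/2}$ together with a $1/\Gamma((k_2+1)/2)^{1/2}$ factor from an $L^2$-type simplex integral computed as in part~(i)) and allowing an independent $L^p$-chaos expansion of order $\ell$ on each of the $k_2+1$ segments between consecutive quadratic steps produces the $(\sum_\ell\cdots)^{k_2+1}$ structure of \eqref{mobdd2}. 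The hardest step will be the combinatorial and analytic bookkeeping in parts~(ii) and~(iii): ensuring that the decoupling constants, the Kunita constants, the simplex integrals via Lemma~\ref{lemGamma}, and the $p$-versus-$2$ moment estimates combine to give exactly the stated $\Gamma(\nu_p)^{1/p}$ and $\Gamma(\nu_p(k_2+1))^{1/p}$ factors; in particular, part~(iii) requires a careful induction on $k_2$ to verify that the $k_2+1$ gaps between quadratic steps yield identical $\ell$-series despite the tetrahedral time-ordering constraint.
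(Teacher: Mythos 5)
Your overall strategy is sound, and parts (i) and (iii) essentially coincide with the paper's own argument: (i) is the same direct $L^2$ computation as in Remark~\ref{rem:secmom} (note, though, that your closed-form identity is misquoted -- the correct one is $\sum_{k\ge0}y^k/\Gamma((k+1)/2)=\pi^{-1/2}+2y e^{y^2}\Phi(\sqrt2\,y)$ with $y=\tfrac12\beta^2\mu_{a,\infty}(2)\sqrt t$, not $\sqrt\pi e^{y^2}[1+\mathrm{erf}(y)]$ at $y=\beta^2\mu_{a,\infty}(2)\sqrt{t/2}$); and (iii) is exactly the paper's iteration of the Rosenthal-type inequality \eqref{eq:BJ} on the fixed-point equation, with the binary $\{2,p\}$ expansion organized by the number $k_2$ of quadratic branches and the gaps $\ell_i$. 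For (ii) your route is genuinely different in organization: the paper decouples each chaos term with its deterministic kernel (Theorem~\ref{thm:dec}) and then partitions the integration domain according to which indices satisfy $z_i<\eta$ (Lemma~\ref{lemmaparti} with $\theta=2$), whereas you split the noise at level $\eta$ first, resum the small-jump contributions into segment propagators controlled by the explicit second-moment formula of (i) plus Jensen, and handle only the large jumps by iterated Kunita bounds. This buys a cleaner reuse of part (i) and fewer $(p-1)^{-1}$ losses, at the price of two steps you should make explicit: the skeleton identity (regrouping and resumming a mixed chaos series in two centered noises) needs justification, and the Kunita/decoupling step is applied to integrands that are random (they depend on the small-jump noise) and not adapted past $t_{k_2}$; this is rescued by conditioning on the small-jump noise, which is independent of the large-jump noise, and by noting that $\la([\eta,\infty))<\infty$ makes $\bar\xi^\eta_\om$ a finite-variation signed measure -- but as written these points are glossed over.

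Two concrete cautions. First, Theorem~\ref{thm:dec} is stated only for $1<p\le2$, so it cannot be invoked in part (iii) where $p\in(2,3)$; fortunately it is not needed there (the paper's proof proceeds via the adapted recursion alone), but you should drop it from your list of tools for (iii). Second, in (iii) the "bookkeeping" you defer hides one genuinely necessary ingredient beyond Minkowski: after iterating \eqref{eq:BJ} you are left with mixed $(2,p)$ iterated norms in a prescribed order, and to compute the spatial and simplex integrals segment-wise you must exchange the order of the exponent-$p$ and exponent-$2$ integrations; the paper does this with the interchange inequality \eqref{interchanging} (Kwapie\'n--Woyczy\'nski), and without it (or an equivalent argument) the factorized $(\sum_\ell\cdots)^{k_2+1}$ structure of \eqref{mobdd2} does not follow.
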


\begin{rem}\label{rem:secmom}
The formula~\eqref{explicitesecondmoment} is rather straightforward to prove. Starting from~\eqref{eq:chaos} and using that $\bar \xi_{\go}^a$ is centered, the second moment is given by
\[
\begin{split}
\E\Big[\rho(t,x)^{-2}\bar \calz^{\om,a}_{\beta}(t,x)^2\Big]
= 1+\sum_{k=1}^{\infty} (\gb^{2} \mu_{a,\infty}(2))^k \int_{ \mathfrak{X}_k(t)\times (\bbR^d)^k} \rho(t,x)^{-2} \prod_{i=1}^{k+1} \rho( \Delta t_i,  \Delta x_i) ^2\, \dd t_i \, \dd x_i \,,
\end{split}
\]
 with the notation \eqref{deltadeff} for $\Delta t_i$ and $\Delta x_i$.
Integrating over $x_1, \ldots, x_k \in \bbR^d$, we get
\[
\begin{split}
\E\Big[\rho(t,x)^{-2}\bar \calz^{\om,a}_{\beta}(t,x)^2\Big]
= 1+\sum_{k=1}^{\infty} (\gb^{2} \mu_{a,\infty}(2))^k (2\sqrt{\pi t}) \int_{\mathfrak{X}_k(t)} \prod_{i=1}^{k+1} \frac{1}{2\sqrt{\pi \Delta t_i}}\, \dd t_i \,,
\end{split}
\]
which gives the first equality in \eqref{explicitesecondmoment} by Lemma~\ref{lemGamma}. The second equality follows from the formula $\sum_{k=0}^\infty x^k/\Gamma((k+1)/2)=\pi^{-1/2}+2xe^{x^2}\Phi(\sqrt{2} x)$. Let us also mention that this formula coincides with the one we obtain if $\bar \calz^{\om,a}_\beta(t,x)$ in \eqref{explicitesecondmoment} is replaced by the solution to the SHE \eqref{eq:SHELN} with a space-time \emph{Gaussian} noise with variance $\mu_{a,\infty}(2)$ (and $u_0=\delta_0$); cf.\ \cite[Eq.\ (2.31)]{Chen15}.
\end{rem}

\subsubsection{The case of dimension $d\geq 2$}
For the statement with $d\ge 2$, we need to introduce a few auxiliary quantities.
With the usual convention  $t_0:=0$ and $t_{k+1}:=t$ and $\Delta t_i:= t_i-t_{i-1}$, we define $\Lambda(0,t,p):=1$ and 
\begin{equation}\label{eq:definingl}
 \Lambda(k,t,p):=t^{1-\nu_p}\int_{\mathfrak{X}_{k}(t)} (\Delta t_{k+1})^{\nu_p-1}\prod_{i=1}^{k} G_{p}(\Delta t_i)  \, \dd t_i
\end{equation}
for $k\ge 1$, where
\begin{equation}\label{eq:Gp}
G_{p}(s):=\begin{cases} s^{\frac13\nu_p-1} \quad & \text{ if } t\le 1 ,\\
s^{\nu_p-1} &\text{ if } t\ge 1. 
\end{cases}
\end{equation}
Also, we let
\begin{equation}  \label{eq:defzezeta}
 \zeta_1(\eta,p,t):= \Big(8\,  \mu^{\log}_{0,\eta}\Big(1+\tfrac{2}{d}\Big)(1+\log_+ t) \Big)^{\frac{p}{1+ 2/d}},\qquad
\zeta_2(\eta,p):= \mu_{0,\eta}^{\log}\Big(1+\tfrac 2d\Big)+ \mu_{\eta,\infty}(p),
\end{equation}
where we have set $\log_+ t:=\log(t\vee 1)$ and
\[
\mu_{0,\eta}^{\log}\Big(1+\tfrac2d\Big):=\int_{(0,\eta)} z^{1+\frac 2d} (3\lvert \log z\rvert+1  )\la(\dd z)
\]
Note that both $\zeta_1(\eta,p,t)$ and $\zeta_2(\eta,p)$ are finite if \eqref{eq:log-2} holds and $\mu_{1,\infty}(p)<\infty$.

\begin{proposition}\label{prop:localbis}
	Assume that $d\ge 2$ and that \eqref{eq:log-2} holds.
	There exists a  constant $C\in(0,\infty)$  that only depends on $d$ such that if 
	$\mu_{1,\infty}(p)<\infty$ for  $p\in(1,1+\frac2 d )$, then for any value of  $\eta\in(0,1]$, $a\in[0,1)$ and $t,\beta>0$,   we have
\begin{equation}\label{compinek}
	\E\Big[\rho(t,x)^{-p}\bar \calz^{\om,a}_{\beta}(t,x)^p\Big]^{\frac1p}
	\le 
	   \sum_{k_1,k_2=0}^\infty  \bigg(\frac{C\beta}{p-1}\bigg)^{k_1+k_2}\zeta_1(\eta,p,t)^{\frac{k_1}p}  \zeta_2(\eta,p)^{\frac{k_2}p}  \Lambda(k_2,t,p)^{\frac1p}.
	 \end{equation}
\end{proposition}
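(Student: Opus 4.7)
The plan is to bound the $L^p$-norm of each term in the chaos expansion~\eqref{eq:chaos} of $\bar{\calz}^{\om,a}_{\beta}(t,x)$ and to sum over chaos order via Minkowski's inequality (using $p\geq1$). For the $k$th chaos term, the idea is first to apply the decoupling inequality of Theorem~\ref{thm:dec}, which converts the integral against the compensated measure $\bar{\xi}^a_{\go}$ into an integral against $k$ independent copies $\bar{\xi}^a_{\go_1},\ldots,\bar{\xi}^a_{\go_k}$ at the cost of the multiplicative factor $(C/(p-1))^k$. This is legitimate since $p\in(1,1+\tfrac2d)\subseteq(1,2]$ when $d\geq 2$, which matches the range of Theorem~\ref{thm:dec}.

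Next, I would decompose each noise into jump-size scales: a ``large-jump'' piece supported on $[\eta,\infty)$ and a dyadic family of ``small-jump'' pieces supported on $[2^{-j-1}\eta,\,2^{-j}\eta)$ for $j\geq0$. Since all of these are independent compensated Poisson measures, expanding $\prod_{i=1}^k \bar{\xi}^a_{\go_i}$ according to this scale decomposition and applying Minkowski once more reduces the bound to a sum over the number $k_2$ of integrations assigned to the large piece and the number $k_1=k-k_2$ of integrations assigned to the (various) small-jump dyadic scales. This is how the double sum over $(k_1,k_2)$ in~\eqref{compinek} arises.

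For each ``large'' integration, I would use the Rosenthal/BDG inequality for compensated Poisson integrals valid on $p\in(1,2]$, i.e. $\|\int f\,d(\delta_{\go}-\nu)\|_{L^p}\leq C_p\bigl(\int|f|^p\,d\nu\bigr)^{1/p}$ for deterministic $f$, together with the identity $\int \rho(s,x)^p\,dx=\vartheta(p)s^{\nu_p-1}$. This produces a jump factor of $\mu_{\eta,\infty}(p)^{1/p}$ and a time-gap factor of $(\Delta t_i)^{(\nu_p-1)/p}$, whose integration over the ordered simplex using Lemma~\ref{lemGamma} yields precisely the $\Lambda(k_2,t,p)^{1/p}$ structure. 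For each ``small'' integration at dyadic scale $2^{-j}\eta$, I would apply the same inequality but with exponent $1+\tfrac2d$, whose $(1+2/d)$-moment contribution at scale $j$ is controlled by $2^{-j(1+2/d)}\eta^{1+2/d}\lambda([2^{-j-1}\eta,2^{-j}\eta))$; summing this dyadically over $j$ and using the hypothesis~\eqref{eq:log-2} yields a bound of order $\mu^{\log}_{0,\eta}(1+\tfrac2d)(1+\log_+ t)$ per small-jump integration, which (to the power $1/(1+\tfrac2d)$ per integration) accounts for the factor $\zeta_1(\eta,p,t)^{k_1/p}$ in~\eqref{compinek}.

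The main obstacle will be to keep the final simplex of dimension exactly $k_2$ rather than $k_1+k_2$ after the $k_1$ small integrations have been absorbed: this requires ``embedding'' each small-jump integration into the time neighborhood of one of the $k_2+1$ large-scale intervals, which in turn forces a further splitting of each such interval into comparable sub-intervals. This is what produces the piecewise exponent $\nu_p/3$ in the definition~\eqref{eq:Gp} of $G_p(s)$ for $s\leq1$, as opposed to the natural $\nu_p-1$ one would obtain for $s\geq1$. Executing this iterative partition correctly while tracking all the dyadic sums, the logarithmic corrections, and the powers of $C/(p-1)$ arising from repeated decoupling is the delicate book-keeping that constitutes the heart of the proof and presumably the reason the authors describe this proposition as one of their main technical achievements.
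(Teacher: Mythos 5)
Your opening steps (chaos expansion plus Minkowski, then the decoupling inequality of Theorem~\ref{thm:dec} with the $(C/(p-1))^k$ cost, then mixed use of the exponents $p$ and $1+\tfrac2d$ with Lemma~\ref{lemGamma} for the simplex integrals) coincide with the paper's strategy, and you correctly identify where $\zeta_1$, $\zeta_2$ and $\Lambda$ should come from. But there is a genuine gap, and it is not mere book-keeping: your organization of the argument classifies each integration purely by the dyadic magnitude of the jump ($k_2$ = number of jumps in $[\eta,\infty)$, $k_1$ = number of small jumps), and this classification cannot produce the stated bound. If a small jump $z_i<\eta$ sits next to a time gap with $\Delta t_i \lesssim z_i^{6/d}$, the exponent-$(1+\tfrac2d)$ estimate you propose gives the divergent integral $\int_0 s^{\nu_{1+2/d}-1}\,\dd s=\int_0 s^{-1}\,\dd s$; such an index must instead be treated with exponent $p$ and counted in $J_2$ (this is precisely why $\zeta_2(\eta,p)=\mu^{\log}_{0,\eta}(1+\tfrac2d)+\mu_{\eta,\infty}(p)$ in \eqref{eq:defzezeta} contains a \emph{small-jump} contribution), and conversely the $\lvert\log z\rvert$ weight inside $\mu^{\log}_{0,\eta}$ only arises when the time integral is cut off below at a scale determined by $z$. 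So the $J_1/J_2$ split is necessarily determined by comparing each $z_i$ with powers of the \emph{neighboring time increments}, not by the size of $z_i$ alone, and — because integrating out one absorbed index merges two time gaps and changes the relevant increment for its neighbors — this comparison has to be set up iteratively. That is exactly the content of the recursive partition $\mathfrak P_k$ (the $L^j,I^j_\pm,D^j$ construction of Section~\ref{sec:partition}) and of the exponent-transfer inequality \eqref{prototype}, which converts $(\Delta t)^{\nu_p/3}\leq z^{1+2/d-p}$ into the piecewise kernel $G_p$ of \eqref{eq:Gp} and makes every remaining time integral convergent.

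You gesture at this ("embedding each small-jump integration into the time neighborhood of one of the $k_2+1$ large-scale intervals ... further splitting of each such interval") and explicitly defer its execution, but this deferred step is the theorem: without a precise rule for which small jumps are absorbable, a proof that the resulting decomposition is an actual partition of $\frakX_k(t)\times(0,\infty)^k$ with cardinality at most $C^k$ (needed so the combinatorial factor can be swallowed into $(C\beta/(p-1))^{k}$), and the transfer-of-exponent computation that replaces $(\Delta t_i)^{\nu_p-1}z_i^p$ by $G_p(\Delta t_i)\,z_i^{1+2/d}$ on the appropriate events, the argument does not close. In particular the dyadic-in-$z$ refinement you introduce is not what resolves the difficulty (the paper never needs it); the resolution lies entirely in the time-versus-jump-size comparison and its iterative updating, which your proposal leaves unproved.
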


\begin{rem}
 In our proofs of Propositions \ref{prop:localbisdone} and \ref{prop:localbis}, we keep track of the dependence in $p$. The reason for this is that in the proof of Theorem  \ref{thm:thinup}---more precisely in the proof of \eqref{eq:1+2d}---we need to apply \eqref{compinek} for $p$ that depends of $\beta$. The only important point  we need to make sure is that our estimates remain uniform for $p$ in an interval around $1+\frac{2}{d}$.
\end{rem}

\subsubsection{Finiteness of the moments}
An almost immediate consequence of the proposition is the following uniform moment bound.
\begin{cor}\label{labornedesmoments}
If \eqref{eq:log-2} holds and $\mu_{1,\infty}(p)<\infty$ for some $p\in(1,1+\frac{2}{d})$, then for any $\beta$ and $T$ there exists $C(\beta,p,T)$ such that
\begin{equation*}
			\sup_{a\in[0,1)}\sup_{(t,x)\in(0,T]\times\R^d}\E\Big[\Big(\rho(t,x)^{-1}\bar \calz^{\om,a}_{\beta}(t,x) \Big)^p\Big]^{\frac1p}
<C(\beta,p,T).
	\end{equation*} 
\end{cor}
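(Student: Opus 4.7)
The plan is to read off the bound directly from Proposition \ref{prop:localbisdone} (when $d=1$) and Proposition \ref{prop:localbis} (when $d\geq 2$): in both cases the right-hand side is independent of $x\in\R^d$ and the constant $C$ does not depend on $a\in[0,1)$, so it remains only to verify that, with $(t,x)$ ranging in $(0,T]\times\R^d$, the resulting series are bounded by a constant depending only on $\beta,p,T$. Since all $t$-dependent factors in these bounds are non-decreasing in $t$, one may replace $t$ by $T$ throughout.

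For $d\geq 2$, in order to estimate $\Lambda(k_2,T,p)$ I would use the uniform bound $G_p(s)\leq (1\vee T)^{2\nu_p/3}\,s^{\nu_p/3-1}$ valid on $(0,T]$, together with Lemma \ref{lemGamma}, to obtain
\begin{equation*}
\Lambda(k_2,T,p)\leq \bigl((1\vee T)^{2\nu_p/3}T^{\nu_p/3}\Gamma(\nu_p/3)\bigr)^{k_2}\,\frac{\Gamma(\nu_p)}{\Gamma(\nu_p(1+k_2/3))}.
\end{equation*}
By Stirling, $\Gamma(\nu_p(1+k_2/3))^{-1/p}$ decays super-exponentially in $k_2$, which ensures convergence of the $k_2$-sum in \eqref{compinek} for any finite value of $\zeta_2(\eta,p)$. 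To handle the $k_1$-sum, I would exploit assumption \eqref{eq:log-2}: dominated convergence gives $\mu_{0,\eta}^{\log}(1+2/d)\downarrow 0$ as $\eta\downarrow 0$, and hence $\zeta_1(\eta,p,T)^{1/p}\downarrow 0$. I would then pick $\eta=\eta(\beta,p,T)\in(0,1]$ small enough that $(C\beta/(p-1))\,\zeta_1(\eta,p,T)^{1/p}\leq 1/2$, turning the $k_1$-sum into a convergent geometric series.

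For $d=1$, the argument is almost identical. The case $p=2$ is handled by the explicit formula in \eqref{explicitesecondmoment}, which is clearly bounded on $(0,T]$. For $p\in(2,3)$, both sums in \eqref{mobdd2} converge for every finite value of their parameters, thanks to the super-exponential decays given by $\Gamma(\nu_p(\ell+1))^{-1/p}$ (inner sum) and $\Gamma((k_2+1)/2)^{-1/2}$ (outer sum). For $p\in(1,2)$, \eqref{eq:log-2} yields $\mu_{0,\eta}(2)\downarrow 0$ as $\eta\downarrow 0$, so one can choose $\eta$ small enough that $(C\beta/(p-1))(\mu_{0,\eta}(2)T^{1/2})^{1/2}\leq 1/2$, which makes the $k_1$-sum in \eqref{mobdd1} a convergent geometric series, while the $k_2$-sum converges thanks to the factor $\Gamma(\nu_p(k_2+1))^{-1/p}$.

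There is no real obstacle here: the analytic heavy lifting has already been carried out in Propositions \ref{prop:localbisdone} and \ref{prop:localbis}, and what remains is a routine uniform-in-$t$ verification of the two types of convergence at play (geometric in $k_1$ after choosing $\eta$ small, and super-exponential in $k_2$ from the inverse Gamma factors).
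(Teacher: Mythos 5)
Your proposal is correct and follows essentially the same route as the paper's own proof: uniformity in $a$ and $x$ is already built into Propositions \ref{prop:localbisdone} and \ref{prop:localbis}, the $t$-dependence is handled by bounding $t\leq T$, the $k_1$-sum is made geometric by shrinking $\eta$ (using \eqref{eq:log-2} so that $\mu_{0,\eta}(2)$, resp.\ $\mu^{\log}_{0,\eta}(1+\frac2d)$, vanishes as $\eta\downarrow0$), and the $k_2$-sums converge via the bound $G_p(s)\leq (1\vee T)^{2\nu_p/3}s^{\nu_p/3-1}$, Lemma \ref{lemGamma}, and the super-exponential growth of the Gamma factors. The explicit formula for $p=2$ and the treatment of $p\in(2,3)$ in $d=1$ also match the paper's argument.
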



\begin{proof}
First let us note that the bounds in Propositions \ref{prop:localbisdone} and \ref{prop:localbis} already are uniform in $a$  and~$x$, so we only need to check uniformity in $t$.

\smallskip
Let us start with the case $d=1$. The statement for $p=2$ is obvious from \eqref{explicitesecondmoment}. If $p\in(1,2)$, by reorganizing \eqref{mobdd1}, 
we obtain that for every $t\in[0,T]$,
\begin{equation*}
\E\Big[\rho(t,x)^{-p}\bar \calz^{\om,a}_{\beta}(t,x)^p\Big]^{\frac1p}
\le  C'\Bigg(\sum_{k_1= 0}^\infty \Big(C_p\beta \mu_{0,\eta}(2)^{\frac12} T^{\frac{1}{4}}\Big)^{k_1}\Bigg)\Bigg(\sum_{k_2= 0}^\infty\frac{\Big(C_p\beta\mu_{\eta,\infty}(p)^{\frac1p} T^{\frac{\nu_p}p}\Big)^{k_2}\Gamma(\nu_p)^{\frac1p}}{\Gamma(\nu_p(k_2+1))^{\frac1p}}\Bigg),
\end{equation*}
where $C_p=C/(p-1)$.
The first sum is finite if one chooses $\eta$ such that  $C\beta \mu_{0,\eta}(2)^{ 1/ 2 } T^{1/4}<\frac12$ and the second one is always finite since $\Gamma(\nu_p(k_2+1))$ grows super-exponentially.  If $p\in(2,3)$, 
we have from~\eqref{mobdd2} that for every $t\in [0,T]$,
\begin{equation*}
\E\Big[\rho(t,x)^{-p}\bar \calz^{\om,a}_{\beta}(t,x)^p\Big]^{\frac1p}
\leq C_p \sum_{k_2=0}^{\infty} \frac{\big(  C_p \beta  T^{\frac14}\big)^{k_2}}{\Gamma(\frac{k_2+1}{2})^{\frac12}} \Bigg(  \sum_{\ell=0}^{\infty} \frac{ \big(C'_p \beta T^{\frac{\nu_p}{p}}\big)^{\ell} }{\Gamma(\nu_p (\ell+1)) }\Bigg)^{k_2+1}\,.
\end{equation*}
Since $\Gamma(\nu_p(\ell+1))$ and $\Gamma(\frac{k_2+1}{2})$  both grow super-exponentially, the two sums are finite.

\smallskip

When $d\ge 2$, since $\zeta_1$ is monotone in $t$, we have, in the same manner, 
\begin{equation}\label{eq:help2}
 \E\Bigl[\rho(t,x)^{-p}\bar \calz^{\om,a}_{\beta}(t,x)^p\Bigr]^{\frac1p}
\le \Biggl(\sum_{k_1= 0}^\infty \Bigl(C_p\beta \zeta_1(\eta,p,T)^{\frac1p}\Bigr)^{k_1}\Biggr) \Biggl(\sum_{k_2= 0}^\infty\Bigl(C_p\beta\zeta_2(\eta,p)^{\frac1p}\Bigr)^{k_2}\Lambda(k_2,t,p)^{\frac1p}\Biggr)
\end{equation}
for $t\le T$.
The first sum is finite provided that $\eta$ is chosen sufficiently small.
Considering the second term, we have
\begin{align*}
		 \Lambda(k_2,t,p)  &\le t^{1-\nu_p} (t\vee 1)^{\frac23\nu_pk_2}\int_{\mathfrak{X}_{k_2}(t)} (\Delta t_{k_2+1})^{\nu_p-1}\prod_{i=1}^{k_2}(\Delta t_i)^{\frac13\nu_p-1} \, \dd t_i \\
		 		   &=  (t\vee 1)^{\frac23\nu_pk_2} t^{\frac{1}{3}\nu_pk_2}  \frac{\gG(\frac13\nu_p)^{k_2} \gG(\nu_p)}{ \gG
			(\frac13\nu_p(k_2+3))}\,,
	\end{align*}
 where we   used Lemma~\ref{lemGamma} for the last identity. 
As a result,  assuming that $T\ge 1$, we obtain that for every $t\in[0,T]$,
\begin{equation*}
 \Big(C_p \beta \zeta_2(\eta,p)^{\frac1p}\Big)^{k_2} \Lambda(k_2,t,p)^{\frac1p}\le   \Big(C_p\beta \zeta_2(\eta,p)^{\frac1p} \gG( {\textstyle\frac13} \nu_p)^{\frac1p}  T^{\frac{\nu_p}p}\Big)^{k_2} \frac{ \gG(\nu_p)^{\frac1p}}{ \gG
 	(\frac13\nu_p(k_2+3))^{\frac1p}},
\end{equation*}
and since $\gG(\nu_p[(k_2/3)+1])$ grows super-exponentially, the sum over $k_2$ in \eqref{eq:help2} is finite.
\end{proof}

\subsection{Bounding moments in the chaos expansion: the first term}\label{sec:k1}

From now on, we focus on the case $x=0$, which yields no loss of generality by \eqref{eq:theinlaws}. 
Also,  in both Propositions~\ref{prop:localbisdone} and~\ref{prop:localbis} above, the case $a=0$ can be deduced from the case $a>0$ using Fatou's lemma: in the following, we can always assume that $a>0$.
Starting from the chaos decomposition \eqref{eq:chaos}, we can use Minkowski's inequality to get
\begin{equation}\label{eq:triangle}
	\E\Big[ \rho(t,0)^{-p} \bar \calz^{\om,a}_{\beta}(t,0)^p\Big]^{\frac1p}
	\le   \sum_{k=0}^{\infty} \gb^k \bbE\left[  | W_{a,k}(t) |^p \right]^{\frac1p} \,,
\end{equation}
where 
\begin{equation}\label{def:wtk}
  W_{a,0}(t):=1,\qquad W_{a,k}(t):=\int_{ \frakX_k(t) \times (\bbR^d)^k}  \frac{ \rho_{t,0}( \bt , \bx)}{ \rho(t,0)}   \prod_{i=1}^k \bar \xi^a_{\go} (\dd t_i , \dd x_i )\quad  \text{ for } k\geq1.
\end{equation}

The estimates for  $\bbE [  | W_{a,k}(t) |^p ]$ are intricate, so let us spend some time on the case $k=1$ to illustrate the intuition behind our proof. 
In what follows, we write 
\begin{equation}\label{indexnotation}
\calx^{(k)}_t:=\frakX_k(t) \times (0,\infty)^k,\quad X^{(k)}_t:=\frakX_k(t)\times(\R^d)^k,\quad \X^{(k)}_t:=\frakX_k(t)\times(\R^d)^k\times(0,\infty)^k.
\end{equation}
We drop the superscript $k$ when $k=1$.
For simplicity, let us consider  the expansion of the free-end partition function.
Because the integrals in \eqref{eq:triangle} are martingales in $t$ for the filtration \eqref{filtraF}, we can apply the Burkholder--Davis--Gundy (BDG) inequality and obtain 
\begin{equation}\label{eq:BDG-2}
 \bbE\Biggl[ \Biggl(\int_{ (0,t) \times \bbR^d}   \rho( s , x) \, \bar \xi^a_{\go} (\dd s , \dd x )\Biggr)^p\Biggr]\\
 \le C_p\,
 \bbE\Biggl[\Biggl(\int_{ (0,t) \times \bbR^d\times [a,\infty)}  (\rho( s , x) z)^2\,
 \delta_{\go}(\dd s, \dd x, \dd z)\Biggr)^{\frac p2}\Biggr].
\end{equation}
Since we are tracking the dependence in $p$,
it is worth noting that it is possible to take $C_p=(4p)^p\le 64$ if $p\leq 2$; see \cite[Chapter VII, Theorem 92]{Del82}.
In order to bound the right-hand side uniformly in $a$, we replace $[a,\infty)$ by $(0,\infty)$. 
Let us further restrict ourselves to the case $d\ge 3$ for simplicity (note that in particular $p<2$). 
By Jensen's inequality and the  subadditivity property~\eqref{eq:subbaditiv}, we have,
for $\theta \in [p,2]$,
\begin{equation}
\label{eq:jensubad}
\begin{split}
 \bbE\Bigg[\bigg(\int_{\X_t} \rho( s , x)^2  z^2\,
 \delta_{\go}(\dd s, \dd x, \dd z)\bigg)^{\frac p2}\Bigg] &\le \bbE\Bigg[\bigg(\int_{ \X_t}  \rho( s , x)^2  z^2\,
 \delta_{\go}(\dd s, \dd x, \dd z)\bigg)^{\frac \theta 2}\Bigg]^{\frac{p}{\theta}}
 \\ &\le \bigg(\int_{ \X_t}   \rho( s , x)^{\theta}z^{\theta }  \,\dd s \,\dd x\,\gl(\dd z) \bigg)^{\frac{p}{\theta}}
\\  &=  \vartheta(\theta)^{\frac p\theta} \bigg(\int_{ \calx_t}   s^{\nu_{\theta}-1 }z^\theta  \,\dd s \,\gl(\dd z) \bigg)^{\frac{p}{\theta}} 
\,,
\end{split}
\end{equation}
recalling also~\eqref{help3} and $\nu_{\theta} = 1-\frac d2(\theta-1)$ for the last line.
On the right-hand side, we see that when~$\theta$ increases, the integrability in $z$ around $0$ improves but the one in $s$ worsens, forcing us to chose $\nu_{\theta}>0$ (\textit{i.e.}, $\theta<1+\frac{2}{d}$) to obtain a finite integral. As a consequence, we need to assume $\mu_{0,\infty}(\theta)<\infty$ for some $\theta\in[p,1+\frac 2d)$. If this holds, then, in fact, the same estimate can be applied iteratively in order to obtain bounds for any of the multiple integrals in \eqref{eq:triangle}. Let us remark that this is essentially the assumption (and the method) used in \cite{Loubert98} to obtain existence, uniqueness and moments for the solution to~\eqref{eq:SHE}. 

Clearly, $\mu_{0,\infty}(\theta)<\infty$ does not hold, for any $\theta>0$, if $\xi_\om$ is an $\alpha$-stable noise with $\al\in(1,2)$; recall that we assume $\mu_{1,\infty}(p)<\infty$ for some $p>1$. In that case, at least when $k=1$, it is easy to do better than \eqref{eq:jensubad}. The 
  key point is to first separate $z\geq1$ and $z< 1$ and then, for $z< 1$, further whether  $z\le s^{d/2}$ or $z>s^{d/2}$. For $z\geq1$, we can simply apply \eqref{eq:jensubad} with $\theta=p$, that is, 
  $$ 
  \bbE\Bigg[\bigg(\int_{\X_t} \rho( s , x)^2  z^2\bone_{\{z\geq1\}}\,
  \delta_{\go}(\dd s, \dd x, \dd z)\bigg)^{\frac p2}\Bigg]\leq \vartheta(\theta)  \mu_{1,\infty}(p) \nu_p^{-1} t^{\nu_p} \,.
  $$
  For $z\leq1$, we first consider the contribution coming from   $z\le s^{d/2}$. By
 Jensen's inequality, we can take the exponent $\frac p2$ outside the expectation on the right-hand side of~\eqref{eq:BDG-2}, which leads to the bound
\begin{equation*}
\bbE\Bigg[ \int_{\X_t} \ind_{\{ z\le s^{d/2}\wedge1\}}  \rho( s , x)^2  z^2\,
 \delta_{\go}(\dd s, \dd x, \dd z)\Bigg]^{\frac p2}  =  \Bigg( \int_{ \X_t} \ind_{\{ z\le s^{d/2}\wedge1\}}  \rho( s , x)^2z^2\, \dd s\, \dd x \,\gl( \dd z)\Bigg)^{\frac p2}.
\end{equation*}
Since $d\ge 3$, we have, integrating first with respect to $x$ (recall \eqref{help3}), then with respect to $s$ and finally with respect to $z$,
\begin{multline*}
 \int_{ \X_t} \ind_{\{ z\le s^{d/2}\wedge1\}}  \rho( s , x)^2z^2\,\dd s\, \dd x\,\gl( \dd z)= \int_{ \calx_t} \frac{z^2\ind_{\{ z\le s^{d/2}\wedge1\}} }{(4\pi s)^{\frac d2}} \, \dd s\, \gl( \dd z) \le  \frac{1}{(\frac d2-1)(4\pi)^{\frac d2}}\mu_{0,1} \Big (1+\tfrac{2}{d} \Big).
\end{multline*}
For the contribution to the integral coming from  $z>s^{d/2}$,
applying~\eqref{eq:jensubad} with $\theta=p$,
we get that it is smaller than 
\begin{align*}
\vartheta(p)  \int_{ \calx_t} \ind_{\{ z> s^{d/2},z<1\}} s^{\nu_p-1} z^p \,\dd s \, \gl( \dd z) =
\frac{\vartheta(p)}{\nu_p}   \int_{ (0,1)}  z^{1+\frac 2d}  \, \gl( \dd z)  =  \frac{\vartheta(p)}{\nu_p}  \mu_{0,1}\Big (1+\tfrac{2}{d} \Big )\,,
\end{align*}
where we have  first integrated with respect to $s$, using that $\nu_p =1-\frac{d}{2} (p-1) >0$ for $p<1+\frac2d$, and then with respect to $z$.
Altogether, we have shown the following bound.
\begin{lemma}\label{lem:lematoy}
In dimension $d\geq 3$, there exists a constant $C$ (which may depend on $d$ but not on~$p$) such that for all $p\in(1,1+\frac 2d)$,
	\begin{equation*}
		\bbE\Bigg[ \bigg(\int_{ (0,t) \times \bbR^d}   \rho( s , x)\,  \bar \xi^a_{\go} (\dd s , \dd x )\bigg)^p\Bigg]
		\le  \frac{C}{\nu_p}  \Big[ \mu_{0,1}\Big(1+\tfrac{2}{d}\Big)^{\frac p2} + \mu_{0,1}\Big(1+\tfrac{2}{d}\Big)+\mu_{1,\infty}(p) t^{\nu_p} \Big].
	\end{equation*}
\end{lemma}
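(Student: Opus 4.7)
The plan is to reduce the problem to deterministic integral estimates via the Burkholder--Davis--Gundy inequality, and then decompose the resulting quadratic variation according to the relative size of the jumps $z$ and the time parameter $s$. First I would apply BDG (as in~\eqref{eq:BDG-2}), using the explicit constant $C_p\le 64$ valid for $p\le 2$, to bound the left-hand side by $\E[(\int \rho(s,x)^2 z^2\,\delta_{\go}(\dd s,\dd x,\dd z))^{p/2}]$. Since $a$ does not appear in the estimate, I would immediately replace the cutoff $[a,\infty)$ by $(0,\infty)$ to make the bound uniform in $a$.

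Next, using the subadditivity property~\eqref{eq:subbaditiv} with $\kappa=p/2<1$, I would split the integration domain into three regions: $\{z\geq 1\}$, $\{z<1,\,z\leq s^{d/2}\}$, and $\{z<1,\,z>s^{d/2}\}$. The choice of threshold $z\sim s^{d/2}$ is motivated by the observation that Jensen's inequality gives different bounds depending on whether the exponent $2$ or $p$ is used, and the two bounds become comparable precisely at $z^{2/d}\sim s$.

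For large jumps $z\ge 1$, the bound~\eqref{eq:jensubad} applied with $\theta=p$ yields a contribution of order $\mu_{1,\infty}(p)\,t^{\nu_p}/\nu_p$. For small jumps with $z\le s^{d/2}\wedge 1$, I would use Jensen's inequality to move the exponent $p/2$ outside the expectation and then compute the resulting integral directly; integrating first in $x$ gives $(4\pi s)^{-d/2}$, then integrating $s$ from $z^{2/d}$ to $\infty$ uses crucially that $d\ge 3$ (so $d/2>1$) and produces $z^{2/d-1}/(d/2-1)$, so that the $z$-integral becomes $\mu_{0,1}(1+2/d)/[(d/2-1)(4\pi)^{d/2}]$. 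For the intermediate region $s^{d/2}<z<1$, I would apply~\eqref{eq:jensubad} again with $\theta=p$ and integrate $s$ from $0$ to $z^{2/d}$, using that $\nu_p>0$; the exponents then match via the identity $p+(2/d)\nu_p=1+2/d$, producing a contribution of order $\mu_{0,1}(1+2/d)/\nu_p$.

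The main subtlety, and essentially the only non-routine step, is identifying the correct cut-off $z\sim s^{d/2}$ for the small-jump decomposition: this threshold balances the $L^2$ control (natural after BDG when $z$ is very small) against the $L^p$ control (which is what encodes the hypothesis $\mu_{0,1}(1+2/d)<\infty$). Once this partition is in place, the three pieces assemble directly into the three terms on the right-hand side of the claimed bound, with the dependence on $p$ tracked through the factor $1/\nu_p$ and the universal constant $C$ coming from BDG together with the geometric prefactors $(4\pi)^{-d/2}$ and $\vartheta(p)$.
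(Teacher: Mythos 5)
Your proposal is correct and follows essentially the same route as the paper: BDG with the uniform constant for $p\le 2$, dropping the cutoff $a$, then splitting the quadratic variation via subadditivity into the regions $\{z\ge 1\}$, $\{z<1,\ z\le s^{d/2}\}$, $\{z<1,\ z>s^{d/2}\}$, handling the first and third by \eqref{eq:jensubad} with $\theta=p$ and the second by Jensen with exponent $2$, exactly as in Section~\ref{sec:k1}. The key threshold $z\sim s^{d/2}$ and the exponent bookkeeping ($p+\tfrac2d\nu_p=1+\tfrac2d$, convergence of $\int s^{-d/2}\,\dd s$ for $d\ge 3$) match the paper's computation, so there is nothing to add.
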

In other words, the  single integral has a finite $p$th moment for some $p\in(1,1+\frac 2d)$ if the intensity measure satisfies $\mu_{0,1}(1+\frac 2d)+\mu_{1,\infty}(p)<\infty$. In fact, this condition is necessary and sufficient by \cite[Theorem~3.3]{Rajput89}, so Lemma~\ref{lem:lematoy} is optimal for single integrals.

\subsection{Decoupling and partitioning: Key tools in proving Propositions \ref{prop:localbisdone} and \ref{prop:localbis}}\label{sec:start}

\subsubsection{Decoupling}

When $k\ge 2$, there is no direct analogue of \eqref{eq:BDG-2} since we cannot apply the BDG inequality for the $k$-fold iterated integral.  
The first step of our proof is to use the decoupling inequalities from Section~\ref{sec:dec} in order to obtain, instead of $W_{a,k}(t)$,  multiple Poisson integrals with respect to $k$ independent copies of the original noise. An important advantage of the decoupled integral is that we can change the order of integration without losing the martingale property.
Using Theorem \ref{thm:dec} 
with the tetrahedral function
$f(\bt,\bx,\bz):=\rho(t,0)^{-1}\rho_{t,0}( \bt , \bx)\prod_{i=1}^kz_i\ind_{\{z_i\ge a\}}$ (the reader can check that~\eqref{eq:fcond} is satisfied whenever $\mu_{1,\infty}(p)<\infty$), we have
\begin{equation}\label{eq:jedecouple}
 \bbE [  \lvert W_{a,k}(t)\rvert^p  ] \le \bigg(\frac{C}{p-1}\bigg)^k\, \bbE^{\otimes k}  [\lvert V_{a,k}(t)\rvert^p  ] \,,
\end{equation}
where 
\begin{equation}\label{eq:jeBDG}\begin{split}
V_{a,k}(t):=&~ \int_{ X_t^{(k)}}   \frac{\rho_{t,0}( \bt , \bx)}{{\rho(t,0)}}   \prod_{i=1}^k \bar \xi^a_{\go_i} (\dd t_i , \dd x_i ) \\
=&~ \int_{ \X_t^{(k)}}   \frac{\rho_{t,0}( \bt , \bx)}{{\rho(t,0)}}   \prod_{i=1}^k z_i\bone_{\{z_i\geq a\}}\, (\delta_{\go_i}-\nu)(\dd t_i , \dd x_i, \dd z_i) 
\end{split}
\end{equation}
and $\go_i$, for $i= 1,\dots,k$, are i.i.d.\ copies of $\go$. For simplicity, and with a small abuse of notation, we write $\bbP$, $\bbP^{\otimes k}$, $\E$ and $\E^{\otimes k}$ in the remainder of the proof.

\subsubsection{Partitioning and integrating over space}
As in Section~\ref{sec:k1}, we  want to estimate   the $p$th moment of the right-hand side of \eqref{eq:jeBDG} 
using a combination of the BDG inequality, Jensen's inequality and subadditivity and then integrate over space.
Because we want to use an intermediate exponent as in \eqref{eq:jensubad} that depends on the value of $\bt$ and~$\bz$,   a first task is to decompose 
$V_{a,k}(t)$ by considering   a (non-random) partition $\mathfrak{P}_k$ of the parameter space~$\calx^{(k)}_t$.
For each element $\cP\in \mathfrak{P}_k$ of our partition, we will  further determine a partition $J_1(\cP)\cup J_2(\cP)$ of $\lint k\rint$ and first integrate with respect to $(t_i,z_i)$ with $i\in J_1(\cP)$ and then with respect to $(t_i,z_i)$ with $i\in J_2(\cP)$. 
 We let $k_1$ and $k_2$ denote the respective cardinalities of~$J_1=J_1(\calp)$ and $J_2=J_2(\calp)$.

When $J\subseteq \lint k\rint$, we use the following notation:
	\begin{equation}\label{deltajnota}
		\Delta_{J} t_i:= t_i- t_{i(J,-1)}  \qquad\text{and}\qquad \Delta_J x_i:= x_i- x_{i(J,-1)}
	\end{equation}
where $x_0=x_{k+1}=0$, $t_0=0$ and $t_{k+1}=t$ as usual
and where for $i\in  J\cup\{k+1\}$ we denote the predecessor of $i$ in $J\cup\{0,k+1\}$ by  $i(J,-1)$ . 
As a result of our partitioning procedure, we obtain the following estimate:  

\begin{lemma}\label{lemmaparti}
Given  $p\in(1,2\wedge (1+\frac{2}{d}))$ and $\theta\in(p,2]$, we have for all $a\in(0,1]$,
 \begin{equation}\label{eq:dolemaparti}
 \begin{split}
		\bbE\Big[ \big\lvert V_{a,k}(t)\big\rvert^{p} \Big]^{\frac1p}  \le    C^k \sum_{\cP\in \mathfrak P_k} t^{\frac{1-\nu_p}p} \Bigg( &\int_{\calx_t^{(k_2)}}\Biggl(\int_{ \calx_t^{(k_1)}}     \ind_{\cP}(\bt,\bz ) \prod_{i=1}^{k+1} (\Delta t_i)^{\nu_{\theta}-1} \prod_{i\in J_1} z^{\theta}_i  \,\dd t_i  \,\gl(\dd z_i) \Biggr)^{\frac{p}{\theta}}  \\ 
		&\times    \prod_{i\in J_2\cup\{k+1\}} (\Delta_{J_2} t_i)^{(\nu_p-1)+\frac{p}{\theta}(1-\nu_{\theta})}\prod_{i\in J_2} z^p_i\,\dd t_i   \,\gl(\dd z_i)\Bigg)^{\frac1p}.
\end{split}
\end{equation} 
\end{lemma}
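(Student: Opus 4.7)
The plan is to combine three ingredients: a decomposition of $V_{a,k}(t)$ over the partition $\mathfrak{P}_k$ followed by Minkowski, an iterative application of the BDG / Jensen / subadditivity chain from Section~\ref{sec:k1} at two different exponents ($\theta$ for the $k_1$ indices in $J_1$, $p$ for the $k_2$ indices in $J_2$), and a deterministic integration over the spatial variables exploiting the Gaussian semigroup. For each $\calp\in \mathfrak{P}_k$, write $V_{a,k}^{\calp}(t)$ for the restriction of the integral in~\eqref{eq:jeBDG} to $\ind_{\calp}(\bt,\bz)$, so that $V_{a,k}(t)=\sum_{\calp} V_{a,k}^{\calp}(t)$, and use Minkowski to move the sum over $\calp$ outside the $L^p$-norm. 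This reduces matters to bounding each $\E^{\otimes k}[|V_{a,k}^{\calp}(t)|^p]^{1/p}$ separately.

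The iteration step relies on the decoupling in~\eqref{eq:jeBDG}: because the noises $\go_i$ are independent, Fubini lets us organize the $k$ integrations in the order we please. Placing the $J_1$ integrations innermost, we condition on $\go_{J_2}$ and apply BDG, Jensen (exponent $\theta\in(p,2]$, so that $p/\theta\leq 1$) and subadditivity (since $\theta/2\leq 1$) at each of the $k_1$ levels. This bounds $\E^{\otimes k_1}[|V_{a,k}^{\calp}(t)|^p\mid \go_{J_2}]$ by a deterministic integral to the power $p/\theta$, with integrand $\rho_{t,0}(\bt,\bx)^{\theta}\rho(t,0)^{-\theta}\ind_{\calp}\prod_{i\in J_1}z_i^{\theta}$ over $(\bt_{J_1},\bx_{J_1},\bz_{J_1})$. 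Taking expectation with respect to $\go_{J_2}$ and iterating BDG + subadditivity (with exponent $p\leq 2$, so no Jensen step is needed) on the $k_2$ outer integrations yields an entirely deterministic bound involving a double integral, with the inner one raised to $p/\theta$.

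It remains to integrate the spatial variables. Writing $\rho(s,x)^{\theta}=\vartheta(\theta)s^{\nu_\theta-1}\rho(s/\theta,x)$ from~\eqref{help3}, the $\bx$-dependence of the innermost integrand is a product of $k+1$ rescaled heat kernels linked at $x_1,\dots,x_k$. Integrating over $\bx_{J_1}$ by the Gaussian convolution identity collapses consecutive factors and produces the prefactors $\prod_{i=1}^{k+1}(\Delta t_i)^{\nu_\theta-1}$ together with a product $\prod_{i\in J_2\cup\{k+1\}}\rho(\Delta_{J_2}t_i/\theta,\Delta_{J_2}x_i)$. Raising to $p/\theta$ and integrating over $\bx_{J_2}$ similarly contracts the remaining $k_2+1$ kernels to a single factor $\rho(t/p,0)$ and yields exponents $\nu_{p/\theta}-1$ on the increments $\Delta_{J_2}t_i$. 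The elementary identity $\nu_{p/\theta}-1=(\nu_p-1)+(p/\theta)(1-\nu_\theta)$ (from $\nu_q=1-\tfrac{d}{2}(q-1)$) rewrites these in the form of~\eqref{eq:dolemaparti}, and the ratio $\rho(t/p,0)/\rho(t,0)^p=\vartheta(p)^{-1}t^{1-\nu_p}$ yields the $t^{(1-\nu_p)/p}$ prefactor. The main obstacle is the careful bookkeeping in the iterative BDG + Jensen + subadditivity step: one must check at each conditional level that the integrand produced is indeed $\theta$- or $p$-integrable for the next conditional BDG application, and that all arising constants can be absorbed into a universal $C^k$ that is uniform in $\calp$ and in $p$ as long as $p$ stays bounded away from $1$.
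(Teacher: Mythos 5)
Your decomposition over $\mathfrak P_k$ with Minkowski, and your spatial-integration bookkeeping (the Gaussian contraction giving $\prod_{i=1}^{k+1}(\Delta t_i)^{\nu_\theta-1}$, then the exponent identity $(\nu_p-1)+\frac p\theta(1-\nu_\theta)$ and the $t^{(1-\nu_p)/p}$ prefactor) match the paper's argument, namely \eqref{eq:caseofJ1}--\eqref{eq:caseofJ2}. The gap is in the probabilistic step, where your order of operations is the reverse of what works. You claim that, conditionally on $\go_{J_2}$, BDG--Jensen--subadditivity over the $k_1$ inner levels bounds $\E^{\otimes k_1}[|V^{\calp}_{a,k}(t)|^p\mid\go_{J_2}]$ by a \emph{deterministic} integral with integrand $\rho_{t,0}^{\theta}\rho(t,0)^{-\theta}\ind_{\calp}\prod_{i\in J_1}z_i^{\theta}$. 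That cannot be right: conditioning on $\go_{J_2}$ does not remove the $J_2$ integrations, so what this step actually produces is $C^{k_1}\bigl(\int_{\calx_t^{(k_1)}}|g(\bw_{J_1})|^{\theta}\,\dd\nu_{J_1}\bigr)^{p/\theta}$ with $g(\bw_{J_1})$ still a $k_2$-fold stochastic integral against $\prod_{i\in J_2}(\delta_{\go_i}-\nu)$. Your next sentence ("taking expectation and iterating BDG on the $k_2$ outer integrations") then has the $J_2$ stochastic integrals trapped inside an $L^{\theta}(\nu_{J_1})$-norm raised to $p/\theta$; BDG does not apply directly there. To proceed you would have to interchange the $L^p(\P_{J_2})$- and $L^{\theta}(\nu_{J_1})$-norms via the Kwapie\'n-type inequality used in Section~\ref{diffikult} (valid since $p\le\theta$), and only then apply BDG at exponent $p$ to $g$. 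But that route yields the transposed nesting
\begin{equation*}
C^{k}\Biggl(\int_{\calx_t^{(k_1)}}\Biggl(\int_{\calx_t^{(k_2)}}|f|^{p}\prod_{i\in J_2}z_i^{p}\,\dd\nu_{J_2}\Biggr)^{\frac{\theta}{p}}\dd\nu_{J_1}\Biggr)^{\frac p\theta},
\end{equation*}
which by that very interchange inequality \emph{dominates} the right-hand side of \eqref{eq:dolemaparti} rather than being dominated by it. So your argument, even after the repair you do not mention, proves a weaker estimate than the lemma and does not feed into Proposition~\ref{prop:dekz}, whose bounds are tailored to the nesting with the $J_2$ variables outermost.

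The correct order, as in the paper's proof of \eqref{eq:Vktp}, is the opposite: after permuting so that the $J_1$ integrations are innermost (legitimate pathwise since $a>0$ and the noises are independent), first iterate BDG and subadditivity at exponent $p\le2$ over the \emph{outer} $J_2$ integrations, which converts them into integrals against $\prod_{i\in J_2}z_i^{p}\,\dd t_i\,\dd x_i\,\la(\dd z_i)$ of the $p$-th moment of the inner $J_1$-integral; only then apply Jensen at exponent $\theta$ (as in \eqref{eq:jensubad}) together with BDG and subadditivity to that inner integral. This produces exactly the nested form of \eqref{eq:dolemaparti}, after which your spatial-integration step goes through unchanged. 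A further minor slip: even the inner deterministic integral cannot have an integrand free of the $J_2$ variables, since $\ind_{\calp}$ and the increments $\Delta t_i$ couple the two groups of time variables.
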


\begin{proof} 
For $\cP\in \mathfrak{P}_k$, let us define 
\begin{equation}\label{eq:VcP}
V_{a,k}(t,\cP):= \int_{ \X_t^{(k)}}  \bone_\calp(\bt,\bz) \frac{\rho_{t,0}( \bt , \bx)}{{\rho(t,0)}}   \prod_{i=1}^k z_i\bone_{\{z_i\geq a\}}\, (\delta_{\go_i}-\nu)(\dd t_i , \dd x_i, \dd z_i)\,.
\end{equation}
Then, by \eqref{eq:jeBDG} and    Minkowski's inequality, we have that
\begin{align*}
		\bbE\Big[ \big\lvert V_{a,k}(t)\big\rvert^{p} \Big]^{\frac1p} &\le  \sum_{\cP\in \mathfrak P_k}  \bbE\Big[ \big\lvert V_{a,k}(t,\cP)\big\rvert^{p}\Big]^{\frac1p}.
	\end{align*}
Because the $\om_i$'s are independent, we can, similarly to \eqref{eq:fubini} in the appendix, permute the integrals in \eqref{eq:jeBDG} and  integrate with respect to the indices in $J_1$ first and $J_2$ afterwards. In conjunction with the  BDG inequality, subadditivity (recall $p\leq 2$) and Jensen's inequality (applied in the fashion as in~\eqref{eq:jensubad}), we obtain
	\beq
	\label{eq:Vktp}
	\begin{split}
		 &\bbE[ \lvert V_{a,k}(t,\cP)\rvert^{p}] \\
&\qquad  \le \mathtoolsset{multlined-width=0.8\displaywidth}\begin{multlined}[t] C^{k_2}
	 \int_{ \X_t^{(k_2)}} \bbE\Bigg[ \bigg\lvert \int_{ \X_t^{(k_1)}}  \bone_\calp(\bt,\bz) \frac{\rho_{t,0}( \bt , \bx)}{{\rho(t,0)}}   
\prod_{i\in J_1} z_i\bone_{\{z_i\geq a\}}\, (\delta_{\go_i}-\nu)(\dd t_i , \dd x_i, \dd z_i)  \bigg\rvert^p\Bigg]
\\ 
\times \prod_{i\in J_2} z_i^p\bone_{\{z_i\geq a\}}\, \dd t_i \, \dd x_i\,\la( \dd z_i) \end{multlined}
  \\
		&\qquad \le C^k   \int_{\X_t^{(k_2)}}\bigg(\int_{ \X_t^{(k_1)}}     \ind_{\cP}(\bt,\mathbf{z})  \frac{\rho_{t,0}( \bt , \bx)^{\theta}}{\rho(t,0)^{\theta}} \prod_{i\in J_1} z^{\theta}_i  \,\dd t_i \,\dd x_i \,\gl(\dd z_i) \bigg)^{\frac{p}{\theta}}   \prod_{i\in J_2} z^p_i\,\dd t_i \,\dd x_i \,\gl(\dd z_i). 
	\end{split} 
	\eeq
\begin{rem} \label{abuse}
 In the above integrals and for the remainder of the proof, with a small abuse of notation,  the coordinates of elements of $\X_t^{(k_1)}$ are indexed by $J_1$ instead of $\lint k_1 \rint$ (and similarly for $J_2$). With this convention,
 $\X^{(k)}_t$ is a strict subset of  $\X_t^{(k_1)}\times \X_t^{(k_2)}$, but this is not a problem since the indicator function $\ind_{\cP}$ restricts the integral to a subset of $\X^{(k)}_t$.
\end{rem}

The next step is to carry out
 integration with respect to $x_1,\dots,x_k$ explicitly. 
 First of all, notice that for any $m\ge 1$  and any $p>1$, we have for every $\bt \in \mathfrak{X}_m(s)$
\begin{equation}\label{eq:iteinte}
	\int_{(\R^d)^m}\rho_{s,y}( \bt , \bx)^p\prod_{i=1}^m\dd x_i  
	= \vartheta(p)^{m}\rho(s,y)^{p} s^{1-\nu_p} \prod_{i=1}^{m+1} (\Delta t_i)^{\nu_p-1} \,.
\end{equation}
This can be proved by induction on $m$ after checking the case $m=1$ by hand, using~\eqref{help3}; we leave the details to the reader.

 We then successively apply  \eqref{eq:iteinte} with $p=\theta$ 
  to the segments of $J_1$ ({i.e.}, maximal sets of consecutive indices in $J_1$). Recalling the notation \eqref{deltajnota}, we get
\begin{equation}\label{eq:caseofJ1}
	\int_{(\bbR^d)^{k_1}}\rho_{t,0}( \bt , \bx)^{\theta}\prod_{i\in J_1}\dd x_i=\vartheta(\theta)^{k_1}\prod_{i=1}^{k+1} (\Delta t_i)^{\nu_{\theta}-1} \prod_{j\in J_2\cup\{k+1\}}(\Delta_{J_2} t_j)^{1-\nu_{\theta}}\rho(\Delta_{J_2} t_j, \Delta_{J_2} x_j)^{\theta} \,.
\end{equation}
 Applying~\eqref{eq:iteinte} in the case $y=0$,
 we also get
\begin{equation}\label{eq:caseofJ2}
	\int_{(\bbR^d)^{k_2}}\prod_{j\in J_2\cup\{k+1\}}\rho(\Delta_{J_2} t_j, \Delta_{J_2} x_j)^{p} \prod_{i\in J_2} \dd x_i
	= \vartheta(p)^{k_2} \rho(t,0)^p t^{1-\nu_p}\prod_{j\in J_2\cup\{k+1\}}  (\Delta_{J_2}t_j)^{\nu_p-1} \,.
\end{equation}
We conclude the proof of Lemma~\ref{lemmaparti} by inserting \eqref{eq:caseofJ1} and \eqref{eq:caseofJ2} in \eqref{eq:Vktp}.
\end{proof}

\subsection{The proof of Proposition \ref{prop:localbisdone}}
\label{diffikult}
The second moment was computed in Remark~\ref{rem:secmom}. In order to prove \eqref{mobdd1},
we only need to separate small and large values of $z$.
We consider a partition $\mathfrak{P}_k$ indexed by the subsets $J\subseteq \lint k\rint$ and we define 
\begin{equation}\label{partisimples}
 \cP(J):=\{ (\bt,\bz)\in \calx^{(k)}_t \ :\  z_i<\eta \text{ for all } i\in J \text{ and }  z_i\ge \eta \text{ for all } i\in \lint k\rint \setminus J\}
\end{equation}
as well as $J_1:=J$ and $J_2:=\lint k\rint \setminus J$.
Applying Lemma \ref{lemmaparti} with $\theta=2$ and noting that $\nu_2=\frac12$ and $\nu_p =1- \frac12(p-1)$ in dimension $d=1$, we obtain
\begin{equation}
\label{boundVaktp}
\begin{split}
 	\bbE&\Big[ \big\lvert V_{a,k}(t)\big\rvert^{p} \Big]^{\frac1p}  
 	 \le  C^k \sum_{J\subseteq \lint k\rint } t^{\frac{1-\nu_p}p} \mu_{0,\eta}(2)^{\frac {k_1}{2}}\mu_{\eta,\infty}(p)^{\frac{k_2}p} \\ 
		&\times\Bigg(\int_{(0,t)^{k_2}}\Biggl(\int_{(0,t)^{k_1}}     \ind_{\{t_1<\dots<t_k\}} \prod_{i=1}^{k+1} (\Delta t_i)^{-\frac{1}{2}}     \prod_{i\in J_1} \dd t_i  \Biggr)^{\frac{p}{2}} \prod_{i\in J_2\cup\{k+1\}} (\Delta_{J_2} t_i)^{\frac{2-p}{4}}\prod_{i\in J_2}\dd t_i\Bigg)^{\frac1p}. 
		\end{split}
\end{equation}
Integrating successively over the segments of $J_1$
and writing $\Delta_{J_2}(i) = i- i(J,-1)$ for the distance between $i\in J_2 \cup\{k+1\}$ and the previous index in $J_2$ (recall  \eqref{deltajnota}),
we get from Lemma~\ref{lemGamma}
that
\begin{equation}
\label{facil}
 \int_{(0,t)^{k_1}}     \bone_{\mathfrak{X}_k(t)} (\bt) \prod_{i=1}^{k+1} (\Delta t_i)^{-\frac{1}{2}}     \prod_{i\in J_1} \dd t_i
 =   \bone_{\mathfrak{X}_{k_2}(t)} ( (t_i)_{i\in J_2})  \prod_{i \in J_2\cup\{k+1\}} (\Delta_{J_2} t_i )^{\frac12\Delta_{J_2}(i)-1} \frac{\gG(\frac12)^{\Delta_{J_2}(i)}}{\gG(\frac12\Delta_{J_2}(i))}\,.
\end{equation}
Now, if we bound  $(\Delta_{J_2} t_i)^{ (\Delta_{J_2}(i)-1)/2} \leq t^{ (\Delta_{J_2}(i)-1)/2}$ and $\gG(\frac12\ell)\geq \frac12\sqrt{\pi}$ for any $\ell\geq 1$, we  get that
\[
\int_{(0,t)^{k_1}}   \bone_{\mathfrak{X}_k(t)} (\bt) \prod_{i=1}^{k+1} (\Delta t_i)^{-\frac{1}{2}}     \prod_{i\in J_1} \dd t_i 
 \leq \bone_{\mathfrak{X}_{k_2}(t)} ( (t_i)_{i\in J_2})  
 2^{k_2+1}  (\pi t)^{\frac12 k_1}\prod_{i\in J_2\cup\{k+1\}} (\Delta_{J_2} t_i)^{-\frac12}.
\]
Going back to~\eqref{boundVaktp} and taking the factor  $[(\Delta_{J_2} t_i)^{-1/2}]^{p/2}$ over to the outer integral, we compute
\begin{equation}
\label{facil2}
 \int_{(0,t)^{k_2}} \bone_{\mathfrak{X}_{k_2}(t)} ( (t_i)_{i\in J_2})   \prod_{i\in J_2\cup\{k+1\}} (\Delta_{J_2} t_i)^{\frac{1-p}{2}}\prod_{i\in J_2}\dd t_i  = t^{\nu_p k_2 +\nu_p-1} \frac{\gG(\nu_p)^{k_2+1}}{\gG(\nu_p(k_2+1))}\,,
\end{equation}
thanks again to Lemma~\ref{lemGamma}.
We therefore conclude that 
\begin{equation}
\label{lastVakt}
\begin{split}
  	 	\bbE\Big[ \big\lvert V_{a,k}(t)\big\rvert^{p} \Big]^{\frac1p}   &\le \sum_{J\subseteq \lint k\rint }  \Big(   \sqrt{\pi t}\mu_{0,\eta}(2) \Big)^{\frac{k_1}{2}} \Big( 2^{\frac p2} \mu_{\eta,\infty}(p) t^{\nu_p}\gG(\nu_p) \Big)^{\frac{k_2}p} \frac{  \sqrt{2} \gG(\nu_p)^{\frac1p}}{\gG(\nu_p(k_2+1))^{\frac1p}}\\
  	&\le (2C)^k  \max_{k_1+k_2=k} \Big(   \sqrt{\pi t}\mu_{0,\eta}(2) \Big)^{\frac{k_1}{2}} \Big(  2^{\frac p2}\mu_{\eta,\infty}(p) t^{\nu_p}\gG(\nu_p) \Big)^{\frac{k_2}p}\frac{ \sqrt{2}\gG(\nu_p)^{\frac1p}}{\gG(\nu_p(k_2+1))^{\frac1p}}.
 \end{split}
\end{equation}
Absorbing $\pi^{k_1/4}$ and $(2^{p/2}\gG(\nu_p))^{k_2/p}$, which is uniformly bounded in $p\in [1,2]$, into the constant~$C$ and  replacing the maximum by a sum, we derive \eqref{mobdd1} from 
\eqref{eq:triangle}, \eqref{eq:jedecouple} and \eqref{lastVakt}.

\medskip
If $p\in(2,3)$, the subadditivity argument in \eqref{eq:Vktp} does not apply. Instead, we shall use a variant of the BDG estimate \eqref{eq:BDG-2} that only contains the intensity measure $\nu:=\dd t\otimes\dd x\otimes\la(\dd z)$ instead of~$\delta_\om$. 
For any $p\geq2$, there exists a constant $C'_p\in(0,\infty)$ such that for all $\calp\otimes\calb(\R^d\times(0,\infty))$-measurable process $K=K(\om,w)=K(\om,t,x,z)$, 
\begin{equation}\label{eq:BJ}
	\E\Bigg[\bigg\lvert \int_{\X} K(w)\,(\delta_\om-\nu)(\dd w)\bigg\rvert^p\Bigg] \leq C'_p \left\{ \E\Bigg[\bigg(\int_{\X} K(w)^2\,\nu(\dd w)\bigg)^{\frac p2}\Bigg] + \E\Bigg[\int_{\X} \lvert K(w)\rvert ^p\,\nu(\dd w)\Bigg] \right\},
\end{equation}
where $C'_p=(2^{p-1}p(p-1)(p/(p-1))^p)^{p/2}\vee 2(p/(p-1))^p(2^p+1+p)\leq 729$ when $p\in[2,3]$;
see \cite[Theorem~1 (b)]{Novikov75} (and its proof for the value of $C'_p$). We also refer to \cite{Marinelli14} for a survey of various versions and proofs (and names) of this inequality. 
Let us define
$$ \wt W_{a,0}(t,x):=\rho(t,x),\qquad \wt W_{a,k}(t,x):= \int_{ X^{(k)}_t}   {\rho_{t,x}( \bt , \bx)}  \prod_{i=1}^k \bar \xi^a_{\go} (\dd t_i , \dd x_i ) \quad \text{ for } k\geq1,$$
 so that in particular $W_{a,k}(t) = \wt W_{a,k}(t,0)/\rho(t,0)$.
With this definition, applying~\eqref{eq:BJ}, we have
\begin{align*}
		\E\Big[ |\wt W_{a,k}(t,x)|^p\Big]^{\frac1p}	&=
  \bbE\Bigg[ \bigg| \int_{X_t} \rho(t-t_k,0-x_k)\wt W_{a,k-1}(t_k,x_k)\, \bar \xi^a_\om(\dd t_k,\dd x_k)\bigg|^p\Bigg ]^{\frac1p}\\
	&\leq  C' \,\Bigg\{\bbE\Bigg[ \bigg( \mu_{0,\infty}(2)\int_{X_t} (\rho(t-t_k,x_k)\wt W_{a,k-1}(t_k,x_k))^2\,\dd t_k\,\dd x_k\bigg)^{\frac p2}\Bigg ]^{\frac 1p}\\
	& \qquad \qquad \qquad + \bigg(\mu_{0,\infty}(p)\int_{X_t} \rho(t-t_k,x_k)^p\bbE[| \wt W_{a,k-1}(t_k,x_k)|^p]\,\dd t_k\,\dd x_k\bigg)^{\frac1p} \Bigg\} \,.\\
\end{align*}
Thus, applying Minkowski's integral inequality,
we get
\begin{align*}
\E\Big[ |\wt W_{a,k}(t,x)|^p\Big]^{\frac1p} &\leq   C' \Bigg\{\,\mu_{0,\infty}(2)^{\frac12} \bigg( \int_{X_t} \rho(t-t_k,x_k)^2\E[\lvert \wt W_{a,k-1}(t_k,x_k)\rvert^p]^{\frac 2p}\,\dd t_k\,\dd x_k\bigg)^{\frac12}\\
	&\qquad \qquad \qquad	+ \mu_{0,\infty}(p)^{\frac1p} \bigg( \int_{X_t} \rho(t-t_k,x_k)^p\bbE[| \wt W_{a,k-1}(t_k,x_k)|^p]\,\dd t_k\,\dd x_k \bigg)^{\frac1p} \Bigg\} \,. 
\end{align*}
Repeating this estimate and recalling~\eqref{eq:triangle}, we get 
\begin{equation}\label{eq:help4}
		\E\Big[ \bar \calz^{\om,a}_{\beta}(t,0)^p\Big]^{\frac1p}	\le \rho(t,0)+  \sum_{k=1}^\infty (C'\beta)^k\sum_{\btheta\in\{2,p\}^k}\mu_{0,\infty}(p)^{\frac{k_1}{p}} \mu_{0,\infty}(2)^{\frac {k_2}{2}} \lVert \rho_{t,0}(\bt,\bx)\rVert_{\btheta,t},
\end{equation}
where $k_1$ is the number of $p$'s and $k_2$ the number of $2$'s in $\btheta$,
and where we have defined
\begin{equation*}
 \lVert f\rVert_{\btheta,t}:= \bigg(\int_{X_t} \bigg(\cdots\bigg(\int_{X_t} \bigg(\int_{X_t} f(\bt,\bx)^{\theta_1} \,\dd t_1\,\dd x_1\bigg)^{\frac{\theta_2}{\theta_1}}\,\dd t_2\,\dd x_2\bigg)^{\frac{\theta_3}{\theta_2}}\cdots\bigg)^{\frac{\theta_k}{\theta_{k-1}}}\,\dd t_k\,\dd x_k\bigg)^{\frac1{\theta_k}}
 \end{equation*}
for $f:((0,\infty)\times\R^d)^k\to[0,\infty)$ and $\btheta=(\theta_1,\dots,\theta_k)\in[1,\infty)^k$.

 Now in order to conclude, we want to  replace $\lVert \rho_{t,0}(\bt,\bx)\rVert_{\btheta,t}$ by an integral which is analogous to that found in  the on the right-hand side of  \eqref{eq:Vktp}. To do so, we use the following identity, valid for any positive function~$f$ and any measures~$\mu_1$ and $\mu_2$:
\begin{equation*}
\label{interchanging}
 \left( \int_{\Omega_1}  \left(\int_{\Omega_2}  f(w_1,w_2)^{2}  \,\mu_2(\dd w_2) \right)^{\frac{p}{2}} \mu_1(\dd w_1)\right)^{\frac1p}  \le   \left( \int_{\Omega_2}  \left(\int_{\Omega_1}  f(w_1,w_2)^{p} \, \mu_1(\dd w_1) \right)^{\frac{2}{p}} \,\mu_2(\dd w_2)\right)^{\frac12} \,.
\end{equation*}
This inequality is a special case of \cite[Lemma~3.3.1]{Kwapien92}. 
Now, letting~$J_1$ denote the set of indices for which $\theta_i=p$ and $J_2$ those for which $\theta_i=2$,
we apply the above inequality iteratively to take all the integrals with respect to $J_1$ inside. We obtain (recall Remark~\ref{abuse})
\begin{equation}
 \lVert \rho_{t,0}(\bt,\bx)\rVert_{\btheta,t}  \le  \bigg(  \int_{X_t^{(k_2)}}\bigg(\int_{ X_t^{(k_1)}}   \ind_{\mathfrak X_k(t)}(\bt)\rho_{t,0}( \bt , \bx)^{p} \prod_{i\in J_1} \dd t_i \,\dd x_i  \bigg)^{\frac{2}{p}}   \prod_{i\in J_2} \dd t_i \,\dd x_i   \bigg)^{\frac12} .
 \end{equation}
 Now we can first integrate with respect to the $x_i$'s using \eqref{eq:caseofJ1} and \eqref{eq:caseofJ2} (with $p$ instead of $\theta$ and $2$ instead of $p$). Recalling that $\nu_2=\frac12$, we  obtain
 \begin{multline*}
 \lVert \rho_{t,0}(\bt,\bx)\rVert_{\btheta,t}  \le 
 \vartheta(p)^{\frac{k_1}{p} } \vartheta(2)^{\frac{k_2}2} \rho(t,0)  \\ \times  t^{\frac14} \bigg(\int_{(0,t)^{k_2}} 
   \bigg(\int_{(0,1)^{k_1}} \ind_{\mathfrak X_k(t)}(\bt)\prod_{i=1}^{k+1} (\Delta t_i)^{\nu_p-1} \prod_{i\in J_1} \dd t_i  \bigg)^{\frac{2}{p}}   \prod_{i\in J_2\cup \{k+1\}} (\Delta_{J_2} t_i)^{ \frac{2}{p}(1-\nu_p)-\frac12} \prod_{i\in J_2}  \dd t_i \bigg)^{\frac12} .
 \end{multline*}
Using Lemma~\ref{lemGamma} to integrate first with respect to $(t_i)_{i\in J_1}$, 
we get as in~\eqref{facil}
\beq\label{facil3}\begin{split}
\int_{(0,1)^{k_1}} \ind_{\mathfrak X_k(t)}(\bt)\prod_{i=1}^{k+1} (\Delta t_i)^{\nu_p-1} \prod_{i\in J_1} \dd t_i  
&=\bone_{\mathfrak{X}_{k_2}(t)} ( (t_i)_{i\in J_2})   \prod_{i \in J_2\cup\{k+1\}} \frac{(\Delta_{J_2} t_i )^{\nu_p \Delta_{J_2}(i)-1}  \gG(\nu_p)^{\Delta_{J_2}(i)} }{\gG(\nu_p\Delta_{J_2}(i))}   \\
& \leq \bone_{\mathfrak{X}_{k_2}(t)} ( (t_i)_{i\in J_2})  \gG(\nu_p)^{k+1} t^{\nu_p k_1} 
\frac{\prod_{i \in J_2\cup\{k+1\}} (\Delta_{J_2} t_i )^{\nu_p -1}  }{\prod_{i \in J_2\cup\{k+1\}}\gG(\nu_p\Delta_{J_2}(i))} \,,
\end{split}\eeq
because $(\Delta_{J_2}t_i)^{\nu_p\Delta_{J_2}(i)-1} \leq t^{\nu_p(\Delta_{J_2}(i)-1)} (\Delta_{J_2}t_i)^{\nu_p-1}$. Pulling $(\Delta_{J_2}t_i)^{\nu_p-1}$ to the outer integral, we evaluate
\begin{align*}
 \int_{(0,t)^{k_2}} \ind_{\mathfrak X_{k_2}(t)}( (t_i)_{i\in J_2}) \prod_{i\in J_2\cup \{k+1\}} (\Delta_{J_2} t_i)^{-\frac12} \prod_{i\in J_2} \dd t_i   = t^{\frac{k_2}{2} -\frac12} \frac{\Gamma(\frac12)^{k_2+1}}{ \Gamma( \frac{k_2+1}{2})} \,,
\end{align*}
using once more Lemma~\ref{lemGamma}.
Altogether, bounding $\vartheta(p),\vartheta(2)\leq 1$, we obtain that
\[
 \lVert \rho_{t,0}(\bt,\bx)\rVert_{\btheta,t}  \le  \rho(t,0) t^{\frac{\nu_p}{p} k_1 +\frac 14 k_2} \frac{\Ga(\nu_p)^{\frac1p(k+1)}\pi^{\frac14 (k_2+1)}}{\Ga(\frac{k_2+1}{2})^{\frac12} \prod_{i\in J_2\cup \{k+1\}}\Ga( \nu_p \Delta_{J_2}(i) )^{\frac1p}} \,.
\]

Going back to~\eqref{eq:help4}, bounding $\vartheta(p),\vartheta(2)\leq 1$ and expressing everything in terms of $J_2$ and $k_2 =\lvert J_2\rvert$, we get that $\E[ \rho(t,0)^{-p} \bar \calz^{\om,a}_{\beta}(t,0)^p]^{1/p}$ is bounded by
\begin{multline*}
\pi^{\frac14}  \Gamma(\nu_p)^{\frac1p} \sum_{k=0}^\infty \sum_{J_2\subseteq \lint k\rint} 
 \frac{ \big( C'\beta \mu_{0,\infty}(p)^{\frac1p} t^{\frac{\nu_p}{p}} \Gamma(\nu_p)^{\frac1p} \big)^{k_1} \big( C'\beta  \mu_{0,\infty}(2)^{\frac12}  \Gamma(\nu_p)^{\frac1p}\pi^{\frac14}  t^{\frac14} \big)^{k_2}}{ \Gamma(\frac{k_2+1}{2})^{\frac12}\prod_{i\in J_2\cup \{k+1\}}\Ga(  \nu_p\Delta_{J_2}(i) )^{\frac1p}} \\
 =\pi^{\frac14} \Gamma(\nu_p)^{\frac1p}   \sum_{k=0}^{\infty} \sum_{k_2=0}^{k} \frac{ \big( C'\beta  \mu_{0,\infty}(2)^{\frac12}  \Gamma(\nu_p)^{\frac1p}\pi^{\frac14}  t^{\frac14} \big)^{k_2}  }{  \Gamma(\frac{k_2+1}{2})^{\frac12}} \\
\times \sumtwo{\ell_1,\ldots, \ell_{k_2+1} \geq 1}{\ell_1+\cdots +\ell_{k_2+1} =k+1}  \prod_{i=1}^{k_2+1}   \frac{ \big( C'\beta \mu_{0,\infty}(p)^{\frac1p} t^{\frac{\nu_p}{p}} \Gamma(\nu_p)^{\frac1p} \big)^{\ell_i-1} }{ \Ga(  \nu_p \ell_i )^{\frac1p}} \,,
\end{multline*}
where we have used a change of variable and the fact that $\ell_1+\cdots +\ell_{k_2+1} =k+1$ implies that $k_1 = \sum_{i=1}^{k_2+1} (\ell_i-1)$.
Exchanging the first two sums and factorizing the last one yields \eqref{mobdd2}.
\qed

\subsection{Intermezzo: $d\ge 2$ under stronger moment conditions}\label{intermission}
The method we have used for $d=1$ can further be used  to prove boundedness of moments  under the more restrictive assumption given in  
\eqref{assump1}.
We illustrate this by Proposition \ref{replica} below whose proof can be achieved by replicating that of \eqref{mobdd1}.
This  quantitative estimate is sufficient to prove the upper bound parts of  Theorem \ref{thm:heavyup} and   similar ideas could in principle be used to prove Theorem \ref{thm:SHE} under the more restrictive assumption \eqref{assump1}. The  more involved method used in the proof of Proposition \ref{prop:localbis} below, however, is   necessary  to bridge the gap between the conditions \eqref{assump1} and \eqref{assump2}; recall that the latter   is optimal in dimension~$2$ and very close to optimal when $d\ge 3$. 

\begin{proposition}\label{replica}
There exists a constant $C\in(0,\infty)$ that only depends on $d$  such that if 
 $d\ge 2$, $\mu_{0,1}(q)<\infty$ and $\mu_{1,\infty}(p)<\infty$ for some $1<p\le q< 1+\frac{2}{d}$, we have, for all $a\in[0,1]$,
 $\eta\in(0,1]$ and $\beta,t>0$,
 \begin{equation*}
 \E\Big[\rho(t,x)^{-p}\bar \calz^{\om,a}_{\beta}(t,x) ^p\Big]^{\frac1p}
	\le   \sum_{k_1,k_2=0}^\infty \bigg(\frac{C \Gamma(\nu_q) \beta}{p-1}\bigg)^{k_1+k_2}  (\mu_{0,\eta}(q)t^{\nu_q} )^{\frac{k_1}q} \frac{  (\Ga(\nu_p)\mu_{\eta,\infty}(p) t^{\nu_p} )^{\frac{k_2}p}\Gamma(\nu_p)^{\frac1p}}{\Gamma(\nu_p(k_2+1))^{\frac1p}}.
	 \end{equation*}
\end{proposition}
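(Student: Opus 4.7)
The strategy is to replicate the argument that led to \eqref{mobdd1}, with the only substantive modification being the choice of the intermediate exponent in Lemma \ref{lemmaparti}: instead of $\theta=2$ (the natural choice when $d=1$, where $\int_{(0,1)} z^2\,\la(\dd z)$ is automatic), we shall take $\theta=q$, which is permitted by the assumption $\mu_{0,1}(q)<\infty$ and lies in the admissible range $(p,1+\tfrac{2}{d})$ since $p\le q<1+\tfrac{2}{d}$. As in Section \ref{sec:upper}, we may assume $a>0$ and reduce to $x=0$ by \eqref{eq:theinlaws}, start from the chaos expansion \eqref{eq:chaos}, apply Minkowski's inequality \eqref{eq:triangle}, and decouple via \eqref{eq:jedecouple} to reduce everything to the moments of the multilinear integrals $V_{a,k}(t)$ defined in \eqref{eq:jeBDG}.

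Next, I would use the partition $\mathfrak{P}_k=\{\calp(J):J\subseteq\lint k\rint\}$ from \eqref{partisimples}, with $J_1=J$ and $J_2=\lint k\rint\setminus J$, and apply Lemma~\ref{lemmaparti} with $\theta=q$. After performing the $z$-integrations---which produce $\mu_{0,\eta}(q)^{k_1/q}$ for indices in $J_1$ and $\mu_{\eta,\infty}(p)^{k_2/p}$ for indices in $J_2$---this reduces the estimation of $\bbE[\lvert V_{a,k}(t)\rvert^p]^{1/p}$ to bounding an iterated time integral of the form
\begin{equation*}
\int_{(0,t)^{k_2}}\!\!\Biggl(\int_{(0,t)^{k_1}}\!\!\bone_{\frakX_k(t)}(\bt)\prod_{i=1}^{k+1}(\Delta t_i)^{\nu_q-1}\prod_{i\in J_1}\dd t_i\Biggr)^{\!p/q}\!\!\prod_{i\in J_2\cup\{k+1\}}(\Delta_{J_2}t_i)^{(\nu_p-1)+(p/q)(1-\nu_q)}\prod_{i\in J_2}\dd t_i.
\end{equation*}

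The inner integral factors across the maximal $J_1$-segments between consecutive $J_2$-indices. Applying Lemma~\ref{lemGamma} exactly as in \eqref{facil3} (with $\nu_p$ replaced by $\nu_q$) and using the bound $(\Delta_{J_2}t_i)^{\nu_q\Delta_{J_2}(i)-1}\le t^{\nu_q(\Delta_{J_2}(i)-1)}(\Delta_{J_2}t_i)^{\nu_q-1}$, the inner integral is dominated by $\bone_{\frakX_{k_2}(t)}((t_i)_{i\in J_2})\,t^{\nu_q k_1}\Gamma(\nu_q)^{k+1}\prod_{i\in J_2\cup\{k+1\}}(\Delta_{J_2}t_i)^{\nu_q-1}/\prod_i\Gamma(\nu_q\Delta_{J_2}(i))$. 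Raising to the power $p/q$ and pulling everything into the outer integral, the combined exponent on each $\Delta_{J_2}t_i$ collapses to $\nu_p-1$ (one readily checks $(\nu_p-1)+(p/q)(1-\nu_q)+(p/q)(\nu_q-1)=\nu_p-1$), so a second application of Lemma~\ref{lemGamma}---in the manner of \eqref{facil2}---produces the factor $t^{\nu_p(k_2+1)-1}\Gamma(\nu_p)^{k_2+1}/\Gamma(\nu_p(k_2+1))$.

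Collecting the powers of $t$ yields the correct exponent $t^{\nu_q k_1/q+\nu_p k_2/p}$, while the $\Gamma$ factors combine into $\Gamma(\nu_q)^{(k_1+k_2+1)/q}\Gamma(\nu_p)^{(k_2+1)/p}/\Gamma(\nu_p(k_2+1))^{1/p}$ (the lower bounds $\Gamma(\nu_q\Delta_{J_2}(i))\ge c>0$ being absorbed into the universal constant). Summing over $J\subseteq\lint k\rint$ with $|J|=k_1$ produces a harmless binomial coefficient bounded by $2^k$, and inserting the resulting estimate together with the decoupling constant $(C/(p-1))^{k/p}\le (C'/(p-1))^k$ (valid for $C$ large since $(p-1)^{1-1/p}\le 1$ on $(1,2)$) into the Minkowski bound \eqref{eq:triangle} yields the claimed inequality. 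The main bookkeeping challenge is simply to verify that the exponents of $t$ and the $\Gamma$-factors combine in the announced form; no new analytic difficulty arises beyond what is already present in the proof of \eqref{mobdd1}.
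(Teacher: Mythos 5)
Your proposal is correct and follows essentially the same route as the paper: the paper's own proof is exactly the guideline you execute, namely reuse the partition \eqref{partisimples}, apply Lemma~\ref{lemmaparti} with $\theta=q$, and replace \eqref{facil} by the \eqref{facil3}-type estimate (with $\nu_q$ in place of $\nu_p$ and the lower bound on $\Gamma(\nu_q\Delta_{J_2}(i))$ absorbed into the constant) before integrating over $J_2$ via Lemma~\ref{lemGamma}. Your exponent bookkeeping, including the collapse to $\nu_p-1$ and the resulting $t^{\nu_q k_1/q+\nu_p k_2/p}$, matches the intended computation.
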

\begin{proof}
 Let us give a very short guideline for the proof: we use the same partition $\mathfrak{P}_k$ as in \eqref{partisimples}
 and apply Lemma \ref{lemmaparti} with $\theta=q$.
 The proof is identical to that of   Proposition \ref{prop:localbisdone} \textit{(ii)} except that \eqref{facil} 
   has to be replaced by~\eqref{facil3}, which together with the bound  $\Gamma(\nu_q \Delta_{J_2}(i)) \geq \frac12$ gives
$$
 	\int_{(0,t)^{k_1}} \bone_{\mathfrak{X}_{k}(t)} ( \bt) \prod_{i=1}^{k+1}(\Delta t_i)^{\nu_q-1}\prod_{i\in J_1} \dd t_i \leq \bone_{\mathfrak{X}_{k_2}(t)} ( (t_i)_{i\in J_2}) \, 2^{k_2+1} \Gamma(\nu_q)^k t^{\nu_q k_1}\prod_{i\in J_2\cup\{k+1\}} (\Delta_{J_2} t_i)^{\nu_q-1}\,.\qedhere
$$
\end{proof}

\subsection{The proof of Proposition \ref{prop:localbis}}

We use the same idea as for the proof of Proposition~\ref{prop:localbisdone}.
We will consider a partition $\mathfrak P_k$ (defined below)
of the parameter space $\mathcal X_t^{(k)}$:
each element $\cP \in \mathfrak P_k$ induces a partition of $\llbracket k\rrbracket$ into two sets $J_1$ and $J_2$ and we are going to use Lemma \ref{lemmaparti} with $\theta=1+\frac{2}{d}$ (and hence $\nu_{\theta}=0$).
The main technical part of this section will then be to bound 
\begin{equation}\label{defcalu}\mathtoolsset{multlined-width=0.9\displaywidth}\begin{multlined}
 U(\cP,t):=t^{{1-\nu_p}} \int_{\calx_t^{(k_2)}}\Bigg(\int_{\calx_t^{(k_1)}}     \ind_{\cP}(\bt,\mathbf{z})   \prod_{i=1}^{k+1} (\Delta t_i)^{-1} \prod_{i\in J_1}  z^{1+\frac{2}{d}}_i  \,\dd t_i \,\gl(\dd z_i)\Bigg)^{\frac{p}{1+2/d}}\\ \times		\prod_{j\in J_2\cup\{k+1\}} (\Delta_{J_2} t_j)^{{\frac{p}{1+2/d}}+\nu_p-1} \prod_{i\in J_2} z^p_i \, \dd t_i \,\gl(\dd z_i) \, .
 \end{multlined}
\end{equation}
The following is the main technical estimate of this section.
\begin{proposition}\label{prop:dekz}
Recall \eqref{eq:definingl} and \eqref{eq:defzezeta}.
With the choice $\mathfrak P_k$ defined in Section~\ref{sec:partition} below (see in particular~\eqref{defcalp}),
we have for every $\cP\in \mathfrak P_k$, $\eta\in(0,1]$ and $t>0$ that
	\begin{equation}\label{eq:finalbound}\begin{split}
		U(\cP,t)\le \zeta_1(\eta,p,t)^{k_1}
		\zeta_2(\eta,p)^{k_2}\Lambda(k_2,t,p),
	\end{split}
 	\end{equation}
 where $k_1:=\lvert J_1\rvert$ and $k_2:=\lvert J_2\rvert$.
\end{proposition}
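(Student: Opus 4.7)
The proof hinges on constructing a tailored partition $\mathfrak P_k$ of $\calx_t^{(k)}$ and then exploiting the strict inequality $p < 1+\frac{2}{d}$ via the subadditivity inequality~\eqref{eq:subbaditiv}. Note that the choice $\theta = 1+\frac{2}{d}$ forces $\nu_\theta = 0$, so the inner integrand in~\eqref{defcalu} carries the maximally singular weight $\prod_{i=1}^{k+1}(\Delta t_i)^{-1}$, which is not integrable without a cancellation between the time variables and the measure $z_i^{1+2/d}\lambda(\dd z_i)$ supplied by the small jumps.

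\textbf{Defining the partition and handling the inner integral.} First I would declare $i \in J_2$ iff $z_i \ge \eta$ and $i \in J_1$ otherwise; the $J_2$-integrals will directly produce the contribution $\mu_{\eta,\infty}(p)$ hidden in $\zeta_2(\eta,p)$. The refinement $\mathfrak P_k$ records, for each $J_1$ index, the relative position of $z_i^{d/2}$ with respect to the surrounding time gaps. Since $p/(1+\frac{2}{d}) < 1$, the exponent can be pushed inside the inner integral by subadditivity, reducing the task to bounding a product, over $J_1$ indices, of single-variable integrals of essentially the form $\int_{(0,\eta)} z^{1+2/d}\lambda(\dd z)\int(\Delta t_i)^{-1}\,\dd t_i$. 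Splitting the $t_i$-domain at the threshold $\Delta t_i = z^{d/2}$ yields two contributions: on $\{\Delta t_i > z^{d/2}\}$ one gets the logarithmic weight $\tfrac{d}{2}|\log z|+\log_+ t$, which matches exactly the structure $\mu^{\log}_{0,\eta}(1+\tfrac{2}{d})(1+\log_+ t)$ inside $\zeta_1(\eta,p,t)$; on $\{\Delta t_i < z^{d/2}\}$ one obtains a purely time-dependent weight that will be absorbed into the adjacent $J_2$-gap and contribute to a $G_p$-factor.

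\textbf{Outer integral and identification of $\Lambda$.} After the inner estimates and raising to the $p/(1+\frac{2}{d})$ power, the remaining integral over $(t_i)_{i \in J_2}$ has an integrand of the form $\prod_{j \in J_2 \cup\{k+1\}}(\Delta_{J_2} t_j)^{\alpha_j}$, where each $\alpha_j$ combines the base exponent $(\nu_p - 1) + p/(1+\frac{2}{d})$ with the residual weights coming from the $J_1$ indices originally lying in that $J_2$-gap. A case analysis on whether $\Delta_{J_2} t_j \le 1$ or $\ge 1$, followed by summation over the admissible scales of the clustered $J_1$ indices, shows that the total weight is dominated by $G_p(\Delta_{J_2} t_j)$ for the interior gaps and $(\Delta t_{k_2+1})^{\nu_p-1}$ for the terminal gap. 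Multiplied by $t^{1-\nu_p}$, this reconstructs exactly $\Lambda(k_2,t,p)$.

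\textbf{Main obstacle.} The crux is to engineer $\mathfrak P_k$ so that all the estimates above combine into the clean product bound~\eqref{eq:finalbound} with constants uniform in $p$ near $1+\frac{2}{d}$. The seemingly odd choice of the exponent $\frac{1}{3}\nu_p$ (rather than the natural $\nu_p$) in the definition of $G_p(s)$ for $s \le 1$ is dictated by the subadditivity losses that accumulate when several $J_1$ indices cluster inside a single $J_2$-gap: each additional clustered index consumes a fraction of the available singularity, and the factor $\frac{1}{3}$ leaves enough margin for these losses to remain controlled regardless of how many $J_1$ indices collapse into a single cluster. Absorbing the combinatorial count of cells of $\mathfrak P_k$ into the prefactor $8$ that appears in $\zeta_1$ is the other delicate point woven through the construction of $\mathfrak P_k$.
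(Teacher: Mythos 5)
There is a genuine gap, and it sits exactly where the paper's construction is most delicate. Your partition assigns $i\in J_1$ whenever $z_i<\eta$ and $i\in J_2$ whenever $z_i\ge\eta$; this is the simple partition \eqref{partisimples} that works for $d=1$ (and, under the stronger hypothesis $\mu_{0,1}(q)<\infty$, in Proposition~\ref{replica}), but it is not the partition \eqref{defcalp} the proposition refers to, and more importantly it cannot work under \eqref{eq:log-2} alone. Once $i\in J_1$, the framework of Lemma~\ref{lemmaparti} with $\theta=1+\frac2d$ forces you to integrate $t_i$ in the inner integral against the weight $(\Delta t_i)^{-1}(\Delta t_{i+1})^{-1}$, and on your ``bad'' region where $\Delta t_i$ is small compared to a power of $z_i$ this integral simply diverges: there is no lower cutoff on $\Delta t_i$, so no ``purely time-dependent weight'' can be extracted and absorbed into the neighbouring $J_2$-gap. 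The only way out is to make the $J_1$/$J_2$ assignment itself depend on the comparison between $z_i$ and the adjacent time gaps (threshold $(\Delta t)^{d/6}$ in \eqref{defcalp}), and—since integrating out one inner variable merges gaps and changes the relevant comparison for its neighbours—this assignment has to be defined \emph{iteratively}, which is precisely the role of the sequences $(L^j,I^j_\pm,D^j)$ in Section~\ref{sec:partition}. Your proposal treats the comparison as a one-shot refinement and never addresses this recursive updating, so the decomposition you describe does not produce a well-defined, convergent inner integral.

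The second missing idea is how small $z_i$'s that end up in $J_2$ are handled. In the paper this is the pointwise exponent transfer \eqref{prototype}: on the event $z_i\ge(\Delta t)^{d/6}$ one trades $(\Delta t)^{\nu_p/3}$ for the factor $z_i^{1+2/d-p}$, converting the non-integrable weight $z_i^p$ into $z_i^{1+2/d}$ at the cost of weakening the time exponent from $\nu_p-1$ to at worst $\frac13\nu_p-1$ (each gap can be charged at most twice, once by $z_i$ and once by $z_{i-1}$ when $i-1\in L$, and the structural constraints of the partition guarantee this budget). This, and not ``subadditivity losses from clustering of $J_1$ indices'', is the origin of the exponent $\frac13\nu_p$ in \eqref{eq:Gp}; clustered $J_1$ indices only produce additional $\zeta_1$-factors via \eqref{eq:samspi} and do not erode the time singularity. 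Note also that the natural threshold $z=(\Delta t)^{d/2}$ you propose (the one from the $k=1$ warm-up) would make a single transfer cost the full $\nu_p$, leaving the non-integrable exponent $-1$; the choice $d/6$ is forced by the two-transfers-per-gap budget. Finally, a small bookkeeping point: the cardinality of $\mathfrak P_k$ is not absorbed into the constant $8$ in $\zeta_1$ (which comes from \eqref{eq:samspi}) but is paid separately, via $\lvert\mathfrak P_k\rvert\le 2\cdot9^k$, in the factor $(9C/(p-1))^k$ when deducing Proposition~\ref{prop:localbis}.
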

Proposition~\ref{prop:localbis} follows immediately from Proposition~\ref{prop:dekz}.
\begin{proof}[Proof of Proposition \ref{prop:localbis}]
  Using Lemma \ref{lemmaparti}, combined with \eqref{eq:jedecouple} and \eqref{eq:jeBDG}, we obtain that 
  \begin{equation}\label{eq:help6}\begin{split}
   \bbE[W_{a,k}(t)^p]^{\frac1p}&\le \bigg(\frac{C}{p-1}\bigg)^k \lvert \mathfrak{P}_k\rvert
   \max_{\cP\in \mathfrak P_k} U(\cP,t)^{\frac1p}\\
   &\le 2\bigg(\frac{9C}{p-1}\bigg)^k \max_{k_1+k_2=k}\Big(\zeta_1(\eta,p,t)^{k_1}
		\zeta_2(\eta,p)^{k_2}\Lambda(k_2,t,p)\Big)^{\frac1p} \,,
		\end{split}
  \end{equation}
  where we have used that $\lvert\mathfrak P_k\rvert \leq 2 \times 9^k$ (see below).
Replacing the $\max$ by a sum yields \eqref{compinek}.
\end{proof}

\subsubsection{Constructing the partition $\mathfrak{P}_k$}
\label{sec:partition}

The construction of $\mathfrak{P}_k$ when $d\ge 2$ is considerably more involved,
as it   no longer suffices to only differentiate between $z_i\geq\eta$ and $z_i<\eta$ as we did for $d = 1$. Recall that 
in Section~\ref{sec:k1},  in order to obtain optimal bounds, we had to split the integral according to how $z_i$ and $\Delta t_i^{d/2}$ compare to each other. A problem arises when trying to generalize this method to $k\ge 2$: 
On the right-hand side of \eqref{defcalu},
the value of $\Delta t_i$ plays the same role as $\Delta t_{i+1}$, which is the reason why we cannot break down the $k$-fold integral into $2^k$ parts as we did in the proof of Proposition \ref{prop:localbisdone}.

We now describe our solution to this problem.
To give the idea behind our partition and its link to the bound~\eqref{eq:finalbound}, recall from~\eqref{defcalu} that
we first integrate with respect to the variables with indices in $J_1$ and then with respect to those with indices in $J_2$.
Loosely speaking, indices in $J_1$ correspond to values of
 $z_i$ that are small compared to $(\Delta t_i)^{d/6}$ and $(\Delta t_{i+1})^{d/6}$; this gives rise to a factor $\zeta_1(\eta,p,t)^{k_1}$ that can be explained by the calculations in~\eqref{eq:samspi}--\eqref{eq:samspibis} below.
Note that after integrating with respect to such an index,
one may have to update the set of parameters since for the next step one has to compare $z_i$ with time increments formed between the remaining variables. These may differ from $\Delta t_i$ and $\Delta t_{i+1}$, which explains why  the partition has to be defined iteratively.

Each element of our partition  $\cP \in \mathfrak{P}_k$ is encoded by a finite sequence $( L^{j}, I^{j}_-, I^{j}_+, D^j)^m_{j=1}$ of partitions of $\lint k+1\rint$;
the length $m$ of the sequence is a variable. With some abuse of notation, we identify $\cP$ with this sequence. We also use the notation $I^j:=I^j_-\cup I^j_{+}$.
Not every sequence is admissible, so let us present the rules for constructing the set of admissible sequences:
\begin{itemize}
	\item First we partition $\lint k+1\rint$ into three sets $L^1$, $I^1_+$, and $I^1_-$, imposing that  $k+1 \in I^1$, and we define $D^1:=\emptyset$.
	\item
	The procedure is then iterative. Assume that $m\ge j$ and that  one has constructed the sets of the first $j$ steps. We let 
	$i(+1,j)$ denote the successor of $i$ and $i(-1,j)$ denote the predecessor of $i$ in $L^j\cup I^j_- \cup I^j_+$, that is, 
	\begin{equation}\begin{split}\label{predesuc}
	 i(+1,j)&:=\min\{ \ell \in L^j\cup I^j_- \cup I^j_+  \ : \ \ell\ge i+1  \}\,,\\
	 	 i(-1,j)&:=\max\{ \ell \in L^j\cup I^j_- \cup I^j_+  \ :\ \ell\le i-1  \} \,,
	 	 \end{split}
	\end{equation}
with the convention $\max\emptyset = 0$; we   only use the notation $i(+1,j)$ if the set over which the minimum is taken is non-empty.
	Unless
		\beq\label{eq:m}
	L^j_{\times}:= \{i \in \lint k\rint  \ : \ i\in L^{j} \text{ and } i(+1,j)\in I_-^{j} \}=\emptyset\,,
	\eeq
	we need to add extra terms corresponding to $j+1$ to our sequence, which will be described in the next point. This procedure is repeated until at some stage $j$, \eqref{eq:m} is satisfied. 
		 We then define $m=j$ and our sequence is complete.

	\item When $L^j_{\times}\ne \emptyset$, the next sets $(L^{j+1}, I^{j+1}_-, I^{j+1}_+, D^{j+1})$ are only partially determined by $(L^{j}, I^{j}_-, I^{j}_+, D^j)$: while we prescribe the choice 
	 $D^{j+1}:=D^j\cup  L^j_{\times}$,  there is some liberty for choosing the  sets
	$L^{j+1}$, $I^{j+1}_-$ and $I^{j+1}_+$.
	We set 
$$I^j_{\times}:= \{i(+1,j) \ :\  i\in L^{j}_{\times} \}$$
	and consider an arbitrary partition $\{I^{(j+1)}_-, I^{(j+1)}_+, L^{(j+1)}\}$  of 
	$I^j_{\times}$ subject to only one constraint: if   $k+1\in I^j_{\times}$, then 
	$k+1\in I^{(j+1)}_-\cup I^{(j+1)}_+$ (this is to guarantee that $k+1\in I^j$ for all $j$). We then 
 define $L^{j+1}$, $I^{j+1}_-$ and $I^{j+1}_+$
 as follows:
	\begin{equation*}\left\{\begin{aligned}
		L^{j+1}&:=(L^j \setminus L^j_{\times})\cup L^{(j+1)}, \\  I^{j+1}_-&:=(I^j_- \setminus
		I^j_{\times})\cup I^{(j+1)}_-,\\
		I^{j+1}_+&:=I^j_+ \cup I^{(j+1)}_+.
		\end{aligned}\right.
	\end{equation*}
\end{itemize}
Given an element in $\mathfrak{P}_k$, we further define
\begin{equation*}
	J_1:=D^m\cup \{ i\in L^{m}\ : \ i(+1,m)\in L^{m}\}  \qquad  \text{and} \qquad J_2:=\lint k\rint\setminus J_1.
\end{equation*}
An example of an admissible sequence of partitions as well as the associated sets $J_1$ and $J_2$ are shown in Figure~\ref{fig:partitions}.
\begin{figure}[h!]
	\begin{center}
		\begin{tikzpicture}[scale=0.8]
			\draw[thick]  (-0.2,2.8) -- (-0.2,3.2) -- (0.2,3.2) -- (0.2,2.8) -- cycle ;
			\draw[thick] (1,3) circle (0.2);
			\draw (1,3) node {\small $-$};
			\draw[thick,->] (0,3.3) .. controls (0.2,3.6) and (0.8,3.6) .. (1,3.3);
			\draw[thick]  (1.8,2.8) -- (1.8,3.2) -- (2.2,3.2) -- (2.2,2.8) -- cycle ;
			\draw[thick]  (2.8,2.8) -- (2.8,3.2) -- (3.2,3.2) -- (3.2,2.8) -- cycle ;
			\draw[thick] (4,3) circle (0.2);
			\draw (4,3) node {\small $-$};
			\draw[thick,->] (3,3.3) .. controls (3.2,3.6) and (3.8,3.6) .. (4,3.3);
			\draw[thick] (5,3) circle (0.2);
			\draw (5,3) node {\small $-$};
			\draw[thick] (6,3) circle (0.2);
			\draw (6,3) node {\small $+$};
			\draw[thick]  (6.8,2.8) -- (6.8,3.2) -- (7.2,3.2) -- (7.2,2.8) -- cycle ;
			\draw[thick]  (7.8,2.8) -- (7.8,3.2) -- (8.2,3.2) -- (8.2,2.8) -- cycle ;
			\draw[thick] (9,3) circle (0.2);
			\draw (9,3) node {\small $-$};
			\draw[thick,->] (8,3.3) .. controls (8.2,3.6) and (8.8,3.6) .. (9,3.3);
			\draw[thick]  (9.8,2.8) -- (9.8,3.2) -- (10.2,3.2) -- (10.2,2.8) -- cycle ;
			\draw[thick] (11,3) circle (0.2);
			\draw (11,3) node {\small $+$};
			\filldraw (0,2) circle (2pt);
			\draw[thick]  (0.8,1.8) -- (0.8,2.2) -- (1.2,2.2) -- (1.2,1.8) -- cycle ;
			\draw[thick,dash pattern={on 1.55 off 1.55}] (0.7,1.9) -- (0.7,1.7) -- (1.3,1.7) -- (1.3, 1.9);
			\draw[thick]  (1.8,1.8) -- (1.8,2.2) -- (2.2,2.2) -- (2.2,1.8) -- cycle ;
			\filldraw (3,2) circle (2pt);
			\draw[thick]  (3.8,1.8) -- (3.8,2.2) -- (4.2,2.2) -- (4.2,1.8) -- cycle ;
			\draw[thick,dash pattern={on 1.55 off 1.55}] (3.7,1.9) -- (3.7,1.7) -- (4.3,1.7) -- (4.3, 1.9);
			\draw[thick] (5,2) circle (0.2);
			\draw (5,2) node {\small $-$};
			\draw[thick,->] (4,2.3) .. controls (4.2,2.6) and (4.8,2.6) .. (5,2.3);
			\draw[thick] (6,2) circle (0.2);
			\draw (6,2) node {\small $+$};
			\draw[thick]  (6.8,1.8) -- (6.8,2.2) -- (7.2,2.2) -- (7.2,1.8) -- cycle ;
			\filldraw (8,2) circle (2pt);
			\draw[thick] (9,2) circle (0.2);
			\draw (9,2) node {\small $-$};
			\draw[thick,dash pattern={on 1.55 off 1.55}] (8.7,1.9) -- (8.7,1.7) -- (9.3,1.7) -- (9.3, 1.9);
			\draw[thick,->] (7,2.3) .. controls (7.2,2.6) and (8.8,2.6) .. (9,2.3);
			\draw[thick]  (9.8,1.8) -- (9.8,2.2) -- (10.2,2.2) -- (10.2,1.8) -- cycle ;
			\draw[thick] (11,2) circle (0.2);
			\draw (11,2) node {\small $+$};
			\filldraw (0,1) circle (2pt);
			\draw[thick]  (0.8,0.8) -- (0.8,1.2) -- (1.2,1.2) -- (1.2,0.8) -- cycle ;
			\draw[thick]  (1.8,0.8) -- (1.8,1.2) -- (2.2,1.2) -- (2.2,0.8) -- cycle ;
			\filldraw (3,1) circle (2pt);
			\filldraw (4,1) circle (2pt);
			\draw[thick] (5,1) circle (0.2);
			\draw (5,1) node {\small $-$};
			\draw[thick,->] (2,1.3) .. controls (2.2,1.6) and (4.8,1.6) .. (5,1.3);
			\draw[thick,dash pattern={on 1.55 off 1.55}] (4.7,0.9) -- (4.7,0.7) -- (5.3,0.7) -- (5.3, 0.9);
			\draw[thick] (6,1) circle (0.2);
			\draw (6,1) node {\small $+$};
			\filldraw (7,1) circle (2pt);
			\filldraw (8,1) circle (2pt);
			\draw[thick]  (8.8,0.8) -- (8.8,1.2) -- (9.2,1.2) -- (9.2,0.8) -- cycle ;
			\draw[thick,dash pattern={on 1.55 off 1.55}] (8.7,0.9) -- (8.7,0.7) -- (9.3,0.7) -- (9.3, 0.9);
			\draw[thick]  (9.8,0.8) -- (9.8,1.2) -- (10.2,1.2) -- (10.2,0.8) -- cycle ;
			\draw[thick] (11,1) circle (0.2);
			\draw (11,1) node {\small $+$};
			\filldraw (0,0) circle (2pt);
			\draw[thick]  (0.8,-0.2) -- (0.8,0.2) -- (1.2,0.2) -- (1.2,-0.2) -- cycle ;
			\filldraw  (2,0) circle (2pt);
			\filldraw  (3,0) circle (2pt);
			\filldraw  (4,0) circle (2pt);
			\draw[thick]  (4.8,-0.2) -- (4.8,0.2) -- (5.2,0.2) -- (5.2,-0.2) -- cycle ;
			\draw[thick,dash pattern={on 1.55 off 1.55}] (4.7,-0.1) -- (4.7,-0.3) -- (5.3,-0.3) -- (5.3,-0.1);
			\draw[thick] (6,0) circle (0.2);
			\draw (6,0) node {\small $+$};
			\filldraw[thick] (7,0) circle (2pt);
			\filldraw[thick] (8,0) circle (2pt);
			\draw[thick]  (8.8,-0.2) -- (8.8,0.2) -- (9.2,0.2) -- (9.2,-0.2) -- cycle ;
			\draw[thick]  (9.8,-0.2) -- (9.8,0.2) -- (10.2,0.2) -- (10.2,-0.2) -- cycle ;
			\draw[thick] (11,0) circle (0.2);
			\draw (11,0) node {\small $+$};
			\draw [thick,decoration={brace,mirror,raise=0.5cm},decorate] (-0.2,0) -- (4.2,0);
			\draw (2,-1.25) node {$J_1$}; 
			\draw [thick,decoration={brace,mirror,raise=0.5cm},decorate] (6.8,0) -- (9.2,0);
			\draw (8,-1.25) node {$J_1$}; 
			\draw [thick,decoration={brace,mirror,raise=0.5cm},decorate] (4.8,0) -- (6.2,0);
			\draw (5.5,-1.25) node {$J_2$}; 
			\draw [thick,decoration={brace,mirror,raise=0.5cm},decorate] (9.8,0) -- (11.2,0);
			\draw (10.5,-1.25) node {$J_2$}; 
			\draw (14.7,3) node {Notation:};
			\draw[thick]  (13.8,2.1) -- (13.8,2.5) -- (14.2,2.5) -- (14.2,2.1) -- cycle ;
			\draw (15,2.35) node {$\to L^{j}$};
			\draw[thick] (14,1.6) circle (0.2);
			\draw (14,1.6) node {\small $-$};
			\draw (15,1.65) node {$\to I_-^{j}$};
			\draw[thick] (14,0.9) circle (0.2);
			\draw (14,0.9) node {\small $+$};
			\draw (15,0.95) node {$\to I_+^{j}$};
			\filldraw[thick] (14,0.2) circle (2pt);
			\draw (15,0.25) node {$\to D^{j}$};
		\end{tikzpicture}
	\end{center}
	\captionsetup{width=.9\linewidth}
	\caption{\footnotesize Example of an admissible sequence of partitions of $\lint k+1 \rint$ with $k=11$. The successive partitions $(L^j,I_-^j,I_+^j,D^j)_{1\leq j\leq m}$ (with $m=4$) are represented from top to bottom.
		Each curved arrow represents an element of $I^j_\times$ and points from an index in $L_{\times}^j$ to an index in~$I_{\times}^j$.
		The symbol~\protect\tikz[scale=0.7]
		\protect\draw[thick,dash pattern={on 1.55 off 1.55}] (0.7,1.9) -- (0.7,1.7) -- (1.3,1.7) -- (1.3, 1.9);
		represents indices that can be chosen to belong to $L^{(j+1)}$
		(\,\protect\tikz[baseline=-3pt,scale=0.7]{
		\protect\draw[thick]  (-0.2,-0.2) -- (-0.2,0.2) -- (0.2,0.2) -- (0.2,-0.2) -- cycle ;}\,), 
		$I_-^{(j+1)}$
		(\!\protect\tikz[baseline=-3pt,scale=0.7]{
			\protect\draw[thick] (0,0) circle (0.2);
			\protect\draw (0,0) node {\small $-$};}\!)
		or $I_+^{(j+1)}$
		(\!\protect\tikz[baseline=-3pt,scale=0.7]{
			\protect\draw[thick] (0,0) circle (0.2);
			\protect\draw (0,0) node {\small $+$};}\!).
In the proof of Proposition~\ref{prop:dekz} we first integrate with respect to variables with indices in $J_1$, proceeding in the order   $L^1_{\times},    L^2_{\times}, \dots,   L^{m-1}_\times, \{ i\in L^{m} : \ i(+1,m)\in L^{m}\} $, and afterwards   integrate with respect to variables with indices in $J_2$. }
\label{fig:partitions}
\end{figure}

Let us now determine, or rather bound from above, the cardinality of $\mathfrak{P}_k$, that is, the number of admissible sequences that can be constructed according to the above rules.
Note that we have $2\times3^k$ possibilities for choosing $L^1$, $I^1_+$ and $I^1_-$. Afterwards, at each step, one has to assign one out of three labels $I^{(j+1)}_-$, $I^{(j+1)}_+$ or $L^{(j+1)}$ to  each element in $I^j_\times$. Since $\lvert I^j_\times\rvert=\lvert L^j_{\times}\rvert$ and  the  $L^j_{\times}$'s are disjoint by construction, there are at most $k$ choices to be made, hence at most $3^k$ possibilities. We therefore have $\lvert\mathfrak{P}_k\rvert\le 2\times9^k$.

\smallskip

Next, let us explain how each sequence in  $\mathfrak{P}_k$ is associated to a subset  $\cP\subseteq \calx^{(k)}_t$.
With the convention $(L^{(1)},I^{(1)}_-, I^{(1)}_+):=(L^1,I^1_-,I^1_+)$, we define 
\begin{equation}\label{defcalp}\mathtoolsset{multlined-width=0.9\displaywidth}\begin{multlined}
	\ind_{\cP}(\bt,\bz):= \prod_{j=1}^m \Bigg(\prod_{i\in I_-^{(j)}} \bone_{\{ z_i \geq  (\Delta_j t_i )^{d/6}\wedge \eta,\, z_{i(-1,j)}<  (\Delta_{j} t_i )^{d/6}\wedge \eta\}} 
\prod_{i\in I_+^{(j)}} \bone_{\{ z_i\wedge z_{i(-1,j)} \geq (\Delta_j t_i )^{d/6}\wedge \eta   \}} \\
	\times \prod_{i\in L^{(j)}} \bone_{\{  z_i <  (\Delta_j t_i  )^{d/6}\wedge \eta \}}\Bigg),
	\end{multlined}
\end{equation}
where $\Delta_j t_i\ceq t_i-t_{i(-1,j)}$ (recall \eqref{predesuc}) and  $z_{k+1}:=\infty$ by convention.

It is easy to verify that \eqref{defcalp} induces a partition of  $\calx^{(k)}_t$ indexed by $\mathfrak{P}_k$: Indeed, for any fixed $(\bt,\bz)\in\calx^{(k)}_t$, the values of $z_i$ and $\Delta t_i$ uniquely determine $(L^1, I^1_-, I^1_+)=(L^{(1)},I^{(1)}_-, I^{(1)}_+)$. Assuming that $(L^j, I^j_-, I^j_+)$ has been identified up to some $j\geq1$, we can check whether \eqref{eq:m} is satisfied. If so, we set $m=j$ and we are done. Otherwise, we again use the values of $z_i$ and $\Delta t_i$ to find $(L^{(j+1)}, I^{(j+1)}_-, I^{(j+1)}_+)$, from which we can then determine $(L^{j+1}, I^{j+1}_-, I^{j+1}_+)$. Therefore, every $(\bt,\bz)$ is contained in some $\calp\in\mathfrak{P}_k$. The uniqueness of $\calp$ is straightforward.

\subsubsection{Proof of Proposition~\ref{prop:dekz}}
Recall the formula~\eqref{defcalu} of $U(\cP,t)$.
The proof is divided into two parts: first we integrate with respect to variables with indices in $J_1$ and then with respect to those with indices in $J_2$.

\medskip
\noindent
{\it Step 1. Integrating with respect to indices in $J_1$.}
Our first task is to prove that, roughly speaking, the integral with respect to each $z_i$ and $t_i$ with indices $i\in J_1$ yields at most a factor  $\zeta_1(\eta,p,t)^{k_1}$. More precisely, we prove that
\begin{equation}\label{redoxJ2}
 U(\cP,t)\le\zeta_1(\eta,p,t)^{k_1} U_2(\cP,t),
\end{equation}
where
\begin{equation*}
 U_2(\cP,t):= t^{1-\nu_p}\int_{\calx_t^{(k_2)}} \ind_{\cP'} \ (\Delta_{J_2}t_{k+1} )^{\nu_p-1}\prod_{i\in J_2} (\Delta_{J_2} t_i)^{\nu_p-1}
	z^p_i (3\lvert\log  z_{i}\rvert+1 )^{\ind_{\{z_i< \eta\}}}\,\dd t_i\,\gl(\dd z_i)
\end{equation*}
and 
\begin{equation}
\label{defcalprime}
	\ind_{\cP'}  := \prod_{i\in I_-^{(m)}} \bone_{\{ z_i \geq  (\Delta_{J_2} t_i )^{d/6}\wedge \eta\}} 
\prod_{i\in I_+^{(m)}} \bone_{\{ z_i\wedge z_{i(-1,J_2)} \geq (\Delta_{J_2} t_i )^{d/6}\wedge \eta   \}} 
	 \prod_{i\in L^{(m)}\cap J_2} \bone_{\{  z_i <  (\Delta_{J_2} t_i  )^{d/6}\wedge \eta \}}
\end{equation}
represents constraints on the values of $\Delta_{J_2} t_i$ and $z_i$ that are inherited from $\cP$ for $i\in J_2$.
Note that compared to \eqref{defcalp},  we only have $j=m$ and we have replaced 
$\Delta_{m} t_i$ by $\Delta_{J_2} t_i$. This makes no difference when $i\in I^{(m)}_-$ or $i\in I^{(m)}_+$ since $i(-1,m)\in J_2$  in both cases; for $i \in  L^{(m)}$, the constraint is implied by $\cP$ since
$\Delta_{J_2} t_i\ge \Delta_{m} t_i$.

\smallskip
First, let us reduce to the case where $m=0$,  that is,  $L^1_{\times} =\emptyset$, :
the idea is to integrate with respect to variables with indices in $ L^1_{\times}$, then with respect to those with indices in $L^2_{\times}$, etc. One may refer to Figure~\ref{fig:partitions} to understand how this procedure goes on.
We start with  integration with respect to $z_i$ and $t_i$  when $i\in L^1_{\times}$.
In this case, we have
$ z_i< [\Delta t_i \wedge  \Delta t_{i+1}]^{d/6}\wedge \eta$. 
To treat  these indices, note that  by symmetry,
\begin{equation}\label{eq:samspi}
	\int^{t}_0s^{-1} (t-s)^{-1}  \ind_{\{ z< [s\wedge (t-s)]^{d/6}\}}\, \dd s \le 2 \int^{t/2}_0 s^{-1} (t-s)^{-1}\ind_{\{ z< s^{d/6}\}}\,\dd s \le 4t^{-1} \log\Big( \tfrac12 tz^{-\frac 6d} \Big)\,,
\end{equation}
where for the last bound we used that   $t-s\ge t/2$.
From now on, we assume $\eta\le 1$. Then, if
 $z\in (0,\eta)$,  using also   that $6/d\leq 3$,
we have 
$$
\int^{t}_0s^{-1} (t-s)^{-1}  \ind_{\{ z< [s\wedge  (t-s)]^{d/6}\}} \,\dd s  \le  8t^{-1}  (3\lvert \log z\rvert+1 )(1+\log_+ t)\,.
$$ 
Recalling the definition \eqref{eq:defzezeta} of $\zeta_1(\eta, p, t)$, we end up with
\begin{equation}
\label{eq:samspibis}
	\int^{t}_0\int_{(0,\eta)} s^{-1} (t-s)^{-1}  \ind_{\{ z< [s\wedge  t-s]^{d/6}\}}z^{1+\frac{2}{d}}\, \dd s\,\la(\dd z) \le
	t^{-1} \zeta_1(\eta,p,t)^{\frac{1+2/d}{p}}.
\end{equation}
So if $i\in L^1_{\times}$, using the trivial fact that $\Delta^2 t_i:= t_{i+1}-t_{i-1}\le t$, we have 
\begin{equation}\label{eq:samspi2}
	 \int^{t_{i+1}}_{t_{i-1}} \int_{(0,\infty)} (\Delta t_i \Delta t_{i+1})^{-1} z_i^{1+\frac{2}{d}}\ind_{\cP}(\bt,\bz)\,\dd t_i\,\gl(\dd z_i) 
	\\
	\le  \zeta_1(\eta,p,t)^{\frac{1+2/d}{p}} ({ \Delta^2 t_{i}})^{-1}\bone_{\calp^{(i)}}(\bt^{(i)},\bz^{(i)}),
\end{equation}
 where $\bt^{(i)}$ and $\bz^{(i)}$ denote the vectors $\bt$ and $\bz$ with the $i$th coordinate omitted and $\cP^{(i)}$ is obtained from $\cP$ by ignoring all the constraints that involve either $z_i$ or $t_{i}$.
 Thus, at the cost of a multiplicative factor $\zeta_1(\eta,p,t)^{(1+2/d)/p}$
one can, for each  $i\in L^1_{\times}$, simplify the integrals with respect to $\gl(\dd z_i)\,\dd t_i$.
After relabeling the indices, we can then iterate this process, making use of the iterative construction of $\cP$. As a result,  we are left to prove \eqref{redoxJ2} in the case where $L^1_\times=\emptyset$. Since $m=1$ in this case, we drop the superscript $1$ from the notation.

\smallskip

Under the assumption  $L^1_\times=\emptyset$, we have $J_1= \{i\in L :  i+1 \in L\}$. We may treat different segments of $J_1$ separately. Let us assume, for instance, that $\lint i, i+\ell-1\rint \subseteq J_1$, where $\lint i,j\rint:=\{i,i+1,\dots, j\}$. In the same spirit as \eqref{eq:samspi}--\eqref{eq:samspi2}, we have that
\begin{multline*}
	\int^{t_{i+1}}_{t_{i-1}} (\Delta t_i \Delta t_{i+1})^{-1}\ind_{\cP}(\bt,\bz) \,\dd t_i\\
	\le \frac{8(1 + \log_+ t)}{\Delta^2 t_{i}}  (1+ 3\lvert \log  (z_i\wedge z_{i+1})\rvert )   \ind_{\cP^{(i)}}(\bt^{(i)},\bz^{(i)}) \ind_{\{z_i \vee z_{i+1} < (\Delta^2 t_i)^{6/d}\wedge \eta \}} .
\end{multline*}
Iterating this  and noticing that $\lvert \log  (z_i\wedge z_{i+1})\rvert= \lvert \log z_i \rvert \vee \lvert \log z_{i+1}\rvert$
since $z_i,z_{i+1}<\eta\leq 1$, we get  
\begin{equation*}
	\int_{\frakX_\ell(t_{i-1},t_{i+\ell})} \prod_{j=0}^{\ell}(\Delta t_{i+j})^{-1}\ind_{\cP}(\bt,\bz)\, \dd t_i
	\cdots \dd t_{i+\ell-1}  
	\le \frac{( 8 (1 + \log_+ t))^{\ell}}{t_{i+\ell}-t_{i-1}} \times\frac{\prod_{j=0}^{\ell}(3\lvert\log  z_{i+j}\rvert+1)}{ \min_{j\in \lint \ell\rint} (3\lvert \log  z_{i+j}\rvert+1)}. 
\end{equation*}
The second denominator can be ignored since it is larger than $1$. If one then integrates with respect to $z_{i}, \dots , z_{i+\ell-1}$ (recall  \eqref{defcalprime} and the remark afterwards), one obtains 
\begin{multline*}
	\int_{\frakX_\ell(t_{i-1},t_{i+\ell})\times(0,\infty)^\ell} \Biggl(\prod_{j=0}^{\ell}(\Delta t_{i+j})^{-1}\Biggr)\ind_{\cP}(\bt,\bz) \prod_{j=0}^{\ell-1}z^{1+\frac 2d}_{i+j}\,
	\dd t_{i+j} \,\gl(\dd z_{i+j})\\
	\le \ind_{\cP'} \frac{\zeta_1(\eta,p,t)^{\frac{1+2/d}{p}\ell} }{t_{i+\ell}-t_{i-1}} (3\lvert\log  z_{i+\ell}\rvert+1 ).
\end{multline*}
This completes  integration with respect to $(t_i,z_i)$ for $i\in J_1$ (note that $i+\ell-1\in J_1$ implies $i+\ell\in L$ and hence $z_{i+\ell}<\eta$).
Recalling~\eqref{defcalu} and bounding $(3|\log z|+1)^{p/(1+2/d)} \leq 3|\log z|+1$, we obtain   \eqref{redoxJ2}.

\medskip
\noindent
{\it Step 1. Integrating with respect to indices in $J_2$.} To complete the proof, we need to show that 
\begin{equation}\label{bornonu2}
 U_2(\cP,t)\le \zeta_2(\eta,p)^{k_2}\Lambda(k_2,t,p).
\end{equation}
To simplify notation, let us assume that  $J_2=\lint k\rint$ and hence $k_2=k$ (again, this simply amounts to relabeling the vertices).
Recalling the definition~\eqref{eq:defzezeta} of $\zeta_2(\eta,p)$,
the bound \eqref{bornonu2} follows once we have shown that if $(\bt,\bz)\in \cP'$, then 
\begin{equation}\label{groumph}
(\Delta t_{k+1})^{\nu_p-1}\prod_{i=1}^{k} (\Delta t_i)^{\nu_p-1}
	z^p_i
	 \le \mathbf{G}_{p}(\bt)
	\prod_{i=1}^k \bigg(z^p_i \ind_{\{z_i\ge \eta\}}+ z^{1+\frac{2}{d}}_i\ind_{\{z_i< \eta\}}\bigg),
\end{equation}
where, recalling the definition~\eqref{eq:Gp} of $G_p(s)$, we set
\begin{equation*}
 \mathbf{G}_{p}(\bt):= \begin{cases}
                         (\Delta t_{k+1} )^{\nu_p-1} \prod_{i\in \lint k\rint}G_{p}(\Delta t_i) &\text{ if } k\notin L,\\
                        (\Delta t_k )^{\nu_p-1} \prod_{i\in \lint k+1\rint \setminus \{k\}}G_{p}(\Delta t_i) &\text{ if } k\in L.
                       \end{cases}
\end{equation*}
Note that in both cases, $t^{1-\nu_p}$ times the integral of $\mathbf{G}_{p}(\bt)$ over the simplex $\mathfrak{X}_k(t)$ 
is equal to $\Lambda(k,t,p)$ as defined in~\eqref{eq:definingl}   (in the second case one simply needs to exchange the role of $\Delta t_k$ and $\Delta t_{k+1}$).

  The reason why~\eqref{groumph} is needed is that, on the left-hand side of \eqref{groumph}, the exponent $p$ makes $z^p_i$ \textit{a priori} not integrable near zero. But since $\nu_p-1>-1$, there is some margin for the integrability of  $(\Delta t_i)^{\nu_p-1}$: the idea is to use the constraints in $\cP'$ to ``transfer'' a part of the exponent of $\Delta t_i$ onto that of  $z_i$ or $z_{i-1}$ or both. 

To this end, we shall use the equivalence
\begin{equation}\label{prototype}
 (\Delta t)^{\frac d6} \leq  z \quad \iff \quad    (\Delta t)^{\frac{\nu_p}3} \leq  z^{1+\frac{2}{d}-p}
\end{equation} 
in  two cases:
(i)  if $i\in I_-\cup I_+$ and $z_i< \eta$, with $z=z_i$ and $\Delta t=\Delta t_i$;
(ii)  if $i \in L$ (hence $z_i<\eta$), with  $z=z_i$ and $\Delta t=\Delta t_{i+1}$.
 The reader can check that for $(\bt,\bz)\in \cP'$,  
  the left-hand side of \eqref{prototype} is satisfied in these two cases.
 Notice now  that if $i\in L$, then necessarily we have $i+1\in I_+$: indeed, $i+1$ cannot be in $I_-$ since $L^1_{\times}=\emptyset$ and it cannot be in $L$, either, since $i\in J_2$ by assumption. Therefore,
we obtain 
\begin{equation}\label{groumph2}\mathtoolsset{multlined-width=0.9\displaywidth}\begin{multlined}
 (\Delta t_{k+1} )^{\nu_p-1}\prod_{i=1}^k (\Delta t_i)^{\nu_p-1}
	z^p_i  \le \prod_{i\in I_-} (\Delta t_i)^{\nu_p-1-\frac{\nu_p}{3}\ind_{\{z_i<\eta\}}}  \prod_{i\in I_+} (\Delta t_i)^{\nu_p-1-\frac{\nu_p}{3}(\ind_{\{z_i<\eta\}}+\ind_{\{i-1\in L\}})}
	  \\ 
	\times \prod_{i\in L} (\Delta t_i)^{\nu_p-1}	  \prod_{i=1}^k
	\bigg(z^p_i \ind_{\{z_i\ge \eta\}}+ z^{1+\frac{2}{d}}_i\ind_{\{z_i< \eta\}}\bigg).
 \end{multlined}\end{equation}
The bound \eqref{groumph} now follows from the estimate
 \begin{equation}\label{ultimecompa}
 	\prod_{i\in I_-} (\Delta t_i)^{\nu_p-1-\frac{\nu_p}{3}\ind_{\{z_i<\eta\}}}	 \prod_{i\in I_+} (\Delta t_i)^{\nu_p-1-\frac{\nu_p}{3}(\ind_{\{z_i<\eta\}}+\ind_{\{i-1\in L\}})}
	  \prod_{i\in L} (\Delta t_i)^{\nu_p-1}\le \mathbf{G}_{p}(\bt),
	  \end{equation}
  which is a consequence of the  following three observations concerning the exponent of $\Delta t_i$ on the left-hand side of \eqref{ultimecompa}:
\begin{itemize}
 \item It is larger than or equal to $\frac{\nu_p}{3}-1$ (which we use if  $\Delta t_i\le 1$).
 \item It is equal to $\nu_p-1$ if $\Delta t_i\ge 1 >\eta$ (as can be checked from the definition \eqref{defcalprime} of $\cP'$).
 \item  It is equal to $\nu_p-1$ if $i=k$ and $k\in L$ 
 or if $i=k+1$ and $k\notin L$ (this follows from the
 convention $z_{k+1}=\infty$).
\end{itemize}
This completes the proof of \eqref{bornonu2}, which, together with \eqref{redoxJ2}, implies \eqref{eq:finalbound}.
\qed

\section{Proof of Theorem~\ref{thm:local} and upper bounds in Theorems~\ref{thm:thinup}  and \ref{thm:heavyup}}\label{sec:deducing}

\subsection{Proof of Theorem~\ref{thm:local}}\label{sec:51}

First let us note that the results for the free-end partition function follows from those for the point-to-point version  and Jensen's inequality, as observed in \eqref{supersub}.
The $L^p$-convergence \eqref{eq:lpconv} for $p> 1$ is a direct consequence of Corollary \ref{labornedesmoments} and the fact that 
$(\cZ^{\go,a}_{\beta}(t,x))_{a\in(0,1]}$ is a time-reversed martingale for the filtration $\cG$ defined in \eqref{filtraG}.
The convergence in $L^1$ when $\mu <\infty$ cannot be obtained in this manner but can be recovered by a truncation argument. More precisely, we consider the partition function
$\cZ^{\go,[a,b)}_{\beta}(t,x)$ obtained by replacing $\go$ by
$\go\cap  ( \bbR\times \bbR^d \times [0,b) )$ (see \cite[Equation (2.40) and below]{BL20_cont}) and reproduce the argument used in \cite[Prop.~4.6 and 4.7]{BL20_cont} but with a moment of order $p\in (1,1+\frac{2}{d})\cap(1,2]$ instead of $2$. 

\smallskip
Let us move to the proof of \eqref{eq:condconv}.
	Recall that $\go_<$ and $\go_\geq$ denote the set of points in $\om$ with third coordinate in $(0,1)$ and $[1,\infty)$, respectively.
	Consider the reduced  partition function 
	\begin{equation*}
		\cZ_{\gb}^{\go_<,a}(t,x) =\rho(t,x)+ \sum_{k=1}^{\infty}  \gb^k \int_{ \frakX_k(t) \times (\bbR^d)^k}   \rho_{t,x}( \bt , \bx)   \prod_{i=1}^k \xi_{\go_<}^a (\dd t_i , \dd x_i ).
	\end{equation*}
It is easy to obtain that
	\begin{equation}\label{eq:decompobs}
		\cZ_{\gb}^{\go,a}(t,x)= \cZ^{\go_<,a}_{\beta}(t,x)+
		\sum_{k=1}^{\infty}  \gb^k \int_{ \frakX_k(t) \times (\bbR^d)^k}    \cZ^{\go_<,a}_{\beta,t,x}( \bt , \bx)   \prod_{i=1}^k \xi_{\go_\geq}^a (\dd t_i , \dd x_i ),
	\end{equation}
	where we have defined
	\begin{equation*}
		\cZ^{\go_<,a}_{\beta,t,x}(\bt , \bx):= \prod_{i=1}^{k+1}\cZ^{\go_<,a}_{\beta}(t_{i-1}, x_{i-1};  t_i, x_i)\,,
	\end{equation*}
 with the convention $x_0:=0$, $x_{k+1}:=x$, $t_0:=0$, $t_{k+1}:=t$.
	As a consequence, using  Minkowski's inequality, the identity~\eqref{eq:theinlaws} and Corollary \ref{labornedesmoments}, we have for any $p\in (1,1+\frac2d)$,
	\begin{multline*}
		\bbE_{<} \Big[ \cZ^{\go,a}_{\beta}(t,x)^{p} \Big ]^{\frac1p}
		\\\le   C(\beta,p,t)\rho(t,x) +\sum_{k=1}^{\infty} \gb^k  \int_{ \frakX_k(t) \times (\bbR^d)^k}   \prod_{i=1}^{k+1}\bbE_<\Big[ \cZ^{\go_<,a}_{\beta}( \Delta t_i , \Delta x_i)^{p}\Big]^{\frac1p}   \prod_{i=1}^k \xi_{\go_\geq}^a (\dd t_i , \dd x_i )  .
	\end{multline*}
	Using  Corollary \ref{labornedesmoments}  again,
	we obtain  
	\begin{equation*}
		\bbE_<\Big[ \cZ^{\go_<,a}_{\beta}( \Delta t_i , \Delta x_i)^{p}\Big]^{\frac1p} 
		\le C(\beta,p,t) \rho(\Delta t_i, \Delta t_i)
	\end{equation*}
	 for every $a\in (0,1)$. Setting $\beta':=\beta C(\beta,p,t)$, we conclude that  
	\begin{equation}\label{largatom}
		\bbE_<\Big[ \cZ^{\go,a}_{\beta}(t,x)^{p} \Big]^{\frac1p}
		\le C(\beta,p,t) \cZ^{\go_\geq}_{\beta'}(t,x)<\infty
	\end{equation}
for every $a\in (0,1)$.
	As $\cZ^{\go,a}_{\beta}(t,x)$ is a reversed martingale in $a$ under $\bbP_<$, this shows \eqref{eq:condconv}. For the proof of the positivity statement in \eqref{eq:conv-2}, we refer to \cite[Section~4.7]{BL20_cont}. 
\qed


\subsection{Proof of Theorem \ref{thm:thinup} (first half: upper bounds)} \label{sub:fh}
In this section, 
we prove all  upper bounds on 
$|\bar \gamma_{\beta}(p)|$.
By convexity and the fact that $\bar \gamma_{\beta}(1)=0$,
we have for $q\in (0,1)$ and $p\in (1,1+\frac{2}{d})$ that
\begin{equation}\label{convexinho}
\frac{q-1}{p-1} \bar \gamma_{\beta}(p) \le  \bar \gamma_{\beta}(q) \le 0. 
\end{equation}
Hence
it is sufficient to prove only an upper bound on $\bar \gamma_{\beta}(p)$  for $p\in (1,1+\frac{2}{d})$.
Furthermore, by Proposition~\ref{prop:finitemom} \textit{(ii)}, we can consider the point-to-point partition function in our computations.

\subsubsection{Dimension $d=1$}
If $d=1$ and $p=2$, the first part of Proposition \ref{prop:localbisdone} immediately yields
\begin{equation*}
\bar \gamma_{\beta}(2)  = \lim_{t\to \infty} \frac{1}{t} \log \bbE\Big[ \rho(t,x)^{-2}\bar \calz^{\go}_{\beta}(t,x)^2\Big]  = \frac{\beta^4 \mu_{0,\infty}(2)^2}{4}.
\end{equation*}
The result for other values of $p\in(1,2)$
follows by convexity, which gives $ 0\leq \bar \gamma_{\beta}(p)\le (p-1)\bar \gamma_{\beta}(2)$.

If $p\in(2,3)$, we use the bound~\eqref{mobdd2} for some $t=t(\beta)$.
Indeed, by sub-additivity of $\log \E[ \bar \calz^{\om}_{\beta}(t,\ast)^p ]$ (see Lemma~\ref{lem:submult}),
we get that
\[
\bar \gamma_{\gb}(p) = \lim_{t\to\infty} \frac1t \log \E\Big[ \bar \calz^{\om}_{\beta}(t,\ast)^p\Big] = \inf_{t>0}  \frac1t \log \E\Big[ \bar \calz^{\om}_{\beta}(t,\ast)^p\Big] \,.
\]
Using~\eqref{supersub}, we therefore get that for any $p>1$ and any $t>0$,
\begin{equation}
\label{eq:gammasub}
\bar \gamma_{\beta}(p) \leq \frac1t \log  \E\Big[ \rho(t,0)^{-p} \bar \calz^{\om}_{\beta}(t,0)^p\Big] \,.
\end{equation}
If $t = t(\gb) = c_{p} \gb^{-4}$ with a sufficiently small constant $c_p$,
 the bound~\eqref{mobdd2} gives that
\[
\begin{split}
\E\Big[ \rho(t,0)^{-p} \bar \calz^{\om}_{\beta}(t,0)^p\Big] & \leq C_p \sum_{k_2=0}^{\infty} \frac{(C_p \beta t^{\frac14})^{k_2}}{\Gamma(\frac{k_2+1}{2})^{\frac12}} \Bigg( \sum_{\ell=0}^{\infty} \frac{ (C_p' \gb t^{\frac{\nu_p}{p}})^{\ell} }{\Gamma(\nu_p (\ell+1))^{\frac1p}} \Bigg)^{k_2+1} \\
& \leq  2C_p \sum_{k_2=0}^{\infty} \frac{(2C_p \beta t^{\frac14})^{k_2}}{\Gamma(\frac{k_2+1}{2})^{\frac12}} \leq 4C_p \,.
\end{split}
\]
Indeed, since $\frac1p\nu_p <\frac14$ for $p\in (2,3)$, we can choose $c_p$ sufficiently small so that both the internal sum over $\ell$ and the sum over $k_2$ in the second line are  bounded by $2$.
Altogether, thanks to~\eqref{eq:gammasub}, we get   $\bar \gamma_{\gb}(p) \leq C''_p \gb^4$ with $C''_p= c_p^{-1}\log 4 C_p$.

%
%
%
%

\subsubsection{Dimension $d\geq 2$}

If $d\ge 2$, we are going to  use Proposition~\ref{prop:localbis} with $\eta=1$ and drop the dependence in $\eta$ in the notation.
Our first task is to replace the functions $\zeta_1(p,t)$, $\zeta_2(p)$ and $\Lambda(k_2,t,p)$ in \eqref{compinek} by something more tractable.
First, we let $C_1$ and $C_2$ be constants, which only depend on the measure $\gl$ and are chosen to be larger than $1$, such that for every $p\in  [1+\frac{1}{d},1+\frac 2 d  )$ and  $t\ge e$, we have
\begin{equation}\label{settingcc}
    \bigg(\frac{C}{p-1}\bigg)^{k} \zeta_1(p,t)^{\frac{k_1}p}  \zeta_2(p)^{\frac{k_2}p} 
	\le \Big(C_1 (\log t)^{\frac{1}{1+2/d}}\Big)^{k_1}  C_2^{k_2},
	\end{equation}
where    $C$  is the constant  in \eqref{compinek}.
For $\Lambda(k_2,t,p)$, we use the following lemma, which we prove afterwards.

\begin{lemma}\label{lem:convenient}
For any $k\ge 0$, $p\in(1,1+\frac 2d)$, $t\ge 1$ and $\alpha\in (0,1)$,
\begin{equation*}
 \Lambda(k,t,p)\le(k+1)^2 
  ( 4\nu_p^{-1} \alpha^{-\nu_p})^{k}e^{\alpha t}.
\end{equation*}
\end{lemma}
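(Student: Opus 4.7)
The case $k=0$ is trivial since $\Lambda(0,t,p)=1\le e^{\alpha t}$, so fix $k\ge 1$. Since $\sum_{i=1}^{k+1}\Delta t_i=t$, at least one increment satisfies $\Delta t_j\ge t/(k+1)$, so by a union bound
\[
\Lambda(k,t,p)\le (k+1)\max_{j\in\{1,\ldots,k+1\}}\Lambda_j(k,t,p),
\]
where $\Lambda_j(k,t,p)$ denotes the same integral as $\Lambda(k,t,p)$ with the extra constraint $\Delta t_j\ge t/(k+1)$. The goal is to show $\Lambda_j\le (k+1)(4\nu_p^{-1}\alpha^{-\nu_p})^k e^{\alpha t}$ for every $j$, which upon inserting gives the desired bound with the factor $(k+1)^2$.

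The key ingredient will be an exponential-tilt estimate. Setting $\widetilde G_p(s):=e^{-\alpha s}G_p(s)$ and $\widetilde f(s):=e^{-\alpha s}s^{\nu_p-1}$, I first check by splitting at $s=1$, using $\Gamma(\nu_p)\le 1/\nu_p$ and the elementary inequality $\alpha^{-\nu_p/3}\le \alpha^{-\nu_p}$ for $\alpha\in(0,1)$, that
\[
\|\widetilde G_p\|_1=\int_0^1 e^{-\alpha s}s^{\frac{\nu_p}{3}-1}\,ds+\int_1^\infty e^{-\alpha s}s^{\nu_p-1}\,ds\le \frac{4\alpha^{-\nu_p}}{\nu_p},\qquad \|\widetilde f\|_1\le \frac{\alpha^{-\nu_p}}{\nu_p}.
\]
Then, since $G_p^{*r}*f(w)=e^{\alpha w}\widetilde G_p^{*r}*\widetilde f(w)$, any integral of $G_p^{*r}*f$ over $[0,t]$ is bounded by $e^{\alpha t}\|\widetilde G_p\|_1^{r}\|\widetilde f\|_1$.

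For the case $j=k+1$, I use that $\Delta t_{k+1}\ge t/(k+1)$ and $\nu_p-1<0$ to bound $(\Delta t_{k+1})^{\nu_p-1}\le (t/(k+1))^{\nu_p-1}$ and pull this out. Introducing $w=t-\Delta t_{k+1}=\sum_{i=1}^{k}\Delta t_i$, the remaining integral reduces to the convolution $\int_{0}^{tk/(k+1)}G_p^{*k}(w)\,dw\le e^{\alpha t}\|\widetilde G_p\|_1^k$. The prefactor simplifies beautifully: $t^{1-\nu_p}(t/(k+1))^{\nu_p-1}=(k+1)^{1-\nu_p}\le k+1$, so $\Lambda_{k+1}\le (k+1)(4\nu_p^{-1}\alpha^{-\nu_p})^k e^{\alpha t}$.

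The more delicate case is $j\le k$, where one of the singular $G_p$-factors is on the ``large'' increment. Here I exploit that $G_p$ is monotone decreasing on $(0,\infty)$, so $G_p(\Delta t_j)\le G_p(t/(k+1))$, and factor this bound out. After integrating the free variable $u_j$ over $[0,t]$, the rest is again an integral of a convolution, namely $\int_0^t (G_p^{*(k-1)}*f)(w)\,dw\le e^{\alpha t}\|\widetilde G_p\|_1^{k-1}\|\widetilde f\|_1$. The main obstacle is controlling the prefactor $t^{1-\nu_p}G_p(t/(k+1))$, which requires splitting into $t\ge k+1$ (yielding $(k+1)^{1-\nu_p}\le k+1$ via $G_p(t/(k+1))=(t/(k+1))^{\nu_p-1}$) and $1\le t<k+1$ (yielding $t^{-2\nu_p/3}(k+1)^{1-\nu_p/3}\le k+1$ via $G_p(t/(k+1))=(t/(k+1))^{\nu_p/3-1}$ and $t\ge 1$). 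In both subcases $\Lambda_j\le (k+1)(4\nu_p^{-1}\alpha^{-\nu_p})^k e^{\alpha t}/4$, and combining with the union bound completes the proof.
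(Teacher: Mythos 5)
Your proof is correct, and while it ends with the same Laplace-transform estimate as the paper (the bound $\int_0^\infty G_p(s)e^{-\alpha s}\,\dd s\le 4\nu_p^{-1}\alpha^{-\nu_p}$, with $\Gamma(\nu_p)\le \nu_p^{-1}$ and $\alpha^{-\nu_p}\ge 1$), the way you isolate the large increment is genuinely different. The paper first decomposes according to which $\Delta t_i$ is the \emph{maximum} and uses a symmetrization plus the rearrangement inequality $(\Delta t_{k+1})^{\nu_p-1}G_p(\Delta t_j)\le(\Delta t_j)^{\nu_p-1}G_p(\Delta t_{k+1})$ for $\Delta t_{k+1}\le\Delta t_j$ (i.e.\ that $s\mapsto G_p(s)s^{1-\nu_p}$ is non-increasing) to transfer the exponent $\nu_p-1$ onto the maximal increment; this reduces uniformly to $(k+1)^{2-\nu_p}\int_{\mathfrak{X}_k(t)}\prod_{i=1}^k G_p(\Delta t_i)\,\dd t_i$ with no case analysis and, incidentally, without ever using $t\ge1$. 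You instead take a union bound over which increment exceeds $t/(k+1)$ and freeze the corresponding factor directly: $(\Delta t_{k+1})^{\nu_p-1}\le(t/(k+1))^{\nu_p-1}$ in one case, and $G_p(\Delta t_j)\le G_p(t/(k+1))$ by monotonicity in the other, the price being the extra case split $t\gtrless k+1$ where the hypothesis $t\ge1$ is genuinely needed to tame $t^{1-\nu_p}G_p(t/(k+1))$. Your bookkeeping in the second case is right: after freezing $G_p(\Delta t_j)$, trading $\Delta t_j$ for $\Delta t_{k+1}$ as the free variable shows the remaining integral equals exactly $\int_0^t(G_p^{*(k-1)}*f)(w)\,\dd w$ with $f(s)=s^{\nu_p-1}$, which your tilting bound controls by $e^{\alpha t}(4\nu_p^{-1}\alpha^{-\nu_p})^{k-1}\nu_p^{-1}\alpha^{-\nu_p}$ — so the phrase ``integrating the free variable over $[0,t]$'' is loose (no factor $t$ appears, nor should one), but the stated identity and the resulting prefactors $(k+1)^{1-\nu_p}$ and $t^{-2\nu_p/3}(k+1)^{1-\nu_p/3}\le k+1$ are correct and assemble to the claimed bound. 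In short: the paper's route is slicker and slightly more general in $t$; yours is more elementary (monotonicity of $G_p$ plus a change of variables instead of the symmetrization step) and even tracks a marginally better constant in the $j\le k$ case.
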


Using Proposition \ref{prop:localbis}, \eqref{settingcc} and Lemma \ref{lem:convenient} (and the trivial bound $(k+1)^{2/q}\leq (k+1)^2$), we have
for any $\alpha>0$ and $q\in (1,1+\frac2d)$
\begin{equation}\label{sumk1k2}
	\bbE\Big[ \rho(t,0)^{-q}\bar \cZ^{\go}_{\beta}(t,0)^q\Big]^{\frac1q}\le  e^{\frac{\alpha}{q} t} \sum_{k_1= 0}^\infty(C_1\beta  (\log  t)^{\frac{1}{1+ {2}/{d}}})^{k_1}  \sum_{k_2= 0}^\infty (k_2+1)^2
	  \bigg( 4^{\frac1q}C_2\beta \nu_q^{-\frac1q} \alpha^{-\frac{\nu_q}q}\bigg)^{k_2} .
\end{equation}
Now, for sufficiently small $\gb$ we choose $q \in (1+\frac1d,1+\frac2d)$, $\alpha\in(0,1)$, $t\geq e$  in the following way (the dependence in $\beta$ may sometimes be omitted in the computations below, for readability):
\begin{equation}\label{parachoiz}
q(\gb) := 1+\frac2d-\frac{8 e}d(2C_2\beta)^{1+\frac 2d} \,, \quad 
 \al(\gb):=(4(2C_2\beta)^{q}\nu_{q}^{-1})^{\frac1{\nu_{q}}}  \quad \text{ and } \quad 
 t(\beta):=e^{(2C_1\beta)^{-(1-\frac 2d)}}\,.
\end{equation}
Note that with this choice we have $\nu_q=4 e(2C_2\beta)^{1+\frac{2}{d}}$ and thus
\begin{equation}\label{parachoiz2}
 \alpha = (2C_2\beta)^{-\frac 2 d }\exp\bigg( -\frac{1}{4e(2C_2\beta)^{1+\frac{2}{d}}} \bigg)\,,
\end{equation}
\begin{equation}\label{geocontrol}
 C_1\beta (\log  t)^{\frac{1}{1+ {2}/{d}}}= \frac{1}{2} \qquad \text{and} \qquad  4^{\frac1q}C_2\beta \nu_q^{-\frac1q} \alpha^{-\frac{\nu_q}q} = \frac12\,,
\end{equation}
so that the sums in $k_1$ and $k_2$ in \eqref{sumk1k2} are respectively equal to $2$ and $12$. We end up with 
\begin{equation}\label{specifiq}
 \bbE\Big[\rho(t,0)^{-q} \bar \cZ^{\go}_{\beta}(t,0)^q\Big]^{\frac1q}\le 24\, e^{\frac{\alpha}q t} \,.
\end{equation}

%
For a fixed $p\in(1,1+\frac{2}{d})$, we can now deduce the upper bound in \eqref{eq:1+2d} from \eqref{specifiq}. We consider~$\beta$ sufficiently small so that $q=q(\beta)\ge p$.
 Using first \eqref{eq:gammasub} and then Jensen's inequality, we get
\begin{equation}\label{check}
 \bar \gamma_{\beta}(p)\le   \frac{1}{t}\log \bbE\Big[\rho(t,0)^{-p} \bar \cZ^{\go}_{\beta}(t,0)^p\Big]
 \le \frac{p}{ t}\log \bbE\Big[\rho(t,0)^{-q} \bar \cZ^{\go}_{\beta}(t,0)^q\Big]^{\frac1q}
 \le \frac{p\log 24}{ t} + \frac{p\alpha}{q}. 
\end{equation}
Using \eqref{parachoiz2} and replacing $t$ by its value \eqref{parachoiz} yield the upper bound in  \eqref{eq:1+2d}  since $p/q\le 1$.
Let us finally comment on our choice \eqref{parachoiz}. The parameters $\alpha$ and $t$ have been chosen so that~\eqref{geocontrol}, hence \eqref{specifiq}, holds. The value of $q=q(\beta)$ is chosen so that $\al$, seen as a function of $q$ for fixed~$\beta$, is minimized.
This completes the proof of the upper bounds in \eqref{eq:d1} and \eqref{eq:1+2d}.  \qed

\medskip 
The lower bounds in \eqref{eq:d1} and \eqref{eq:1+2d} will   be shown after Proposition~\ref{prop:dgeq2}.

\begin{proof}[Proof of Lemma \ref{lem:convenient}]
Recall the definitions~\eqref{eq:definingl} and \eqref{eq:Gp} of $\Lambda(k,t,p)$ and $G_p(s)$.	For fixed $j\in \lint k\rint$, using a symmetry argument, we have that 
	\begin{multline*}
		\int_{\mathfrak{X}_k(t)} \ind_{\{\Delta t_{k+1} = \max\limits_{i\in \lint k+1\rint}\Delta t_{i}\}} (\Delta t_{k+1})^{\nu_p-1}\prod_{i=1}^{k}G_p(\Delta t_{i}) \,\dd t_i\\
		= \int_{\mathfrak{X}_k(t)} \ind_{\{\Delta t_{j} = \max\limits_{i\in \lint k+1\rint}\Delta t_{i}\}} (\Delta t_{j})^{\nu_p-1}\prod_{i\in \lint k+1\rint \setminus \{ j\}}G_p(\Delta t_{i}) \,\dd t_i\\
		\ge \int_{\mathfrak{X}_k(t)} \ind_{\{\Delta t_{j} = \max\limits_{i\in \lint k+1\rint}\Delta t_{i}\}} (\Delta t_{k+1})^{\nu_p-1}\prod_{i\in \lint k\rint}G_p(\Delta t_{i}) \,\dd t_i\,,
	\end{multline*}
where we have used that $(\Delta t_{k+1})^{\nu_p-1}G_p(\Delta t_j)\leq (\Delta t_j)^{\nu_p-1}G_p(\Delta t_{k+1})$ if $\Delta t_{k+1}\leq \Delta t_j$ for the second step.
As a consequence, again by a symmetry argument, we have
 \begin{equation*}
  \Lambda(k,t,p)\le   (k+1)  \, t^{1-\nu_p}
 \int_{\mathfrak{X}_k(t)} \ind_{\{\Delta t_{k+1} = \max\limits_{i\in \lint k+1\rint}\Delta t_{i}\}} (\Delta t_{k+1})^{\nu_p-1}\prod_{i=1}^{k}G_p(\Delta t_{i}) \,\dd t_i\,.
 \end{equation*}
Together with the inequality $\max_{i\in \lint k+1\rint}\Delta t_{i}\ge t/(k+1)$, this implies
\begin{equation*}
  \Lambda(k,t,p)\le (k+1)^{2-\nu_p} \int_{\mathfrak{X}_k(t)}  \prod_{i=1}^{k}G_p(\Delta t_{i})\,\dd t_i.
 \end{equation*}
We can conclude our proof by observing that 
 \begin{equation*}
\int_{\mathfrak{X}_k(t)}  \prod_{i=1}^{k}G_p(\Delta t_{i})\,\dd t_i \le e^{\alpha t} \bigg(
	\int^{\infty}_{0} G_p(s)e^{-\alpha s} \,\dd s  \bigg)^{k}
 \end{equation*}
and
\begin{equation*}
	\int^{\infty}_{0} G_p(s)e^{-\alpha s}\,\dd s \le   \int^{1}_{0} s^{\frac13\nu_p-1}\,\dd s+ \int^{\infty}_{0} s^{\nu_p-1}e^{-\alpha \nu_p}\,\dd s
	= 3\nu_p^{-1}+\gG(\nu_p)\alpha^{-\nu_p}\le 4\nu_p^{-1}
	\alpha^{-\nu_p}\,,
\end{equation*}
where the last step is valid because $\nu_p \in (0,1)$ and $\alpha \in (0,1)$.
\end{proof}

\subsection{Proof of Theorem \ref{thm:heavyup} (first half: upper bounds)}
\label{sub:fattails}
Again, by \eqref{convexinho} it suffices 
 to prove~\eqref{sioum}  for $p> 1$. Moreover, \eqref{eq:liminf} follows immediately from \eqref{sioum}.
 
 \smallskip
If $d\ge 2$,  the arguments from Section~\ref{sub:fh} remain valid if we  let $q$  be equal to the value corresponding to our assumption $\mu_{1,\infty}(q)<\infty$ (we consider $C_1$ and $C_2$ such that \eqref{settingcc} is valid for this value of $q$) but retain the choice in \eqref{parachoiz} for $\alpha$ and $t$.
The conclusion \eqref{check} is still valid for any $p\in(1,q]$, and because $\alpha(\beta)= C_3 \beta^{q/\nu_q}$,
 \eqref{sioum} follows.

 \smallskip

If $d=1$, it suffices by convexity to treat the case when $p=q$. 
By \eqref{mobdd1} (with $\eta=1$) and our assumptions, there are constants $C_1$ and $C_2$, which may depend on $q$, such that
\begin{equation}
\label{eq:d=1lighttail}
	\E\Big[\rho(t,x)^{-q}\bar \calz^{\om}_{\beta}(t,x) ^q\Big]^{\frac1q}
	\le 
	  \sum_{k_1=0}^\infty   (C_1 \beta t^{\frac14} )^{k_1} \sum_{k_2=0}^\infty  \frac{  (C_2 \beta t^{\frac{\nu_q}{q}} )^{k_2}}{\Gamma(\nu_q(k_2+1))^{\frac1q}}.
\end{equation}
Let us choose $t=t(\beta):= (2C_1)^{4} \beta^{-4} $ so that the first sum is equal to $2$.

For the second sum,  consider the three-parameter function $F_{\al,\delta}^{(\ga)}(x)=\sum_{m=0}^\infty x^{m}/\Ga(\al m+\delta)^\ga$.  A precise asymptotic of $F_{\al,\delta}^{(\ga)}$ is given  in \cite[Theorem~1]{Gerhold12}, from which one gets that there is  $K_{\al,\delta}^{(\ga)}>0$ such that for all $x\geq0$,
\begin{equation}
 \label{eq:6}
	  F_{\al,\delta}^{(\ga)}(x) \leq K_{\al,\delta}^{(\ga)}e^{2\ga x^{\frac 1 {\alpha\gamma}}}  \,.
\end{equation}
Thanks to this estimate,  the second sum in~\eqref{eq:d=1lighttail} is bounded by $C_3(1+\beta^{c_q}) \exp(\frac1q (C_2\beta)^{q/\nu_q} t)$ with $c_q = (q-1) (\frac{1}{\nu_q}- \frac4q)<0$. 
In view of \eqref{eq:gammasub}, we obtain that 
\[
\bar\gamma_{\gb}(q)\leq  \frac{q\log (2C_3 (1+\gb^{c_q}) )}{t(\beta)} + C_2\beta^{\frac{q}{\nu_q}}  \leq  C_4 \gb^4 \log (1+\gb^{-1}) + C_2 \gb^{\frac{q}{\nu_q}}\,.
\]
Recalling that  $q/\nu_q <4$ if $p<2$, this gives the upper bound in  \eqref{sioum}.
Finally, the lower bound in \eqref{eq:stable} is an immediate consequence of~\eqref{eq:liminf} since our assumption  
$ \lim_{z\to \infty} \log  \gl( [z,\infty))/\log  z = \alpha$ implies  that
$\mu_{1,\infty}(q)<\infty$ for all $q<\alpha$. 
\qed

\medskip
The second half of Theorem~\ref{thm:heavyup} will be shown after Proposition~\ref{prop:dgeq2}.

\section{Existence  and uniqueness of solutions to the SHE}
\label{sec:SHE}
This section is devoted to the proof of Theorems~\ref{thm:SHE} and  \ref{thm:SHEunique}.
The existence part, that is, Theorem~\ref{thm:SHE} is detailed in the Sections \ref{sec:plan}--\ref{sec:geninitial}.
Theorem~\ref{thm:SHEunique}, is addressed afterwards: in Section~\ref{sec:intesol}, we verify that the solution defined in Theorem~\ref{thm:SHE} satisfies the condition~\eqref{eq:cond}, while uniqueness is shown in Section~\ref{sec:unique}.

\subsection{Overview of the proof of Theorem \ref{thm:SHE}}
\label{sec:plan}
	For the ease of exposition, we first present  the case where the initial condition $u_0$ is the Dirac function $\delta_0$. 
	By \eqref{eq:pppf-2}, we have
	\beq\label{eq:zeidentity}
	\mathtoolsset{multlined-width=0.85\displaywidth}\begin{multlined} \calz^{\om,a}_{\beta}(t,x)=\rho(t,x)+\beta\int_{(0,t)\times \bbR^d}\rho(t-s,x-y)\calz^{\om,a}_{\beta}(s,y) \,\xi^a_{\go_<}(\dd s,\dd y)\\
		+\beta\int_{(0,t)\times \bbR^d}\rho(t-s,x-y)\calz^{\om,a}_{\beta}(s,y)\,\xi_{\om_\geq}(\dd s,\dd y).
	\end{multlined}
	\eeq
	We want to let $a$ tend to $0$ and conclude that 
	\beq\label{eq:10}
	\mathtoolsset{multlined-width=0.85\displaywidth}\begin{multlined} \calz^\om_{\beta}(t,x)=\rho(t,x)+\beta\int_{(0,t)\times \bbR^d}\rho(t-s,x-y)\calz^\om_{\beta}(s,y)\,\xi_{\go_<}(\dd s,\dd y)\\
		\quad+\beta\int_{(0,t)\times \bbR^d}\rho(t-s,x-y)\calz^{\om}_{\beta}(s,y)\,\xi_{\om_\geq}(\dd s,\dd y).
	\end{multlined}\eeq
	We have to prove that the two integrals in the on the right-hand side are well defined in It\^o's and Lebesgue's sense, respectively (cf.\ Remark~\ref{rem:Ito}),  that they are finite and that they are the limit of the integrals displayed in \eqref{eq:zeidentity}. For this last point, setting 
	\begin{align*}
		I^a_1(t,x)&:=	\int_{(0,t)\times \bbR^d}\rho(t-s,x-y)(\calz^{\om,a}_{\beta}(s,y)-\calz^{\om}_{\beta}(s,y))\,\xi^a_{\go_<}(\dd s,\dd y),\\
		I^a_2(t,x)&:=	\int_{(0,t)\times \bbR^d}\rho(t-s,x-y)\calz^{\om}_{\beta}(s,y)\,\xi^{[0,a)}_{\go_<}(\dd s,\dd y),\\
		I^a_3(t,x)&:=	\int_{(0,t)\times \bbR^d}\rho(t-s,x-y)(\calz^{\om,a}_{\beta}(s,y)-\calz^{\om}_{\beta}(s,y))\,\xi_{\go_\geq}(\dd s,\dd y),
	\end{align*}
with $\xi^{[a,b)}_{\go}:=\xi^a_\go-\xi^b_\go$ (and $\xi^0_\go:=\xi_\go$), we need to prove that
	for all $(t,x)$ and $j=1,2,3$, the following convergence holds in probability:
	\begin{equation}\label{eq:I}
		\lim_{a\to 0} I^a_j(t,x)=0.
	\end{equation}
	 The finiteness and convergence of these integrals are proved in Section~\ref{sec:fin} and Section~\ref{sec:conv}, respectively. The case of general initial condition is discussed in Section~\ref{sec:geninitial}.

\subsection{Finiteness of stochastic integrals}\label{sec:fin}
We first introduce a technical statement that is a direct consequence of Corollary \ref{labornedesmoments}.

\begin{lemma}
	\label{lem:pint} Assuming \eqref{eq:log} and \eqref{eq:log-2},  for all $p\in(1,1+\frac 2d)$,  $T>0$, $x\in\R^d$ and $\beta >0$,  there exist positive constants $\beta'=\beta'(\beta,p,T)$ and $C=C(\beta,p,T)$ such that for every $t\in [0,T]$,
	\begin{equation*}
	 \bbE_<\bigg[ \sup_{a\in(0,1)} \calz^{\om,a}_{\beta}(t,x)^p \bigg]^{\frac1p} \leq C\calz^{\om_{\geq}}_{\beta'}(t,x)\qquad \text{$\bbP_\geq$-a.s.} 
	\end{equation*}
\end{lemma}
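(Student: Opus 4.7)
The strategy is to combine Doob's $L^p$ maximal inequality with the pointwise moment bound \eqref{largatom} obtained in the proof of Theorem~\ref{thm:local}, which itself is where Corollary~\ref{labornedesmoments} enters. In this way Lemma~\ref{lem:pint} becomes, as announced, a direct consequence of that corollary.

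The first step will be to exploit the reverse-martingale property recalled below~\eqref{filtraG} (cf.\ \cite[Lemma~3.5]{BL20_cont}): for $\bbP_{\geq}$-almost every realization of $\omega_\geq$, the family $(\calz^{\omega,a}_\beta(t,x))_{a\in(0,1]}$ is a non-negative c\`adl\`ag backward $\bbP_<$-martingale with respect to the reverse filtration $\mathcal{G}$. Upon time-reversal this becomes a standard forward martingale indexed by $[0,1]$, so Doob's $L^p$ inequality (for $p>1$), applied first on $[a_0,1]$ and then extended by monotone convergence as $a_0\downarrow 0$, will yield
\[
\bbE_<\Big[\sup_{a\in(0,1]}\calz^{\omega,a}_\beta(t,x)^p\Big]^{\frac1p}\le \frac{p}{p-1}\sup_{a\in(0,1]}\bbE_<\big[\calz^{\omega,a}_\beta(t,x)^p\big]^{\frac1p}.
\]
The supremum on the right is precisely what \eqref{largatom} bounds: combining the decomposition \eqref{eq:decompobs} with Minkowski's inequality (using the conditional independence of $\calz^{\omega_<,a}_\beta$ across disjoint time slices of $\omega_<$) and Corollary~\ref{labornedesmoments} applied to the pure small-jump noise $\omega_<$ (for which $\mu_{1,\infty}(p)=0$, so the hypothesis is trivially met) gives, uniformly in $a\in(0,1)$ and $t\in(0,T]$,
\[
\bbE_<\big[\calz^{\omega,a}_\beta(t,x)^p\big]^{\frac1p}\le C_1(\beta,p,T)\,\calz^{\omega_\geq}_{\beta'}(t,x)\qquad\bbP_\geq\text{-a.s.},
\]
for some $\beta'=\beta'(\beta,p,T)$. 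Combining the two displays and absorbing $p/(p-1)$ into the constant will conclude the proof.

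The only mild technical subtlety is justifying Doob's inequality for a continuum-parameter backward martingale; the c\`adl\`ag property reduces the supremum to a countable one, after which the inequality is entirely standard. No new estimate beyond those in Section~\ref{sec:upper} and the proof of Theorem~\ref{thm:local} is required, so no substantive obstacle is anticipated.
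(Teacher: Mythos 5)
Your proposal is correct and follows essentially the paper's own route: the paper also reduces via Doob's maximal inequality (for the time-reversed $\P_<$-martingale in $a$) to a uniform bound on $\E_<[\calz^{\om,a}_\beta(t,x)^p]^{1/p}$, which it obtains exactly as in \eqref{largatom}, i.e.\ by expanding over the atoms of $\om_\geq$, applying Minkowski's inequality and Corollary~\ref{labornedesmoments} to the small-jump factors, and resumming into $C\,\calz^{\om_\geq}_{\beta'}(t,x)$. The only cosmetic difference is that you cite \eqref{largatom} (with the constants made uniform over $t\in[0,T]$ via Corollary~\ref{labornedesmoments}) rather than re-deriving it, which is harmless.
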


\begin{proof} 
	By Doob's maximal inequality, it is sufficient to bound
	$\E_<[\calz^{\om,a}_{\beta}(t,x)^p]^{1/p}$ uniformly in $a$,~$t$ and $x$.
	With the conventions that $t_{k+1}:=t$, $x_{k+1}:=x$ and the term corresponding to $k=0$ is equal to 
	$\calz^{\om_<,a}_{\beta}(t,x)$, we have that
	\begin{equation}\label{defofW} 
		\calz^{\om,a}_{\beta}(t,x)=    \sum_{k=0}^{\infty}\beta^{k}\int_{\frakX_{k}(t)\times(\R^d)^k}   \prod_{i=1}^{k+1}\calz^{\om_<,a}_{\beta}(t_{i-1},x_{i-1};t_i,x_{i})  \prod_{j-1}^k\xi_{\om_{\geq}}(\dd t_j, \dd x_j). 
	\end{equation}
	Averaging with respect to $\go_<$ and using the Minkowski inequality, translation invariance
	and Corollary \ref{labornedesmoments} (we let $C_0=C_0(\beta,p,T)$ denote the constant obtained from Corollary \ref{labornedesmoments}), we obtain the desired bound as follows:
	\begin{align*}
		\bbE_<\Big[\calz^{\om,a}_{\beta}(t,x)^p\Big]^{\frac1p}&\le\sum_{k=0}^{\infty}\beta^{k}\int_{\frakX_{k}(t)\times(\R^d)^k}  \prod_{i=1}^{k+1} \bbE_<\Big[\calz^{\om_<,a}_{\beta}(\Delta t_i, \Delta x_i)^p\Big]^{\frac1p}  \prod_{j-1}^k\xi_{\om_{\geq}}(\dd t_j, \dd x_j)\\
		 &\le C_0\sum_{k=0}^{\infty}[C_0\beta]^{k}  \int_{\frakX_{k}(t)\times(\R^d)^k} \rho_{t,x}(\bt,\bx) \prod_{j=1}^{k}   \xi_{\om_{\geq}}(\dd t_j, \dd x_j) = C_0\calz^{\om_{\geq}}_{C_0\beta}(t,x).\qedhere
	\end{align*}
	\end{proof}

\begin{rem} 
In this section and the next, the dependence in $\beta,p$ and $T$ of the constant do not have a major importance: most of the time, they are omitted in  computations.
\end{rem}

We now proceed to showing that the integrals in \eqref{eq:10} are well defined and finite. The integral  $I_3^a$ is a Lebesgue integral. Because of \eqref{eq:conv-2}, it clearly suffices to show that 
\begin{equation}\label{eq:zup}
 \int_{(0,t)\times \bbR^d}\rho(t-s,x-y)\sup_{a\in(0,1)}\calz^{\om,a}_{\beta}(s,y)\,\xi_{\om_\geq}(\dd s,\dd y)<\infty.
\end{equation}
In fact, we are going to show that the $L^p(\bbP_<)$-norm of the quantity in \eqref{eq:zup} is finite $\bbP_\geq$-a.s. Using Minkowski's integral inequality and Lemma~\ref{lem:pint}, we have
\begin{equation} 
\label{eq:Mink1}
\begin{split}
 \bbE_< \Bigg[ \bigg(\int_{(0,t)\times \bbR^d}  & \rho(t-s,x-y)\sup_{a\in(0,1)}\cZ^{\om,a}_{\beta}(s,y)\,\xi_{\go_\geq}(\dd s,\dd y )\bigg)^p\Bigg]^{\frac1p}\\
& \le\int_{(0,t)\times \bbR^d}  \rho(t-s,x-y) \bbE_< \bigg[ \sup_{a\in(0,1)} \cZ^{\om,a}_{\beta}(s,y)^p \bigg]^{\frac1p}  \,\xi_{\go_\geq}(\dd s,\dd y)\\
& \leq C \int_{(0,t)\times\R^d} \rho(t-s,x-y)\calz^{\om_\geq}_{\beta'}(s,y)\,\xi_{\om_\geq}(\dd s,\dd y).
 \end{split}
 \end{equation}
By \eqref{eq:zeidentity}, applied with $a=1$, the integral in the last line is equal to $(\calz^{\om_{\geq}}_{\beta'}(t,x)-\rho(t,x))/\beta'$,
which is finite and therefore proves \eqref{eq:zup}.

\smallskip

Let us now deal with   $I_1^a$ and~$I_2^a$.
We show that for $\P_\geq$-a.e.\  realization of $\go_\geq$, the process $(\om_<,s,y)\mapsto\rho(t-s,x-y)\calz^{\om_<\cup \om_\geq}_\beta(s,y)$ is $L^p(\P_<)$-integrable with respect to $\xi_{\om_<}$, in the sense described in Section~\ref{sec:sol}.
Thanks to Lemma~\ref{lem:integrab}, this implies that $(\om,s,y)\mapsto\rho(t-s,x-y)\calz^{\om}_\beta(s,y)$ is $L^0(\bbP)$-integrable with respect to $\xi_{\om_<}$.
To this end, according to \eqref{eq:integ} and \eqref{eq:condizione},
we only need to prove that for some $p\in (1,1+\frac{2}{d})$, we have
\begin{equation}\label{eq:tobeproved}
 \bbE_<\Bigg[ \bigg( \int_{(0,t)\times \bbR^d\times (0,1)}  \rho(t-s,x-y)^2\calz^{\om}_{\beta}(s,y)^2 z^2\, \delta_{\go_<}(\dd s,\dd y, \dd z )     \bigg)^{\frac p2}\Bigg]<\infty.
\end{equation}
Using Fatou's Lemma for the inner integral, we can replace $\calz^{\om}_{\beta}(s,y)$
by $\calz^{\om,a}_{\beta}(s,y)$ and add a restriction $\ind_{\{z\ge a\}}$ provided that the bound we prove is uniform in $a$.
Now using the BDG inequality in the reverse direction, we obtain that 
\beq\label{eq:BDG8}\begin{split}
  &\bbE_<\Bigg[ \bigg( \int_{(0,t)\times \bbR^d\times  (0,1)}  \rho(t-s,x-y)^2\cZ^{\om,a}_{\beta}(s,y)^2 z^2 \ind_{\{z\ge a\}}\,\delta_{\go_<}(\dd s,\dd y, \dd z )     \bigg)^{\frac p2}\Bigg]
  \\ 
  &\qquad\le C\,\bbE_< \Bigg[ \bigg\lvert\int_{(0,t)\times \bbR^d}  \rho(t-s,x-y)\cZ^{\om,a}_{\beta}(s,y)\,\xi^a_{\go_<}(\dd s,\dd y )\bigg\rvert^p\Bigg].
\end{split}\eeq
Now using \eqref{eq:zeidentity} and the Minkowski inequality, we have
\begin{equation}
\label{eq:Mink2}
\begin{split}
  & \beta\bbE_<  \Bigg[ \bigg\vert\int_{(0,t)\times \bbR^d}  \rho(t-s,x-y)\cZ^{\om,a}_{\beta}(s,y)\,\xi^a_{\go_<}(\dd s,\dd y )\bigg\rvert^p\Bigg]^{\frac1p}\\ 
  &\quad \le  \rho(t,x)+ \bbE_< \Big[ \cZ^{\go,a}_\beta(t,x)^{p}\Big]^{\frac1p}
  +\beta\, \bbE_< \Bigg[ \bigg(\int_{(0,t)\times \bbR^d}  \rho(t-s,x-y)\cZ^{\om,a}_{\beta}(s,y)\,\xi_{\go_\geq}(\dd s,\dd y)\bigg)^p\Bigg]^{\frac1p}.
  \end{split}\!\!\!
\end{equation}
We conclude by observing that all summands on the right-hand side are uniformly bounded in $a$:
we use Lemma~\ref{lem:pint} for the second one, while the third one has been controlled in the previous paragraph (see~\eqref{eq:Mink1}). \qed

\subsection{Convergence of stochastic integrals}\label{sec:conv}
We give the proof of \eqref{eq:I}  in this section.
The convergence of $I^a_3$ follows from  \eqref{eq:zup} and dominated convergence.
The convergence of $I^a_2$ follows from dominated convergence for stochastic integrals. 
The main piece of work consists in proving  the convergence of $I^a_1$.
We define
\begin{equation*}
\begin{split}
J^{a,b}_1(t,x)&:= \int_{(0,t)\times \bbR^d} \rho(t-s,x-y)(\calz^{\om,a}_{\beta}(s,y)-\calz^{\om}_{\beta}(s,y))\,\xi^{[a,b)}_{\go_<}(\dd s,\dd y)\,,\\ 
J^{a,b}_2(t,x)& 
:=\int_{(0,t)\times \bbR^d} \rho(t-s,x-y) \big(\calz^{\om,a}_{\beta}(s,y)-\calz^{\om}_{\beta}(s,y)\big)\,\xi^{b}_{\go_<}(\dd s,\dd y) \,,
\end{split}
\end{equation*}
and we prove that the following holds in $\P_\geq$-probability:
\begin{equation*}
\lim_{b\to 0}\sup_{a\in(0,1)}\E_<\Big[\lvert J^{a,b}_1(t,x)\rvert\Big]=0 \qquad \text{and} \qquad
\lim_{a\to 0} \E_<\Big[ \lvert J^{a,b}_2(t,x) \rvert\Big]  =0.
\end{equation*}

\subsubsection*{Convergence of $J^{a,b}_2(t,x)$}
By Jensen's inequality,
it suffices to prove that $\E_<[|J^{a,b}_2(t,x) |^{p}]\to0$
for some $p\in(1,\min(2, 1+\frac 2d))$.
By the BDG inequality, we can further reduce this to proving   
\begin{equation}\label{BDGTVPART}
  \lim_{a\to 0}\bbE_<\Bigg[ \bigg(\int_{(0,t)\times \bbR^d\times [b,1)} \rho(t-s,x-y)^2(\calz^{\om,a}_{\beta}(s,y)-\calz^{\om}_{\beta}(s,y))^2z^2\,\delta_{\go_<}(\dd s, \dd y,\dd z)\bigg)^{\frac p2}\Bigg]=0.
\end{equation}
 By subadditivity \eqref{eq:subbaditiv}, for $a<b$ this quantity  is smaller than
\begin{multline*}
   \bbE_<\Bigg[\int_{(0,t)\times \bbR^d\times [b,1)} \rho(t-s,x-y)^p\lvert\calz^{\om,a}_{\beta}(s,y)-\calz^{\om}_{\beta}(s,y)\rvert^p  z^p\,\delta_{\go_<}(\dd s, \dd y,\dd z)\Bigg]
 \\=\mu_{b,1}(p)  \int_{(0,t)\times \bbR^d} \rho(t-s,x-y)^p\bbE_<\Big[\lvert\calz^{\om,a}_{\beta}(s,y)-\calz^{\om}_{\beta}(s,y)\rvert^p \Big]\,\dd s\,\dd y.
\end{multline*}
As the integrand tends to zero thanks to~\eqref{eq:condconv},
by dominated convergence we will get that the last integral converges to $0$ for $\P_\geq$-a.s.\   if
we show that for $\P_\geq$-a.e.\ realization of $\om_\geq$,
\begin{equation}\label{eq:doomine}
 \int_{(0,t)\times \bbR^d} \rho(t-s,x-y)^p\bbE_<\bigg[\sup_{a\in(0,1)}\calz^{\om,a}_{\beta}(s,y)^p \bigg]\,\dd s\,\dd y<\infty  \,.
\end{equation}
To   this end, we use Lemma~\ref{lem:pint} to bound the left-hand side of \eqref{eq:doomine} by a constant times
\beq\label{eq:doomine2}\mathtoolsset{multlined-width=0.9\displaywidth}\begin{multlined}
\int_{(0,t)\times\R^d} \rho(t-s,x-y)^p\calz^{\om_\geq}_{\beta'}(s,y)^p\,\dd s\,\dd y\\
	 \leq 	\mathtoolsset{multlined-width=0.85\displaywidth}\begin{multlined}[t]  \Bigg(\sum_{k=0}^\infty  (\beta')^k\int_{\frakX_k(t)\times(\R^d)^k} \Bigg(\int_{(t_k,t)\times\R^d} \rho(t-s,x-y)^p\rho(s-t_k,y-x_k)^p\,\dd s\,\dd y\Bigg)^{\frac1p} \\
	\times\prod_{i=1}^k \rho(\Delta t_i,\Delta x_i) \,\xi_{\om_\geq}(\dd t_i,\dd x_i)\Bigg)^p,\end{multlined}
\end{multlined}\eeq
where the last inequality follows from Minkowski's inequality (for the $L^p$-norm of the measure $\rho(t-s,x-y)^p\,\dd s\,\dd y$).
Recalling \eqref{help3} and using Lemma~\ref{lemGamma}, one has
\begin{align}
	\int_{(t_k,t)\times\R^d} \rho(t-s,x-y)^p\rho(s-t_k,y-x_k)^p\,\dd s\,\dd y&   = \vartheta(p)^2 \frac{\Ga(\nu_p)^2}{\Ga(2\nu_p)}  (t-t_k)^{2\nu_p-1}  \rho\big( \tfrac1p (t-t_k),x-x_k \big) \nonumber\\
	&   = \vartheta(p) \frac{\Ga(\nu_p)^2}{\Ga(2\nu_p)} (t-t_k)^{\nu_p }\rho(t-t_k,x-x_k)^p.
	\label{eq:calculrhorho}
\end{align}
Therefore, the right-hand side of \eqref{eq:doomine2} is further bounded by  $\vartheta(p) \Ga(\nu_p)^2\Ga(2\nu_p)^{-1} t^{\nu_p}$  times
\begin{equation}\label{findureasoning}
 \Bigg(\sum_{k=0}^\infty  (\beta')^k\int_{\frakX_k(t)\times(\R^d)^k} \rho_{t,x}(\bt,\bx) \prod_{i=1}^k \xi_{\om_\geq}(\dd t_i,\dd x_i)\Bigg)^p=\calz^{\om_\geq}_{\beta'}(t,x)^p,
\end{equation}
which is finite.  This concludes the proof of \eqref{BDGTVPART}.
%
\qed

\subsubsection*{Convergence of $J^{a,b}_1(t,x)$} Using  Jensen's inequality, we  need to show that for some $p\in (1,1+\frac 2 d )$,
\begin{equation}\label{eq:2lines}\begin{split}
 \lim_{b\to0} \sup_{a\in(0,1)} \bbE_<\Bigg[ \bigg\lvert\int_{(0,t)\times \bbR^d} \rho(t-s,x-y)\calz^{\om,a}_{\beta}(s,y)\,\xi^{[a,b)}_{\go_<}(\dd s, \dd y)\bigg\rvert^{p}\Bigg]=0,\\
  \lim_{b\to0} \sup_{a\in(0,1)} \bbE_<\Bigg[ \bigg\lvert\int_{(0,t)\times \bbR^d} \rho(t-s,x-y)\calz^{\om}_{\beta}(s,y)\,\xi^{[a,b)}_{\go_<}(\dd s, \dd y)\bigg\rvert^{p}\Bigg]=0.
 \end{split}
\end{equation}
For the second line, we apply the  BDG inequality 
and realize that 
\begin{multline*}
 \sup_{a\in(0,1)} \bbE_<\Bigg[ \bigg\lvert\int_{(0,t)\times \bbR^d} \rho(t-s,x-y)\calz^{\om}_{\beta}(s,y)\,\xi^{[a,b)}_{\go_<}(\dd s, \dd y)\bigg\rvert^{p}\Bigg]\\ \le C\,  \bbE_<\Bigg[ \bigg(\int_{(0,t)\times \bbR^d\times [0,b)} \rho(t-s,x-y)^2\calz^{\om}_{\beta}(s,y)^2z^2\,\delta_{\go_<}(\dd s, \dd y,\dd z)\bigg)^{\frac p2}\Bigg].
\end{multline*}
Thanks to \eqref{eq:tobeproved}, this converges to $0$ as $b\to0$, by dominated convergence.
Concerning the first line in \eqref{eq:2lines}, we  rely on the proof of Propositions~\ref{prop:localbisdone} and \ref{prop:localbis} but with a small variation.
Let us define a modified partition function by
$$\mathcal Y^{\go,a,b}_{\beta}(s,x;t,y):= \sum_{k= 1}^\infty  \beta^{k-1}\int_{\mathfrak X_k(s,t)\times (\bbR^d)^k} \rho_{s,x;t,y}(\bt,\bx)\Bigg(\prod_{i=1}^{k-1}\xi^{a}_{\go}(\dd t_i,\dd x_i) \Bigg)\, \xi^{[a,b)}_{\go}(\dd t_k, \dd x_k)$$
and $\caly^{\om,a,b}_\beta(t,x):=\caly^{\om,a,b}_\beta(0,0;t,x)$.
The reader can check that the $\E_<[(\cdots)]$-term in the first line of \eqref{eq:2lines} is simply $\bbE_<[ \lvert\mathcal Y^{\go,a,b}_{\beta}(t,x)\rvert^p]$. 
In analogy with \eqref{eq:decompobs}, we have
\begin{align*} 
\label{eq:decompobsy}
 \mathcal Y^{\go,a,b}_{\beta}(t,x)&=\mathcal Y^{\go_<,a,b}_{\beta}(t,  x)\\
 &\quad+
 \sum_{k=1}^{\infty} \gb^k  \int_{ \frakX_k(t) \times (\bbR^d)^k}    \cY^{\go_<,a,b}_{\beta}(t_k, x_k;t,x)  \prod_{i=1}^k\cZ^{\go_<,a}_{\beta}(t_{i-1},x_{i-1};t_i,x_i) \,\xi_{\go_\geq} (\dd t_i , \dd x_i ).\nonumber
\end{align*}
 Applying Minkowski's inequality, we get
\begin{multline*}
\!\!\!  \bbE_<\Big[  \lvert\mathcal Y^{\go,a,b}_{\beta}(t,x)\rvert^p\Big]^{\frac1p}
 \le \bbE_<\Big[\mathcal Y^{\go_<,a,b}_{\beta}(t,  x)^p\Big]^{\frac1p}+ \sum_{k=1}^{\infty} \gb^k  \int_{ \frakX_k(t) \times (\bbR^d)^k}  
	\bbE_<\Big[\lvert\cY^{\go_<,a,b}_{\beta}(\Delta t_{k+1}, \Delta x_{k+1})\rvert^p \Big]^{\frac1p}\\
 \times \prod_{i=1}^k \bbE_<\Big[\cZ^{\go_<,a}_{\beta}(\Delta t_{i}, \Delta x_{i})^p \Big]^{\frac1p}\,
 \xi_{\go_\geq} (\dd t_i , \dd x_i ). 
\end{multline*}
We then need the following estimate, which comes from a small adaptation of the proofs of Propositions~\ref{prop:localbisdone} and \ref{prop:localbis} and Corollary \ref{labornedesmoments}.  The proof of this lemma is postponed to the end of the section.
\begin{lemma}\label{lem:localter}
Assume that $\gl([1,\infty))=0$. Given $\beta,T>0$, and $p\in (1,1+\frac 2 d)$ 
 there exists a function $\delta(b) =\delta_{\beta,p,T}(b)$ such that $\lim_{b\to 0}\delta(b)=0$ and
 \begin{equation*}
  \sup_{t\in(0,T]} \sup_{x\in \bbR^d} \sup_{a\in(0,1]} \bbE\Big[ \rho(t,x)^{-p} \lvert\mathcal Y^{\go,a,b}_{\beta}(t,x)\rvert^p \Big]^{\frac1p}\le \delta(b).
 \end{equation*}
\end{lemma}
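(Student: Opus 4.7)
The plan is to adapt the proofs of Propositions~\ref{prop:localbisdone} and \ref{prop:localbis} to the ``mixed'' multiple integral $\mathcal Y^{\go,a,b}_\beta(t,x)$, tracking the single factor coming from the outermost integration, which is restricted to $[a,b)$-jumps rather than $[a,\infty)$-jumps. By Minkowski's inequality, it suffices to bound, for each $k \geq 1$,
\[
  \tilde W_{a,b,k}(t,x) \ceq \beta^{k-1} \int_{\frakX_k(t)\times(\bbR^d)^k} \frac{\rho_{t,x}(\bt,\bx)}{\rho(t,x)} \prod_{i=1}^{k-1} \xi_{\go}^{a}(\dd t_i,\dd x_i)\,\xi_{\go}^{[a,b)}(\dd t_k,\dd x_k).
\]
By WLOG restricting to $x=0$ (by~\eqref{eq:theinlaws}) and applying decoupling (Theorem~\ref{thm:dec}) to the tetrahedral integrand, one reduces to bounding a multiple integral against $k$ independent copies of the noise, with the $k$-th copy restricted to jumps in $[a,b)$. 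The usual factor $(C/(p-1))^k$ appears.

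Next, one applies Lemma~\ref{lemmaparti} and the corresponding choice of partition $\mathfrak{P}_k$ (as in~\eqref{partisimples} for $d=1$ and as in Section~\ref{sec:partition} for $d\geq2$), with exponent $\theta=2$ if $d=1$, $p\in(1,2)$, or $\theta=1+\tfrac2d$ if $d\geq2$. The structural difference is that the partial moments $\mu_{a,\infty}(\theta)$ or $\mu_{a,\infty}(p)$ that formerly resulted from integrating over each $z_i$ are replaced, for the single index $i=k$, by $\mu_{a,b}(\theta)$ or $\mu_{a,b}(p)$. Choosing $b \le \eta$ (which is allowed since we may shrink $b$) ensures that the index $k$ is always assigned to $J_1$, so the factor we gain is $\mu_{a,b}(\theta)^{1/\theta} \le \mu_{0,b}(\theta)^{1/\theta}$. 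For $d=1$, $p\in(2,3)$, the same idea applies to the tower of BDG/Novikov inequalities from~\eqref{eq:BJ}--\eqref{eq:help4}: the innermost integration yields the small factor.

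Extracting this single factor out of the sum and setting
\[
 \delta_{\beta,p,T}(b) \ceq \mu_{0,b}(\theta)^{\frac1\theta} + \mu_{0,b}(p)^{\frac1p},
\]
one finds that $\delta_{\beta,p,T}(b)\to0$ as $b\to0$ by dominated convergence, using that \eqref{eq:log-2} implies $\mu_{0,1}(1+\frac2d)<\infty$ (for $d\geq2$) or $\mu_{0,1}(2)<\infty$ (for $d=1$), together with $p<1+\frac2d\leq\theta$. The remaining sum over $k$ and over the partition $\mathfrak{P}_k$ is exactly of the form appearing in Propositions~\ref{prop:localbisdone} and~\ref{prop:localbis}, with $k$ replaced by $k-1$ and the moments $\mu_{a,\infty}(\cdot)$ uniformly bounded in $a\in(0,1]$ by $\mu_{0,1}(\cdot)$ (here we crucially use $\la([1,\infty))=0$). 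Hence Corollary~\ref{labornedesmoments} gives a bound on the remaining sum which is uniform in $a\in(0,1]$ and $t\in(0,T]$, finishing the proof.

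The main obstacle to watch is ensuring the small factor $\delta(b)$ can be pulled cleanly out of the sum over $k$ and over partitions without breaking summability; this is handled by observing that since the position $k$ of the restricted integration is \emph{fixed} (and not a dummy index over which one symmetrizes), no extra combinatorial factor of $k$ appears. A secondary point is that in the $d\geq2$ construction of $\mathfrak{P}_k$, one must verify that forcing $k\in J_1$ is compatible with the iterative construction of Section~\ref{sec:partition}; this is immediate since the assignment of the $k$-th index to the ``small-jump'' side of the partition only restricts, rather than enlarges, the admissible sequences.
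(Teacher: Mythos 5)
Your plan works for $d=1$ (both via the simple partition \eqref{partisimples} with $b\le\eta$, and via the Novikov peeling for $p\in(2,3)$, where the restricted variable $(t_k,x_k,z_k)$ is the first one peeled off), and it matches the paper's overall strategy of re-running Propositions~\ref{prop:localbisdone}--\ref{prop:localbis} while tracking the one integration restricted to $[a,b)$. But for $d\ge 2$ there is a genuine gap: the claim that ``choosing $b\le\eta$ ensures that the index $k$ is always assigned to $J_1$'' is false for the partition of Section~\ref{sec:partition}. There, membership of an index in $L^{(j)}$ versus $I_\pm^{(j)}$ is decided by comparing $z_i$ with $(\Delta_j t_i)^{d/6}\wedge\eta$, not merely with $\eta$; since $\Delta t_k$ can be arbitrarily small, a configuration with $z_k<b$ may still satisfy $z_k\ge(\Delta_j t_k)^{d/6}$, putting $k$ into $I_-$ or $I_+$ and hence possibly into $J_2$. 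Nor can you ``restrict the admissible sequences'' to those with $k\in J_1$: $\mathfrak P_k$ must exhaust the parameter space $\calx^{(k)}_t$, and the cells with $k\in J_2$ carry a genuine (nonzero) part of $V^b_{a,k}(t)$, so discarding them leaves the bound incomplete. The paper's proof of Lemma~\ref{lem:localter} instead treats both cases: when $k\in J_1$ one gains the factor $\bigl(\mu^{\log}_{0,b}(1+\tfrac2d)/\mu^{\log}_{0,\eta}(1+\tfrac2d)\bigr)^{p/(1+2/d)}$, and when $k\in J_2$ one gains $\mu^{\log}_{0,b}(1+\tfrac2d)/\zeta_2(\eta,p)$, because in the bound \eqref{groumph} the factor $z_k^p$ on $\{z_k<\eta\}$ is converted into $z_k^{1+2/d}$ (times a logarithmic weight), and the restriction $z_k<b$ then truncates the corresponding moment of $\la$.

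A secondary but related defect is your choice $\delta(b)=\mu_{0,b}(\theta)^{1/\theta}+\mu_{0,b}(p)^{1/p}$: under the standing hypothesis \eqref{eq:log-2} alone, $\mu_{0,b}(p)$ need not be finite for $p<1+\tfrac2d$ (e.g.\ $\al$-stable noise with $\al\in(p,1+\tfrac2d)$), so this quantity is ill-defined in the $d\ge2$ regime; the correct small quantity there is $\mu^{\log}_{0,b}(1+\tfrac2d)$, which is finite and tends to $0$ as $b\to0$ by \eqref{eq:log-2} and dominated convergence. Your observation that no extra combinatorial factor in $k$ arises because the restricted index is fixed is correct and is indeed what lets the small factor be pulled out of the sums over $k$ and over $\mathfrak P_k$, exactly as in Corollary~\ref{labornedesmoments}.
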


\noindent
By Corollary \ref{labornedesmoments} and Lemma \ref{lem:localter}, we obtain   a constant $C_0=C_0(\beta, p,T)$ such that 
\begin{align*}
 \bbE_<\Big[ \lvert \mathcal Y^{\go,a,b}_{\beta}(t,x)\rvert^p\Big]^{\frac1p}
 &\le \delta(b)\Bigg( \rho(t,x)+  \sum_{k=1}^{\infty} (C_0 \gb)^k \int_{ \frakX_k(t) \times (\bbR^d)^k}   
 \rho_{t,x}(\bt,\bx) \prod_{i=1}^k 
 \xi_{\go_\geq} (\dd t_i , \dd x_i )\Bigg)\\
&= \delta(b) \cZ^{\go_\geq}_{\beta'}(t,x) \,,
 \end{align*}
 with $\beta':=C_0\beta$.
Since $\cZ^{\go_\geq}_{\beta'}(t,x)$ is $\bbP_{\geq}$-a.s.\ finite, this shows the first line of \eqref{eq:2lines}. Thus, the proof of the convergence to zero of  $J^{a,b}_1(t,x)$ is complete. \qed

\begin{proof}[Proof of Lemma \ref{lem:localter}]
 The proof is considerably easier for $d=1$, so we only provide details  in the case $d\ge 2$.
 We follow exactly the plan of the proof of Proposition \ref{prop:localbis} up to Equation~\eqref{eq:jeBDG}. Because of the restriction on the last iteration of the integral over the noise, the quantity we need to bound is not  $\bbE[\lvert V_{a,k}(t)^{p}\rvert]$ but rather $\bbE[\lvert V^b_{a,k}(t)\rvert^{p}]$ where 
 \begin{equation*}
 V^b_{a,k}(t):=\int_{ \X_t^{(k)}}   \frac{\rho_{t,0}( \bt , \bx)}{{\rho(t,0)}}  \bone_{\{z_k< b\}} \prod_{i=1}^k z_i\bone_{\{z_i\geq a\}}\, (\delta_{\go_i}-\nu)(\dd t_i , \dd x_i, \dd z_i).
 \end{equation*}
To conclude, we only need to slightly improve the bounds in further computations.
The reader can check that the proof of Lemma \ref{lemmaparti} yields  the inequality \eqref{eq:dolemaparti} for  $V^b_k(t)$ with an additional  $\ind_{\{z_k< b\}}$ in each of the integrals on the right-hand side.
Let $\mathcal U(\cP,t,b)$ denote the corresponding integrals in the case $\theta=1+\frac{2}{d}$, cf.\ \eqref{defcalu}.
Adapting the proof of Proposition \ref{prop:dekz}, we want to prove that 
\begin{equation}\label{thekey}
\mathcal U(\cP,t,b) \le  \gep(b,\eta)
 \zeta_1(\eta,p,t)^{k_1}\zeta_2(\eta,p)^{k_2} \Lambda(k_2,t,p)
\end{equation}
for some $\gep(b,\eta)$ that converges to $0$ as $b\to0$ for any fixed $\eta\in(0,1)$.
Because of \eqref{thekey}, the factor $\gep(b,\eta)$ appears on the right-hand side of \eqref{eq:help6} 
  so that, provided that $\eta$ has been chosen sufficiently small, we obtain that for every $t\in[0,T]$, $x\in \bbR^d$ and $a\in(0,b)$, 
\begin{equation*}
\bbE\Big[ \rho(t,x)^{-p}\mathcal Y^{\go,a,b}_{\beta}(t,x)^p  \Big]
\le \beta^{-1}\gep(b,\eta) C(\beta,p,T)
\end{equation*}
 with the constant $C(\beta,p,T)$ of Corollary \ref{labornedesmoments}. This is exactly the desired result.
  Note that the  factor $\gb^{-1}$ comes from the fact that in
  the definition of $\mathcal Y^{\go,a,b}_{\beta}(t,x)$,
  we have $\beta^{k-1}$ instead of $\beta^{k}$.

 To prove \eqref{thekey}, we follow the proof of Proposition \ref{prop:dekz} and realize that 
 we can improve the upper bound by a factor of
 $ ( {\mu^{\log}_{0,b} (1+\frac{2}{d} )}/{\mu^{\log}_{0,\eta} (1+\frac{2}{d} )}  )^{ {p}/(1+2/d)}$ if $k\in J_1$ 
 and a factor of $ {\mu^{\log}_{0,b} (1+\frac{2}{d} )}/{\zeta_2(\eta,p)}$ if $k\in J_2$ so that \eqref{thekey} is valid for  
\begin{equation*}
 \gep(b,\eta):=   \Bigg(\frac{\mu^{\log}_{0,b} (1+\frac{2}{d} )}{\mu^{\log}_{0,\eta} (1+\frac{2}{d} )} \Bigg)^{\frac{p}{1+2/d}} \vee \frac{\mu^{\log}_{0,b} (1+\frac{2}{d} )}{\zeta_2(\eta,p)} . \qedhere
\end{equation*}
\end{proof}

\subsection{General initial condition}\label{sec:geninitial}
Using translation invariance, we have already shown that the point-to-point partition function $\cZ_{\gb}^{\om}(y;t,x)$ is a solution to \eqref{eq:integz} with $u_0=\delta_y$, for all $y\in\R^d$. The fact that $v$ as defined in \eqref{eq:v} is a mild solution to the SHE with initial condition $u_0$ follows immediately by integrating \eqref{eq:SHE} on both sides with respect to $u_0$, provided that integrals can be permuted in the following manner:
\begin{multline*}
\int_{\bbR^d}\left( \int_{(0,t)\times\bbR^d}\rho(t-s,x-y')\cZ_{\gb}^{\om}(y;s,y') \,\xi_{\om}(\dd s, \dd y')\right)\,u_0(\dd y)\\
= \int_{(0,t)\times\bbR^d}\rho(t-s,x-y')\left( \int_{\bbR^d}\cZ_{\gb}^{\om}(y;s,y')\,u_0(\dd y) \right)\,\xi_{\om}(\dd s, \dd y').
\end{multline*} 
Writing $\xi_\om=\xi_{\om_<}+\xi_{\om_\geq}$, we can use the ordinary Fubini theorem for the $\xi_{\om_\geq}$-integral
and  a stochastic version of  Fubini's theorem \cite[Theorem~A.3]{Chong19} for the integration with respect to $\xi_{\om_<}$. 
Both cases require  some integrability properties, which
 we are going to show   by recycling the estimates from Section~\ref{sec:fin}.
 
 Concerning the integral with respect to $\xi_{\om_\geq}$, we simply need to check integrability in the Lebesgue sense, that is, 
$$
 \int_{\R^d}\int_{(0,t)\times\R^d}\rho(t-s,x-y')\calz^{\om}_\beta(y;s,y')\,\xi_{\om_\geq}(\dd s,\dd y') \,\lvert u_0\rvert(\dd y)<\infty 
 \qquad \P_{\geq}\text{-a.s.}
 $$
By Minkowski's integral inequality and Lemma~\ref{lem:pint},
for $p\in(1,1+\frac 2d)$, the $L^p(\P_<)$-norm of the left-hand side  is bounded by a constant times
\begin{multline*}
	\int_{\R^d} \int_{(0,t)\times\R^d}\rho(t-s,x-y') \calz^{\om_\geq}_{\beta'}(y;s,y')\, \xi_{\om_\geq}(\dd s,\dd y') \,\lvert u_0\rvert (\dd y) \\
	= (\beta')^{-1}\left( \int_{\R^d} \calz^{\om_\geq}_{\beta'}(y;t,x) \lvert u_0\rvert (\dd y) -\int_{\R^d}\rho(t,x-y)\,\lvert u_0\rvert (\dd y)\right),
\end{multline*}
where the equality follows from \eqref{eq:zeidentity} with $a=1$.
 The second integral on the right-hand side is finite as a consequence of our assumption \eqref{eq:u0growth}. The first one is finite by \cite[Prop.~2.21]{BL20_cont}.
 
 \smallskip
 
  For the integral with respect to $\xi_{\om_<}$, according to \cite[Theorem~A.3]{Chong19}, changing the order of integration is permitted if we have
$$ \int_{\R^d} \E_<\Bigg[\bigg( \int_{(0,t)\times\R^d\times(0,1)}\rho(t-s,x-y')^2\calz^{\om}_\beta(y;s,y')^2z^2\,\delta_{\om_<}(\dd s,\dd y',\dd z) \bigg)^{\frac p2}\Bigg]^{\frac1p} \,\lvert u_0\rvert(\dd y) <\infty.$$
Using~\eqref{eq:BDG8} and \eqref{eq:Mink2}, together with~Lemma~\ref{lem:pint} and~\eqref{eq:Mink1}, we can bound the quantity above by a constant times  
\begin{align*}
\int_{\R^d} \left( \rho(t,x-y) + \calz^{\om_\geq}_{\beta'}(y;t,x) +  \int_
{(0,t)\times \bbR^d} \rho(t-s,x-y') \calz^{\om_\geq}_{\beta'}(y;s,y') \, \xi_{\go_\ge} (\dd s ,\dd y')   \right)\, \lvert u_0\rvert(\dd y),
\end{align*}
The integrability of each of the three terms has already been shown above. 
\qed

\subsection{Integrability property of the solution}\label{sec:intesol}

Let us show that the solution $v$ defined in Equation~\eqref{eq:v} satisfies the integrability condition \eqref{eq:cond}. The important part is to show that whenever $u_0$ satisfies \eqref{eq:u0growth} for a given $T>0$, then for any  $p\in (1,1+\tfrac 2 d)$, we have
\begin{equation}\label{zatsfinite}
 \int_{(0,T)\times\R^d} \rho(\theta (T-t),x)^p \bbE_{<}\Bigg[ \bigg\lvert\int_{\R^d}  \calz^{\om}_{\beta}(y;t,x)\, u_0(\dd y) \bigg\rvert^p\Bigg]\,\dd t\,\dd x  <\infty.
  \end{equation}
In our proof of \eqref{zatsfinite}, we will check along the way that 
 the expectation term, which equals $\E_<  [\lvert v(t,x)\rvert^p  ]$, is finite for any $t$ and $x$, proving  the first line of \eqref{eq:cond}. 
By \eqref{eq:u0growth},  we can find $\gep>0$  such  that 
\begin{equation}\label{eq:choicegep}
2 T \limsup_{r\to\infty} r^{-2} \log \Big(|u_0|([-r,r]^d)) \Big) <1-2\gep \,.
\end{equation}
We also let $\theta:=(1-\gep)^{-1}$.
In the following proof, the parameters $p\in (1,1+\tfrac 2 d)$, $T$, $\beta$ and $\gl$ and $\gep$ are all fixed, and the constants appearing in the inequalities may depend on them.

Using Minkowski's inequality first and Lemma~\ref{lem:pint} afterwards, we have for any $t\in[0,T]$,
$$
\E_< \Bigg[\bigg\lvert  \int_{\R^d} \calz^{\om}_\beta(y;t,x)\,u_0(\dd y)\bigg\rvert^p \Bigg]^{\frac1p}\leq\int_{\R^d} \E_< \Big[\calz^{\om}_\beta(y;t,x)^p \Big]^{\frac1p}\,\lvert u_0\rvert(\dd y) \le  C \int_{\R^d}  \calz^{\om_\geq}_{\beta'}(y;t,x)\,\lvert u_0\rvert(\dd y)\,.
$$
 The a.s.\ finiteness of the right-hand side for fixed $x$ and $t$ is a consequence of \cite[Prop.\ 2.19]{BL20_cont}.
Using the inequality above, to prove \eqref{zatsfinite}, we now need to show that  $\bbP_\geq$-a.s.,
\begin{equation}\label{anotherfiniteintegral}
 \int_{(0,T)\times\R^d} \rho(\theta (T-t),x)^p \Bigg(\int_{\R^d}  \calz^{\om_\geq}_{\beta'}(y;t,x)\,\lvert u_0\rvert(\dd y)\Bigg)^p\,\dd t\,\dd x  <\infty.
  \end{equation}
We do so by comparing $\calz^{\om_\geq}_{\beta'}(y;t,x)$ with a variable with finite $p$th moment. For a small $\gep>0$,
we introduce the quantity
\begin{equation}
	\label{eq:TT}\calt\ceq \sup_{k\geq1}\sup_{y\in \bbR^d} \suptwo{(t_i,x_i,z_i)_{i=1}^k\subseteq\om_\ge:}{ 0<t_1<\dots<t_k<T} \left[\sum_{i=1}^{k} \log z_i -\eps \sum_{i=2}^{k}\frac{\lVert x_i-x_{i-1}\rVert^2}{2(t_i-t_{i-1})}- \eps\bigg(\frac{\|y\|^2}{2T} +\frac{\lVert x_1-y\rVert^2}{2t_1} \bigg)\right].
\end{equation}
Note that optimizing over $y$ leads to the simpler expression
\begin{equation}
	\label{eq:TTprim}
	\calt= \sup_{k\geq1} \suptwo{(t_i,x_i,z_i)_{i=1}^k\subseteq\om_\ge:}{ 0<t_1<\dots<t_k<T} \left[\sum_{i=1}^{k} \log z_i  -\eps \sum_{i=2}^{k}\frac{\lVert x_i-x_{i-1}\rVert^2}{2(t_i-t_{i-1})}-  \frac{\gep\lVert x_1\rVert^2}{2(T+t_1)} \right].
\end{equation}
The following lemma is an easy  consequence of  \cite[Lemma~4.17]{BL20_cont}.
\begin{lemma}\label{quatrepointdixseptbis}
For any $T>0$ and $\gep>0$,
 $\mathcal T$ is finite $\bbP_\geq$-a.s.\ 
\end{lemma}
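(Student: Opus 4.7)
My plan is to reduce Lemma~\ref{quatrepointdixseptbis} to a direct application of \cite[Lemma~4.17]{BL20_cont}. Since $T+t_1 \geq t_1$, the boundary term $\eps \lVert x_1\rVert^2/[2(T+t_1)]$ in \eqref{eq:TTprim} is pointwise smaller than $\eps \lVert x_1\rVert^2/(2t_1)$, so $\mathcal T$ is dominated by the analogous supremum in which $T+t_1$ is replaced by $t_1$, which is precisely the quantity whose a.s.\ finiteness is the content of the quoted lemma. The monotonicity step is essentially the only new ingredient needed.

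To indicate why the quoted lemma holds, exponentiate and use the trivial bound $\sup \leq \sum$ for non-negative summands to obtain
\begin{equation*}
e^{\mathcal T} \leq \sum_{k=1}^{\infty}\sumtwo{(t_i,x_i,z_i)_{i=1}^{k}\subseteq \om_\geq}{0<t_1<\cdots<t_k<T} \Biggl( \prod_{i=1}^{k} z_i \Biggr) \exp\Biggl(-\eps\sum_{i=2}^{k}\tfrac{\lVert x_i-x_{i-1}\rVert^2}{2(t_i-t_{i-1})}-\tfrac{\eps\lVert x_1\rVert^2}{2t_1}\Biggr).
\end{equation*}
Each Gaussian factor equals, up to a polynomial prefactor $(\Delta t_i)^{d/2}$ (respectively $t_1^{d/2}$ for the boundary term), a heat kernel with diffusion parameter $1/\eps$. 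By AM--GM applied to the $k$ positive numbers $t_1, \Delta t_2, \ldots, \Delta t_k$ summing to at most $T$, the total polynomial prefactor is bounded by $(T/k)^{kd/2}$, which decays super-exponentially in $k$. Hence $e^{\mathcal T}$ is dominated by a point-to-line partition function of the form $\calz^{\om_\geq}_{\beta^*}(T,\ast)$ built from the heat kernel $\rho_{1/\eps}$, for some effective intensity $\beta^* = \beta^*(\eps, T)$.

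This partition function is $\bbP_\geq$-a.s.\ finite thanks to \cite[Prop.~2.5 and 2.6]{BL20_cont}, under the standing assumption \eqref{eq:log}; indeed, the finiteness criterion depends only on the integrability properties of the Gaussian density and is insensitive to a rescaling of its variance. The only real obstacle is the bookkeeping required to organize the polynomial prefactors and heat-kernel products into a clean partition-function bound; once this is done, the lemma follows at once.
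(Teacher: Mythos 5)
The reduction in your first step goes the wrong way. In \eqref{eq:TTprim} the boundary term $\eps\lVert x_1\rVert^2/[2(T+t_1)]$ is \emph{subtracted}, so replacing $T+t_1$ by $t_1$ makes the subtracted penalty larger and the resulting supremum \emph{smaller}: pointwise one has
\begin{equation*}
\sum_{i=1}^{k}\log z_i-\eps\sum_{i=2}^{k}\frac{\lVert x_i-x_{i-1}\rVert^2}{2(t_i-t_{i-1})}-\frac{\eps\lVert x_1\rVert^2}{2(T+t_1)}
\;\geq\;
\sum_{i=1}^{k}\log z_i-\eps\sum_{i=2}^{k}\frac{\lVert x_i-x_{i-1}\rVert^2}{2(t_i-t_{i-1})}-\frac{\eps\lVert x_1\rVert^2}{2t_1},
\end{equation*}
so $\mathcal T$ \emph{dominates} the supremum treated by \cite[Lemma~4.17]{BL20_cont}, rather than being dominated by it. Finiteness of the smaller quantity gives nothing: the whole point of the denominator $T+t_1$ (which arises from optimizing over the starting point $y$ in \eqref{eq:TT}) is that it is a strictly weaker penalty, and configurations with an atom at small $t_1$ and large $\lVert x_1\rVert$ contribute much more to $\mathcal T$ than to the standard supremum. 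So the "only new ingredient" you identify is precisely the step that fails.

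The gap can be closed, but it requires the extra time slack to be built in rather than argued away. The paper does this by applying \cite[Lemma~4.17]{BL20_cont} on the doubled window $[0,2T]$ (with the convention $t_0=0$, $x_0=0$, so the $i=1$ term carries the penalty $\eps\lVert x_1\rVert^2/(2t_1)$), restricting the supremum to configurations with $t_1>T$, and then shifting time by $-T$; by translation invariance of $\om_\geq$ this produces exactly the denominator $t_1+T$ of \eqref{eq:TTprim}. Alternatively, your exponentiation sketch could be adapted directly, but then the boundary Gaussian factor must be recognized as a heat kernel run over time $(T+t_1)/\eps$, i.e.\ a chain started at time $-T$ at the origin, which again amounts to working on a window of length $2T$ --- as written, your sketch only addresses the supremum with denominator $t_1$ and hence does not reach $\mathcal T$.
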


\begin{proof}
 By \cite[Lemma~4.17]{BL20_cont}, we have (with the convention $x_0=0$ and $t_0=0$) 
 \begin{equation*}
	\sup_{k\geq1} \suptwo{(t_i,x_i,z_i)_{i=1}^k\subseteq\om_\ge:}{ 0<t_1<\dots<t_k<2T} \sum_{i=1}^{k} \Biggl(\log z_i -\eps\frac{\lVert x_i-x_{i-1}\rVert^2}{2(t_i-t_{i-1})}\Biggr)<\infty.
\end{equation*}
Taking a further restriction (namely $t_1>T$) and shifting time by $-T$, we obtain by translation invariance that
 \begin{equation*}
	\sup_{k\geq1} \suptwo{(t_i,x_i,z_i)_{i=1}^k\subseteq\om_\ge:}{ 0<t_1<\dots<t_k<T} \left[\sum_{i=1}^{k} (\log z_i)  -\eps \sum_{i=2}^{k}\frac{\lVert x_i-x_{i-1}\rVert^2}{2(t_i-t_{i-1})}- \eps\frac{\lVert x_1\rVert^2}{2(t_1+T)}  \bigg)\right]<\infty.\qedhere
\end{equation*}
\end{proof}

  As a consequence of Lemma \ref{quatrepointdixseptbis}, we have for every $x,y\in \mathbb R^d$ and $t\in [0,T]$ that
\begin{equation}\label{lacompapa}
\calz_{\beta'}^{\om_\geq}(y;t,x)\leq e^{\calt+ \frac{\gep \|y\|^2}{2T}} \sum_{k=0}^\infty(\beta')^k \int_{\X_t^{(k)}}\rho_{y;t,x}(\bt,\bx)\prod_{i=1}^{k+1} e^{\eps\frac{\lVert \Delta x_i\rVert^2}{2\Delta t_i}} \prod_{j=1}^k \delta_{\om_{\geq}}(\dd t_j,\dd x_j,\dd z_j),
\end{equation}
with the convention that $x_0=y$.
Because 
\begin{equation}
	\label{eq:scalingrho}
	\rho(t,x) e^{\eta \frac{\lVert  x\rVert^2}{2 t}} = \theta_{\eta}^{\frac d2} \rho(\theta_{\eta} t,x)\, \qquad \text{with}\qquad \theta_\eta = (1-\eta)^{-1} 
\end{equation}
and   $\theta=(1-\gep)^{-1}$,
we have $\calz_{\beta'}^{\om_\geq}(y;t,x)\leq e^{\calt+ \frac{\gep \|y\|^2}{2T}} \wh\calz_{\beta'}^{\om_\geq}(y;t,x)$, where
\begin{equation}
\label{def:hatcalz}
\wh\calz_{\beta'}^{\om_\geq}(y;t,x) := \sum_{k=0}^\infty (\theta^{\frac d2}\beta')^k \int_{\X_t^{(k)}}\rho_{y;\theta t,x}(\theta\bt,\bx)  \prod_{j=1}^k \delta_{\om_{\geq}}(\dd t_j,\dd x_j,\dd z_j) \,.
\end{equation}
Setting $\hat u_0(\dd y):= e^{\frac{\gep\|y\|^2}{2T}} |u_0|(\dd y)$, we can reduce the proof of~\eqref{anotherfiniteintegral} to showing
\begin{equation}\label{lastfiniteintegral}
 \int_{(0,T)\times\R^d} \rho(\theta (T-t),x)^p \Bigg(\int_{\R^d}  \wh\calz^{\om_\geq}_{\beta'}(y;t,x)   \hat u_0(\dd y)\Bigg)^p\,\dd t\,\dd x  <\infty.
  \end{equation}
%
We 
take the expectation and apply Minkowski's inequality twice to get
\begin{equation}
\label{momentthat-1}
\begin{split}
	\E\Bigg[\int_{(0,T)\times\R^d} \rho&(\theta (T-t),y)^p \Bigg(\int_{\R^d}  \wh\calz^{\om_\geq}_{\beta'}(y;t,x) \hat u_0(\dd y)\Bigg)^p\,\dd t\,\dd y\Bigg]\\
	& \leq\int_{(0,T)\times\R^d} \rho(\theta (T-t),y)^p \Bigg(\int_{\R^d} \E\Big[ \wh\calz^{\om_\geq}_{\beta'}(y;t,x)^p\Big]^{\frac1p}\,\hat u_0(\dd y)\Bigg)^p\,\dd t\,\dd x \\
	&\leq \Bigg(\int_{\R^d}\Bigg(\int_{(0,T)\times\R^d} \rho(\theta (T-s),y)^p \E\Big[ \wh\calz^{\om_\geq}_{\beta'}(y;t,x)^p\Big]\,\dd t\,\dd x\Bigg)^{\frac1p}\, \hat u_0 (\dd y)\Bigg)^p.
	\end{split}
\end{equation}
Observe now that
$\wh\calz^{\om_\geq}_{\beta'}(y;t,x)$ is a (non-normalized) point-to-point partition function corresponding to a Poisson environment with intensity measure $\gl([1,\infty)) \delta_{\theta^{d/2}\beta'}$ and a time-rescaling by $\theta$.
Therefore, by Corollary~\ref{labornedesmoments}, 
\begin{equation}\label{momenthat}
 \E  [ \wh\calz^{\om_\geq}_{\beta'}(y;t,x)^p  ] \le C \rho(\theta t,x-y)^p,
\end{equation} 
for a constant $C$ that depends on all parameters but not on $y$, $t$ and $x$.
Hence, the $\frac1p$th power of~\eqref{momentthat-1} is bounded by a constant times
\begin{equation*}
\int_{\R^d}\Bigg(\int_{(0,T)\times\R^d} \rho(\theta (T-t),x)^p\rho(\theta t,x-y)^p  \,\dd t\,\dd x\Bigg)^{\frac1p} \,\hat u_0 (\dd y)
\leq C'\int_{\R^d} T^{\nu_p} e^{- \frac{\lVert y\rVert^2}{2\theta T}} \,\hat u_0 (\dd y) \,,
\end{equation*}
where the inner integral has been computed exactly, as in~\eqref{eq:calculrhorho}.
The above integral is finite thanks to our choice of $\gep$ and $\theta$ in \eqref{eq:choicegep}. Thus, \eqref{lastfiniteintegral} holds and the proof is complete.
\qed

\subsection{Uniqueness of  solutions to the SHE}
\label{sec:unique}

\newcommand{\Difference}{v}

We assume in this section that $\lambda$ satisfies \eqref{assump1} and   prove uniqueness among solutions satisfying \eqref{eq:cond}.
By the fact that the condition \eqref{eq:cond} is stable under linear combinations, it is sufficient to show that any solution to \eqref{eq:integz} with $u_0\equiv 0$ is equal to zero.
%
%
Consider  $v$ that satisfies \eqref{eq:cond} and is such that for every $t\in (0,T]$ and $x\in \bbR^d$,
\[
\Difference(t,x)= \beta\int_0^t\int_{\R^d} \rho(t-s,x-y)\Difference(s,y)\,\xi_\om(\dd s,\dd y) \,.
\]
Applying this identity to $\Difference(s,y)$ in the integrand and repeating this, we obtain that for any $k\ge 1$,
\begin{equation}\label{kreplik}
\Difference(t,x)= \beta^k\int_{\calx^{(k)}_t} \rho_{t_1,x_1;t,x}(\bt^{(1)},\bx^{(1)})\Difference(t_1,x_1)\prod_{i=1}^k \xi_\om(\dd t_i,\dd x_i),
\end{equation}
where $\bt^{(1)}$ and $\bx^{(1)}$ are obtained from $\bt$ and $\bx$, respectively, by removing the first component.  
Fixing $x\in \bbR^d$ and $t\in (0,T)$, we are going to prove that  $v(t,x)=0$ by showing that the right-hand side in \eqref{kreplik} is summable in $k$. More precisely, we set  
$$
\calu(t,x):= \sum_{k=0}^\infty \beta^k\int_{\calx^{(k)}_t} \rho_{t_1,x_1;t,x}(\bt^{(1)},\bx^{(1)})\Difference(t_1,x_1)\prod_{i=1}^k \xi_\om(\dd t_i,\dd x_i) \,,
$$
the term for $k=0$ being simply $\Difference(t,x)$.
We fix $p\in(1,1+\frac{2}{d})$  for which $\mu_{0,1}(p)<\infty$ (when $d=1$ we fix $p=2$) and we are going to prove that
\begin{equation}\label{ucefini}
\bbE_{<}\Big[\big|\calu(t,x)\big|^p\Big]<\infty \qquad \text{$\bbP_{\geq}$-a.s.}
\end{equation}
Separating $\om_<$ and $\om_\geq$ in $\xi_\om$,  we have that 
\begin{equation}\label{summu}
\calu(t,x)=  \sum_{k=0}^\infty \beta^k\int_{\calx^{(k)}_t}  \calv(t_1,x_1)\prod_{i=2}^{k+1}  \calz^{\om_<}_\beta(t_{i-1},x_{i-1};t_i,x_i) \prod_{i=1}^k \xi_{\om_\geq}(\dd t_i,\dd x_i),
\end{equation}
where
\begin{equation}\label{summuprim}
\calv(t,x):= \sum_{k=0}^\infty \beta^k\int_{\calx^{(k)}_t} \rho_{t_1,x_1;t,x}(\bt^{(1)},\bx^{(1)})\Difference(t_1,x_1) \prod_{i=1}^k \xi_{\om_<}(\dd t_i,\dd x_i) 
\end{equation}
and the terms for $k=0$ in \eqref{summu} and \eqref{summuprim} are $\calv(t,x)$ and  $\Difference(t,x)$ respectively.
By Corollary~\ref{labornedesmoments} and translation invariance, there exists $\beta'>0$ such that
\beq\label{eq:help7}\begin{split}
& \E_< \Big[\lvert\calu(t,x)\rvert^p \Big]^{\frac1p}\\ &\qquad\leq  
	 \E_< \Big[\lvert \calv(t,x)\rvert^p \Big]^{\frac1p}+\sum_{k=1}^\infty (\beta')^k\int_{\calx^{(k)}_t} \E_< \Big[\lvert \calv(t_1,x_1)\rvert^p \Big]^{\frac1p}  \prod_{i=2}^{k+1}\rho(\Delta t_i,\Delta x_i) \prod_{i=1}^k \xi_{\om_\geq}(\dd t_i,\dd x_i).
\end{split}\eeq
To prove \eqref{ucefini}, we bound each of the 
two summands in \eqref{eq:help7} separately.

Let us start with  $\E_<  [\lvert \calv(t,x)\rvert^p  ]^{1/p}$.
Using the triangle inequality and iterating the BDG inequality and the subadditivity property~\eqref{eq:subbaditiv} for each term  (as done in \eqref{eq:BDG}, for example), we obtain
\begin{equation*}
\E_<\Big[\lvert	\calv(t,x)\rvert^p\Big]^{\frac1p}\leq \sum_{k=0}^\infty (C_p \beta\mu_{0,1}(p)^{\frac1p})^k \Bigg(\int_{\calx^{(k)}_t} \rho_{t_1,x_1;t,x}(\bt^{(1)},\bx^{(1)})^p\E_<\Big[\lvert\Difference(t_1,x_1)\rvert^p\Big] \prod_{i=1}^k  \dd t_i\,\dd x_i\Bigg)^{\frac1p} \,.
\end{equation*}
Then, applying \eqref{eq:iteinte} and   Lemma~\ref{lemGamma}
with the trivial inequality $t-t_1\le t$,
we get that
\begin{equation}
\label{eq:help8}
\begin{split}
&\E_<\Big[\lvert	\calv(t,x)\rvert^p\Big]^{\frac1p}\leq \E_<\Big[\lvert\Difference(t,x)\rvert^p\Big]^{\frac1p} \\
&\quad+ \sum_{k=1}^\infty \frac{ \left[C_p\beta(\mu_{0,1}(p)\Ga(\nu_p)t^{\nu_p})^{\frac1p} \right]^k}{\Ga(k\nu_p)^{\frac1p}}
	\Bigg(\int_{(0,t)\times\R^d} \rho(t-t_1,x-x_1)^p \E_<\Big[\lvert\Difference(t_1,x_1)\rvert^p\Big]\,\dd t_1\,\dd x_1\Bigg)^{\frac1p}.
	\end{split}
\end{equation}
Since the sum in $k$ in finite, the two conditions in  our assumption \eqref{eq:cond} implies the $\P_{\geq}$-a.s.\ finiteness of the right-hand side of \eqref{eq:help8}.
 

\smallskip

  To show that the series in   \eqref{eq:help7} is finite, we apply a similar trick to \eqref{lacompapa} and replace the $z_i$'s coming from the atoms of $\go_{\ge}$ by one. More precisely, 
we set 
\begin{equation}\label{eq:TTxt}
 \calt(t,x):= \sup_{k\geq1} \suptwo{(t_i,x_i,z_i)_{i=1}^k\subseteq\om_\ge:}{ 0<t_1<\dots<t_k<t} \left[\sum_{i=1}^{k} (\log z_i)  -\eta \sum_{i=2}^{k+1}\frac{\lVert x_i-x_{i-1}\rVert^2}{2 \Delta t_i}\right] \,,
\end{equation}
with the convention that $t_{k+1}=t$ and $x_{k+1}=x$.
The value of $\eta>0$ in~\eqref{eq:TTxt}
 is chosen  such that $\theta = ( 1- \eta \frac{3p-1}{2} )^{-1}$, with the same $\theta$ as in the assumption~\eqref{eq:cond}.
By the translation invariance and symmetry properties of $\om_\ge$, \cite[Lemma~4.17]{BL20_cont} guaranties that 
$\calt(t,x)<\infty$, $\bbP_{\ge}$-a.s.\
Defining
\begin{equation}\label{defchapo}
\wh\calz_{\beta'}^{\om_\geq}(s,y;t,x) := \sum_{k=0}^\infty(  \beta')^k \int_{\X_t^{(k)}}\rho_{s,y;t,x}(\bt,\bx) \prod_{i=1}^{k+1} e^{ \eta \frac{\lVert \Delta x_i\rVert^2}{2\Delta t_i}} \prod_{j=1}^k \delta_{\om}(\dd t_j,\dd x_j\,\dd z_j)\,,
\end{equation}
we find that the series in \eqref{eq:help7} is smaller than the following integral (we have replaced $t_1,x_1,z_1$ by $s,y,z$):
\begin{equation}\label{finitintegral}
 \beta' e^{\calt(t,x)}\int_{(0,t)\times \bbR^d\times [1,\infty)} \wh\calz_{\beta'}^{\om_\geq}(s,y;t,x)\E_< \Big[\lvert \calv(s,y)\rvert^p \Big]^{\frac1p}\,\delta_{\go_{\ge}}(\dd s,\dd y, \dd z).
\end{equation}
Note that the integral in \eqref{finitintegral} is a sum and that all terms are finite almost surely  by \eqref{eq:help8} and \eqref{momenthat}. 
In order to show that the integral in~\eqref{finitintegral} is finite, we proceed in several steps. The  first one is to replace 
$\wh\calz_{\beta'}^{\om_\geq}(s,y;t,x)$ by $\rho(t-s,x-y)e^{\eta {\lVert x-y\rVert^2}/{(t-s)}}$.
This is the purpose of the following lemma whose short proof is postponed to the end of the section.

\begin{lemma}\label{finipointz}
 Let  
 \begin{equation}\begin{split}
   \Theta(t,x)&:= \bigg\{ (s,y)  \in (0,t)\times \bbR^d  \ : \ \wh\calz^{\om_\geq}_{\beta'}(s,y;t,x)\ge \rho(t-s,x-y)e^{\eta\frac{\lVert x-y\rVert^2}{(t-s)}} \bigg\},\\
   \bar \go_{\ge}&:= \Big\{  (s,y) \in (0,t)\times \bbR^d  \ : \ \exists z\ge 1,\ (s,y,z)\in \go \Big\}.  
    \end{split}
 \end{equation}
Then we have  $\#\left(\Theta(t,x)\cap \bar \go_{\ge}\right)<\infty$ 
  $\bbP_{\ge}$-a.s.
\end{lemma}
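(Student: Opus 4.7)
My plan is to control the expected number of bad points $\#(\bar\om_\geq \cap \Theta(t,x))$ by combining Mecke's formula (which turns the random sum over atoms into an integral against the intensity measure) with a Markov-type bound applied to $\wh\calz^{\om_\geq}_{\beta'}(s,y;t,x)$. The key observation that makes this strategy clean is that the integral defining $\wh\calz^{\om_\geq}_{\beta'}(s,y;t,x)$ in \eqref{defchapo} is supported on atoms with time coordinate in the open interval $(s,t)$. In particular, whether a given atom $(s,y,z)\in\om_\geq$ lies in $\Theta(t,x)$ depends only on the restriction of $\om_\geq$ to $((s,t)\times\bbR^d\times[1,\infty))$, which is independent of the atom itself. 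Thanks to this, Mecke's formula yields
\[
\E_\geq\Big[\#(\bar\om_\geq\cap\Theta(t,x))\Big]=\gl([1,\infty))\int_0^t\int_{\bbR^d}\bbP_\geq\big((s,y)\in\Theta(t,x)\big)\,\dd y\,\dd s\,,
\]
and by Markov's inequality the integrand is bounded by $\E_\geq[\wh\calz^{\om_\geq}_{\beta'}(s,y;t,x)]/(\rho(t-s,x-y)e^{\eta\|x-y\|^2/(t-s)})$.

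Next I would compute the numerator explicitly using Fubini and the scaling identity \eqref{eq:scalingrho}. Interchanging expectation with the sum/integrals in \eqref{defchapo} replaces $\prod_j\delta_{\om_\geq}$ by $\prod_j\gl([1,\infty))\,\dd t_j\dd x_j$, and the identity $\rho(\Delta t_i,\Delta x_i)e^{\eta\|\Delta x_i\|^2/(2\Delta t_i)}=\theta_\eta^{d/2}\rho(\theta_\eta\Delta t_i,\Delta x_i)$ absorbs all the exponential enhancements into a time-rescaled heat kernel. The semigroup property of the heat kernel then collapses the spatial integrals to a single factor $\rho(\theta_\eta(t-s),x-y)$, while the simplex $\frakX_k(s,t)$ contributes $(t-s)^k/k!$, so that
\[
\E_\geq\big[\wh\calz^{\om_\geq}_{\beta'}(s,y;t,x)\big]=\theta_\eta^{d/2}\rho(\theta_\eta(t-s),x-y)\exp\!\big(C(t-s)\big)\,,
\]
with $C:=\beta'\gl([1,\infty))\theta_\eta^{d/2}$.

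Dividing by $\rho(t-s,x-y)e^{\eta\|x-y\|^2/(t-s)}$ and cancelling the Gaussian exponents (both denominator and numerator have a $(2\pi(t-s))^{-d/2}$-type factor once the scaling is unfolded), the ratio simplifies to $\exp(C(t-s))\exp(-\eta\|x-y\|^2/(2(t-s)))$. This in turn yields
\[
\E_\geq\Big[\#(\bar\om_\geq\cap\Theta(t,x))\Big]\leq \gl([1,\infty))\int_0^te^{C(t-s)}\bigg(\frac{2\pi(t-s)}{\eta}\bigg)^{d/2}\dd s <\infty\,,
\]
from which the $\bbP_\geq$-a.s.\ finiteness of $\#(\bar\om_\geq\cap\Theta(t,x))$ follows by Markov's inequality.

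The main obstacle is essentially bookkeeping: one must justify the use of Mecke's formula for a functional whose value depends on $\om_\geq$ and verify that the added atom does not change $\wh\calz^{\om_\geq+\delta_{(s,y,z)}}_{\beta'}(s,y;t,x)$ because the integration domain is strictly $(s,t)$. After that, everything reduces to the Gaussian computation above, which is short and does not rely on the finer decoupling estimates from Sections~\ref{sec:upper}--\ref{sec:deducing}; in particular no control of log-type suprema like $\calt$ or $\calt(t,x)$ from \eqref{eq:TT} or \eqref{eq:TTxt} is needed here.
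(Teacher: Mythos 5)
Your proposal is correct, and it reaches the conclusion by a slightly different (more elementary) route than the paper. The paper also bounds $\bbE_{\ge}[\#(\Theta(t,x)\cap\bar\go_{\ge})]$ by integrating a pointwise probability bound against the intensity, but it obtains that pointwise bound from Markov's inequality at order $p>1$, invoking the moment estimate \eqref{momenthat} (hence Corollary~\ref{labornedesmoments} and the machinery of Section~\ref{sec:upper}), which yields $\bbP_\geq[(s,y)\in\Theta(t,x)]\le C' e^{-\eta p\lVert x-y\rVert^2/(2(t-s))}$. You instead use the first moment of $\wh\calz^{\om_\geq}_{\beta'}(s,y;t,x)$, computed exactly via the (multivariate) Mecke formula, the scaling identity \eqref{eq:scalingrho} and the Chapman--Kolmogorov property; this is legitimate precisely because $\gl([1,\infty))<\infty$ and $\wh\calz$ carries no $z$-weights, and it gives the bound $e^{C(t-s)}e^{-\eta\lVert x-y\rVert^2/(2(t-s))}$, after which both arguments finish with the same Gaussian-tail integration in $y$ and $s$. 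A further point in your favour is that you make explicit the Palm/Mecke subtlety that the paper glosses over: the atom added at time exactly $s$ does not alter $\wh\calz^{\om_\geq}_{\beta'}(s,y;t,x)$ because the simplex $\frakX_k(s,t)$ is open at $s$, so the Palm probability coincides with the ordinary one. What each approach buys: yours is self-contained and avoids importing the $p$-th moment bounds; the paper's is shorter in context because \eqref{momenthat} is already available from Section~\ref{sec:intesol}.
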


\noindent As a consequence of Lemma \ref{finipointz}, the integral in \eqref{finitintegral} is finite if 
\begin{equation}\label{onestepbeyond}
\int_{(0,t)\times \bbR^d\times [1,\infty)} N(s,y)\, \delta_{\go_{\ge}} (\dd s,\dd y, \dd z)<\infty
\end{equation}
where 
$$N(s,y):= \E_< \Big[\lvert \calv(s,y)\rvert^p \Big]^{\frac1p} \rho(t-s,x-y)e^{\eta\frac{\lVert x-y\rVert^2}{(t-s)}}.$$
By \cite[Prop.\ 12.1]{PoiBook}, 
\eqref{onestepbeyond} holds if we can show that 
\begin{equation}\label{wuhuhu}
 \int_{(0,t)\times \bbR^d}\E_< \Big[\lvert \calv(s,y)\rvert^p \Big]^{\frac1p}  \rho(t-s,x-y)e^{\frac{\eta\lVert x-y\rVert^2}{(t-s)}}\, \dd s\, \dd y<\infty\,.
\end{equation}
To this end,
we use the bound \eqref{eq:help8}  for $\E_< [\lvert	\calv^{\om}_\beta(s,y)\rvert^p ]^{1/p}$ and check that the integrals corresponding to each of the two summands on  the right-hand side of \eqref{eq:help8} are finite. In the first case, we must show that 
\begin{equation}
\int_{(0,t)\times \bbR^d}\E_< \Big[ \big\lvert	\Difference(s,y)\big\rvert^p \Big]^{\frac1p}  \rho(t-s,x-y)e^{\eta\frac{\lVert x-y\rVert^2}{(t-s)}}\, \dd s\, \dd y<\infty \,.
\end{equation}
Applying Jensen's inequality for the finite measure $e^{-\eta {\lVert x-y\rVert^2}/{2(t-s)}}\,\dd s\, \dd y$, we can bound the quantity above by
\begin{equation}\label{jensenexperto}
 \Bigg( \int_{(0,t)\times \bbR^d} \E_< \Big[ \big\lvert	\Difference(s,y)\big\rvert^p \Big]   \rho(t-s,x-y)^p   e^{ \eta(3p-1) \frac{\lVert x-y\rVert^2}{2(t-s)}}\,\dd s\, \dd y \Bigg)^{\frac1p}.
 \end{equation}
Recalling~\eqref{eq:scalingrho} and the relation $\theta = (1-\eta \frac{3p-1}{2})^{-1}$, the finiteness of~\eqref{jensenexperto} follows from our assumption \eqref{eq:cond}.

For the second summand on  the right-hand side of \eqref{eq:help8},
 we have to show that 
\begin{equation}\label{thatsover}
		\int_{(0,t)\times\R^d} e^{\eta \frac{\lVert x-y\rVert^2}{(t-s)}} \rho(t-s,x-y)\Bigg(\int_{(0,s)\times\R^d} \rho(s-r,y-v)^p \E_<\Big[\lvert\Difference(r,v)\rvert^p\Big]\,\dd r\,\dd v\Bigg)^{\frac1p}\,\dd s\,\dd y<\infty \,.
\end{equation}
Using the same trick as in \eqref{jensenexperto},
we see that it is sufficient to prove the finiteness of 
\begin{equation*}
		\int_{(0,t)\times\R^d}  \int_{(0,s)\times\R^d} e^{-\eta(3p-1)\frac{\lVert x-y\rVert^2}{2(t-s)}} \rho(t-s,x-y)^p\rho(s-r,y-v)^p \E_<\Big[\lvert\Difference(r,v)\rvert^p\Big]\,\dd r\,\dd v\,\dd s\,\dd y \,.
\end{equation*}
Now, by~\eqref{eq:scalingrho} and thanks to our choice of $\eta$, we can replace $e^{-\eta(3p-1) {\lVert x-y\rVert^2}/{2(t-s)} } \rho(t-s,x-y)^p$ by a constant times $\rho(\theta (t-s),x-y)^p$
and bound $\rho(s-r,y-v)^p$ by a constant times $\rho(\theta(s-r),y-v)^p$.
Then, using~\eqref{help3} as in\eqref{eq:calculrhorho}, we have
\begin{align*}
\int_{(r,t)\times\R^d} \rho( \theta(t-s),x-y)^p\rho( \theta(s-r),y-v)^p\,\dd s\,\dd y
\leq C'_{\eta,p} (t-r)^{\nu_p} \rho( \theta  (t-r) ,x-v)^p \,.
\end{align*}
Hence,  the finiteness of \eqref{thatsover} is also consequence of \eqref{eq:cond},
which finally concludes the proof.
\qed

\begin{proof}[Proof of Lemma \ref{finipointz}]
 By Markov's inequality and \eqref{momenthat},
 we have for any given $s,y\in  (0,t)\times \bbR^d $,
 \begin{align*}
  \bbP_{\ge}\bigg[ \wh\calz^{\om_\geq}_{\beta'}(s,y;t,x)\ge \rho(t-s,x-y)e^{\eta \frac{ \lVert x-y\rVert^2}{(t-s)}} \bigg] 
  & \le C \bigg( \frac{\rho\big(\frac{t-s}{1-\eta},x-y  \big)}{\rho(t-s,x-y)} e^{- \eta \frac{\lVert x-y\rVert^2}{(t-s)}} \bigg)^p 
  &= C' e^{- \eta p\frac{\lVert x-y\rVert^2}{2(t-s)}}\,,
 \end{align*}
  using~\eqref{eq:scalingrho} for the last inequality.
Hence, we have 
\begin{align*}
 \bbE_{\ge}\Big[ \#\left(\Theta(t,x)\cap \bar \go_{\ge}\right)\Big] 
& \le C' \int_{(0,t)\times \bbR^d\times [1,\infty)} e^{- \eta p \frac{\lVert x-y\rVert^2}{2(t-s)}}\,\dd s\, \dd y\, \gl (\dd z)\\ 
&= C_p \gl([1,\infty))\int_{(0,t)} (t-s)^{\frac d2}\, \dd s <\infty.\qedhere
\end{align*}
\end{proof}

\section{Moments of order $p\in (0,1)$ and lower bounds in Theorems~\ref{thm:thinup} and \ref{thm:heavyup}} \label{sec:theotherbound}

This section is dedicated to estimating fractional   moments of $\cZ^{\go}_{\beta}(t,\ast)$. More precisely, we focus on
$\bbE [ \cZ^{\go}_{\beta}(t,\ast)^{1/2} ]$ to simplify notation, but the method would be equally efficient to directly estimate $\bbE [ \cZ^{\go}_{\beta}(t,\ast)^p ]$ for $p\in(0,1)$.
The estimates obtained in this section allow us to complete the proof of Theorems \ref{thm:thinup} and \ref{thm:heavyup}, since by convexity and the fact that $\bar \gamma_{\beta}(1)=0$ and $\bar \gamma_{\beta}(0)=0$,
we have 
\begin{equation}\label{convexinho2}
 \begin{cases}
  \bar\gamma_{\beta}(p) \le 2p \bar \gamma_{\beta}(\frac12) \quad &  \text{ for } p\in (0,\frac12),\\
    \bar\gamma_{\beta}(p) \le 2(1-p) \bar \gamma_{\beta}(\frac12) \quad &  \text{ for } p\in (\frac12,1),\\
     \bar\gamma_{\beta}(p) \ge -2(p-1)  \bar\gamma_{\beta}(\frac12)
    \quad & \text{ for } p>1.
 \end{cases}
\end{equation}
We also
prove Proposition \ref{prop:waitingforbetter} in Section~\ref{sec:waitingforbetter},
partially using ideas developed in Sections~\ref{sec:coarse}--\ref{sec:multibod}.

\smallskip

In Section \ref{sec:coarse}, we introduce a method that combines a coarse-graining and a change-of-measure argument,  which allows  to obtain upper bounds that decay exponentially in $t$ (this  is crucial for the proof of    Theorems \ref{thm:thinup} and \ref{thm:heavyup}).
This approach originates in the study of the discrete pinning model \cite{DGLT09,GLT10hier}: in its refined form, which first appeared in  \cite{Ton09}, it can be used in a continuum setup such as the SHE.  Note that the method   found  many implementations in the last decade, such as for disordered pinning \cite{BL18pin,GLT10}, directed polymers and variants \cite{BL17,Ber15,Lac10pol,Lac11}, the random walk pinning model \cite{BT10, BS11}, large deviations of random walks in a random environment \cite{YZ10}, self-avoiding walks in a random environment \cite{Lac14SAW}, and anomalous path detection in a random environment \cite{CZ18}.

\smallskip

The achievement of the method (presented in Lemma \ref{lem:changemeas}) is to reduce the problem of estimating moments to that of showing that the original measure $\bbP$ significantly differs (in total variation) from an alternative measure where the environment has been modified along a Brownian trajectory (the size-biased measure).
This last statement is proved in Sections \ref{sec:onebody} (for $d=1$ or heavy-tailed noises) and \ref{sec:multibod} (for $d\ge 2$  and light-tailed noises).
While Lemma \ref{lem:changemeas} is quite general and can be adapted to various settings, the proof that the original and the tilted measures differ in total variation heavily depends on context.

The proof presented in Section \ref{sec:onebody} for $d=1$ relies on  ideas found in \cite[Section 3]{Lac10pol}: in essence, we discriminate between the original and the tilted environments by looking at the average in a box of length $T$ and wdith $\sqrt{T}$.
The proof for heavy-tailed environments, also found in Section~\ref{sec:onebody}, relies on a similar idea to that found in \cite[Section 2]{Vi19}.

On the other hand, the approach taken in Section \ref{sec:multibod} and Section \ref{sec:waitingforbetter} are completely new and have no discrete analogue. We show that what makes the tilted measure
different from the original one is the presence of clusters of points that are very close to one another. It requires a fine analysis to identify exactly what the characteristics of these clusters are.

%

 \subsection{A general coarse-graining lemma}\label{sec:coarse}
 
For notational simplicity, we set  $\cZ^{\go}_{\beta,t}:=\bar\cZ_{\beta}^{\go}(t,\ast)$ in the remainder of this section.
Our coarse-graining approach reduces the problem of bounding fractional moments of $  \cZ_{\beta,t}^{\go}$ to   identifying a single event that is unlikely under the original measure $\bbP$ but becomes likely under the size-biased measure $\wt \P^{0}_{\beta,t}$; recall Section~\ref{sec:sizebias} and notation therein. 

We say that a measurable event $\cA$ for the point process $\go$ has \emph{time range} $[0,T)$ if  $\ind_{\cA}(\go)$ is a function of $\go\cap ([0,T)\times \bbR^d\times (0,\infty))$. Moreover,  defining $\wh \go=\wh \go(\go,\go',B)  := \go \cup \wt \go (\go',B)$, we will use, for a generic real-valued function $f$ defined on the set of environments, the abbreviation
\begin{equation}\label{shorthat}
\wh f(\go,\go',B):= f\circ \wh \go(\go,\go',B).
\end{equation}
Also, recall that $\bQ_x$ is the law of a  $d$-dimensional standard Brownian motion starting from $x$. 

 \begin{lemma}
\label{lem:changemeas} Assume that $\mu <\infty$.
There exists a constant $K\in(0,\infty)$, which only depends  on the dimension $d$, with the following property:
For any  $\beta>0$ and $T>0$, if there exists an event $\cA$ 
with time range  $[0,T)$ satisfying both
\begin{gather}
\bbP(\go \in \cA ) \leq e^{-K} \,, \tag{H1}\label{H1}\\
\max_{x \in [0,\sqrt{T})^d}  \bbP \otimes \bbP'_{\beta}\otimes\bQ_x (  \wh \go \notin \cA  ) \leq e^{-K} \, , \tag{H2}\label{H2}
\end{gather}
 then  for all $m\ge 1$ we have
\begin{equation}\label{coarse1}
\bbE\Big[ ( \cZ^{\go}_{\beta,mT})^{\frac12}\Big]  \leq 2^{-m}  \,,
\end{equation}
 and as a consequence
$\bar\gamma_{\gb}(\frac12) \leq - (\log 2) T^{-1}.$
\end{lemma}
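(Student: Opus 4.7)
The conclusion on $\bar\gamma_\beta(1/2)$ will follow from \eqref{coarse1}: taking logs, dividing by $mT$, and sending $m\to\infty$ yields $\bar\gamma_\beta(1/2) \leq -(\log 2)/T$, so the essential task is to prove \eqref{coarse1}. I start with the base case $m=1$, which is a direct application of Cauchy--Schwarz together with Lemma~\ref{lem:sizebias}. Splitting $\bbE[(\cZ^{\go}_{\beta,T})^{1/2}]$ according to $\cA$ and its complement and using $\bbE[\cZ^{\go}_{\beta,T}]=1$, I have
$$
\bbE\bigl[(\cZ^{\go}_{\beta,T})^{1/2}\bigr] \leq \bbP(\cA)^{1/2} + \bbE\bigl[\cZ^{\go}_{\beta,T}\bone_{\cA^c}\bigr]^{1/2}.
$$
Hypothesis \eqref{H1} controls the first term, while Lemma~\ref{lem:sizebias} identifies $\bbE[\cZ^{\go}_{\beta,T}\bone_{\cA^c}]$ with $\bbP\otimes\bbP'_\beta\otimes\bQ(\wh\go\notin\cA)$, which is bounded by $e^{-K}$ via \eqref{H2} at $x=0$. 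Hence $\bbE[(\cZ^{\go}_{\beta,T})^{1/2}] \leq 2e^{-K/2} \leq 1/2$ as soon as $K\geq 2\log 4$.

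For general $m$, I will coarse-grain in both time and space. Partition $(0,mT]$ into blocks $I_i := ((i-1)T, iT]$ and $\bbR^d$ into cubes $\cC_B := \sqrt{T}(B+[0,1)^d)$ for $B\in\bbZ^d$. The Chapman--Kolmogorov decomposition of the partition function yields $\cZ^{\go}_{\beta,mT} = \sum_{\mathbf{B}\in(\bbZ^d)^{m-1}} \cY_{\mathbf{B}}^{\om}$, where $\cY_{\mathbf{B}}^{\om}$ is the contribution from paths lying in $\cC_{B_i}$ at time $iT$ for each $i=1,\ldots,m-1$; combined with the subadditivity property \eqref{eq:subbaditiv}, this gives
$$
\bigl(\cZ^{\go}_{\beta,mT}\bigr)^{1/2} \leq \sum_{\mathbf{B}\in(\bbZ^d)^{m-1}} \bigl(\cY_{\mathbf{B}}^{\om}\bigr)^{1/2}.
$$
Each $\cY_{\mathbf{B}}^{\om}$ is determined by noise on the disjoint intervals $I_1,\ldots,I_m$, so the $m$ blocks are independent. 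The plan is to apply the single-block argument of the first paragraph to each block $i$, with the event $\cA$ replaced by its shift by $(i-1)T$ in time and $\sqrt{T}B_{i-1}$ in space: crucially, the Brownian trajectory arising in the size-biased representation of block $i$ starts in $\cC_{B_{i-1}}$, which after translation lies in $[0,\sqrt{T})^d$ --- precisely the range for which \eqref{H2} is assumed to hold. After combining the $m$ block estimates and summing over $\mathbf{B}$ the resulting Gaussian-type weights in $B_i-B_{i-1}$ (which sum to a constant depending only on $d$), choosing $K$ sufficiently large absorbs this combinatorial prefactor and delivers $\sum_{\mathbf{B}}\bbE[(\cY_{\mathbf{B}}^{\om})^{1/2}]\leq 2^{-m}$.

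The main technical obstacle will be producing a genuine product factorization of $\cY_{\mathbf{B}}^{\om}$ across blocks, since consecutive blocks share the intermediate integration variables $x_i$ representing positions at times $iT$. The key auxiliary fact to exploit is that on any $\sqrt{T}$-cube the heat kernel $\rho(T,\cdot)$ varies only by a dimensional constant, so that after an averaging step (of Cauchy--Schwarz or Jensen type) on each $x_i$-integration one obtains a product of block-independent factors at the cost only of a dimensional prefactor, absorbed into $K$. Once this factorization is in place, independence of the blocks yields the multiplicative estimate whose sum over $\mathbf{B}$ gives \eqref{coarse1}.
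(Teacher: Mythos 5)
Your $m=1$ argument is correct and is in fact a streamlined version of the paper's one-block step (splitting on $\cA$ and using Cauchy--Schwarz plus the size-biased identity of Lemma~\ref{lem:sizebias} is the same as tilting by $e^{-K\ind_{\cA}}$). The problem is the passage to general $m$: the step you yourself flag as "the main technical obstacle" is the entire content of the lemma, and the fix you propose does not work. The blocks of $\cY_{\mathbf B}$ are coupled through the intermediate endpoints $x_i$, and the quantity you must control is a \emph{fractional} moment $\bbE[(\cY_{\mathbf B})^{1/2}]$; independence of the noise over disjoint time intervals is of no direct use until the exponent $1/2$ has been dealt with, because only first moments (linear functionals of the noise) factorize over blocks. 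Your proposed decoupling -- "the heat kernel varies by a dimensional constant on a $\sqrt T$-cube, so an averaging step on each $x_i$-integration gives block-independent factors" -- fails on two counts: the ratio $\max/\min$ of $\rho(T,\cdot)$ over a cube at distance $j\sqrt T$ from the relevant center grows like $e^{c_d\lVert j\rVert}$, not a constant (this part is at least summable against the Gaussian weights); more seriously, the objects you need to compare are the random point-to-point partition functions $\bar\cZ_\beta((i-1)T,x_{i-1};iT,x_i)$, whose dependence on the endpoint inside a cube is not through a deterministic kernel prefactor, so no pointwise replacement by a cube-to-cube quantity "at the cost of a dimensional constant" is available. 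As written, the per-block application of your $m=1$ argument cannot be carried out.

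The paper resolves this by performing the Cauchy--Schwarz step \emph{once, globally, with a product change of measure}: for each skeleton $\bj$ one writes $\bbE[(\cZ^{\go}_{\beta,mT}(\chi_{\bj}))^{1/2}]^2\le \bbE[G_{\bj}(\go)^{-1}]\,\bbE[G_{\bj}(\go)\cZ^{\go}_{\beta,mT}(\chi_{\bj})]$ with $G_{\bj}(\go)=\prod_{i=1}^m \exp(-K\ind_{\cA}(\theta_{((i-1)T,j_{i-1}\sqrt T)}\go))$ as in \eqref{def:ggg}. Then $\bbE[G_{\bj}^{-1}]\le 2^m$ follows from \eqref{H1} and independence across time blocks, and the remaining quantity is a \emph{first} moment, so the shared $x_i$'s cause no difficulty: after size-biasing (Lemma~\ref{lem:sizebias}) it becomes $\bbE\otimes\bbE'_\beta\otimes\bQ[\wh G_{\bj}\,\chi_{\bj}(B)]$, which factorizes through the Markov property of $B$ into $\prod_i\max_{x\in\cC_0}\bbE\otimes\bbE'_\beta\otimes\bQ_x[\wh g\,\ind_{\{B_T\in\cC_{j_i-j_{i-1}}\}}]$, and \eqref{H2} applies because each restarted trajectory begins in a translate of $[0,\sqrt T)^d$. (An equivalent route is to expand $\prod_i(\ind_{\cA_i}+\ind_{\cA_i^c})$ into $2^m$ terms and Cauchy--Schwarz each one, which is the binomial version of the same tilt; either way, some device of this type applied \emph{before} reducing to first moments is indispensable.) Two smaller corrections: the sum over skeleton increments is not absorbed by taking $K$ large -- the far increments are controlled by the square-root Gaussian tail via a cutoff $N=N(d)$ chosen first, and only the $(2N)^d$ near increments are beaten by $e^{-K/2}$, which is why $K$ depends only on $d$; and the identity $\bbE[\cZ^{\go}_{\beta,T}]=1$ and the $a=0$ size-biased representation require the uniform integrability/$L^1$-convergence input (Theorem~\ref{thm:local}), which should be cited rather than taken for granted.
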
 

\noindent Below, we are going to apply Lemma \ref{lem:changemeas} to events of the type 
 \beq\label{eq:A}\cA:= \{\go \ : \  f(\go)-\E[f(\go)] > h \} \qquad \text{or}\qquad \cA:= \{\go \ : \  f(\go) > h \} \eeq
 where $f$ is a function that only depends on $\go \cap ([0,T)\cap \bbR^d \times (0,\infty))$ and $h\in \bbR$ is a threshold  to  be chosen appropriately.
 More precisely, we consider functions $f$ of the form
 \begin{equation}\label{padrao}
f(\om)= \int_{\X^{(k)}_T} \Upsilon(\bt,\bx,\bz) \prod_{i=1}^k\delta_\om(\dd t_i,\dd x_i,\dd z_i), 
\end{equation} 
where $k$ is a positive integer and $\Upsilon$ is a non-negative function on $\X^{(k)}_T$ (recall the notation~\eqref{indexnotation}).
Given $f$ as above, we define  
\begin{equation}\label{thefprim}
f'(\go',B)= \int_{\calx^{(k)}_T} \Upsilon(\bt,(B_{t_i})^k_{i=1},\bz) \prod_{i=1}^k\delta_{\om'}(\dd t_i,\dd z_i).
\end{equation}
In other words, $f'$ is the total contribution of the  additional terms in $\wh f$  that are obtained by considering (only) the atoms on the added ``environment spine'' $(\om',B)$ (recall \eqref{shorthat}).
Then, for $f$ of the form~\eqref{padrao}, since $\Upsilon$ is non-negative, we have
\begin{equation}\label{magic}
\wh f(\go,\go',B)\ge f(\go)+f'(\go',B).
\end{equation}
In this setting, verifying  \eqref{H1} and~\eqref{H2} boils down to first and second moment computations for $f$ and $f'$.

\begin{cor}\label{cheapcoro}
	Assume that $f$ is as in   \eqref{padrao} 
	and satisfies one of the two following conditions:
	\begin{enumerate}
	 \item[\labelword{\rm(C1)}{(C1)}] 
	 For all $x\in [0,\sqrt{T})^d$, we have that  $$\E'_{\beta}\otimes \bQ_x [ f'(\go',B)  ]^2  \geq     4e^{K}    \Bigl\{ \var(f(\go)) +\var_{\P'_\beta \otimes \bQ_x}(f'(\go',B))\Bigr\};$$
	 \item[\labelword{\rm(C2)}{(C2)}] 
	  The function $\Upsilon$ is  integer-valued, we have $\bbE[ f(\go)] \le e^{-K}$ and  $$\forall x\in [0,\sqrt{T})^d: \quad  \E'_{\beta}\otimes \bQ_x [ f'(\go',B)  ]^2   \ge e^{K} \var_{\P'_\beta \otimes \bQ_x}(f'(\go',B)).$$ 
	\end{enumerate}
In both cases, also assume that the variances on the right-hand side are finite.
	Then  \eqref{H1} and~\eqref{H2} are satisfied and $\gamma_{\gb}(\frac12) \leq - (\log 2) T^{-1}.$
\end{cor}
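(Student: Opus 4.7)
The plan is to apply Lemma~\ref{lem:changemeas} by building, in each of the two cases, a suitable event $\cA$ of the form \eqref{eq:A} from the function $f$ in \eqref{padrao}, and then verifying \eqref{H1} and \eqref{H2} by elementary second-moment estimates. The structural input driving both cases is the pointwise inequality $\wh f(\go,\go',B) \geq f(\go) + f'(\go',B)$ from \eqref{magic}, which is where the non-negativity of $\Upsilon$ enters crucially.

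For case (C1), I will set $\cA := \{\go : f(\go) > m + \tfrac{1}{2} m'_{\min}\}$, where $m := \bbE[f(\go)]$, $m'(x) := \bbE'_{\gb} \otimes \bQ_x[f'(\go',B)]$ and $m'_{\min} := \inf_{x\in[0,\sqrt{T})^d} m'(x)$. A direct Chebyshev bound gives $\bbP(\cA) \leq 4\var(f)/(m'_{\min})^2$, which the hypothesis of (C1) forces to be at most $e^{-K}$. For \eqref{H2}, I will use \eqref{magic} to include the event $\{\wh\go \notin \cA\}$ inside $\{(f-m) + (f'-m'(x)) \leq -\tfrac{1}{2} m'(x)\}$ and apply Chebyshev a second time, exploiting independence of $\go$ and $(\go',B)$ together with the full strength of the hypothesis (keeping both variances on the right-hand side).

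For case (C2), the integer-valuedness of $f$ and $f'$ makes the argument even cleaner: I will take $\cA := \{\go : f(\go) \geq 1\}$. The bound \eqref{H1} is then immediate from Markov's inequality combined with $\bbE[f]\leq e^{-K}$. For \eqref{H2}, the inclusion $\{\wh\go \notin \cA\} \subseteq \{f'(\go',B)=0\}$ follows from $\wh f \geq f' \geq 0$ and integer-valuedness, after which a Paley--Zygmund style bound $\bbP'_{\gb}\otimes\bQ_x(f'=0) \leq \var_{\bbP'_{\gb}\otimes\bQ_x}(f')/m'(x)^2$ combined with the second hypothesis of (C2) delivers the conclusion.

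I do not expect a genuine obstacle: both parts reduce to a single application of Chebyshev or Markov, and all the substantive content lives inside Lemma~\ref{lem:changemeas}. The only subtlety to keep in mind is that $\cA$ must be chosen independently of $x$; this is why the hypotheses of both (C1) and (C2) are stated uniformly over $x\in [0,\sqrt{T})^d$, and why in (C1) the threshold defining $\cA$ has to involve $m'_{\min}$ rather than the pointwise $m'(x)$.
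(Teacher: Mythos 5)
Your proposal is correct and follows essentially the same route as the paper: in case \ref{(C1)} one takes $\cA$ of the form \eqref{eq:A} with a threshold slightly above $\E[f(\go)]$, checks \eqref{H1} by Chebyshev, and checks \eqref{H2} by combining \eqref{magic} with a second Chebyshev bound for the independent sum $f+f'$; in case \ref{(C2)} one takes $\cA=\{f\geq 1\}$ with Markov for \eqref{H1} and Chebyshev for $f'$ for \eqref{H2}, exactly as in the paper. The only difference is cosmetic: the paper's threshold is $m+e^{K/2}\sigma$ with $\sigma^2=\var(f)+\var_{\P'_\beta\otimes\bQ_x}(f')$, whereas you use $m+\tfrac12\inf_x\E'_\beta\otimes\bQ_x[f']$, which handles the required $x$-uniformity of $\cA$ a bit more explicitly but yields the same estimates.
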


\begin{proof} 
 If \ref{(C1)} is satisfied, define
\[
m:=\bbE[f(\go)] \qquad\text{and}\qquad \sigma:=\sqrt{  \var(f(\go))+\var_{\P'_\beta \otimes \bQ_x}(f'(\go',B))}
\]
 and consider the event
$$\cA:=\{ \go \ : \ f(\go) >  m+ e^{\frac K2} \sigma \}.$$
We can apply Chebyshev's inequality, using that $\sigma^2\geq \mathrm{Var}(f(\go))$, to check the validity of \eqref{H1}. For \eqref{H2}, we observe that as a consequence of \eqref{magic}, we have
 \begin{equation*}
\{ \wh \go \notin \cA\}
\subseteq
\{ f(\go) + f'(\go',B)\le m+  e^{\frac K2}  \sigma\}. 
 \end{equation*}
 Using Chebyshev's inequality, we deduce that 
 \begin{equation*}
  \bbP\otimes \bbP'_{\beta}\otimes \bQ_x (\wh \go \notin \cA )
  \le \frac{ \var(f(\go))+ \var_{\P'_\beta \otimes \bQ_x}(f'(\go',B))}{ \Big(e^{\frac K2}\sigma-\E'_{\beta}\otimes \bQ_x [ f'(\go',B)] \Big)^2 } \le e^{-K}.
 \end{equation*}
If \ref{(C2)} is satisfied,   we set  
 $$\cA:= \{ \go \ : \ f(\go) \ge 1 \} =\{ \go \ : \ f(\go) >0 \} \,.$$ 
We then deduce \eqref{H1} from Markov's inequality applied to $f(\go)$ and \eqref{H2}  from Chebychev's inequality applied to $f'(\go',B)$.
\end{proof}

\begin{rem}
 We will use the assumption \ref{(C1)} for $d=1$ in the light-tailed case and the assumption~\ref{(C2)} in all heavy-tailed cases and for $d\ge 2$. These two assumptions correspond to two different ways of distinguishing between the original measure and the size-biased one. In
 Assumption \ref{(C1)}, $f$ is increased under the size-biased measure by an amount $f'$ that, on average, exceeds the typical fluctuation (measured as $\var(f(\om))$) under the original measure. Assumption \ref{(C2)} implies that there are patterns in $\go$ that appear with large probability under the size-biased measure  and are most likely absent under the original one. 
\end{rem}

\begin{proof}[Proof of Lemma~\ref{lem:changemeas}]
Let us assume that $T$ and $\cA$ satisfy \eqref{H1} and \eqref{H2}. 
We divide $\bbR^d$ into cubes of side length $\sqrt{T}$: define
\[
\cC_j = \cC^{(T)}_{j}:= j\sqrt{T}+[0,\sqrt{T})^d,\qquad j\in \bbZ^d \, .
\]
Given $\bj =(j_1,\dots, j_{m}) \in (\bbZ^d)^{m}$, we define the function $\chi_{\bj}: C([0,mT])\to \bbR$ 
by 
\begin{equation*}
 \chi_{\bj}(\varphi):= \ind\{\forall i \in \lint m\rint: \ \varphi(iT)\in \cC_{j_i} \}.
\end{equation*}
In other words, $\chi_{\bj} (\varphi)$ is the indicator that the function $\varphi$ passes through  a prescribed sequence of cubes of diameter  $\sqrt{T}$  at times $iT$, where $i\in \lint  m\rint$. If $\varphi=(B_t)_{t\in [0,mT]}$ is a Brownian motion,  the unique value of  $\bj$ such that $\chi_{\bj}(B)=1$ corresponds to a coarse-grained trajectory (or skeleton) of~$B$ on time scale $T$.
 Now, recalling  that we have set $\cZ^{\go}_{\beta,t} = \bar \cZ^{\go}_{\beta}(t,\ast)$ and recalling the notation from Section~\ref{sec:sizebias}, we have 
\begin{equation}\label{decompsa}
\cZ^{\go}_{\beta,mT}=\int_{(\bbR^d)^m}  \prod_{i=1}^M \bar\cZ^{\go}_{\beta}((i-1)T,x_{i-1} ; iT,x_{i}) \, \dd x_i \,.
\end{equation}
This can be checked directly from the definition  if $\cZ^{\go}_{\beta,mT}$ and $\bar\cZ^{\go}_{\beta}((i-1)T,x_{i-1} ; iT,x_{i}) $ are replaced by 
$\bar\cZ^{\go,a}_{\beta}(mT,\ast)$ and $\bar\cZ^{\go,a}_{\beta}((i-1)T,x_{i-1} ; iT,x_{i})$, respectively. The case $a=0$ then follows from the $L^1$-convergence in Theorem \ref{thm:local}.
We therefore get 
$$
\cZ^{\go}_{\beta,mT}=  \sum_{\bj\in (\bbZ^d)^{m}} \cZ^{\go}_{\beta,mT}(\chi_{\bj})  \,, \qquad  \cZ^{\go}_{\beta,mT}(\chi_{\bj}) :=\int_{(\bbR^d)^m}  \prod_{i=1}^M \bar \cZ^{\go}_{\beta}((i-1)T,x_{i-1} ; iT,x_{i}) \ind_{\{x_i\in \cC_{j_i}\}} \,\dd x_i.
$$
As a consequence, by subadditivity \eqref{eq:subbaditiv}, we have
\begin{equation*}
 \bbE\Bigl[ ( \cZ^{\go}_{mT})^{\frac12} \Bigr]\le \sum_{\bj\in (\bbZ^d)^{m}}
 \bbE\Bigl[  \cZ^{\go}_{mT}(\chi_{\bj})^{\frac12}\Bigr]. 
\end{equation*}
Our aim is  to obtain a good bound on 
$ \bbE[ ( \cZ^{\go}_{\beta,mT}(\chi_{\bj}) )^{1/2}]$. This is where we introduce a change-of-measure procedure.
We use the Cauchy--Schwarz inequality as follows:
\begin{equation}\label{CS}
  \bbE\Bigl[ ( \cZ^{\go}_{\beta,mT}(\chi_{\bj}))^{\frac12} \Bigr]^2\le \bbE\Bigl [ G_{\bj}(\go)^{-1}\Bigr ]\bbE\Bigl[ G_{\bj}(\go)\cZ^{\go}_{\beta,mT}(\chi_{\bj})\Bigr] \,,
\end{equation}
for some non-negative function $G_{\bj}(\go)$ that penalizes the $\go$'s that  contribute most to $\bbE[ \cZ^{\go}_{\beta,mT}(\chi_{\bj})]$.
 This procedure   is referred to as a change of measure since $\bbE [ G_{\bj}(\go)\cZ^{\go}_{\beta,mT}(\chi_{\bj}) ]$ corresponds to the expectation of  $\cZ^{\go}_{\beta,mT}(\chi_{\bj})$ under a new measure whose density with respect to the original one is given by $G_{\bj}(\go)$.
We now construct  $G_{\bj}$ with a product structure in order to gain a constant factor per coarse-grained step of the trajectory.
 For $K >0$, a fixed constant to be determined below, we define (with $j_0:=0$)
 \begin{equation}
 \label{def:ggg}
 \begin{split}
g(\go):=  \exp ( - K\, \ind_{\cA}(\go) ),\quad
g_{i,j}(\go):= g\circ \theta_{((i-1)T,j\sqrt{T})}( \go),   \quad
 G_{\bj}(\go):= \prod_{i=1}^m g_{i,j_{i-1}}(\go) ,
\end{split}
\end{equation}
where $\theta_{(t,x)}$ is the space-time shift operator acting on point collections in $\R\times \R^d\times (0,\infty)$, that is, $$\theta_{(t,x)}(\go) = \{ (s-t,y-x,z) \ : \  (s,y,z)\in \go\}.$$
%
Because $\cala$ has time range $T$, for given $\bj$,  the variables $(g_{i,j_{i-1}}(\go))_{i=1}^m$ are independent and identically distributed under $\bbP$, so that,  by  \eqref{H1},
$$\bbE[ G_{\bj}(\go)^{-1}]= (1+ (e^K-1) \bbP(\cA) )^m \le 2^m.$$
We then derive from \eqref{CS} that
\begin{equation}\label{detail1}
  \bbE\Bigl[ ( \cZ^{\go}_{\beta,mT})^{\frac12}\Bigr] \le 2^{\frac m2} \sum_{ \bj\in (\bbZ^d)^{m}}\sqrt{\bbE\Bigl[ G_{\bj}(\go) \cZ^{\go}_{\beta,mT}(\chi_{\bj})\Bigr]}.
\end{equation}
Using the size-biased representation of Lemma~\ref{lem:sizebias} and recalling the notation~\eqref{shorthat}, we obtain that 
\begin{equation*}
\bbE\Bigl[ G_{\bj}(\go) \cZ^{\go}_{\beta,mT}(\chi_{\bj})\Bigr]
=    \bbE \otimes \bbE'_{\beta} \otimes\bQ\Bigl[ \wh G_{\bj}(\go,\go',B) \chi_{\bj}(B) \Bigr].
\end{equation*}
Letting $B^{(i)}_t:=B_{(i-1)T+t}-j_{i-1}\sqrt{T}$, using the stationarity of $\go$ and $\go'$  and the fact that $\cA$ has time-range $T$, we have
\begin{align*} 
 \bbE \otimes \bbE'_{\beta} \otimes\bQ\Bigl[ \wh G_{\bj}(\go,\go',B) \chi_{\bj}(B) \Bigr] &=\bbE \otimes \bbE'_{\beta} \otimes\bQ\Biggl[ \prod_{i=1}^m \wh g_{i,j_{i-1}}(\om,\om',B^{(i)})\bone_{\{B^{(i)}_T\in \cC_{j_i-j_{i-1}}\}}\Biggr]  \\
 &=\bQ \Biggl[ \prod_{i=1}^m \bbE \otimes \bbE'_{\beta} \Bigl[ \wh g_{i,j_{i-1}}(\go,\go', B^{(i)}) \ind_{\{B^{(i)}_T\in \cC_{j_i-j_{i-1}} \}}\Bigr] \Biggr]\\
& = \bQ \Biggl[ \prod_{i=1}^m \bbE \otimes \bbE'_{\beta} \Bigl[ \wh g(\go,\go', B^{(i)}) \ind_{\{B^{(i)}_T\in \cC_{j_i-j_{i-1}} \}}\Bigr] \Biggr].
 \end{align*}
Conditionally on $(B_t)_{t\in[0,(i-1)T]}$, $(B^{(i)}_t)_{t\ge0}$ is a Brownian motion starting from $B_{(i-1)T} - j_{i-1}\sqrt{T}$, which belongs to $\cC_0$. Thus, 
 using the   Brownian motion's Markov property  iteratively (starting with $i=m$), we obtain that 
\begin{equation*} 
\bbE \otimes \bbE'_{\beta} \otimes\bQ\Bigl[ \wh G_{\bj}(\go,\go',B) \chi_{\bj}(B) \Bigr]
\le
 \prod_{i=1}^m \max_{x\in \cC_{0}} \bbE \otimes \bbE'_{\beta} \otimes\bQ_x\Bigl[  \wh g(\go,\go',B) \ind_{\{B_T\in \cC_{j_i-j_{i-1}} \}} \Bigr]\, .
\end{equation*}
Reindexing the sums yields
\begin{equation}\label{detail2}
\sum_{ \bj\in (\bbZ^d)^{m}}\sqrt{\bbE\Bigl[ G_{\bj}(\go) \cZ^{\go}_{\beta,mT}(\chi_{\bj})\Bigr]}  \le \Biggl( \sum_{j\in \bbZ^d}\max_{x\in \cC_{0}} \sqrt{ \bbE \otimes \bbE'_{\beta} \otimes\bQ_x\Bigl[  \wh g(\go,\go',B) \ind_{\{B_T\in \cC_{j} \}} \Bigr]} \Biggr)^m.
 \end{equation}
Hence, it remains to show that the sum on the right-hand side is small.
Given an integer $N$, since $g\le 1$, we have 
\begin{equation} \mathtoolsset{multlined-width=0.9\displaywidth}\begin{multlined}
\sum_{j\in \bbZ^d}\max_{x\in \cC_{0}} \sqrt{ \bbE \otimes \bbE'_{\beta} \otimes\bQ_x\Big[  \wh g(\go,\go',B) \ind_{\{B_T\in \cC_{j} \}} \Big]}
\\ \le  \sum_{\| j\|_{\infty} \ge N}\max_{x\in \cC_{0}} \sqrt{ \bQ_x[B_T\in \cC_j]} 
+  (2N)^{d} \max_{x\in \cC_{0}}\sqrt{ \bbE \otimes \bbE'_{\beta} \otimes\bQ_x\Big[  \wh g(\go,\go',B) \Big]}. \end{multlined}
 \end{equation}
We then choose $N$ to be the smallest integer such that the first sum
 is smaller than $(4\sqrt{2})^{-1}$; note it depends on $d$ but not  on $T$. For the second term, recalling the definition~\eqref{def:ggg} of $g$ and using   assumption \eqref{H2} for the last inequality,
we have
\begin{equation}\label{detail3}
\bbE \otimes \bbE'_{\beta} \otimes\bQ_x[  \wh g(\go,\go',B) ]
=e^{-K}+ (1-e^{-K})\P \otimes \P'_{\beta} \otimes\bQ_x( \wh \go \notin \cA )
\le 2 e^{-K}.
\end{equation}
Finally, combining \eqref{detail1}--\eqref{detail3},
 we obtain that
 \[
 \bbE\Bigl[ ( \cZ^{\go}_{\beta,mT})^{\frac12}\Bigr]  \le 2^{\frac m2}\Biggl( \frac{1}{4\sqrt{2}} +  (2N)^d  \sqrt{2 e^{-K}}  \Biggr)^m = 2^{-m} \, ,
 \]
where the last inequality holds by choosing
$K= 2 [ (3+d) \log 2 + d\log N ]$.
 \end{proof}

\subsection{One-body estimates}
\label{sec:onebody}

  We now present a  simple choice for $f$ that yields the desired bounds on the Lyapunov exponents both in dimension $d=1$ and in any dimension $d\geq 2$ for heavy-tailed environments. This gives the second half of Theorem~\ref{thm:thinup} \textit{(i)} and of Theorem~\ref{thm:heavyup}, that is, the lower bounds on $|\gamma_{\gb}(p)|$. More precisely, we prove the following.
 
 \begin{proposition}
	\label{prop:fractional} Assume that $\mu <\infty$.
	\begin{enumerate}
		\item[(i)]  In dimension $d=1$,  
		we have 
		\begin{equation}\label{onehalfcase}
		 \limsup_{\beta \to 0+} \beta^{-4} \bar \gamma_{\beta}( {\textstyle\frac12})<0.
		\end{equation}
		\item[(ii)]  In any dimension $d\geq 1$, if for some $\alpha < 1+\frac 2d$, 
		we have 
		$\liminf_{A\to \infty} A^{\alpha} \lambda([A,\infty)) >0,$
		 then 
		 	\begin{equation}\label{assumliminf}
		 \limsup_{\beta \to 0+} \beta^{-\frac{\alpha}{\nu_{\alpha}}} \bar \gamma_{\beta}({\textstyle\frac12})<0.
		\end{equation}
		If we only assume    
		$\limsup_{A\to \infty} A^{\alpha} \lambda([A,\infty)) >0$,
		then 
	\begin{equation}\label{assumlimsup}
		 \liminf_{\beta \to 0+} \beta^{-\frac{\alpha}{\nu_{\alpha}}} \bar \gamma_{\beta}({\textstyle\frac12})<0.
		\end{equation}
		
	\end{enumerate}
\end{proposition}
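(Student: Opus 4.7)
The strategy is to apply Lemma~\ref{lem:changemeas} via Corollary~\ref{cheapcoro} with a ``one-body'' functional $f$ of the type~\eqref{padrao}: $f$ will integrate (or count) only those atoms of $\om$ that fall in an appropriate rectangle $[0,T)\times[-c\sqrt T,c\sqrt T]^d\times I\subseteq(0,T)\times\R^d\times(0,\infty)$, with $I\subseteq(0,\infty)$ a height window adapted to the tail of $\la$. Recalling~\eqref{thefprim}, the companion quantity $f'(\om',B)$ counts only the atoms added along the Brownian spine. The goal is to choose $T$, $I$ and the constant $c$ so that $\E'_\beta\otimes\bQ_x[f'(\om',B)]$ dominates the relevant standard deviations, uniformly over $x\in[0,\sqrt T)^d$; condition \ref{(C1)} (resp.\ \ref{(C2)}) will then provide the bound $\bar\gamma_\beta(\tfrac 12)\leq -(\log 2)/T$.

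For part (i), I would take $f(\om):=\int_{[0,T)\times[-c\sqrt T,c\sqrt T]\times(0,1)}z\,\delta_\om(\dd t,\dd x,\dd z)$. Campbell's formula gives $\var(f(\om))=2c\,T^{3/2}\mu_{0,1}(2)$. Conditioning on $B$ and using that $\om'$ has intensity $\dd t\otimes\beta z\,\la(\dd z)$, one obtains
\begin{equation*}
\E'_\beta\otimes\bQ_x\big[f'(\om',B)\big]=\beta\mu_{0,1}(2)\,\bQ_x\big[L_T^c\big],\qquad \var_{\P'_\beta\otimes\bQ_x}\big(f'(\om',B)\big)\leq\beta\mu_{0,1}(3)\bQ_x\big[L_T^c\big]+\beta^2\mu_{0,1}(2)^2\var_{\bQ_x}(L_T^c),
\end{equation*}
where $L_T^c:=\int_0^T\bone_{\{B_t\in[-c\sqrt T,c\sqrt T]\}}\,\dd t$. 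A classical Brownian estimate ensures that, for $c$ large enough and $x\in[0,\sqrt T)$, $\bQ_x[L_T^c]\geq T/2$ and $\var_{\bQ_x}(L_T^c)\leq \delta T^2$ for any prescribed $\delta>0$. Inserting these bounds and choosing $\delta<(8e^K)^{-1}$ so as to absorb the Brownian-variance term into the LHS of~\ref{(C1)}, condition~\ref{(C1)} reduces to $\beta^2 T^{1/2}\mu_{0,1}(2)^2\geq C$, so picking $T=C'\beta^{-4}$ and applying Lemma~\ref{lem:changemeas} yields $\bar\gamma_\beta(\tfrac 12)\leq -(\log 2)/T\asymp-\beta^4$, which is~\eqref{onehalfcase}.

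For part (ii), I would switch to the integer-valued functional
\begin{equation*}
 f(\om):=\#\bigl\{(t,x,z)\in\om\ :\ t\in[0,T),\ x\in[-c\sqrt T,c\sqrt T]^d,\ z\in[A,2A]\bigr\},
\end{equation*}
with $A=A(\beta)$ to be tuned. Then $\bbE[f(\om)]=(2c)^d T^{1+d/2}\la([A,2A])$, and conditional on $B$, $f'(\om',B)$ is Poisson with random rate $m(B)\in[0,M]$ with $M:=2\beta TA\la([A,2A])$ and $\E_{\bQ_x}[m(B)]\geq \tfrac12(1-\delta)M$. Using $\var_{\P'_\beta\otimes\bQ_x}(f')=\E_{\bQ_x}[\var_{\P'_\beta}(f'|B)]+\var_{\bQ_x}(\E_{\P'_\beta}[f'|B])$ and the variance bound $\var_{\bQ_x}(m(B))\leq\delta M^2$ coming from $L_T^c\approx T$, we get $\var_{\P'_\beta\otimes\bQ_x}(f')\leq M+\delta M^2$. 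Hence \ref{(C2)} reduces to the pair of inequalities
\begin{equation*}
T^{1+d/2}\la([A,2A])\leq c_1,\qquad \beta T A\la([A,2A])\geq c_2.
\end{equation*}
Balancing them at equality gives $T^{d/2}\asymp\beta A$. Inserting the critical tail $\la([A,2A])\asymp A^{-\al}$ and optimizing over $A$ forces $A\asymp\beta^{-(2+d)/(2\nu_\al)}$ and $T\asymp\beta^{-\al/\nu_\al}$, whence $\bar\gamma_\beta(\tfrac 12)\leq -(\log 2)/T\asymp-\beta^{\al/\nu_\al}$. To ensure a suitable $A$ with $\la([A,2A])\asymp A^{-\al}$ exists at the right scale, I will use the following dyadic lemma: if $\liminf_{A\to\infty}A^\al\la([A,\infty))>0$ (resp.\ $\limsup>0$), then there exists $c>0$ such that for every $A_0$ large enough (resp.\ for infinitely many $A_0$), at least one dyadic block $[2^k A_0,2^{k+1}A_0]$ with $k\in\{0,\dots,K\}$ for some fixed $K=K(\al,c)$ satisfies $\la([2^kA_0,2^{k+1}A_0])\geq c'(2^kA_0)^{-\al}$; this follows from $\la([A_0,\infty))=\sum_{k\geq 0}\la([2^kA_0,2^{k+1}A_0])$ and the lower bound $cA_0^{-\al}$ on $\la([A_0,\infty))$. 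Under the $\liminf$ hypothesis this provides a good $A(\beta)$ within a bounded factor of $\beta^{-(2+d)/(2\nu_\al)}$ for \emph{every} small $\beta$, giving~\eqref{assumliminf}; under the weaker $\limsup$ hypothesis it only yields a sequence $\beta_n\to 0$ along which $\bar\gamma_{\beta_n}(\tfrac 12)\leq -c\beta_n^{\al/\nu_\al}$, but the monotonicity of $\beta\mapsto\bar\gamma_\beta(\tfrac 12)$ from Proposition~\ref{prop:qual}~(ii) converts this to~\eqref{assumlimsup}.

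The main obstacle I anticipate is the variance analysis of $f'(\om',B)$, whose randomness combines the Poisson atoms of $\om'$ with the Brownian path $B$. The total-variance decomposition reduces this to controlling the occupation time $L_T^c$ (or equivalently the deviation of $m(B)$ from its maximum $M$) as a function of $c$, and a careful quantification is needed in order to verify that the Brownian cross terms of size $\delta M^2$ can be absorbed into the LHS of \ref{(C1)} or \ref{(C2)} by choosing $c$ large. A secondary delicate point is turning the pointwise tail lower bound on $\la$ into the existence of a good $A(\beta)$ at the right scale for every (resp.\ infinitely many) small $\beta$, which is the role of the dyadic lemma above together with the monotonicity of $\bar\gamma_\beta(\tfrac 12)$.
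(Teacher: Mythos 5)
Your overall strategy is the same as the paper's: both parts are derived from the coarse-graining/change-of-measure Lemma~\ref{lem:changemeas} through Corollary~\ref{cheapcoro}, using a one-body functional of atoms lying in a space-time box of spatial size $\asymp\sqrt T$, with $T\asymp\beta^{-4}$ in (i) and $T\asymp\beta^{-\al/\nu_\al}$ in (ii). Part (i) is sound in substance, with two remarks. First, the paper takes $f$ to be the plain count of atoms with $z$ in a fixed window $[a,b)$ chosen so that $\la([a,b))>0$; your weighted functional with window $(0,1)$ is vacuous when $\la((0,1))=0$ (only $\mu<\infty$ and $\la\not\equiv0$ are assumed), so you should allow the window to be any $[a,b)$ carrying mass. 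Second, instead of your occupation-time variance estimate for $L_T^c$, the paper bounds $f'\le\bar f$ (the same functional without the Brownian restriction) and uses $\var_{\P'_\beta\otimes\bQ_x}(f')\le \E'_\beta[\bar f^2]-\E'_\beta\otimes\bQ_x[f']^2$, which avoids any second-moment computation for the occupation time; both routes work.

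Part (ii) has a genuine gap: the ``dyadic lemma''. From $\la([A_0,\infty))\ge c A_0^{-\al}$ and $\la([A_0,\infty))=\sum_{k\ge0}\la([2^kA_0,2^{k+1}A_0))$ you cannot conclude that some block with $k\le K$, $K$ fixed, carries mass $\ge c'(2^kA_0)^{-\al}$: the only a priori control on the far tail is $\la([B,\infty))\le\mu/B$, and since the hypothesis together with $\mu<\infty$ forces $\al>1$, this bound is much weaker than $A_0^{-\al}$, so all the mass witnessing the tail bound at $A_0$ may lie beyond $2^{K+1}A_0$. Concretely, a purely atomic $\la$ with atoms at $B_n=B_{n-1}^{1+\delta}$ of mass $\asymp B_{n-1}^{-\al}$, $0<\delta<\al-1$, satisfies $\mu<\infty$ and $\liminf_{A\to\infty}A^\al\la([A,\infty))>0$, yet contains dyadic stretches of unbounded length with no mass at all; for $\beta$ whose target scale $\beta^{-(d+2)/(2\nu_\al)}$ falls inside such a stretch, your lemma produces no admissible block near that scale, and the proof of \eqref{assumliminf} (which must hold for \emph{all} small $\beta$) breaks down as written. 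The paper avoids the issue by not localizing in $z$ at all: it takes the window $[A,\infty)$, so that $\E[f]=(2R)^dT^{1+d/2}\la([A,\infty))$ and $\E'_\beta[\bar f]=\beta T\mu_{A,\infty}(1)\ge\beta T A\la([A,\infty))$ involve exactly the quantity on which the hypothesis bears; it then sets $A=M\beta^{-(d+2)/(2\nu_\al)}$ and defines $T$ by $\E[f]=e^{-K}$, after which the tail lower bound yields both $T\lesssim\beta^{-\al/\nu_\al}$ and the variance condition in \ref{(C2)} once $M$ is large. (Your $[A,2A]$ scheme might be salvageable by allowing the block to sit far above the target scale, but that needs an argument you do not give.) Finally, the appeal to monotonicity of $\beta\mapsto\bar\gamma_\beta(\frac12)$ in the limsup case is unnecessary, and it would in any case go in the wrong direction for $\beta$ below the good values: \eqref{assumlimsup} is only a $\liminf$ statement, so exhibiting a sequence $\beta_n\to0$ with $\bar\gamma_{\beta_n}(\frac12)\le-c\beta_n^{\al/\nu_\al}$, which is what the subsequential tail bound gives directly (as in the paper), already suffices.
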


\begin{proof} 
The proofs of \textit{(i)} and \textit{(ii)} slightly differ but have the same starting point, that is, the same function $f$ (to which we will apply Corollary~\ref{cheapcoro}), up to a  choice of parameter. 

\smallskip

\noindent \textit{Preparing the setup.}
We start by fixing $R>1$ such that 
\begin{equation}\label{R32}
\Bigg((2\pi)^{-\frac12}\int_{[-R,R-1]}e^{-\frac{u^2}{2}}\, \dd u \Bigg)^{d} \ge 1-\frac{1}{64} \, e^{-K}.
		 \end{equation}   
where $K$ is the constant from Lemma~\ref{lem:changemeas}. Given $T>0$ and $0<a<b\leq \infty$,
we then consider the functional
\begin{equation}\label{classikf}
f(\go) := \int_{\X^{(1)}_T} \ind_{\{ \lVert x\rVert_{\infty}\le R\sqrt{T}, z\in [a,b)\}}\, \delta_\om(\dd t,\dd x,\dd z),
\end{equation}
which is Poisson distributed with
\begin{equation}
\label{expectvarf}
	\bbE[f(\go)]=\var( f(\go))= \gl([a,b))(2R\sqrt{T})^d T.
\end{equation}
Using the formalism of the previous subsection, we further have 
\begin{equation*}
 f'(\go',B) =  \int_{(0,T)\times (0,\infty)} \ind_{\{ \|B_t\|_{\infty}\le R\sqrt{T}, z\in [a,b)\}}\, \delta_{\om'}(\dd t,\dd z). 
\end{equation*}
To estimate the expectation and variance of $f'$,
we define 
\begin{equation*}
 \bar f(\go'):=\int_{(0,T)\times (0,\infty)}\ind_{\{z\in [a,b)\}}\, \delta_{\om'}(\dd t,\dd z).
\end{equation*}
Note that we have $f'(\go',B)\leq  \bar f(\go')$ and, recalling that $\om'$ has intensity measure $\beta z\,\dd t\,\la(\dd z)$, also
\begin{equation}\label{momentbarf}
\bbE'_{\beta}[\bar f(\go')]=       \beta \mu_{a,b}(1)T,\qquad \bbE'_{\beta}[\bar f(\go')^2]=       \beta \mu_{a,b}(1)T +  (  \beta \mu_{a,b}(1)T )^2.
\end{equation}
The condition \eqref{R32}   guarantees that for any $x\in [0,\sqrt{T})^d$ and $t\in(0,T]$
\begin{equation*}
 \bQ_x\Big( \lVert B_t\rVert_{\infty}\le R\sqrt{T}\Big)\ge 1-\frac{1}{64} \, e^{-K} \,,
\end{equation*}
so that we have
\begin{equation}\label{espefprim}
 \bbE'_{\beta}\otimes \bQ_x [ f'(\go',B)  ] \ge \bigg(1-\frac{1}{64} \, e^{-K}\bigg)\beta \mu_{a,b}(1)T 
 \geq \frac{1}{\sqrt{2}}\,  \beta \mu_{a,b}(1)T  \,.
\end{equation}
We also have, combining \eqref{momentbarf} and the first inequality of \eqref{espefprim},
\begin{equation}\label{varvar}
  \var_{\bbP'_{\beta}\otimes \bQ_x} (f'(\go',B) )  \le 
 \bbE'_{\beta}[\bar f(\go')^2]-  \bbE'_{\beta}\otimes \bQ_x [ f'(\go',B)  ]^2 \le 
  \beta \mu_{a,b}(1)T + \frac{e^{-K}}{32}  (  \beta \mu_{a,b}(1)T )^2.
\end{equation}
We are now ready to apply these estimates to specific cases.
%

\medskip

\noindent \textit{Proof of \textit{(i)}.}
We first  choose $a$ and $b$ such that
$\gl([a,b))>0$ (in a way that does not depend on $T$ or $\beta$)
and then we let 
\begin{equation*}
 T=T(\beta):= \frac{2^{10} R^2e^{2K} \gl([a,b))^2}{\beta^4 \mu_{a,b}(1)^4}.
\end{equation*}
With this choice and for $\beta$ sufficiently small, the second term on the right-hand side of  \eqref{varvar} dominates: using \eqref{espefprim} for the second inequality, 
we get 
\begin{equation*}
  \var_{\bbP'_{\beta}\otimes \bQ_x} (f'(\go',B) )\le
  \frac{e^{-K}}{16}  (  \beta \mu_{a,b}(1)T )^2 \le \frac {e^{-K}}{8} \bbE'_{\beta}\otimes \bQ_x [ f'(\go',B)  ]^2.
\end{equation*}
Now the reader can check that with our definition of $T$, we have (recall~\eqref{expectvarf})
\begin{equation*}
 \var( f(\go)) = 2R\gl([a,b)) T^{\frac32} = \frac{e^{-K}}{16}   (  \beta \mu_{a,b}(1)T )^2  \le \frac {e^{-K}}{8} \bbE'_{\beta}\otimes \bQ_x [ f'(\go',B)  ]^2.
\end{equation*}
Therefore, condition \ref{(C1)} in  Corollary \ref{cheapcoro} is indeed satisfied for small $\beta$, and we thus have 
\begin{equation*}
		 \limsup_{\beta \to 0+} \beta^{-4} \bar \gamma_{\beta}({\textstyle \frac12})\le -\frac{(\log 2) \mu_{a,b}(1)^4}{2^{10} R^2e^{2K} \gl([a,b))^2}.
\end{equation*}

\medskip

\noindent \textit{Proof of \textit{(ii)}.}
We let $\ell>0$ denote the limit superior/inferior of $A^{-\alpha} \gl([A,\infty))$. We choose 
\begin{equation}\label{deftheA}
A=A(\beta):= M \, \beta^{-\frac{d+2}{2-d(\alpha-1)}} \,,
\end{equation}
where $M=M(K,R,\ell)$ is a large constant that depends on $K$, $R$ and $\ell$.
For the remainder of the proof, we assume that $\beta$ is such that we have, 
for the value of $A$ specified above,
\begin{equation}\label{checkthelimit}
\gl([A,\infty))\ge \frac{\ell A^{-\alpha}}{2} \,.
\end{equation}
(We assume that $\beta$ is sufficiently small if the assumption is about the limit inferior, and we take $\beta$ along a well-chosen subsequence   tending to $0$ if the assumption is about the limit superior.)
We are going to use Corollary \ref{cheapcoro} for the function \eqref{classikf}, with the choices $a=A$, $b=\infty$ and 
\begin{equation}\label{eq:Tbound}
T:=  \Big[ e^{-K} (2R)^{-d} \gl([A,\infty))^{-1}\Big]^{\frac{2}{2+d}}.
\end{equation}
With our choice of $A$ and our tail assumption, we have 
\begin{equation*}
 T\le \bigg[ e^{-K} (2R)^{-d} \frac{2}{\ell} M^{\alpha}\bigg]^{\frac{2}{2+d}}\beta^{-\frac{\alpha}{\nu_{\alpha}}}\,.
\end{equation*}
Therefore, once we show that    \ref{(C2)}   is satisfied, Corollary~\ref{cheapcoro} yields the desired bound on $\bar \gamma_{\beta}(\frac12)$.

For the expectation of $f$, by~\eqref{expectvarf}, our choice for $T$  directly yields
$$\bbE[f(\go)]=(2R)^d\gl([A,\infty))T^{1+\frac{d}{2}}= e^{-K}.$$
To check the condition on the variance of $f'$, in view of \eqref{espefprim} and \eqref{varvar}, it is sufficient to have
\begin{equation*} 
 \beta \mu_{A,\infty}(1) T \ge 4 e^K.
 \end{equation*}
To this end, we bound $\mu_{A,\infty}(1) \ge A\gl([A,\infty))$ and use the definition~\eqref{eq:Tbound} of $T$
to get that $\beta \mu_{A,\infty}(1) T$ is bounded below by a constant (that depends on $K,R$) times $\gb A \lambda([A,\infty))^{d/(2+d)}$. By~\eqref{checkthelimit} and 
the definition of $A$ in \eqref{deftheA}, the latter can be made large by choosing $M=M(K,R,\ell)$ sufficiently large.
 \end{proof}

\subsection{Multi-body estimates}
\label{sec:multibod}

The aim of this section is to prove the following.
	\begin{proposition}\label{prop:dgeq2}
		Suppose that $d\geq 2$ and $\mu<\infty$. Then 
		\begin{equation}\label{eq:ub-loglog} \limsup_{\beta\to0}\frac{\log\lvert\log \lvert\bar \gamma_\beta(\frac12)\rvert\rvert}{\lvert\log \beta\rvert} \leq 1+\tfrac 2d. \end{equation}
	\end{proposition}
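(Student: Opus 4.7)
The plan is to apply Lemma~\ref{lem:changemeas} with a time scale $T=T(\beta)$ diverging fast enough as $\beta\to 0$ that $\log T(\beta)\geq\beta^{-(1+2/d+\eps)}$ for every fixed $\eps>0$ and all sufficiently small $\beta$. Such a $T$ would yield $\lvert\bar\gamma_\beta(\tfrac12)\rvert\geq (\log 2)/T(\beta)$, from which \eqref{eq:ub-loglog} follows directly. Because $d\geq 2$, the one-body events from Section~\ref{sec:onebody} (which gave only a polynomial $T$) are no longer adequate, and a genuinely multi-body event $\cala$ is required.

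I would construct $\cala$ via Corollary~\ref{cheapcoro} under condition \ref{(C2)}, with an integer-valued functional of the form \eqref{padrao}: $f(\om)$ counts ordered $k$-tuples of atoms of $\om$ forming a \emph{Brownian-like $k$-cluster}, namely atoms $(t_i,x_i,z_i)$ with $t_i$ contained in a short time window of length $\tau=\tau(\beta)$, with consecutive spatial increments satisfying $\lVert\Delta x_i\rVert^2\leq c\Delta t_i$, and with $z_i\geq a$ for a threshold $a=a(\beta)$. The point of this choice is that the size-biased representation of Lemma~\ref{lem:sizebias} places the added Poisson atoms exactly on a Brownian trajectory, so the spatial Brownian constraint is automatic for the spine contribution $f'$; a Chernoff-type Poisson concentration estimate applied to the added atoms with $z\geq a$ in each of the $T/\tau$ sub-intervals of length $\tau$ should give $f'(\om',B)\gtrsim T/\tau$ with variance of the same order, hence the variance condition in \ref{(C2)} provided $T/\tau\gg e^K$.

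The upper bound $\E[f(\om)]\leq e^{-K}$ will come from Mecke's formula: after integrating out the spatial coordinates as in~\eqref{eq:iteinte}, the problem reduces to a Dirichlet-type integral over $(\Delta t_i)_{i=1}^{k}$ subject to $\sum\Delta t_i\leq\tau$. By Lemma~\ref{lemGamma} this produces a denominator of the form $((k-1)(1+d/2))!$, which Stirling's formula converts into a factorial suppression of order $e^{-(1+d/2)\,k\log k}$; combined with the Poisson intensity factor $(\lambda_a\tau^{1+d/2})^k$, it is this interplay that dictates the admissible $T$. The natural calibration is $\tau=2k/(\beta\mu_{a,\infty}(1))$ (matching tilt rate to cluster size), together with $k=k(\beta)\to\infty$ and $a=a(\beta)$ chosen so that the effective Poisson weight $\lambda_a(\beta\mu_{a,\infty}(1))^{-(1+d/2)}$ of a single cluster cell stays below a suitable threshold; the critical balance gives $\log T\sim \beta^{-(1+2/d)}$ up to logarithmic corrections.

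The main obstacle is twofold. First, identifying the precise cluster geometry requires carefully matching the spatial widths $\sqrt{c\Delta t_i}$ to Brownian fluctuations while keeping the Mecke/Dirichlet computation tractable; the shape of the cluster must be chosen so that the tilt reliably produces it yet the original Poisson process rarely does. Second---and this is the ingredient with no discrete analogue---the cluster size $k$ must diverge with $\beta$, and the trade-off between making $k$ large (to benefit from the factorial suppression) and keeping the exponential factor $(\lambda_a\tau^{1+d/2})^k$ small forces $a(\beta)$ to be coupled to the tail of $\lambda$ in a $\beta$-dependent way, even when $\mu<\infty$. The correct exponent $1+\tfrac 2d$ will emerge from sharp Stirling asymptotics of the Gamma function combined with the constraint relating $k$, $\tau$ and $a$, and it is precisely this multi-body scaling argument that the authors flag as the new technical contribution of Section~\ref{sec:multibod}.
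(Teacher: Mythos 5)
Your high-level framework coincides with the paper's: Corollary~\ref{cheapcoro} under condition \ref{(C2)}, a multi-body cluster functional of the form \eqref{padrao}, and a time scale calibrated so that $\log T\approx\beta^{-(1+2/d)(1+o(1))}$. The gap is in the concrete event: the cluster you propose does not satisfy \eqref{H1}, and your calibration cannot repair this. With a window of length $\tau\sim k/(\beta\mu_{a,\infty}(1))$, thresholds $z_i\ge a$ and tubes $\lVert\Delta x_i\rVert^2\le c\,\Delta t_i$, each atom of $\om$ has about $\lambda([a,\infty))\,\tau^{1+d/2}$ admissible followers, and even after the Dirichlet/Stirling suppression $\Gamma((k-1)(1+\frac d2)+1)$ the effective per-gap weight is of order $\lambda([a,\infty))(\tau/k)^{1+d/2}\asymp\lambda([a,\infty))\,(\beta\mu_{a,\infty}(1))^{-(1+d/2)}$, which diverges as $\beta\to0$. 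Hence $\E[f]\gtrsim T^{1+\frac d2}\bigl(c\,\beta^{-(1+\frac d2)}\bigr)^{k-1}$: your event is overwhelmingly likely under $\P$, so $\E[f]\le e^{-K}$ fails for every $k$. You yourself note that one needs $\lambda([a,\infty))\lesssim(\beta\mu_{a,\infty}(1))^{1+d/2}$, but this is unattainable whenever $\lambda$ has bounded support (e.g.\ $\lambda=\delta_1$) or even an unbounded light tail, and the proposition must cover every non-trivial $\lambda$ with $\mu<\infty$; tail-coupling of $a(\beta)$ is precisely the heavy-tailed one-body mechanism of Proposition~\ref{prop:fractional}(ii) and yields only a polynomial $T$, not $\exp(\beta^{-(1+2/d)})$. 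Moreover, your claimed balance ``$\log T\sim\beta^{-(1+2/d)}$ from Stirling'' has no mechanism in your scheme, where $T$ enters $\E[f]$ and $\E'_\beta\otimes\bQ[f']$ only polynomially.

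What the paper does instead keeps $k$ \emph{fixed} and $a,b$ fixed with $\lambda([a,b))>0$, and encodes the scale $T$ into the cluster itself, see \eqref{eq:f}: the product of gaps is constrained by $\Delta t_{\lint 2,k\rint}\le\eps\beta^{2k/d}T^{-1}$, so that visiting the cluster beats the $T^{-d/2}$ cost of reaching its first atom (this is what makes the event rare under $\P$), while the intermediate gaps range over $[T^{-\kappa'},T^{-\kappa}]$, and integrating $\prod s_i^{-1}\,\dd s_i$ over that range produces the entropy factor $(\log T)^{k-2}$. Both $\E[f]$ and $\E'_\beta[\bar f_{\phantom{n}}]$ are then proportional to $\beta^{(1+2/d)k}(\log T)^{k-2}$ (compare \eqref{calculexpect}--\eqref{calculexpect2} with \eqref{barexp}), so the size-biased law is \emph{not} distinguished by a count concentration at the spine's natural scale $k/\beta$ --- there is no first/second-moment gap there --- but by the different powers of the auxiliary small parameter $\eps$ ($\eps^{1+d/2}$ for $\P$ versus $\eps$ for the spine). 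The requirement $\beta^{(1+2/d)k}(\log T)^{k-2}\asymp\eps^{-(4+d)/4}$ forces $\log T\approx\beta^{-(1+2/d)\frac{k}{k-2}}$, giving the exponent $(1+\frac2d)\frac{k}{k-2}$ for each fixed $k$, and $k\to\infty$ only at the very end; factorial asymptotics play no role. A secondary issue: your variance claim ``$f'\gtrsim T/\tau$ with variance of the same order'' is false for counts of ordered $k$-tuples (with $\sim 2k$ spine atoms per window the count fluctuates by multiplicative factors $e^{c\sqrt k}$), which is why the paper controls $\var_{\P'_\beta}(\bar f)$ through the $f_I f_J$ expansion and the graph/ordering argument rather than by Poisson concentration.
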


With Propositions~\ref{prop:fractional} and \ref{prop:dgeq2} at hand, we can complete the proof of Theorems \ref{thm:thinup} and
\ref{thm:heavyup}. Recall that the first half of the proof was given in Sections \ref{sub:fh} and \ref{sub:fattails}, respectively.

 \begin{proof}[Proof of  Theorems \ref{thm:thinup} and
	\ref{thm:heavyup} (second half)] 
	 By \eqref{convexinho2}, the lower bounds in \eqref{eq:d1} and \eqref{eq:1+2d}  directly follow  from \eqref{onehalfcase} and \eqref{eq:ub-loglog}. Next, \eqref{eq:lastone} follows from \eqref{assumlimsup} since 
	$\mu_{1,\infty}(q)=\infty$ implies that $\limsup_{A\to \infty} A^{\alpha} \lambda([A,\infty)) >0$ for every $\alpha>q$.
	Finally, the upper bound in \eqref{eq:stable}   follows from \eqref{eq:lastone} because \eqref{eq:alpha} implies $\mu_{1,\infty}(q)=\infty$ for all $q>\al$.
\end{proof}

\begin{proof}[Proof of Proposition \ref{prop:dgeq2}]
In  Section~\ref{sec:onebody}, counting the number of atoms  in $\om$  with size in a certain range was sufficient to determine the correct order of magnitude (in $\beta$) of $\bar \gamma_{\beta}(\frac12)$ (and hence of $\bar\gamma_\beta(p)$ for other values of $p$). If $d\geq2$ and if $\la$ is light-tailed, in order to obtain optimal estimates, we must consider certain \emph{clusters} of atoms. Furthermore, the size of these point configurations, which we denote by  $k$,  must be taken  large.
 We shall obtain our result \eqref{eq:ub-loglog} by proving that for any fixed $k\ge 3$, we have
		\begin{equation*}
		\limsup_{\beta\to0} \frac{\log\lvert\log \lvert \bar\gamma_\beta(\frac12)\rvert\rvert}{\lvert\log \beta\rvert} \leq (1+\tfrac2d) \frac{k}{k-2}.
		\end{equation*}
This will be achieved again by using Corollary~\ref{cheapcoro}. We need to introduce a few parameters for the functional $f$ used in this proof. First,  we   fix $0<a<b<\infty$ such that $\la([a,b))> 0$ as well as two parameters $0<\kappa<\kappa'$ satisfying
		\begin{equation}\label{condkap}
		 \kappa\in \bigg(\frac{1}{k}, \frac{1}{k-1}\bigg) \qquad 
		 \text{and}  \qquad (k-2)\kappa'+\kappa<1.
		\end{equation}
		  Furthermore, we  let $R=R(k,d)$   be  such that
		 \begin{equation}\label{RK}
		\Bigg((2\pi)^{-\frac12}\int_{[-\sqrt{2}R,\sqrt{2}(R-1)]}e^{-\frac{u^2}{2}} \,\dd u \Bigg)^{d}
		\Bigg((2\pi)^{-\frac12}\int_{[-R,R]}e^{-\frac{u^2}{2}}\, \dd u \Bigg)^{d(k-1)}\ge 1-\frac{e^{-K}}{8},
		 \end{equation}
	 where $K$ is the constant from Lemma~\ref{lem:changemeas}.
		 With this choice of $R$, we have, for any $T>0$ and $\bt\in \frakX_k(T)$ such that  $t_1\le \frac T2$,
		 \begin{equation}\label{thisisgood}
		  \inf_{x\in[0,\sqrt{T})^d} \bQ_x\Big( \lVert B_{t_1}\rVert_{\infty}\le R\sqrt{T}, \ \forall i\in \lint 2,k\rint: \lVert\Delta B_{t_i}\rVert_{\infty}\le R \sqrt{\Delta t_i} \Big)\ge 1- \frac{e^{-K}}{8}.
		  \end{equation}
Finally, for some small $\eps\in (0,1)$ (independent of $\beta$ and to be chosen later), we set
\begin{equation}\label{settingT}
 T:=\exp\bigg( \gep^{-\frac{4+d}{4(k-2)}} \beta^{-(1+\frac2d)\frac{k}{k-2}} \bigg).
\end{equation}
	        Using the notation $v_I := \prod_{i\in I} v_i$ for $I\subseteq \lint k\rint$,  we consider the multi-body functional
		\begin{equation}
		\label{eq:f}
		\mathtoolsset{multlined-width=0.9\displaywidth}\begin{multlined} f(\om)= \int_{\X^{(k)}_T} \bone_{\{\forall i \in \lint k \rint:\ z_i\in [a,b)\}} \bone_{\{ t_1\le \frac T2,\ \forall i\in\lint 2,k-1\rint:\ \Delta t_i \in  [T^{-\kappa'},T^{-\kappa}] , \  \Delta t_{\lint 2, k\rint}\le  \gep \beta^{ {2k}/{d}} T^{-1}\}}\\
				\times\bone_{\{  \lVert x_1\rVert_\infty \le R \sqrt{T}, \ \forall i\in\lint 2, k\rint:\ \lVert \Delta x_i\rVert_\infty\leq R\sqrt{\Delta t_i} \}} \prod_{i=1}^k\delta_\om(\dd t_i,\dd x_i,\dd z_i).
				 \end{multlined}
		\end{equation}	 
This function $f$ counts clusters of atoms that are so close to each other (in time) that the benefit of visiting all of them, which is $\prod_{i=2}^k z_i\rho(\Delta t_i, \Delta x_i)$ or  roughly $(\Delta t_{\lint 2, k\rint})^{-d/2}$, outweighs the cost of visiting the first atom of the group, which is of order $T^{-d/2}$ in the bulk of the box.
		Limiting the possible range for  $\Delta t_i$ is convenient in the computations and our particular choice for this limitation is largely \textit{ad hoc}.	
	If
		\begin{equation}\label{lacondition}
	  \gep \beta^{\frac{2k}{d}} T^{-1+(k-2) \kappa'}\le T^{-\kappa}, 
	\end{equation}	
	then the conditions on the $t_i$'s in the first line of \eqref{eq:f} imply $\Delta t_k \le T^{-\kappa}$.
	With our choice~\eqref{settingT}  for $T$ and \eqref{condkap}, the condition \eqref{lacondition} is satisfied for any $\beta<1$.
		
		\medskip
		
Let us now check that   condition \ref{(C2)} in Corollary \ref{cheapcoro} is satisfied. We start with the condition on  $\bbE[f(\go)]$.
Integrating over the $x_i$'s and $t_1$, making the change of variable $s_i=\Delta t_{i+1}$ and using that we have $\Delta t_k\leq T^{-\kappa}$, we get 
		\begin{equation}\label{calculexpect}
		 \bbE[f(\go)]=  \frac{T^{1+\frac{d}{2}}}{2}\Big[ (2R)^d \gl([a,b))\Big]^k \int_{[T^{-\kappa'},T^{-\kappa}]^{k-2}\times[0,T^{-\kappa}]}  \bone_{\{ s_{\lint k-1\rint}\le  \gep \beta^{ {2k}/{d}} T^{-1}\}} \prod_{i=1}^{k-1} s_i^{\frac d2}\, \dd s_i.
		\end{equation}
	Next, we   integrate with respect to $s_{k-1}$ and use \eqref{lacondition} to obtain
		\begin{equation}\label{calculexpect2}\mathtoolsset{multlined-width=0.9\displaywidth}\begin{multlined}
			T^{1+\frac{d}{2}} \int_{[T^{-\kappa'},T^{-\kappa}]^{k-2}\times[0,T^{-\kappa}]}  \bone_{\{  s_{k-1}\le  \gep \beta^{ {2k}/{d}} T^{-1} / s_{\lint k-2\rint}\}} \prod_{i=1}^{k-1} s_i^{\frac d2}\, \dd s_i\\
			 =  \frac{\gep^{1+\frac{d}{2}}\beta^{ (1+\frac{2}{d} )k}  }{1+\frac{d}{2}}\int_{[T^{-\kappa'},T^{-\kappa}]^{k-2}}\prod_{i=1}^{k-2} s_i^{-1} \,\dd s_i
			=\frac{\gep^{1+\frac{d}{2}}\beta^{ (1+\frac{2}{d} )k}
				[ (\kappa'-\kappa ) \log T]^{k-2}}{d+2}.\end{multlined}
	\end{equation}
		Therefore, by \eqref{settingT}, there exists a constant $C=C(R,\la,a,b,\kappa,\kappa',k,d)$ such that
		\begin{equation*}
		 	 \bbE[f(\go)]=  C \gep^{1+\frac{d}{2}} \beta^{ (1+\frac{2}{d} )k}(\log T)^{k-2}=C \gep^{\frac{d}{4}}.
		\end{equation*}
   Hence, choosing $\gep\le  (Ce^K )^{- {4}/{d}}$ fulfills the assumption on $\E[f(\om)]$ in \ref{(C2)}; let us stress that $\gep$ must  satisfy a second condition which will be specified later in the proof.

Let us now estimate the expectation and variance of $f'(\go',B)$ (recall the definition~\eqref{thefprim}). We consider an alternative function  
		\begin{equation}\label{expressbarf}
		 \bar f(\go'):=\int_{([0,T]\times[a,b))^{k}} \bone_{\{ t_1\le \frac T2,\ \forall i\in\lint 2,k-1\rint:\ \Delta t_i \in  [T^{-\kappa'},T^{-\kappa}],  \  \Delta t_{\lint 2, k\rint}\le  \gep \beta^{ {2k}/{d}} T^{-1}\}}\prod_{i=1}^k\delta_{\om'}(\dd t_i,\dd z_i)
		\end{equation}
 that does not include any restriction for $B$.
 Using \eqref{thisisgood}, we obtain, similarly to \eqref{espefprim} and \eqref{varvar}, that for any $x\in[0,\sqrt{T})^d$
 \begin{equation} \label{babarf}\begin{split}
   \bbE'_{\beta}\otimes \bQ_x [f'(\go',B) ]&\geq \Bigg(1-\frac{e^{-K}}{8}\Bigg)  \bbE'_{\beta} [\bar f(\go') ] \ge \frac{1}{\sqrt{2}} \bbE'_{\beta} [\bar f(\go') ],\\
   \var_{\bbP'_{\beta}\otimes \bQ_x} (f'(\go',B) )&\le 
   \var_{\bbP'_\beta} (\bar f(\go') )+\frac{e^{-K}}{4} \bbE'_{\beta} [\bar f(\go') ]^2\,.
   \end{split}
 \end{equation}
We can thus conclude that the variance bound in \ref{(C2)}  is satisfied if 
\begin{equation}\label{nuproof}
 \var_{\bbP'_{\beta}} (\bar f(\go') ) \le \frac{e^{-K}}{4}\bbE'_{\beta} [\bar f(\go') ]^2.
\end{equation}
 Proceeding as in \eqref{calculexpect}--\eqref{calculexpect2} and recalling that $\go'$ has intensity measure $\gb z \,\dd t\,\lambda (\dd z)$,
 we have
\beq
\label{barexp}
\begin{split}
 \bbE'_\beta [\bar f(\go') ]&=  (\beta \mu_{a,b}(1))^k \frac{T}{2} \int_{[T^{-\kappa'},T^{-\kappa}]^{k-2}\times[0,T^{-\kappa}]}\bone_{\{ s_{\lint k-1\rint}\le  \gep \beta^{ {2k}/{d}} T^{-1}\}} \prod_{i=1}^{k-1} \dd s_i \\
 &= \frac{\gep}{2}\mu_{a,b}(1)^k\beta^{ (1+\frac{2}{d} )k}    \int_{[T^{-\kappa'},T^{-\kappa}]^{k-2}}  \prod_{i=1}^{k-2} s_i^{-1} \,\dd s_i\\
 &= C' \gep \beta^{ (1+\frac{2}{d} )k} (\log T)^{k-2}=C' \gep^{-\frac{d}{4}}\,,
\end{split}
\eeq
with $C'=C'(\la,a,b,\kappa,\kappa',k,d)$, and where we have used the value~\eqref{settingT} of $T$ for the last identity.
In order to compute the second moment of $\bar f(\go')$, we expand the integral in \eqref{expressbarf} by writing 
\beq\label{eq:expand} \prod_{i=1}^k\delta_{\om'}(\dd t_i,\dd z_i)=\sum_{I\subseteq \lint  k\rint } \prod_{i\in I}\Big( \delta_{\om'}(\dd t_i,\dd z_i)-\beta z_i\,\dd t_i  \,\gl(\dd z_i) \Big)\prod_{j\in I^c}  \beta z_j \,\dd t_j\,\gl(\dd z_j),   \eeq
where $I^c:=\lint k\rint\setminus I$.
We let $ f_I$ denote the integral corresponding to the term $I$.
Since $f_{\emptyset}=\bbE'_{\beta}[\bar f(\go')]$, we have
\begin{equation}\label{eq:var}
 \var_{\P'_\beta}(\bar f(\go'))= \sum_{I,J\subseteq \lint k\rint:\ I,J\ne \emptyset}\bbE'_{\beta} [ f_I(\go')f_{J}(\go') ].
\end{equation}
The terms in the sum above are equal to zero if $I$ and $J$ do not have the same cardinality. If $\lvert I\rvert=\lvert J\rvert$, then   $\bbE'_{\beta} [ f_{I}(\go')f_{J}(\go') ]$ is obtained by matching the values of $(t_i)_{i\in I}$ to that of $(t_i)_{i\in J}$ in the integral before averaging. 
More precisely, 
we have
\begin{multline*}
 \bbE'_\beta [ f_I(\go')f_{J}(\go') ] \\ =  (\beta\mu_{a,b}(1) )^{2k-\lvert I\rvert}  \int_{[0,T]^{2k-\lvert I\rvert}} \bigg(   \ind_{\mathcal Q} \Big((t^{(1)}_i)_{i=1}^k \Big)  \prod_{j\in I^{\cc}} \dd t^{(1)}_j    \bigg)\bigg(   \ind_{\mathcal Q}\Big( (t^{(2)}_i)_{i=1}^k\Big) \prod_{j\in J^{\cc}} \dd t^{(2)}_j    \bigg)\prod_{i\in I} \dd t_i, 
\end{multline*}
where $\mathcal{Q}$ is the subset of $[0,T]^{k}$ induced by the  indicator function in \eqref{expressbarf}, and the vectors $\bt^{(1)}$ and $\bt^{(2)}$ are completed by setting
\begin{equation}\label{fusion}
t^{(1)}_i=t^{(2)}_{\sigma(i)}=t_i \qquad \text{ for } i\in I \,,
\end{equation}
with  $\sigma$  being the unique increasing bijection from $I$ to $J$.
In particular, we have
\begin{equation}\label{fullcase}
 \bbE'_\beta [ f_{\lint k \rint}(\go')^2 ]= \bbE'_{\beta} [\bar f(\go') ].
\end{equation}
When $\lvert I\rvert=\lvert J\rvert=\ell\in \lint k-1\rint $,
we are going to show that  
\begin{equation}\label{boundconstraint}
 \int_{[0,T]^{2k-\lvert I\rvert}} \bigg(   \ind_{\mathcal Q} \Big((t^{(1)}_i)_{i=1}^k \Big)  \prod_{j\in I^{\cc}} \dd t^{(1)}_j    \bigg)\bigg(   \ind_{\mathcal Q}\Big( (t^{(2)}_i)_{i=1}^k\Big) \prod_{j\in J^{\cc}} \dd t^{(2)}_j    \bigg)\prod_{i\in I} \dd t_i \le T^{1-(2k-\ell-1)\kappa}.
\end{equation}
So  combining \eqref{fullcase} with \eqref{boundconstraint}, we obtain that
\begin{equation*}
   \var_{\P'_\beta}(\bar f(\go'))\le \bbE'_{\beta}[\bar f(\go')]+ \sum_{\ell=1}^{k-1}  \binom{k}{\ell}^2  (\beta\mu_{a,b}(1) )^{2k-\ell} T^{1-(2k-\ell-1)\kappa}
   \le \bbE'_{\beta}[\bar f(\go')]+ 1,
\end{equation*}
where the last inequality is valid for sufficiently small $\beta$, because  $1-(2k-\ell-1)\kappa\le 0$ thanks to~ \eqref{condkap}.
For this reason, \eqref{nuproof} is satisfied if $\bbE'_{\beta}[\bar f(\go')]\ge 8 e^K$, which according to \eqref{barexp} holds true provided that $\gep$ is chosen small enough. 

\medskip

To complete the proof, let us show how  \eqref{boundconstraint} is obtained: the key is to integrate
in the correct order the different variables.
We construct a graph $G_{I,J}$ with $2k-\ell$ vertices where each vertex is identified with one of the variables $(t_i)_{i\in I}$, $(t^{(1)}_i)_{i\in I^{\cc}}$,  $(t^{(2)}_i)_{i\in J^{\cc}}$. We start with two (initially disconnected)  paths of $k$ vertices labeled $(t^{(1)}_i)_{i=1}^k$ and $(t^{(2)}_i)_{i=1}^k$, respectively,
the edges being $\{t^{(r)}_i, t^{(r)}_{i+1}\}$ for $i\in \lint k-1\rint$.
Then we glue the two paths together by identifying the vertices $(t^{(1)}_i)_{i\in I}$ with $(t^{(2)}_i)_{i\in J}$ according to \eqref{fusion}; any double edge that might have been created is replaced by a single edge.
As $I\ne \emptyset$, the graph we obtain is connected.
Now we consider an enumeration $v_1,v_2,v_3,\dots, v_{2k-\ell}$ of the vertices of $G_{I,J}$  such that for every $j\in \lint   2k-\ell-1\rint$, the subgraph $G^{(j)}_{I,J}$ of $G_{I,J}$   induced by $v_{j+1},\dots, v_{2k-\ell}$ is connected. One way of finding such an enumeration is, for example, to construct a spanning tree $T_{I,J}$ of $G_{I,J}$ and then, after $v_1,\dots, v_{j}$ have been determined for some $j\in\{0,\dots,2k-\ell-1\}$, to take any leaf of $T_{I,J}\setminus\{v_1,\dots,v_j\}$ as $v_{j+1}$.
We refer to Figure~\ref{fig:2} for one example of $G_{I,J}$ together with a permitted and a forbidden enumeration according to the rule we just introduced.
\begin{figure}[h!]
	\centering
		\begin{subfigure}[b]{\textwidth}
		\centering
		\begin{tikzpicture}[scale=0.6]
		   \draw (-2,-1) node[below] {(a)};
			\node[circle, inner sep = 1.5pt, fill = black] (12) at (0,-3) {}; 
			\node[circle, inner sep = 1.5pt, fill = black] (1122) at (2,-1.5) {};
			\node[circle, inner sep = 1.5pt, fill = black] (21) at (4,0) {};
			\node[circle, inner sep = 1.5pt, fill = black] (3) at (6,-1.5) {}; 
			\node[circle, inner sep = 1.5pt, fill = black] (41) at (8,0) {}; 
			\node[circle, inner sep = 1.5pt, fill = black] (42) at (8,-3) {};  
			\node[circle, inner sep = 1.5pt, fill = black] (51) at (10,0) {}; 
			\node[circle, inner sep = 1.5pt, fill = black] (52) at (10,-3) {}; 
			\node[circle, inner sep = 1.5pt, fill = black] (6) at (12,-1.5) {};
			\node[circle, inner sep = 1.5pt, fill = black] (7) at (14,-1.5) {};
			\node[circle, inner sep = 1.5pt, fill = black] (81) at (16,0) {}; 
			\node[circle, inner sep = 1.5pt, fill = black] (82) at (16,-3) {}; 
			\draw (0,-3) node[below] {\small $t^{(2)}_1$};
			\draw (2,-1.5) node[above] {\small $t^{(1)}_1$};
			\draw (2,-1.5) node[below] {\small $t^{(2)}_2$};
			\draw (4,0) node[above] {\small $t^{(1)}_2$};
			\draw (6,-1.5) node[above] {\small $t^{(1)}_3$};
			\draw (6,-1.5) node[below] {\small $t^{(2)}_3$};
			\draw (8,0) node[above] {\small $t^{(1)}_4$};
			\draw (8,-3) node[below] {\small $t^{(2)}_4$};
			\draw (10,0) node[above] {\small $t^{(1)}_5$};
			\draw (10,-3) node[below] {\small $t^{(2)}_5$};
			\draw (12,-1.5) node[above] {\small $t^{(1)}_6$};
			\draw (12,-1.5) node[below] {\small $t^{(2)}_6$};
			\draw (14,-1.5) node[above] {\small $t^{(1)}_7$};
			\draw (14,-1.5) node[below] {\small $t^{(2)}_7$};
			\draw (16,0) node[above] {\small $t^{(1)}_8$};
			\draw (16,-3) node[below] {\small $t^{(2)}_8$};
			\draw[ultra thick] (12) -- (1122) -- (21) -- (3) -- (41) -- (51) -- (6) -- (7) -- (81);
			\draw (3) -- (1122);
			\draw[ultra thick] (3) -- (42) -- (52);
			\draw (6) -- (52);
			\draw[ultra thick] (7) -- (82);
			%
		\end{tikzpicture}
	\end{subfigure}	
	\vspace{1\baselineskip}
	\begin{subfigure}[b]{\textwidth}
		\centering
		\begin{tikzpicture}[scale=0.6]
				   \draw (-2,-1) node[below] {(b)};
			\node[circle, inner sep = 1.5pt, fill = black] (12) at (0,-3) {}; 
			\node[circle, inner sep = 1.5pt, fill = black] (1122) at (2,-1.5) {};
			\node[circle, inner sep = 1.5pt, fill = black] (21) at (4,0) {};
			\node[circle, inner sep = 1.5pt, fill = black] (3) at (6,-1.5) {}; 
			\node[circle, inner sep = 1.5pt, fill = black] (41) at (8,0) {}; 
			\node[circle, inner sep = 1.5pt, fill = black] (42) at (8,-3) {};  
			\node[circle, inner sep = 1.5pt, fill = black] (51) at (10,0) {}; 
			\node[circle, inner sep = 1.5pt, fill = black] (52) at (10,-3) {}; 
			\node[circle, inner sep = 1.5pt, fill = black] (6) at (12,-1.5) {};
			\node[circle, inner sep = 1.5pt, fill = black] (7) at (14,-1.5) {};
			\node[circle, inner sep = 1.5pt, fill = black] (81) at (16,0) {}; 
			\node[circle, inner sep = 1.5pt, fill = black] (82) at (16,-3) {}; 
			\draw (0,-3) node[above] {\small{$v_1$}};
			\draw (2,-1.5) node[above] {\small{$v_2$}};
			\draw (4,0) node[above] {\small{$v_3$}};
			\draw (6,-1.5) node[above] {\small{$v_6$}};
			\draw (8,0) node[above] {\small{$v_7$}};
			\draw (8,-3) node[above] {\small{$v_5$}};
			\draw (10,0) node[above] {\small{$v_{8}$}};
			\draw (10,-3) node[above] {\small{$v_{4}$}};
			\draw (12,-1.5) node[above] {\small{$v_{9}$}};
			\draw (14,-1.5) node[above] {\small{$v_{12}$}};
			\draw (16,0) node[above] {\small{$v_{10}$}};
			\draw (16,-3) node[above] {\small{$v_{11}$}};
			\draw[ultra thick] (12) -- (1122) -- (21) -- (3) -- (41) -- (51) -- (6) -- (7) -- (81);
			\draw (3) -- (1122);
			\draw[ultra thick] (3) -- (42) -- (52);
			\draw (6) -- (52);
			\draw[ultra thick] (7) -- (82);
		\end{tikzpicture}
	\end{subfigure}
	\vspace{1\baselineskip}
\begin{subfigure}[b]{\textwidth}
	\centering
	\begin{tikzpicture}[scale=0.6]
		\draw (-2,-1) node[below] {(c)};
		\node[circle, inner sep = 1.5pt, fill = black] (12) at (0,-3) {}; 
		\node[circle, inner sep = 1.5pt, fill = black] (1122) at (2,-1.5) {};
		\node[circle, inner sep = 1.5pt, fill = black] (21) at (4,0) {};
		\node[circle, inner sep = 1.5pt, fill = black] (3) at (6,-1.5) {}; 
		\node[circle, inner sep = 1.5pt, fill = black] (41) at (8,0) {}; 
		\node[circle, inner sep = 1.5pt, fill = black] (42) at (8,-3) {};  
		\node[circle, inner sep = 1.5pt, fill = black] (51) at (10,0) {}; 
		\node[circle, inner sep = 1.5pt, fill = black] (52) at (10,-3) {}; 
		\node[circle, inner sep = 1.5pt, fill = black] (6) at (12,-1.5) {};
		\node[circle, inner sep = 1.5pt, fill = black] (7) at (14,-1.5) {};
		\node[circle, inner sep = 1.5pt, fill = black] (81) at (16,0) {}; 
		\node[circle, inner sep = 1.5pt, fill = black] (82) at (16,-3) {}; 
		\draw (0,-3) node[above] {\small {$v_1$}};
		\draw (2,-1.5) node[above] {\small {$v_2$}};
		\draw (4,0) node[above] {\small{$v_3$}};
		\draw (6,-1.5) node[above] {\small{$v_4$}};
		\draw (8,0) node[above] {\small{$v_7$}};
		\draw (8,-3) node[above] {\small{$v_5$}};
		\draw (10,0) node[above] {\small{$v_{8}$}};
		\draw (10,-3) node[above] {\small{$v_{6}$}};
		\draw (12,-1.5) node[above] {\small{$v_{9}$}};
		\draw (14,-1.5) node[above] {\small{$v_{10}$}};
		\draw (16,0) node[above] {\small{$v_{11}$}};
		\draw (16,-3) node[above] {\small{$v_{12}$}};
		\draw[ultra thick] (12) -- (1122) -- (21) -- (3) -- (41) -- (51) -- (6) -- (7) -- (81);
		\draw (3) -- (1122);
		\draw[ultra thick] (3) -- (42) -- (52);
		\draw (6) -- (52);
		\draw[ultra thick] (7) -- (82);
	\end{tikzpicture}
\end{subfigure}
	\captionsetup{width=.9\linewidth}
	\caption{\footnotesize (a) The graph $G_{I,J}$ with $k=8$, $I=\{1,3,6,7\}$ and $J=\{2,3,6,7\}$ and a spanning tree $T_{I,J}$ (thick edges); (b) a permitted enumeration of $G_{I,J}$  based on~$T_{I,J}$; (c)   a forbidden enumeration of $G_{I,J}$  based on~$T_{I,J}$. Integration in the forbidden  order only yields a suboptimal bound: 
 once $v_1,\dots,v_4$ have been integrated (\textit{i.e.}, removed from the graph), we obtain two disconnected components, so if  integration is continued, $v_6$ becomes isolated and contributes a factor $T$ when integrated
(similarly, both $v_{11}$ and $v_{12}$ become isolated when $v_1,\ldots,v_{10}$ have been integrated: each of them contributes a factor of $T$.
}
	\label{fig:2}
\end{figure}
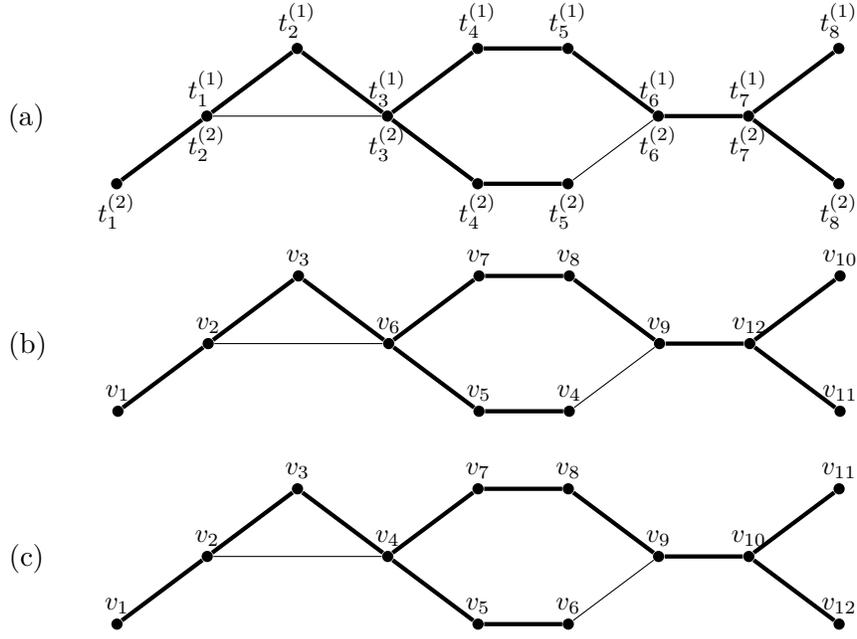

Having fixed such an enumeration, in order to
obtain \eqref{boundconstraint}, we integrate $v_1, \dots, v_{2k-\ell}$ in this order. 
Note that if $v_j= t^{(r)}_i$ for some $j\in\lint 2k-\ell-1\rint$, $r\in\{1,2\}$ and $i\in I^c$, then, by construction, 
\[
\{ t^{(r)}_{i-1}, t^{(r)}_{i+1}\} \cap\{ v_{j+1},\dots, v_{2k-\ell}\}\ne \emptyset ;
\]
indeed, if both $t_{i-1}^{(r)}$ and $t_{i+1}^{(r)}$ had been picked before, then $v_j=t_i^{(r)}$ would be an isolated vertex in $G_{I,J}^{(j-1)}$ because $i\in I^c$. 
In the same manner, if $v_j= t_i$ for some $j\in \lint 2k-\ell-1\rint$ and $i\in I$, then 
\[
\{ t^{(1)}_{i-1}, t^{(1)}_{i+1},t^{(2)}_{\sigma(i)-1},t^{(2)}_{\sigma(i)+1}\} \cap\{ v_{j+1},\dots, v_{2k-\ell}\}\ne \emptyset.
\]
As a result, for each $j\in\lint 2k-\ell-1\rint$, the range when we integrate with respect to  the variable $v_j$ and the values of the variables  $v_{j+1},\dots, v_{2k-\ell}$ are fixed is included in an interval of length~$T^{-\kappa}$:
Indeed,
by \eqref{lacondition},
the indicator function~$\ind_{\mathcal Q}$ in the integral implies   
that $\Delta t^{(r)}_i\le T^{-\kappa}$ for all $i\in \lint 2,k\rint$   
and $r\in \{ 1,2\}$.
We no longer have constraints only for the last variable $v_{2k-\ell}$, which yields a factor of~$T$.
\end{proof}

\subsection{Proof of Proposition~\ref{prop:waitingforbetter}}
\label{sec:waitingforbetter}

First, notice that it suffices to consider the free-end partition function.  Since  $(\cZ^{\go,a}_{\beta}(t,x))_{a\in(0,1]}\stackrel{(d)}{=} (e^{- {\|x\|^2}/({2t})}\cZ^{\go,a}_{\beta}(t,0))_{a\in(0,1]}$, the a.s.\ positivity of the limit when $a\to 0$ does not depend on $x$. Hence the result in the point-to-point case follows from the free-end case by Fatou's Lemma.
 Although this is not essential to the reasoning, it is practical for our computations to assume that $\int_{(0,1)}z^{1+ {2}/{d}}\, \gl(\dd z)<\infty$.
If this assumption is violated, then the result has already been proved in  \cite[Proposition~2.10]{BL20_cont}.
We also assume that $\mu <\infty$: the general case follows from a truncation procedure, see the end of Section~5 in \cite{BL20_cont}. For notational simplicity, we actually assume that $\mu=0$ (\textit{i.e.}, that $\la$ has support in $(0,1)$).

\smallskip

While the proof does not rely on Lemma \ref{lem:changemeas}, it partially builds on the same idea, which is to show that the size-biased measure transforms an atypical event under the original probability measure into a typical one. Let us summarize this in a lemma that is similar in spirit to Corollary~\ref{cheapcoro}. Recall \eqref{filtraG} and the notation $\cZ^{\go,a}_{\beta,t}:= \bar \cZ^{\go,a}_{\beta}(t,\ast)$.

\begin{lemma}\label{lem:help}
Let us assume that there exists a sequence of integer-valued functions $(f_n)_{n\in\N}$   of the form \eqref{padrao}  (with $\Upsilon$ integer-valued and with $T$ replaced by $t$) with the following properties:
\begin{itemize}
 \item [(i)] For every $n$, $f_n$ is $\cG_{a}$-measurable for some $a=a_n>0$.
 \item [(ii)] Using the notation \eqref{thefprim}, we have that
  \begin{equation}\label{limiitz}
  \lim_{n\to \infty}\bbE [f_n(\go)  ]=0\qquad \text{ and } \qquad 
  \lim_{n\to \infty} \frac{\var_{\bbP'_{\beta} \otimes \bQ}(f'_n(\go',B))}{\bbE'_{\beta}\otimes \bQ[f'_n(\go',B)]^2}=0. 
\end{equation}
\end{itemize}
Then we have 
\begin{equation*}
 \lim_{a\to 0} \bbE\Big[  (\cZ^{\go,a}_{\beta,t} )^{\frac12}\Big]=0.
\end{equation*}

\end{lemma}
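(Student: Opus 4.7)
The plan is to bound $\bbE[(\cZ^{\go,a}_{\beta,t})^{1/2}]$ for $a\le a_n$ uniformly by a quantity depending only on $n$ and tending to $0$. The starting point is the standard split using $\cA_n:=\{f_n(\go)\ge 1\}$:
\[
\bbE\Big[(\cZ^{\go,a}_{\beta,t})^{1/2}\Big]\le \bbE\Big[(\cZ^{\go,a}_{\beta,t})^{1/2}\bone_{\cA_n}\Big] + \bbE\Big[(\cZ^{\go,a}_{\beta,t})^{1/2}\bone_{\cA_n^c}\Big].
\]
Using Cauchy--Schwarz on the first summand, together with the fact that after the reduction to $\mu=0$ we have $\bbE[\cZ^{\go,a}_{\beta,t}]=1$, and then Markov's inequality, I would bound it by $\bbP(\cA_n)^{1/2}\le \bbE[f_n(\go)]^{1/2}$, which vanishes as $n\to\infty$ thanks to the first half of~\eqref{limiitz}. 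Cauchy--Schwarz applied to the second summand gives
\[
\bbE\Big[(\cZ^{\go,a}_{\beta,t})^{1/2}\bone_{\cA_n^c}\Big]\le \sqrt{\bbE\big[\cZ^{\go,a}_{\beta,t}\bone_{\cA_n^c}\big]}=\sqrt{\wt \bbP^a_{\beta,t}(\cA_n^{\cc})}\,,
\]
so the whole task reduces to controlling $\wt \bbP^a_{\beta,t}(\cA_n^{\cc})$ uniformly in $a\le a_n$.

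The heart of the argument is to exploit the size-biased representation of Lemma~\ref{lem:sizebias}: for $a\in(0,a_n]$,
\[
\wt \bbP^a_{\beta,t}(\cA_n^{\cc})=\bbP\otimes\bbP'_{\beta}\otimes\bQ\big(\wh f_n(\go,\go',B)=0\big),
\]
where I use that $f_n$ is integer-valued. The key observation is that since $f_n$ is $\cG_{a_n}$-measurable, the defining integrand $\Upsilon$ in~\eqref{padrao} is supported on $\{z\ge a_n\}$, so the extra atoms added to $\go$ with $z\in[a,a_n)$ do not contribute to either $f_n$ or $f_n'$. Consequently, expanding the $k$-fold integral $\int\Upsilon\prod\delta_{\wh\go}$ by writing $\delta_{\wh\go}=\delta_{\go}+\delta_{\wt\go_{a_n,t}(\go',B)}$ decomposes $\wh f_n$ into $2^k$ non-negative terms (since $\Upsilon\ge 0$ and $\Upsilon$ is integer-valued), among which are the pure term $f_n(\go)$ and the pure term $f_n'(\go',B)$. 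This is exactly inequality~\eqref{magic}, giving the pointwise bound $\wh f_n\ge f_n(\go)+f_n'(\go',B)$. Integer-valuedness then forces $\{\wh f_n=0\}\subseteq\{f_n'(\go',B)=0\}$, and so
\[
\wt \bbP^a_{\beta,t}(\cA_n^{\cc})\le \bbP'_{\beta}\otimes\bQ\big(f_n'(\go',B)=0\big) \le \frac{\var_{\bbP'_{\beta}\otimes\bQ}(f_n'(\go',B))}{\bbE'_{\beta}\otimes\bQ[f_n'(\go',B)]^{2}}
\]
by Paley--Zygmund (i.e.\ Chebyshev applied to $f_n'-\bbE'_{\beta}\otimes\bQ[f_n']$), which is the second-moment condition in~\eqref{limiitz}.

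Combining these two bounds yields, for every $n$ and every $a\le a_n$,
\[
\bbE\Big[(\cZ^{\go,a}_{\beta,t})^{1/2}\Big]\le \sqrt{\bbE[f_n(\go)]}+\sqrt{\frac{\var_{\bbP'_{\beta}\otimes\bQ}(f_n'(\go',B))}{\bbE'_{\beta}\otimes\bQ[f_n'(\go',B)]^{2}}}\,.
\]
Taking $\limsup_{a\to 0}$ on the left for fixed $n$, and then letting $n\to\infty$ using~\eqref{limiitz}, produces the announced conclusion. I do not foresee a serious obstacle: the only point requiring care is the verification that extra atoms with $z\in[a,a_n)$ introduced by the size-biasing at level $a$ are invisible to $f_n$ and $f_n'$, which is a direct consequence of the $\cG_{a_n}$-measurability assumption~(i).
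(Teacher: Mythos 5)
Your argument is correct and essentially the paper's own proof: both rest on the size-biased representation of Lemma~\ref{lem:sizebias} together with the $\cG_{a_n}$-measurability of $f_n$, the pointwise inequality $\wh f_n\ge f_n(\go)+f'_n(\go',B)$ from \eqref{magic}, Markov's inequality for $f_n$ and Chebyshev's inequality for $f'_n$. The only cosmetic differences are that you split $\bbE[(\cZ^{\go,a}_{\beta,t})^{1/2}]$ over $\{f_n\geq1\}$ and its complement and apply Cauchy--Schwarz to each piece, whereas the paper uses the two-valued change of measure $G(\go)=\exp(-K\ind_{\{f_n(\go)\ge1\}})$ with a single Cauchy--Schwarz step, and that you obtain the bound uniformly in $a\le a_n$ directly rather than invoking the supermartingale property of $a\mapsto(\cZ^{\go,a}_{\beta,t})^{1/2}$; both variations are valid.
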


\begin{proof}
Consider $\gep>0$ and let $n$ be sufficiently large such that both quantities in \eqref{limiitz} are smaller than $\gep$. Furthermore,  let $a=a_n$ be such that 
$f_n$ is $\cG_a$-measurable.
Defining $K:= \log \gep $ and
$G(\go):=\exp(-K\ind_{\{f_n(\go)\ge 1\}})$, we
have by the Cauchy--Schwarz inequality,
\begin{equation*}
 \bbE\Big[  (\cZ^{\go,a}_{\beta,t} )^{\frac12}\Big]
 \le  \bbE\Big[G(\go)^{-1} \Big]^{\frac12}\bbE\Big[ G(\go)\cZ^{\go,a}_{\beta,t}\Big]^{\frac12} \,.
\end{equation*}
Clearly, by Markov's inequality,
\begin{equation*}
 \bbE[G(\go)^{-1}]\le 1 + e^{K} \bbP(f_n(\go)\ge 1)\le 2\,.
\end{equation*}
Using Lemma \ref{lem:sizebias} and the fact that $f_n$ is $\cG_a$-measurable, 
we further have (recall the notation \eqref{shorthat})
\begin{equation*}
 \bbE[G(\go)\cZ^{\go,a}_{\beta,t}]=  \bbE\otimes\bbE'_{\beta}\otimes \bQ\Big[\widehat G_{a,t}(\go,\go',B)\Big].
\end{equation*}
As $f_n$ is integer-valued, the inequality \eqref{magic} implies that 
\begin{equation*}
 \widehat G_{a,t}(\go,\go',B)\le e^{-K}+ \ind_{\{\wh f_n(\go,\go',B)=0\}}\leq e^{-K} + \ind_{\{f'(\om',B)=0\}},
\end{equation*}
which, by an application of the Chebychev inequality, results in
  \[
  \bbE\otimes\bbE'_{\beta}\otimes \bQ\Big[\widehat G_{a,t}(\go,\go',B)\Big]\le e^{-K}+ \bbP'_{\beta} \otimes \bQ ( f'_n(\go',B)\leq 0  ) \le e^{-K}+\gep=2\gep.
  \]
Because $a\mapsto (\calz^{\om,a}_{\beta,t})^{1/2}$ is a supermartingale, we obtain that 
\begin{equation*}
 \limsup_{a'\to 0} \bbE\Big[  (\cZ^{\go,a'}_{\beta,t} )^{\frac12}\Big]\le \bbE\Big[  (\cZ^{\go,a}_{\beta,t} )^{\frac12}\Big]\le 2 \sqrt{\gep},
\end{equation*}
which finishes the proof since $\gep$ is arbitrary.
\end{proof}

Before we specify the sequence of functions $f_n$, we need to introduce a few parameters.
We let $k_n:=1+ \lfloor n^{1/3} \rfloor$ and we set for $n\geq 1$
\[
M_n := \prod_{i=1}^{n}  \frac{k_{i-1}(k_{i-1}+\frac12)}{(k_{i-1}+1)(k_{i-1}-\frac12)}  \qquad \text{and}
\qquad b_n := e^{ -M_n} \,.
\]
Let us stress that since we have 
$$ \frac{M_{n+1}}{M_n}=1+ \frac{1}{2 k^2_{n}}+O(k^{-3}_{n}) $$ 
we have $\log M_n \sim  \frac32 \, n^{1/3} \sim \frac32 k_n $ as $n\to\infty$.
 Now, as the quantity $ {|\log z|}/{(\log \lvert \log z\rvert)^{5+4/d +\gep}}$ is of order $M_n (\log M_n)^{-(5+4/d +\gep)}$ on the interval $[b_{n+1},b_n)$, the assumption~\eqref{eq:log1-eps}  is equivalent to
\begin{equation}
\sum_{n= 1}^\infty \frac{M_n}{(\log M_n)^{5+\frac4d +\gep}} \int_{[b_{n+1},b_n)} z^{1+\frac{2}{d}} \,\gl(\dd z)=\infty \,.
\end{equation}
Then, using that $\log M_n \sim c \, n^{1/3} $ and in particular that $M_n^{1/k_n} \leq C$ for some constant $C>0$,
we get that
\begin{equation}\label{trucdelimsup} 
 \limsup_{n\to \infty}  k_n^{-2(1+\frac2d) -\frac{\gep}{2}}   M_n^{1-\frac{1}{k_n}} \int_{[b_{n+1},b_n)} z^{1+\frac{2}{d}} \,\gl(\dd z)=\infty \,,
\end{equation}
because $k_n^{-3-\frac{\gep}{2}}$ is summable.
Considering a subsequence if necessary, we assume from now on, without loss of generality, that~\eqref{trucdelimsup} holds with a limit instead of a $\limsup$. 
We set
\begin{equation}
\label{def:Vn}
V_n := M_n^{1-\frac{1}{k_n}} \int_{[b_{n+1},b_n)} z^{1+\frac{2}{d}} \,\gl(\dd z) \,,
\end{equation}
so that $\lim_{n\to+\infty} k_n^{-2(1+\frac2d) -\frac{\gep}{2} }  V_n = \infty $. We let   $\delta =\delta_{\gep} $
be sufficiently small so that
\begin{equation}
\label{def:delta}
\frac{d-2\delta}{d+2}  \left( 2 (1+\tfrac2d) + \tfrac{\gep}{2} \right) >2 \,,
\end{equation}
 and we set
\begin{equation}\label{Reta}
	 R_n := V_n^{ \delta /d }  \qquad \text{and} \qquad  \eta_n:= 1- \Bigg[(2\pi)^{-\frac12}\int_{[-R_n,R_n]}e^{-\frac{u^2}{2}}\, \dd u \Bigg]^{dk_n} .
\end{equation}
One can easily check that $\lim_{n\to\infty} R_n =   \infty$ and also  $\lim_{n\to \infty} \eta_n=0$ (using that $V_n \geq k_n^{2(1+2/d)}$).

\smallskip
We are now ready to introduce $f_n$. As in the proof of Proposition \ref{prop:dgeq2}, we want to count clusters of $k_n$ atoms that are beneficial to visit; note that here we have $k_n \to \infty$. More precisely, assuming without loss of generality that $t=1$, we define
\begin{equation} \label{anotherf}
\mathtoolsset{multlined-width=0.9\displaywidth}\begin{multlined} f_n(\om):=  \int_{ \X^{(k_n)}_1}   \bone_{\{ (\Delta t_{\lint k_n\rint}  )^{- {d}/{2}}z_{\lint k_n\rint} \ge A_n,\   \forall i\in\lint 2,k_n\rint: \  \Delta t_i \in [b_n^{ {2\theta'_n}/{d}}, b_n^{ {2\theta_n}/{d}}]   \}} \\
	\times\bone_{\{\forall i\in\lint k_n\rint:\ \lVert \Delta x_i\rVert_\infty \leq R_n \sqrt{\Delta t_i},\ z_i \in [b_{n+1},b_n)\}}\prod_{i=1}^{k_n} \delta_\om(\dd t_i,\dd x_i,\dd z_i)\,,
	\end{multlined}
\end{equation}
where we have used the shorthand notation
\begin{equation}
\label{thetathetaprim}
A_n:= V_n^{ \frac{d }{d+2} (1+\delta) k_n }\,,\qquad 
 \theta_n := \frac{2k_n+1}{2k_n-1} 
\qquad \text{ and } \quad \theta_n' := \frac{k_n}{k_n-1}\,.
\end{equation}
Note that $f_n$ is $\cG_{b_{n+1}}$-measurable.
The restriction on the range for $x_i$ and the requirement that
$ (\Delta t_{\lint k_n\rint}  )^{- {d}/{2}}z_{\lint k_n\rint}$  be large are   important features of $f$. The remaining constraints are \textit{ad hoc} and mainly serve the purpose to lighten the computations.
	If $z_i$ and $t_i$ satisfy the constraints above, then recalling the definition of $\theta_n'$, we have
	\begin{equation}\label{home}
t^{\frac{d}{2}}_1\le A_n^{-1}( \Delta t_{\lint 2, k_n\rint} )^{-\frac{d}{2}} z_{\lint k_n\rint } \le A_n^{-1} \exp (  M_n  [ (k_n-1)\theta_n'-k_n ]  ) = A_n^{-1}  \,.
\end{equation}
Since $\lim_{n\to\infty} A_n =\infty$ (recall that $\lim_{n\to\infty} V_n =\infty$)
and $k_n b_n^{ {2\theta_n}/{d}} \leq k_n \exp(- \frac2d M_n)$ goes to~$0$, 
we have 
$A_n^{-1} + (k_n-1) b_n^{2\theta_n/d}\le 1$ for  sufficiently large $n$,
and hence
\begin{equation*}
\left\{\big(\Delta t_{\lint k_n\rint} \big)^{-\frac{d}{2}}z_{\lint k_n\rint} \ge A_n, \    \forall i\in\lint 2,k_n\rint: \  \Delta t_i \in [b_n^{ {2\theta'_n}/{d}}, b_n^{ {2\theta_n}/{d}}]   \right\}
 \Longrightarrow  t_{k_n}  \le 1. 
\end{equation*}
Consequently, integrating first with respect to the $x_i$'s,  making a change of variables $s_i=\Delta t_i$ and then integrating only with respect  to $s_{k_n}$,  we have
\begin{equation}
\label{calculEf}
\begin{split}
 \bbE  [f_n(\go) ]&=  (2R_n)^{dk_n}\int_{ [b_n^{ {2\theta'_n}/{d}}, b_n^{ {2\theta_n}/{d}}]^{k_n-1}
 \times [0,1] \times [b_{n+1},b_n)^{k_n} } \ind_{\{ (s_{\lint k_n\rint})^{ {d}/{2}} \le A_n^{-1}z_{\lint k_n\rint} \}}\prod_{i=1}^{k_n} s^{\frac{d}{2}}_{i} \,  \dd s_i\,\la( \dd z_i)
 \\ &=\frac{2}{d+2} (2R_n )^{dk_n}A_n^{- (1+\frac{2}{d} )} \int_{ [b_n^{ {2\theta'_n}/{d}}, b_n^{ {2\theta_n}/{d}}]^{k_n-1}
  \times [b_{n+1},b_n)^{k_n} }  \prod_{j=1}^{k_n-1} s^{-1}_{j}\, \dd s_j \prod_{i=1}^{k_n} z^{1+\frac{2}{d}}_i \,\la(\dd z_i)
 \\ &= \frac{2}{d+2} 2^{dk_n} V_n^{-k_n} \bigg(\frac{2(\theta'_n-\theta_n)}{d} M_n\bigg)^{k_n-1} \bigg(\int_{[b_{n+1},b_n)}  z^{1+\frac{2}{d}}\,\la(\dd z) \bigg)^{k_n} \,,
\end{split}\!
\end{equation}
where we have used the definition of $R_n$ and $A_n$ in the last line.
Using the definition of $V_n$, together with the fact that $\theta'_n-\theta_n \leq \frac{1}{2 (k_n-1)^2}$,
we get that
\[
 \bbE  [f_n(\go) ] \leq \frac{2^{1+d}}{d+2}  \bigg(  \frac{2^d}{d (k_n-1)^2}\bigg)^{k_n-1} \,.
\]
Since $k_n \to\infty$,
this shows the first condition  in~\eqref{limiitz}.

\smallskip 

 To check the second condition in~\eqref{limiitz}, we  compute the expectation and variance of 
\begin{equation}\label{eq:barf}
 \bar  f_n(\om'):=  \int_{ \mathfrak{X}_{k_n}(1) \times [b_{n+1},b_n)^{k_n}}  \bone_{\{(\Delta t_{\lint k_n\rint}  )^{- {d}/{2}}z_{\lint k_n\rint} \ge A, \    \forall i\in\lint 2,k_n\rint: \  \Delta t_i \in [b_n^{ {2\theta'_n}/{d}}, b_n^{ {2\theta_n}/{d}}]   \}} \prod_{i=1}^{k_n} \delta_{\om'}(\dd t_i,\dd z_i). \!
\end{equation}
Recalling the notation~\eqref{Reta} of $\eta_n$, we have
$
 \bQ ( \forall i\in\lint k_n\rint: \lVert\Delta B_{t_i}\rVert_{\infty}\le R\sqrt{ \Delta t_i}  )= 1-\eta_n\,.
$
Hence, repeating the computations  that led to \eqref{babarf} in the previous section, we obtain 
\begin{equation*}\begin{split}
 \bbE'_{\beta }\otimes \bQ [  f'_n(\om',B) ] &= (1-\eta_n) \bbE'_{\beta}[\bar  f_n(\om')],\\ 
  \var_{\bbP'_{\beta}\otimes \bQ} [  f_n'(\om',B) ] &\le  \var_{\P'_\beta}(\bar f_n (\go'))+ 2\eta_n \bbE'_{\beta} [ \bar f_n(\go') ]^2.
\end{split}\end{equation*}
Since $\eta_n$ tends to zero, the reader can check that the second condition in \eqref{limiitz} follows from the following claims: 
\begin{equation} \label{finalclaimz}
\begin{split}
 \lim_{n\to \infty} \bbE'_\beta [ \bar f_n(\go') ] &=\infty,\\
 \var_{\bbP'_{\beta}}(\bar f_n(\go'))&\le 2 \bbE'_\beta [ \bar  f_n(\om') ] \qquad  \text{ for  sufficiently large $n$.}
\end{split}
\end{equation}

The first property in \eqref{finalclaimz} follows from
direct calculation. We have
\begin{equation}
\label{calculEprime}
\begin{split}
\bbE'_\beta [ \bar  f_n(\om') ] & =
\int_{ [b_n^{ {2\theta'_n}/{d}}, b^{ {2\theta_n}/{d}}]^{k_n-1}
 \times [0,1] \times [b_{n+1},b_n)^{k_n} } \ind_{\{ (s_{\lint k_n\rint})^{ {d}/{2}} \le A_n^{-1}z_{\lint k_n\rint} \}}\prod_{i=1}^{k_n}   \dd s_i\, \gb z_i \,\la( \dd z_i)  \\
&=  
\beta^{k_n} A_n^{-\frac{2}{d}} \bigg(\frac{2(\theta'_n-\theta_n)M_n}{d}\bigg)^{k_n-1}\bigg(\int_{[b_{n+1},b_n)}  z^{1+\frac{2}{d}}\,\la(\dd z) \bigg)^{k_n} \\
&=
 \frac{d}{2(\theta'_n -\theta_n)} \Big( \frac{2 \gb}{d} (\theta'_n-\theta_n)  V_n^{\frac{d-2\delta}{d+2}} \Big)^{k_n}  \,,
\end{split}
\end{equation}
where the computation is similar to~\eqref{calculEf}.
Now, since $\theta'_n-\theta_n \sim \frac12 k_n^{-2}$  and $ k_n^{-2(1+2/d) -  {\gep}/{2}} V_n \to\infty$, we have from our choice~\eqref{def:delta} for $\delta$
 that $  (\theta'_n-\theta_n)  V_n^{ ({d-2\delta})/({d+2})} \to \infty$,
which leads to the first part of~\eqref{finalclaimz}.

As in the previous section, to compute the second moment of $\bar f_n(\go')$, we can use \eqref{eq:expand} to write $\var_{\P'_\beta}(\bar f_n(\om'))$ in the form \eqref{eq:var}. As seen before, only sets with $\lvert I\rvert=\lvert J\rvert$ contribute,  and the term corresponding to  $I=J=\lint k_n\rint$ is simply $\bbE'_\beta[ \bar  f_n(\om')]$.
If $\lvert I\rvert=\lvert J\rvert=\ell \in \lint k_n-1\rint$, then
\begin{multline*}
 \bbE'_\beta [ f_I(\om')f_{J}(\go') ] = \beta^{2k_n-\ell} \int_{((0,1)\times[b_{n+1},b_n))^{2k_n-\ell}} \bigg(   \ind_{\mathcal{R}}\Big( (t^{(1)}_i)_{i=1}^{k_n},(z^{(1)}_i)^{k_n}_{i=1} \Big) \prod_{j\in I^{\cc}} z^{(1)}_i\,\dd t^{(1)}_j \,\gl (\dd z^{(1)}_i)   \bigg)\\ 
 \times\bigg(   \ind_{\mathcal{R}}\Big( (t^{(2)}_i)_{i=1}^{k_n},(z^{(2)}_i)^{k_n}_{i=1} \Big) \prod_{j\in J^{\cc}} z^{(2)}_i \,\dd t^{(2)}_j  \,\gl (\dd z^{(2)}_i)\Bigg)\prod_{i\in I}z_i\,\dd t_i\, \gl(\dd z_i),
\end{multline*}
where $\mathcal{R}$ is the subset of $((0,1)\times[b_{n+1},b_n))^{k_n}$ induced by the indicator function in \eqref{eq:barf} and for  $i\in I$, we used the notation
$$ t^{(1)}_i=t^{(2)}_{\sigma(i)}=t_i\qquad\text{and}\qquad z^{(1)}_i=z^{(2)}_{\sigma(i)}=z_i  \,.$$
Next, we relax the constraints imposed by $\mathcal{R}$
by only keeping 
$\Delta t^{(r)}_i \le b_n^{ {2\theta_n}/{d}}$.
 Then we proceed as in   the proof of \eqref{boundconstraint}:
by choosing an optimal order, we obtain  that  integration over each time variable produces a factor of at most $b_n^{ {2\theta_n}/{d}}$, except for the last integration, which produces a factor of one. Thus,
\begin{align*}
 \bbE'_\beta[ f_I(\om')f_{J}(\go') ] &\le  \beta^{2k_n-\ell} \, b_n^{\frac{2\theta_n}{d}(2k_n-\ell-1)}\,  \bigg( \int_{[b_{n+1},b_n)} z \,\gl (\dd z) \bigg)^{2k_n-\ell}\\ 
 &\le   \beta^{2k_n-\ell}    \exp\bigg( \frac2d \Big[  (2k_n-\ell)M_{n+1}-(2k_n-\ell-1)\theta_n M_n\Big] \bigg)\mu_{0,1}(1+\tfrac{2}{d})^{2k_n-\ell},
\end{align*}
where in the second line we simply used 
the fact that  
\begin{equation*}
   \int_{[b_{n+1},b_n)} z\, \gl (\dd z)  \le (b_{n+1})^{-\frac{2}{d} } \mu_{0,1}(1+\tfrac{2}{d}) \,,
\end{equation*}
together with the definition of $b_n$ and $b_{n+1}$. 
Recalling the definition of $\theta_n$ and of $M_n$,
we have $M_{n+1} = \frac{k_n}{k_n+1} M_n \theta_n$, so that
\[
 (2k_n-\ell) M_{n+1}  -(2k_n-\ell-1)\theta_n M_n = M_n \theta_n  k_n \left[ \frac{2k_n-\ell}{k_n+1} - \frac{2k_n- \ell-1}{k_n}\right] \leq 0 
\]
for any $\ell \leq k_n-1$. 
We therefore end up with 
\[
\begin{split}
 \var_{\P'_\beta}(\bar f_n(\go')) \leq \bbE'_\beta[ \bar  f_n(\om')] + \sum_{\ell=1}^{k_n-1} \binom{k_n}{\ell}^2  \beta^{2k_n-\ell}   \mu_{0,1}(1+\tfrac{2}{d})^{2k_n -\ell}   \leq  \bbE'_\beta[ \bar  f_n(\om')] + C'(d,\lambda,\gb)^{k_n}\,.
 \end{split}
 \]
Recalling~\eqref{calculEprime}, we have that for any constant $C>0$
the expectation $\bbE'_\beta[ \bar  f_n(\om')]$ is larger than $ C^{k_n}$ for sufficiently large  $n$.
 This proves the second claim in \eqref{finalclaimz} and concludes the proof.
\qed

\appendix

\section{Technical results} \label{app:A}

\subsection{Proof of Theorem~\ref{thm:dec}}

The result is contained in \cite{Kallenberg17}, but only implicitly, so we give a short proof. Let us first check that  \eqref{eq:fcond}
ensures that all integrals in \eqref{eq:dec-2} are well defined and finite.
Applying the BDG inequality and using the subadditivity~\eqref{eq:subbaditiv} of the function $x\mapsto x^{p/2}$ for $x>0$ iteratively, we deduce that
\beq\label{eq:BDG}\begin{split}
	&\E\Biggl[ \biggl\lvert\int_{ \X^N} f(\bw)\,M^\om(\dd w_1)\cdots\,M^\om(\dd w_N) \biggr\rvert^p \Biggr] \\
	&\qquad\leq C_p\E\Biggl[\Biggr(\int_\X \Biggl(\int_{\X^{N-1}} f(\bw)\,M^\om(\dd w_1)\cdots\,M^\om(\dd w_{N-1})\Biggr)^2\,\delta_\om(\dd w_N)\Biggr)^{p/2} \Biggr]\\
	&\qquad\leq C_p\E\Biggl[ \int_\X \biggl\lvert\int_{\X^{N-1}} f(\bw)\,M^\om(\dd w_1)\cdots\,M^\om(\dd w_{N-1})\biggr\rvert^p\,\delta_\om(\dd w_N)  \Biggr]\\
	&\qquad =C_p\int_\X \E\Biggl[ \biggl\lvert\int_{\X^{N-1}} f(\bw)\,M^\om(\dd w_1)\cdots\,M^\om(\dd w_{N-1})\biggr\rvert^p  \Biggr]\,\nu(\dd w_N)\\
	&\qquad\leq \cdots\leq C_p^N \int_{ \X^N} \lvert f(\bw)\rvert^p \,\nu(\dd w_{1})\cdots\nu(\dd w_N)<\infty.
\end{split}\eeq
These inequalities remain   unchanged if $\om$ is replaced by $\om_1,\dots,\om_N$ and $\E$ is replaced by $\E^{\otimes N}$.

\smallskip

Now we move to the proof of \eqref{eq:dec-2}, for which we shall prove the second inequality.  To obtain the reverse inequality it is sufficient follow the same proof and observe  that all estimates are two-sided (\textit{i.e.}, one can always substitute ``$\geq$'' for ``$\leq$'' if one also replaces $1/C$ by $C$). 
We now consider a  state space on which $\go$ is jointly defined with our i.i.d.\ copies $\go_1,\dots,\go_N$ (and $\go$ is independent of $(\go_1,\dots,\go_N)$).  We let $\bar \bbP:= \bbP\otimes \bbP^{\otimes N}$ denote the associated probability.  
In analogy with \eqref{filtraF}, we consider a filtration $(\bar \cF_t)_{t\ge 0}$ on this state space  defined by
\begin{equation*} 
\bar \cF_t:= \cF_t \otimes \cF^{(1)}_t \otimes \cdots \otimes \cF^{(N)}_t,\quad \cF^{(i)}_t:= \sigma\Big( \go \cap ([0,t]\cap \bbR^d \times (0,\infty))\Big),\quad i=1,\dots,N.
\end{equation*}
Our result follows from applying the following inequality, valid for $i=1,\dots, N$, iteratively:
\begin{equation}\label{compachain}\mathtoolsset{multlined-width=0.9\displaywidth}\begin{multlined}
 \bar \bbE\bigg[ \int_{ \X^N} f(\bw)\,M^\om(\dd w_1)\cdots M^{\om}(\dd w_i)\, M^{\om_{i+1}}(\dd w_{i+1})\dots M^{\om_N}(\dd w_N)\bigg]\\
 \le  \frac{C}{1-p} \bar \bbE\bigg[ \int_{ \X^N} f(\bw)\,M^\om(\dd w_1)\cdots
 M^{\om}(\dd w_{i-1})\,M^{\om_i}(\dd w_i)\cdots M^{\om_N}(\dd w_N)\bigg].\end{multlined}
\end{equation}
Let us first spend some time on the first step $i=N$.
By elementary properties of It\^o integrals, the processes 
 $(X_t)_{t\ge 0}$ and $(Y_t)_{t\ge 0}$ defined by
 \begin{equation*}\begin{split}
               X_t&\ceq  \int_{ \X^N} f(\bw)\bone_{(0,t]}(t_N)\,M^\om(\dd w_1)\cdots M^\om(\dd w_N),\\
                     Y_t&\ceq \int_{ \X^N} f(\bw)\bone_{(0,t]}(t_N)\,M^{\om}(\dd w_1)\cdots M^{\om}(\dd w_{N-1})\, M^{\om_N}(\dd w_N)
                 \end{split}
                 \end{equation*}
are martingales with respect to $(\bar \calf_t)_{t\geq0}$, whose quadratic variation processes  are given by
\begin{align*}
	[X]_t&= \int_\X\Biggl(\int_{ \X^{N-1}} f(\bw)\bone_{(0,t]}(t_N)\,M^\om(\dd w_1)\cdots M^\om(\dd w_{N-1})\Biggr)^2\,\delta_{\om}(\dd w_N),\\
	[Y]_t&= \int_\X\Biggl(\int_{ \X^{N-1}} f(\bw)\bone_{(0,t]}(t_N)\,M^\om(\dd w_1)\cdots M^\om(\dd w_{N-1})\Biggr)^2\,\delta_{\om_{N}}(\dd w_N),
\end{align*}
respectively. Since $\om$ and $\om_N$ are Poisson random measures with the same $(\bar\calf_t)_{t\geq0}$-intensity measures (namely $\dd t\otimes \dd x\otimes \la(\dd z)$), the jump measures associated to $[X]$ and $[Y]$  have the same predictable compensator in $(\bar\calf_t)_{t\geq0}$. As a result, $X$ and $Y$
are  \emph{weakly tangential} martingales 
in the sense of \cite[Section~3]{Kallenberg17}. 
Thus, by \cite[Thm. 4.1]{Kallenberg17} and Doob's inequality, there is $C_p>0$ such that
\begin{equation} \label{fdsb}
\bar \E [ \lvert X_{\infty}\rvert^p ]\leq \bar \E\bigg[  \sup_{t\ge 0} {\lvert X_t\rvert^p} \bigg] \leq C_p  \bar\E\bigg[  \sup_{t\ge 0} {\lvert Y_t\rvert^p} \bigg] \leq C_p\bigg(\frac{ p}{p-1}\bigg)^p \bar\E[|Y_\infty|^p].
\end{equation}
The reader can check that for  the implicit constant $C_p$ from  \cite[Thm. 4.2]{Kallenberg17},  one may take $
 C_p:=(24p^2)^p [ 2\times 3^{p/2} ( 2^{p/2+1}+14\times 3^{p/2}(28\times 3^{p/2})^{p/2} ) ]$.
The expression in brackets 
comes from the last equation in  \cite[p.\ 38]{Kallenberg17} (with $\vp(x)=\lvert x\rvert^{p/2}$ and $c=(14\times 3^{p/2})^{-1}$), while the computation in \cite[p.\ 39]{Kallenberg17}, combined with the BDG inequality as in \cite[Ch.\ VII, Thm.\ 92]{Del82}, entails an additional factor of $(24p^2)^p$.  
Most importantly, $C_{p}$ is bounded uniformly in $p\in (1,2]$, so that there exists a universal constant $C$ such that for $p\in (1,2]$,
\begin{equation*}
  \bar\E [ \lvert X_{\infty}\rvert^p ]\ \le \bigg(\frac{C}{p-1}\bigg)\bar\E [ \lvert Y_{\infty}\rvert^p ],
\end{equation*}
which concludes the proof of \eqref{compachain} for $i=N$.

\medskip

In order to iterate and  prove \eqref{compachain} for $i \le N-1$, we 
wish to interchange $M^\om(\dd w_1)\cdots M^\om(\dd w_{i})$ with $M^{\om_{i+1}}(\dd w_{i+1})\cdots M^{\om_{N}}(\dd w_{N})$, that is, we want to write
\begin{equation}\label{eq:fubini}  \mathtoolsset{multlined-width=0.9\displaywidth}\begin{multlined} \int_{ \X^N} f(\bw)\,M^\om(\dd w_1)\cdots M^{\om}(\dd w_i)\, M^{\om_{i+1}}(\dd w_{i+1})\cdots M^{\om_N}(\dd w_N)\\
	=\int_{\X^{i}} \Biggl(\int_{\X^{N-i}} f(\bw)\,M^{\om_{i+1}}(\dd w_{i+1})\cdots M^{\om_N}(\dd w_N)\Biggr) \,M^{\om}(\dd w_1)\cdots M^\om(\dd w_{i}).\end{multlined} \end{equation}
Even though the  integral on the right-hand side is anticipative when considering the filtration $(\bar \cF_t)_{t\geq0}$,
we can recover an integral in It\^o's sense by constructing the inner integrals 
$$\int_{\X^{N-i}} f(\bw)\,M^{\om_{i+1}}(\dd w_{i+1})\cdots M^{\om_N}(\dd w_N)$$
using the filtration $(\bar \cF_t)_{t\geq0}$ and the outer integrals using the filtration $\bar \cF^{(i)}$ defined 
by 
\begin{equation*}
 \bar \cF^{(i)}_t:= \cF_t \otimes \cF^{(1)}_t\otimes \cdots \otimes \cF^{(i)}_t \otimes \cF^{(i+1)}_\infty\otimes \cdots   \otimes \cF^{(N)}_{\infty}.
\end{equation*}
Note that the inner integrals are $\bar\calf^{(i)}_0$-measurable.
With this convention, we can justify \eqref{eq:fubini} as follows: It  certainly holds if $f$ is a step function, that is, if $f$ only assumes finitely many values (in that case, the  integrals are simply finite sums). 
For general $f$, take a sequence of step functions $(f_n)_{n\in\N}$ such that $\lvert f_n\rvert\leq \lvert f\rvert$ for all $n$ and $f_n\to f$ pointwise as $n\to\infty$. Equation \eqref{eq:fubini} holds for $f_n$, and arguing similarly to \eqref{eq:BDG} and using  dominated convergence, we have that
\begin{align*} 
\lim_{n\to \infty}\bar \bbE \Biggl[\Biggl\lvert \int_{ \X^N} (f-f_n)(\bw)\,M^\om(\dd w_1)\cdots M^{\om}(\dd w_i) \,M^{\om_{i+1}}(\dd w_{i+1})\cdots M^{\om_N}(\dd w_N)\Biggr\rvert^p\Biggr] &=0 ,\\
\lim_{n\to \infty} \bar\E\Biggl[\Biggl| \int_{\X^{i}} \Biggl(\int_{\X^{N-i}} (f-f_n)(\bw)\,M^{\om_{i+1}}(\dd w_{i+1})\cdots M^{\om_N}(\dd w_N)\Biggr) \,M^{\om}(\dd w_1)\cdots M^\om(\dd w_{i})\Biggr|^p\Biggr] &=0, 
\end{align*}
proving \eqref{eq:fubini}. The fact that we are able to interpret the latter integral in It\^o's sense is crucial for the BDG inequality, which was needed in \eqref{eq:BDG}, to apply.
Once \eqref{eq:fubini} is established, we can prove \eqref{compachain}  by considering the weakly tangential martingales (for the filtration $(\bar \cF^{(i)}_t)_{t\geq0}$)
\begin{align*} 
           X^{(i)}_t&\ceq \int_{\X^{i}} \Biggl(\int_{\X^{N-i}} f(\bw)\ind_{(0,t]}(t_i)\,M^{\om_{i+1}}(\dd w_{i+1})\cdots M^{\om_N}(\dd w_N)\Biggr) \,M^{\om}(\dd w_1)\cdots M^\om(\dd w_{i}),\\
                     Y^{(i)}_t&\ceq \int_{\X^{i}} \Biggl(\int_{\X^{N-i}} f(\bw)\ind_{(0,t]}(t_i)\,M^{\om_{i+1}}(\dd w_{i+1})\cdots M^{\om_N}(\dd w_N)\Biggr) \,M^{\om}(\dd w_1)\cdots M^{\om}(\dd w_{i-1})\,M^{\om_i}(\dd w_{i}),
\end{align*}
applying the same estimate as in \eqref{fdsb} and re-arranging the integrals similarly to \eqref{eq:fubini}.

\subsection{Proof of Lemma~\ref{lem:integrab}}
 For $A, u,\eps>0$ and $H\in \cals$ that satisfies $\lvert H\rvert\leq u\lvert K\rvert$, we have that
\begin{multline*}
	\P\Biggl( \Biggl\lvert \int_\X H(\om,t,x)\,\xi_{\om_<}(\dd t,\dd x) \Biggr\rvert>\eps\Biggr) \leq \P_\geq \Bigl( A u \lVert K\rVert_{\xi_{\om_<},p; \P_{<}}>\eps\Bigr)\\ +  \P\Biggl( \Biggl\lvert \int_\X H(\om,t,x)\,\xi_{\om_<}(\dd t,\dd x) \Biggr\rvert>uA\lVert K\rVert_{\xi_{\om_<},p; \P_{<}}\Biggr). 
\end{multline*}
The second probability is bounded by
$$
\E_{\geq}\Biggl[ \frac{1}{u^pA^p\lVert K\rVert^p_{\xi_{\om_<},p; \P_{<}}}  \E_<\left[ \left\lvert \int_\X H(\om,t,x)\,\xi_{\om_<}(\dd t,\dd x) \right\rvert^p \right] \Biggr]\leq A^{-p}.
$$
Therefore, using dominated convergence for the first term, we get
\begin{equation*}
	\lim_{u\to0}\sup_{H\in\cals,\lvert H\rvert\leq u\lvert K\rvert}\P\Biggl( \Biggl\lvert \int_\X H(\om,t,x)\,\xi_{\om_<}(\dd t,\dd x) \Biggr\rvert>\eps\Biggr) \leq \lim_{u\to0}  \E_\geq [\bone_{\{ A u \lVert K\rVert_{\xi_{\om_<},p; \P_{<}}>\eps\}}]+A^{-p}\leq A^{-p}. 
\end{equation*}
Sending $A\to\infty$ shows that the left-hand side is $0$, which is obviously equivalent to $\lVert uK\rVert_{\xi_<,0}\to0$ as $u\to0$. Therefore, $K$ is integrable with respect to $\xi_{\om_<}$ by \eqref{eq:integ}.
\qed

\bibliographystyle{abbrv}
\bibliography{biblio.bib}

\end{document}